%

\documentclass[12pt]{amsart}
\usepackage{amsmath,amstext,amsfonts,amsthm,amssymb,epigraph}
\usepackage{eucal}
\usepackage{framed}
\usepackage{xcolor}
\usepackage{bbm}
\addtolength{\oddsidemargin}{-10mm}
\addtolength{\textwidth}{18mm}
\usepackage[all]{xy}                                        %
\setlength{\epigraphwidth}{8cm}
\swapnumbers
\newtheorem{thm}[subsubsection]{Theorem}
\newtheorem{lem}[subsubsection]{Lemma}
\newtheorem{prp}[subsubsection]{Proposition}
\newtheorem{crl}[subsubsection]{Corollary}

\newtheorem{PRP}[subsection]{Proposition}

\newtheorem*{Thm}{Theorem}
\newtheorem*{Lem}{Lemma}

{  \theoremstyle{definition}
           \newtheorem{dfn}[subsubsection]{Definition}
           
           \newtheorem{rem}[subsubsection]{Remark}

           \newtheorem*{Dfn}{Definition}
           
           \newtheorem*{Rem}{Remark}
           
}

\newcommand{\ad}{\mathrm{ad}}
\newcommand{\an}{\mathit{an}}
\newcommand{\nr}{\mathit{nr}}

\newcommand{\Aut}{\operatorname{Aut}}

\newcommand{\codim}{\mathrm{codim}}

\newcommand{\fl}{\mathrm{fl}}

\newcommand{\GL}{\mathrm{GL}}
\newcommand{\Gp}{\mathtt{Gp}}

\newcommand{\Hom}{\mathrm{Hom}} 

\newcommand{\htt}{\operatorname{ht}}
%
\newcommand{\id}{\mathrm{id}}
\newcommand{\im}{\mathit{im}}
\renewcommand{\int}{\mathit{int}}

\newcommand{\iso}{\mathit{iso}}
\newcommand{\Ker}{\mathrm{Ker}}
\newcommand{\CG}{\mathtt{C}}

\mathchardef\mhyphen="2D

\newcommand{\Ob}{\operatorname{Ob}}
\newcommand{\ol}{\overline}

\newcommand{\re}{\mathit{re}}
\newcommand{\rk}{\operatorname{rk}}
\newcommand{\Sk}{\mathtt{Sk}} 
\newcommand{\Sp}{\mathtt{Sp}} 

\newcommand{\Span}{\operatorname{Span}}

\newcommand{\U}{\mathtt{U}}
\newcommand{\Vect}{\mathtt{Vect}}

\newcommand{\wt}{\widetilde}

\newcommand{\bC}{\mathbb{C}}

\newcommand{\bN}{\mathbb{N}}

\newcommand{\bR}{\mathbb{R}}

\newcommand{\bZ}{\mathbb{Z}}

\newcommand{\cF}{\mathcal{F}}
\newcommand{\cG}{\mathcal{G}}
\newcommand{\cH}{\mathcal{H}}

\newcommand{\cL}{\mathcal{L}}

\newcommand{\cP}{\mathcal{P}}
\newcommand{\cQ}{\mathcal{Q}}
\newcommand{\cR}{\mathcal{R}}

\newcommand{\cT}{\mathcal{T}}
\newcommand{\cU}{\mathcal{U}}

\newcommand{\fb}{\mathfrak{b}}

\newcommand{\fg}{\mathfrak{g}}
\newcommand{\fh}{\mathfrak{h}}
\newcommand{\fk}{\mathfrak{k}}
\renewcommand{\fl}{\mathfrak{l}}

\newcommand{\fn}{\mathfrak{n}}
\newcommand{\fp}{\mathfrak{p}}
\newcommand{\fr}{\mathfrak{r}}
\newcommand{\fs}{\mathfrak{s}}
\newcommand{\ft}{\mathfrak{t}}
\newcommand{\fa}{\mathfrak{a}}

\newcommand{\Z}{\mathbb{Z}}

\newcommand{\fsl}{\mathfrak{sl}}
\newcommand{\fgl}{\mathfrak{gl}}

\newcommand{\fosp}{\mathfrak{osp}}

\newcommand{\fq}{\mathfrak q}

\begin{document}

\title[]{Root groupoid and related Lie superalgebras}
\author{M.~Gorelik}
\address{Weizmann Institute of Science}
\email{maria.gorelik@gmail.com}
\author{V.~Hinich}
\address{University of Haifa}
\email{vhinich@gmail.com}
\author{V. Serganova}
\address{UC Berkeley}
\email{serganov@math.berkeley.edu}
 
\begin{abstract}

 We introduce a notion of a root groupoid as a replacement of the notion of Weyl group for (Kac-Moody) Lie superalgebras. The objects of the root 
groupoid classify certain root data, the arrows are defined by generators and relations. As an abstract groupoid the root groupoid has many connected components and we show that to some of them one can associate 
an interesting family of Lie superalgebras which we call root
superalgebras. We classify root superalgebras satisfying some additional assumptions.
  To each root groupoid component we associate a graph (called skeleton) generalizing the Cayley graph of the Weyl group. We establish the
  Coxeter property of the skeleton generalizing in this way the fact that the Weyl group of a Kac-Moody Lie algebra is Coxeter.  
\end{abstract}
\maketitle
\section{Introduction}

\subsection{Generalities}
\subsubsection{}

In this paper we present an attempt to generalize the notion of Weyl group 
to Lie superalgebras. For a semisimple Lie algebra Weyl group parametrizes 
Borel subalgebras containing a fixed torus. This cannot be directly extended 
to  Lie superalgebras since there are essentially different choices of Borel 
subalgebras.
In order to describe all Borel subalgebras, the notion of odd (or isotropic) reflection was introduced many years ago, \cite{S2},~\cite{P},~\cite{DP}.
An odd reflection can not be naturally extended to 
an automorphism of the Lie superalgebra. For many years a strong feeling 
persisted among the experts  that one should extend Weyl group 
to ``Weyl groupoid''.
One  attempt was made in \cite{S3}. A somewhat reminiscent construction
of groupoid was suggested by I.~Heckenberger and collaborators ~\cite{H},
\cite{HY}, see also~\cite{AA}. In~\ref{ss:comment} we comment on these definitions. 
More recently another notion named Weyl 
groupoid was introduced by A.~Sergeev and A.~Veselov~\cite{SV} for
finite-dimensional superalgebras in order to describe the character ring
(but we do not see a connection with our notion).

The notion of root groupoid presented in this paper is close to the one
defined in~\cite{S3}.

\subsubsection{}The connection between semisimple Lie algebras and root systems can be described from two opposite perspectives.
One can start with a Lie algebra, choose a Cartan subalgebra and study the geometry of the set of roots. On the other hand, one can start with a Cartan matrix
and construct a Lie algebra by generators and relations. The second approach was vastly extended to construct a  very important family of infinite-dimensional Lie algebras
by Kac, Moody, Borcherds and others. Our approach follows the same pattern for construction of Lie superalgebras from combinatorial data.
\subsubsection{}
The standard construction of a Kac-Moody Lie algebra comprises two steps.
In the first step one defines a huge graded Lie algebra factoring a free
Lie algebra by so-called Chevalley relations (we call this algebra
{\sl half-backed algebra}). In the second step one factors the half-backed
algebra by the unique maximal ideal having trivial intersection with the 
Cartan subalgebra (this is Victor Kac's approach). Alternatively
(Robert Moody) one imposes an explicit set of Serre relations. Both approaches yield the same result in the symmetrizable case. 

One of the motivations of our work came from the observation that the 
classical Serre relation can be interpreted as reflected Chevalley 
relations. This led 
us to the notion of a root algebra which, roughly speaking, respects the 
symmetries determined by a root groupoid. In many cases there is a unique 
root algebra which can be defined by Chevalley relations reflected in all 
possible ways. Sometimes there is a number of root algebras defined by a 
given root datum. The description of all root algebras is
an open question --- we don't know the answer even for Lie algebras.
For finite dimensional and affine Lie superalgebras all Serre's relations were described in \cite{Y}, see also~\cite{Zh}. One can see from this description that
Serre's relations may involve more than two generators.

\subsection{Root groupoid}

In \cite{Kbook}, 1.1, Kac defines a realization of a Cartan matrix $A=(a_{xy})$, $x,y\in X$, as a triple
$(\fh, a,b)$ such that $a=\{a(x)\in\fh\}$, $b=\{b(x)\in\fh^*\}$
and $\langle a(x),b(x)\rangle=a_{xy}$. Adapting this definition to Lie superalgebras,
we add the parity function $p:X\to\bZ_2$ on the index set $X$ and make a quadruple 
$v=(\fh,a,b,p)$ an
object of {\sl the root groupoid $\cR$} --- the main object of our study. Every quadruple $v$ defines a Cartan matrix by the formula above. 
The pair $(A,p)$ is called in this paper {\sl Cartan datum}. There are three kinds of generators
in the set of arrows in $\cR$. Two of them are quite dull: one (a homothety) rescales 
$a(x)$, another (tautological arrow) is defined by an isomorphism $\theta:\fh\to\fh'$;
the third kind are {\sl reflexions} that retain the same vector space $\fh$ but change the collections $a(x)$ and $b(x)$ by usual (even or odd) reflection formulas.
Each generator $f:v\to v'$ defines a linear transformation $\fh_v\to\fh_{v'}$
(it is identity for homotheties and reflexions, and $\theta$ for the tautological arrow 
defined by $\theta$); two compositions of generators leading from $v$ to $v'$ are equivalent 
if they define the same linear transformation.  The root groupoid $\cR$ has a lot of 
components, some of them, most probably, useless. However, some connected components 
(we call them admissible) lead to interesting Lie superalgebras. It is worth mentioning that
Cartan matrices $A_v$ are different even inside one component: one type of reflexions, 
{\sl isotropic reflexions}, modify Cartan matrices in a certain way (see the formulas in
\ref{sss:cartanmatrix-change}).

\subsection{Root algebras}

For each $v\in\cR$ one defines a (huge) Lie superalgebra $\wt\fg(v)$ (we call it
half-baked Lie superalgebra) in the same way as did  V.~Kac and R.~Moody, see~\ref{sss:half}. For an arrow $\gamma:v\to v'$ in $\cR$ the 
isomorphism
$\fh(\gamma):\fh(v)\to \fh(v')$ does not extend to a homomorphism of the half-baked algebras.
We define a root algebra supported on a component $\cR_0$ of $\cR$ as a collection of 
quotients $\fg(v)$ of $\wt\fg(v)$ such that for any $\gamma:v\to v'$ the isomorphism 
$\fh(\gamma)$ extends to an isomorphism $\fg(v)\to\fg(v')$.

A component $\cR_0$ of $\cR$ is called admissible if it admits a root algebra. Admissibility
can be expressed in terms of weak symmetricity of the Cartan matrices at $\cR_0$, see
Theorem~\ref{thm:admissible=wsym}.

For an admissible component $\cR_0$ there always exists an initial and a final object
in the category of root algebras. The initial root algebra $\fg^\U$ is called {\sl universal}. The final root algebra $\fg^\CG$ is called {\sl contragredient}. Note that $\fg^\CG$ in the admissible case is defined as the quotient of $\wt\fg(v)$ by the maximal
ideal having zero intersection with $\fh$. The universal root algebra $\fg^\U$ is obtained by imposing on $\wt\fg(v)$ reflected Chevalley relations --- so generalizing the classical Serre relations.

{Note that these were two different approaches of the founding fathers of
Kac-Moody Lie algebras: Victor Kac~\cite{Kbook} factored the half-baked 
algebra by the maximal ideal having zero intersection with the Cartan 
subalgebra, whereas Robert Moody~\cite{M} imposed on it the Serre relations.}

\subsection{Graphs associated to the root groupoid}

Define $\Sk\subset\cR$ (skeleton) as the subgroupoid whose arrows are
the compositions  of reflexions.
Denote by $\Sk(v)$ the connected component of $v\in\Sk$.
This is a contractible groupoid; it makes sense to study it as a marked graph, whose edges 
are reflexions marked by the elements of the index set $X$.

We define the spine $\Sp$ as the subgroupoid of $\Sk$ whose arrows are the compositions of isotropic reflexions only. For instance, if there are no isotropic reflexions (for example,  if $p(x)=0$ for all $x$) then $\Sp$ has no arrows.
The connected component of $v$ in $\Sp$ is denoted by $\Sp(v)$.

We show that the vertices of $\Sk(v)$ parametrize Borel subalgebras
of a root algebra $\fg(v)$ that can be obtained by a chain of reflexions
from the original Borel subalgebra $\fb(v)$, see~\ref{rem:attainable}
and~\ref{crl:unique-in-sk}. Similarly, the vertices of $\Sp(v)$
parametrize Borel subalgebras of a root algebra $\fg(v)$ whose even part
coincides with that of $\fb(v)$, see~\ref{crl:Spiff}.

In the classical case of Kac-Moody Lie algebras $\Sk(v)$ is the Cayley
graph of the Weyl group and $\Sp(v)=\{v\}$.

The Weyl group $W$ acts on $\Sk(v)$ and each $W$-orbit
has unique element in $\Sp(v)$, see~\ref{crl:Spiff} for details.

\subsection{Groups associated to the root groupoid}

An only algebraic invariant of an abstract connected groupoid is the automorphism group of its
object. The group $\Aut_\cR(v)$ is one of a plethora of groups we assign to an admissible 
component $\cR_0$. It acts (up to a torus) on any root Lie algebra and on the set of its
roots. For the component corresponding to a  semisimple Lie algebra, $\Aut_\cR(v)$ coincides with the Weyl group.
In the case of conventional Kac-Moody Lie algebras $\Aut_\cR(v)$ is the product
of the Weyl group and a certain group of ``irrelevant'' automorphisms. 
The group of irrelevant automorphism $K(v)$ is very easy to describe. This is a subgroup
of automorphisms $\theta\in\GL(\fh(v))$ preserving all $b(x)\in\fh^*$ as well as 
all $a(x)$ up to constant. It is a unipotent abelian group in the case of Kac-Moody algebras. The equality $\Aut_\cR(v)=W\times K$ does not hold already for $\fgl(1|1)$,
see~\ref{sss:aut-gl11}.

\subsubsection{Skeleton subgroup} We will now present a combinatorial description of the quotient group $\Aut_\cR(v)/K(v)$.   We denote by $\Sk^D(v)$ the 
subset of vertices in $\Sk(v)$ having a Cartan datum $D$-equivalent 
to $A_v$, see~\ref{dfn:Deq}. The set $\Sk^D(v)$ has a group structure and Proposition~\ref{prp:structure-Aut}(3) claims that there is an isomorphism
$\Aut_\cR(v)/K(v)=\Sk^D(v)$.

\subsubsection{Weyl group}

For a vertex $v$ in an admissible $\cR_0$ we define Weyl group $W(v)$ (up to isomorphism, it depends on the component only) as a certain subgroup of $\GL(\fh)$ generated by reflections (more precisely, by the reflections with respect to anisotropic roots, see~\ref{ss:weyl}). The Weyl group $W(v)$ is a normal subgroup of $\Aut_\cR(v)$.

\subsubsection{Spine subgroup}

The intersection
$\Sp^D(v)=\Sp(v)\cap\Sk^D(v)$ is a subgroup in $\Sk^D(v)$. Proposition~\ref{prp:structure-Aut}
claims that $\Aut_\cR(v)/K(v)=\Sk^D(v)$ is a semidirect product $W(v)\rtimes\Sp^D(v)$.
In particular, if $\Sp^D$ is trivial, this gives $\Aut(v)=W\times K$.

\subsection{Coxeter properties}

A fundamental property of Kac-Moody Lie algebras is that its Weyl group
is a Coxeter group. We generalize this result to the Weyl groups appearing in any admissible component. Similarly to the classical result, the length of an element $w\in W$ can be expressed as the number of positive anisotropic roots that become negative under $w$,
see Corollary~\ref{crl:W-len}.

An analog of Coxeter property holds also for the skeleton $\Sk(v)$. The length of the
shortest path from $v$ to $v'$ in $\Sk(v)$ can also be expressed as the number of real positive roots that become negative, see~\ref{prp:Sk-len}.

Coxeter property for groups provides its presentation in terms of generators and relations,
with relations defined by ``pairwise interaction'' of the generators.
It turns out that a similar presentation exists for the skeleton.
In Section~\ref{sec:coxeter2} we define the notion of Coxeter graph
that generalizes that of Coxeter group, and prove that the skeleton
$\Sk(v)$ is a Coxeter graph.

\subsection{Fully reflectable components}
\label{ss:fully}
Admissible Cartan matrices are not in general required to allow reflexions $r_x$ for all $x\in X$. We call a component $\cR_0$ fully reflectable if all reflexions are allowed
at all vertices of $\cR_0$. This means that $\Sk(v)$ is a regular graph of degree $|X|$.
In Section~\ref{sec:trichotomy} we divide all admissible indecomposable fully reflectable components into three types:
finite, affine and indefinite. This trichotomy extends the similar trichotomy for Kac-Moody Lie algebras. There is a full classification of those types that contain an isotropic
root; it has been done by C.~Hoyt and V. Serganova, see~\cite{Hoyt}, \cite{S3}. Curiously, there are only two new indefinite series having an isotropic root; they are called $Q^\pm(m,n,k)$.

\subsection{On the (lack of) uniqueness of a root Lie superalgebra} We have already mentioned that, for an admissible component $\cR_0$ there is an initial $\fg^\U$ and a final $\fg^\CG$ root algebra supported at $\cR_0$. The natural map $\fg^\U\to\fg^\CG$ is surjective and all root algebras are factors lying in between. In Sections~\ref{sect:sym}
and \ref{sect:aff}
we study the gap between $\fg^\U$ and $\fg^\CG$ in the fully reflectable case.
The result of these sections can be summarized as follows.

\begin{Thm}
Let $\cR_0$ be an admissible indecomposable fully reflectable component. Then
$\fg^\U=\fg^\CG$ except for the cases  $\fg^\CG=\fg\fl(1|1)$, $\fg^\U=\fsl(n|n)^{(i)}$, 
$(i=1,2,4)$, $\fs\fq(n)^{(2)}$ and the case 
when $\cR_0$ is indefinite and nonsymmetrizable.
\end{Thm}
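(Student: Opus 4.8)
The plan is to follow the trichotomy of Section~\ref{sec:trichotomy}: an admissible indecomposable fully reflectable component $\cR_0$ is finite, affine or indefinite, and among these the symmetrizable ones are exactly the finite and affine ones together with a subclass of the indefinite ones. There are then two things to do: treat the symmetrizable components uniformly (Section~\ref{sect:sym}), showing $\fg^\U=\fg^\CG$ outside a short list of exceptions, and treat the affine case in detail (Section~\ref{sect:aff}) to pin down that list; the indefinite nonsymmetrizable components are the last exception in the statement, so nothing is claimed there.

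\emph{Symmetrizable case.} Fix $v\in\cR_0$. A symmetrization of $A_v$ equips the half-baked algebra $\wt\fg(v)$ with an invariant even bilinear form, and the maximal ideal with zero intersection with $\fh$, whose quotient is $\fg^\CG$, coincides with the radical $\fr_v$ of this form (one checks $\fr_v\cap\fh=0$ and maximality). Since reflected Chevalley relations always lie in $\fr_v$, the content is the reverse inclusion: $\fr_v$ is generated, as an ideal, by the reflected Chevalley relations. Here one adapts the Gabber--Kac argument to the super/groupoid setting: arguing by height, a homogeneous element of $\fr_v$ of minimal height is identified with the image of a Chevalley relation $[e_i,f_j]=\delta_{ij}h_i$ (or an $\fh$-relation) at some $v'\in\cR_0$ transported to $v$ through a chain of reflexions, using that any reflexion $r_x\colon v\to v'$ carries the defining relations of $\wt\fg(v')$ into those of $\wt\fg(v)$ up to relations already forced. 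The step that must be checked by hand is that each of Yamane's Serre relations~\cite{Y} for the finite-dimensional and affine contragredient Lie superalgebras — including those involving more than two generators — is of this reflected form; granting this, the argument yields $\fg^\U=\fg^\CG$ away from the configurations where $\fr_v$ contains a relation invisible to the reflexions.

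\emph{The exceptions.} Two mechanisms produce such an invisible relation. First, an isolated isotropic node: the rank-one datum with $p(x)=1$ and $a_{xx}=0$ has $\fg^\CG=\fgl(1|1)$, but its only reflexion $r_x$ returns an isomorphic datum and yields no new relation, whereas $\fg^\CG$ still forces $[e_x,e_x]=0$; hence $\fg^\U\neq\fg^\CG$. Second, the $\fsl(n|n)$- and $\fsq(n)$-type data and their affinizations: the Cartan matrix is degenerate in a way that makes a central element interact with the Serre relations, and the resulting relation again escapes the reflexions. Running the affine classification against Yamane's list, one checks that these are the only escaping relations and that they give exactly $\fg^\U=\fsl(n|n)^{(i)}$ for $i=1,2,4$ and $\fg^\U=\fsq(n)^{(2)}$, with $\fg^\CG$ the appropriate proper quotient; this bookkeeping is the bulk of Section~\ref{sect:aff}.

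The hard part will be the reverse inclusion in the symmetrizable case together with the classification bookkeeping: one must run a Gabber--Kac-type induction in the presence of isotropic generators, match it against Yamane's explicit (and sometimes higher) Serre relations, and verify that the only failures are the $\fgl(1|1)$-, $\fsl(n|n)^{(i)}$- and $\fsq(n)^{(2)}$-components. For the indefinite nonsymmetrizable components there is no invariant form, the mechanism underlying the whole argument is unavailable, and the theorem accordingly makes no assertion.
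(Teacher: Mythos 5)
Your high-level plan --- split by the trichotomy, run a Gabber--Kac argument for the symmetrizable components, and analyze the affine components separately --- matches the paper's organization, but the mechanism you propose for the symmetrizable case has a genuine gap. A height induction that identifies a minimal-height element of $\fr$ with a reflected Chevalley relation, followed by a check "by hand" that Yamane's relations are of reflected form, presupposes an explicit presentation of $\fg^\CG$; via \cite{Y} such presentations exist only in the finite-dimensional and affine cases, so the indefinite symmetrizable components --- which the theorem asserts satisfy $\fg^\U=\fg^\CG$ --- are left uncovered. The actual Gabber--Kac mechanism (Theorem~\ref{thm:symmetric-g-gkm}) needs no knowledge of relations: by Proposition~\ref{propK911} the quotient $\fr^-/(\fs^-+[\fr^-,\fr^-])$ maps injectively into $\bigoplus_{\alpha\in\Sigma}M^\CG(-\alpha)$, it is integrable because the adjoint representation is, and the Casimir operator forces any integrable subquotient $L^\CG(\mu)$ of $M^\CG(-\alpha)$ to satisfy $(\mu|\mu+2\rho)=0$ --- an equation with no solutions in type (Fin) beyond $\fgl(1|1)$, forcing $(\rho|\delta)=0$ in type (Aff), and incompatible with integrability in the symmetrizable indefinite case. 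That eigenvalue computation is the engine your argument is missing.

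In the affine case, "running the classification against Yamane's list" is not a proof: for the nonsymmetrizable components $S(2,1,b)$ and $\fq(n)^{(2)}$ the paper explicitly states its results are new, so there is no list to run against, and even where a presentation of $\fg^\CG$ is known, comparing it with the reflected Chevalley relations does not by itself decide whether the extra relations are consequences of them. The paper instead proves (Lemma~\ref{lem:extension}) that the gap is measured by central extensions, $\dim\fg^\U_{k\delta}-\dim\fg^\CG_{k\delta}=\dim H^2(\bar\fg^\CG,\bC)_{k\delta}$, and reduces the theorem to computing $H^2$ of twisted loop algebras (Lemmas~\ref{lem:redfindim} and~\ref{lem:oddextensions}); the exceptions $\fsl(n|n)^{(i)}$ and $\fsq(n)^{(2)}$ appear precisely because $H^2(\fpsl(n|n),\bC)^{\circ}$ and $H^2(\fp\fs\fq(n),\bC)^{\circ}$ are nonzero. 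Your remark that "a central element interacts with the Serre relations" points in the right direction, but without this cohomological identification there is no way to certify that the list of exceptions is complete. (Your treatment of the $\fgl(1|1)$ exception is correct.)
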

The similar result for symmetrizable Kac-Moody Lie algebras was proven by Gabber-Kac,
\cite{GabberKac}. Their proof  was adapted to our symmetrizable   case in Section~\ref{sect:sym}.
In the case when $\fg^\CG=\fg\fl(1|1)$ the algebra $\fg^\U$ has dimension $(4|2)$
and the algebras $\fg^\U$ and $\fg^\CG$ are the only two root algebras in this component,
see~\ref{rank1}.
Note that the explicit realization of $\fg^\CG$ for $\fg^\U=\fsl(n|n)^{(i)}$, $(i=1,2,4)$
and  $\fs\fq(n)^{(2)}$ is given in~\cite{S3}.

The results for nonsymmetrizable affine algebras, $S(2,1,b)$ and $\fs\fq(n)^{(2)}$ are new.

We also prove that if  $\fg^\CG\neq\mathfrak{gl}(1|1)$ then  any algebra $\fg$
sandwiched between $\fg^{\CG}$ and $\fg^{\U}$ is a root algebra.

\subsection{Examples of calculation of $\Aut(v)$}

In the last Section~\ref{sec:app} we compute the skeleton and the group 
$\Aut_\cR(v)$ for two classes of connected
components.

The first one is the case of a ``star-shaped'' spine. It includes the algebras 
$\fs\fq(3)^{(2)}$,  $B(1|1)^{(1)}$, $D(2|1,a)$, $D(2|1,a)^{(1)}$, $Q^{\pm}(m,n,k)$.
Here one has $\Aut_\cR(v)=W\times K$ as in this case $\Sp^D(v)$ is trivial.
For the same reason $\Aut_\cR(v)=W$  for all finite dimensional Lie superalgebras except for 
the case of $\fgl(n|n)$; the latter  is considered in~\ref{sss:glmn}.
The second class is the class of components whose skeleton identifies with that of
$\fsl_n^{(1)}$. This includes the root data for $\fsl(k|l)^{(1)}$, $\fs\fq(n)^{(2)}$
and  $S(2|1,b)$. In these cases the  Weyl group  $W(\fsl_n^{(1)})$ acts simply transitively
on the skeleta $\Sk(v)$. This allows one to realize the Weyl group and $\Sk^D(v)=\Aut_\cR(v)/K(v)$ as subgroups of $W(\fsl_n^{(1)})$.

\subsection{Borcherds-Kac-Moody algebras}
R.~Borcherds in \cite{Bo} introduced a generalization of Kac-Moody algebras, 
 where the Cartan matrix is real symmetric and satisfies additional
 conditions. The proof of Gabber-Kac is valid for this class
(see~\cite{Kbook}, 11.13) and give $\fg^{\CG}=\fg^\U$
if the Cartan matrix is symmetrizable and satisfies the conditions
(C1')--(C3') in ~\cite{Kbook}, 11.13). 
Borcherds-Kac-Moody (BKM) superalgebras were studied by M.~Wakimoto in~\cite{W}. Note that any symmetrizable Kac-Moody algebra is a BKM algebra,
but many symmetrizable Kac-Moody superalgebras (including $\fgl(m|n)$ for $m,n>2$)
are not BKM superalgebras. 
BKM superalgebras are  root superalgebras.

\subsection{Comment on the groupoids studied in~\cite{HY,AA}}
\label{ss:comment}

In~\cite{HY} the authors assign a groupoid (called Coxeter groupoid)
to a collection of vectors in a vector space $\fh^*$ endowed with a 
nondegenerate symmetric bilinear form.
The objects of "Coxeter groupoid" appearing in the definition
in~\cite{HY} correspond to different choices of  Borel
subalgebras of a Kac-Moody superalgebra given by a symmetrizable Cartan datum;
thus, they correspond to the vertices of what we call a skeleton
component. The arrows are generated by reflections with respect to all
simple roots. Our result claiming coxeterity of the skeleton
(Theorem~\ref{thm:skeleton-coxeter}) means that the groupoid defined
in~\cite{HY} is contractible. For instance, it assigns to a semisimple
Lie algebra the Cayley graph of its Weyl group (which is contractible
when considered as a groupoid). In order to get for a semisimple Lie
algebra the classifying groupoid of the Weyl group instead of the
contractible Cayley graph, one has to identify the vertices having the
same Cartan matrix as it is done in~\cite{AA}.

In the present paper we do something similar to~\cite{AA}, however,
instead of identifying equivalent vertices, we add
isomorphisms between them.

In our root  
groupoid we have generators for the arrows of three different types: 
apart from reflexions, we have tautological arrows and homotheties that 
connect vertices with D-equivalent Cartan matrices. In absence of 
isotropic reflexions (for instance for Kac-Moody algebras) the Weyl group
coincides with the automorphism group of an object of the
corresponding component of a root groupoid.  
In general, these two groups are different, see~\ref{prp:structure-Aut} and Section~\ref{sec:app}.

\subsection{}

Let us highlight the most important results of the paper.
\begin{itemize}
\item A criterion of admissibility in terms of weak symmetricity,
see~\ref{thm:admissible=wsym}.
\item The Weyl group is defined uniformly, see~\ref{ss:weyl}.
\item A general proof of the fact that the Weyl group is a Coxeter
group generated by the principal reflections, see~\ref{ss:Wcoxeter}.
\item A similar Coxeter property for the skeleton, 
see~\ref{thm:skeleton-coxeter}.
\item Root Lie superalgebras are classified in the finite and 
affine case, see~\ref{rank1}, \ref{crlfin}, \ref{thm:UKM-ns}
and \ref{rootalg}.
\end{itemize}

Apart from theorems proven, one of the important aims of the paper is to develop a useful language to deal with Lie superalgebras. Many parts of this language (isotropic (odd) reflexions, 
skeleton as the set of Borels, spine as the set of Borels with a fixed even part, Weyl group as the Weyl group of the even part of the Lie  superalgebra (slightly modified as the latter is not Kac-Moody)) were in use for a long time, but we believe our general setup gives a way to look at it uniformly.

\subsection{Acknowledgements}
V.S. enjoyed numerous visits to Weizmann Institute whose pleasant atmosphere is
gratefully acknowledged.
We are grateful to J.~Bernstein whose comment initiated the project
and to R.~Moody whose question triggered our Sections \ref{sec:coxeter} 
and \ref{sec:coxeter2} devoted to Coxeter properies of the root groupoid.
We are also grateful to V.~Kac and A.~Sherman for valuable comments and 
to L.~D.~Silberberg and S.~K.~Kerbis for locating numerous mistakes in 
the earlier versions of the manuscript. We thank the anonymous referees 
for helpful suggestions.

The work of V.H. was supported by ISF 786/19 grant.
M.G was supported by ISF 1957/21 grant.
The work of V.S. was supported by NSF grant 2001191.

\section{Setup}

\subsection{Groupoid of root data}
\label{ss:RDG}
Recall that a groupoid is a category in which all arrows are invertible.

Once  and forever we fix a finite set $X$. The cardinality of $X$
will be called {\sl the rank} of root data and of Lie superalgebras
connected to them.

For a complex vector space $\fh$ and a  set  $X$, a map $a:X\to\fh$
will be called injective if the induced map $\Span_\bC(X)\to \fh$ is an injective map of vector spaces.

\subsubsection{}
We now define {\sl the root groupoid} $\cR$.

The objects of $\cR$ (the root data) are the quadruples 
$(\fh, a:X\to \fh, b:X\to \fh^*, p:X\to\mathbb{Z}_2)$ where 
$\fh$ is a finite dimensional vector space over $\bC$
  such that $a,b$ are injective.

We will define the arrows of $\cR$ by generators and 
relations.

We have generating arrows of three types:
\begin{itemize}
\item[1.] a reflexion~\footnote{In this paper we follow the idea of
K.~Chukovsky~\cite{Krokodil} to use synonyms for different
(although related) objects. In {\sl loc. cit} these are Hyppopotamus and Behemoth that are synonymous in Russian. In this paper we will later introduce {\sl reflections} generating the Weyl group, that will be related to, but different from the reflexions defined now.}
  $r_x:(\fh,a,b,p) \to (\fh,a',b',p')$
defined by a source $(\fh,a,b,p)$ and 
{\sl  a reflectable element}
$x\in X$, see~\ref{ss:reflexions} for the explicit formulas;
\item[2.] a tautological arrow $t_\theta:(\fh,a,b,p)\to
(\fh',a',b',p)$ determined by $\theta:\fh\stackrel{\sim}{\to}
\fh'$. Here $a':=\theta\circ a$, $b'=((\theta^*)^{-1})
\circ b$.   
\item[3.] a homothety  $h_\lambda:(\fh,a,b,p)\to (\fh,a',b,p)$ 
determined by $\lambda:X\to\bC^*$, with 
$a'(x)=\lambda(x)a(x)$. 
\end{itemize}

This collection of objects and arrows (=quiver) generates a free category denoted (temporarily) $\cF$. In other words, the objects of $\cF$ are the root data;
the arrow in $\cF$ are the paths composed of the generating arrows.
The groupoid $\cR$  will be defined as the one with the same objects as 
$\cF$, and whose arrows are equivalence classes of the arrows above. The 
equivalence relation is defined below.

First of all, we define a functor $\fh:\cF\to\Vect$
to the category of vector spaces carrying $(\fh,a,b,p)$
to $\fh$, carrying the reflexions and the homotheties
to the identities, and tautological arrows to the respective
isomorphisms of the underlying vector spaces.

\subsubsection{}
\label{sss:eq}
The equivalence relation on each Hom-set of $\cF$
is defined as follows: two compositions of arrows 
$(\fh,a,b,p)\to(\fh',a',b',p')$ are equivalent
if they induce the same isomorphism $\fh\to\fh'$.

\subsection{Formulas for the reflexions}
\label{ss:reflexions}
Any root datum $(\fh,a,b,p)$ determines a Cartan matrix
$A(a,b)=(a_{xy})_{x,y\in X}$ given by the formula
$$a_{xy}:=\langle a(x),b(y)\rangle.$$

\begin{dfn}
An element $x\in X$ is called {\sl reflectable} at $v=(\fh,a,b,p)$ if
the following conditions hold.
\begin{itemize}
\item[1.] If $a_{xx}=0$ then $p(x)=1$;
\item[2.] If $a_{xx}\ne 0$ and $p(x)=0$ then $\frac{2a_{xy}}{a_{xx}}
\in\bZ_{\leq 0}$.
\item[3.] If $a_{xx}\ne 0$ and $p(x)=1$ then $\frac{a_{xy}}{a_{xx}}
\in\bZ_{\leq 0}$.
\end{itemize}
\end{dfn}
\subsubsection{}
\label{sss:reflexionformulas}
Let $x\in X$ be reflectable at $v=(\fh,a,b,p)$. The reflexion $r_x:v\to 
v'=(\fh,a',b',p')$ is defined as follows.
\begin{itemize}
\item[(anisotropic)] If  $a_{xx}\not=0$, then $p':=p$ and
$$a'(y):=a(y)- 2\frac{a_{yx}}{a_{xx}}a(x),\ \ \
b'(y):=b(y)- 2\frac{a_{xy}}{a_{xx}}b(x).$$
\item[(isotropic)] If $a_{xx}=0$ then $p(x)=1$ and 
$$(a'(y),b'(y),p'(y)):=\left\{\begin{array}{l}
(-a(x),-b(x),p(x)) \ \ \ \ \  \text{ if } x=y,\\
(a(y),b(y),p(y)) \ \ \ \ \  \ \ \ \ \text{ if } x\not=y,\ \  a_{xy}=0,\\
(a(y)+\frac{a_{yx}}{a_{xy}}a(x),
  b(y)+b(x), 1+p(y))
  \ \  \text{ if }  a_{xy}\not=0.\end{array}
\right.$$
\end{itemize}

\begin{dfn}
The pair $(A=\{a_{xy}\}, p)$ will be called {\sl Cartan datum} for $v$.
\end{dfn} 

Note that the reflectability of $x\in X$, as well as the formulas for the reflexion $r_x$
depend only on the Cartan datum.

\subsubsection{}
\label{sss:cartanmatrix-change}
Let us indicate what happens to a Cartan matrix under a reflexion.
Anisotropic reflexions preserve the Cartan matrix. If $r_x:v\to v'$ is 
an isotropic reflexion ($a_{xx}=0$), the Cartan matrix $(a'_{yz})$ is given by the following formulas
$$\begin{array}{ll}
a'_{xy}=-a_{yx}, \\
a'_{yx}=-a_{xy},\\
a'_{yy}=\left\{\begin{array}{ll}
a_{yy}+2a_{yx}&\text{ if } a_{xy}\ne 0\\
a_{yy}&\text{ if } a_{xy}=0.\end{array}
\right. \\
a'_{yz}=\left\{\begin{array}{ll}
a_{yz} \ \ \ \ \ & \text{ if }a_{xz}=0, x,y\not=z,\\
a_{yz}+a_{yx} &\text{ if } a_{xz}\not=0, a_{xy}=0, x,y\not=z\\
a_{yz}+a_{yx}(1+\frac{a_{xz}}{a_{xy}}) & \text{ if } a_{xz}\not=0, a_{xy}\not=0.\end{array}
\right.
\end{array}$$

\begin{PRP}
The category $\cR$ is a groupoid.
\end{PRP}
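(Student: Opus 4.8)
The plan is to show that the category $\cR$ is a groupoid by showing that each of the three kinds of generating arrows is invertible in $\cR$, and that these inverses respect the equivalence relation of~\ref{sss:eq}. Since $\cR$ is obtained from the free category $\cF$ by passing to equivalence classes of morphisms, and the equivalence relation identifies morphisms inducing the same isomorphism on $\fh$, it suffices to produce, for every generating arrow $f:v\to v'$, a morphism $g:v'\to v$ in $\cF$ such that the compositions $g\circ f$ and $f\circ g$ induce the identity on the respective vector spaces. Once this is known for generators, every morphism in $\cR$ (a composition of generators) is invertible by composing the inverses in reverse order.

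First I would treat the two easy cases. For a homothety $h_\lambda:(\fh,a,b,p)\to(\fh,a',b,p)$ with $a'(x)=\lambda(x)a(x)$, the candidate inverse is $h_{\lambda^{-1}}$, where $\lambda^{-1}(x)=\lambda(x)^{-1}$; one checks this lands at the original root datum and that both composites induce $\id_\fh$. For a tautological arrow $t_\theta:(\fh,a,b,p)\to(\fh',a',b',p)$ with $\theta:\fh\xrightarrow{\sim}\fh'$, the candidate inverse is $t_{\theta^{-1}}$; here one must verify that $t_{\theta^{-1}}$ indeed goes from $(\fh',a',b',p)$ back to $(\fh,a,b,p)$, i.e. that $\theta^{-1}\circ a'=a$ and $((\theta^{-1})^*)^{-1}\circ b'=b$, which is immediate from $a'=\theta\circ a$ and $b'=((\theta^*)^{-1})\circ b$ together with $(\theta^{-1})^*=(\theta^*)^{-1}$. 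The composites visibly induce $\id_\fh$ and $\id_{\fh'}$.

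The main case, and the only place where a real computation is needed, is the reflexion $r_x:v\to v'$. The key claim is that $x$ is again reflectable at $v'$, and that $r_x:v'\to v''$ returns to the original root datum, i.e. $v''=v$; since $r_x$ induces the identity on $\fh$ in both directions, this gives $r_x\circ r_x\sim\id$ in $\cR$. In the anisotropic case ($a_{xx}\neq 0$), the Cartan matrix is preserved (see~\ref{sss:cartanmatrix-change}), so reflectability of $x$ at $v'$ is automatic, and one checks directly that applying the anisotropic formula twice gives $a''(y)=a(y)$ and $b''(y)=b(y)$: this is the standard fact that a reflection is an involution, using $a'_{xx}=a_{xx}$ and $a'(x)=-a(x)$. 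In the isotropic case ($a_{xx}=0$), one reads off from~\ref{sss:cartanmatrix-change} that $a'_{xx}=-a_{xx}=0$ and $p'(x)=p(x)=1$, so $x$ is reflectable at $v'$; then one checks that the isotropic formula applied twice is the identity, splitting into the cases $x=y$ (where $(-(-a(x)),-(-b(x)),p(x))=(a(x),b(x),p(x))$), $a_{xy}=0$ (trivial), and $a_{xy}\neq 0$ (where one uses $a'_{xy}=-a_{yx}$, $a'_{yx}=-a_{xy}$, $p'(y)=1+p(y)$ and substitutes into $a(y)+\tfrac{a_{yx}}{a_{xy}}a(x)\mapsto \bigl(a(y)+\tfrac{a_{yx}}{a_{xy}}a(x)\bigr)+\tfrac{a'_{yx}}{a'_{xy}}a'(x)$, simplifying to $a(y)$, and similarly for $b$ and $p$).

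The expected main obstacle is purely bookkeeping: carefully verifying in the isotropic case $a_{xy}\neq 0$ that the triple substitution collapses to the identity, keeping track of the sign changes in $a'_{xy},a'_{yx}$ and the parity shift; there is no conceptual difficulty. One final remark worth including: because the equivalence relation of~\ref{sss:eq} only remembers the induced linear map $\fh\to\fh'$, we do not even need the inverses to be strict inverses in $\cF$ — it is enough that both composites induce the identity on $\fh$, which is what the above checks provide. Hence every generator, and therefore every morphism, of $\cR$ is invertible, so $\cR$ is a groupoid. $\qed$
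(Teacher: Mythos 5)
Your proposal is correct and follows essentially the same route as the paper: homotheties and tautological arrows are inverted by $h_{\lambda^{-1}}$ and $t_{\theta^{-1}}$ respectively, and reflexions are involutions ($r_x^2=\id$) by direct calculation, which the paper states without writing out. Your additional check that $x$ remains reflectable at $v'$ (so that the second reflexion is actually defined) is a detail the paper leaves implicit but is worth making explicit.
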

\begin{proof}
 It is enough to verify that each generating arrow in 
$\cF$ has invertible image in $\cR$. First of all,
in our category the composition of two tautological arrows is 
tautological, defined by the composition of the 
corresponding isomorpisms $\fh\stackrel{\sim}{\to}\fh'\stackrel{\sim}{\to}\fh''$. Similarly, 
composition of two homotheties is a homothety. This implies 
that tautological arrows and homotheties are invertible. 
Invertibility of reflexions follows from the formulas:
one has $r_x^2=\id$ for all $x$ (this is an explicit calculation).
\end{proof}

Note the following observation.
\begin{lem}
\label{lem:sym-stable}
All reflexions preserve the symmetricity of a Cartan matrix.
\end{lem}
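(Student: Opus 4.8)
The plan is to reduce immediately to the case of isotropic reflexions and then exhibit a single symmetric bilinear form that computes the Cartan matrix both before and after the reflexion. Anisotropic reflexions leave the Cartan matrix unchanged (recorded in~\ref{sss:cartanmatrix-change}), so there is nothing to prove for them; hence I may assume $r_x:v\to v'$ is isotropic, i.e. $a_{xx}=0$, $p(x)=1$, and that $A=A(a,b)$ is symmetric.

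First I would introduce the subspace $\fh_X:=\Span_\bC(a(y):y\in X)\subseteq\fh$ together with the bilinear form $B$ on $\fh_X$ determined on the basis $(a(y))_{y\in X}$ by $B(a(y),a(z)):=a_{yz}$. It is well defined because $a$ is injective, and it is symmetric precisely because $A$ is. The useful reformulation of symmetry is that for every $y\in X$ the covector $b(y)$ restricts on $\fh_X$ to $B(a(y),-)$, since $\langle a(z),b(y)\rangle=a_{zy}=a_{yz}=B(a(y),a(z))$ for all $z$.

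Next I would check, reading off the reflexion formulas of~\ref{sss:reflexionformulas}, that this same form $B$ still works after the reflexion: that $a'(y)\in\fh_X$ and $b'(y)|_{\fh_X}=B(a'(y),-)$ for every $y$. This is immediate in the two easy branches, $y=x$ (where $a'(x)=-a(x)$ and $b'(x)=-b(x)$) and $a_{xy}=0$ (where $a'(y)=a(y)$ and $b'(y)=b(y)$). In the remaining branch $a_{xy}\ne 0$ one has $b'(y)=b(y)+b(x)$ and $a'(y)=a(y)+\frac{a_{yx}}{a_{xy}}a(x)$; the point is that symmetry of $A$ forces $\frac{a_{yx}}{a_{xy}}=1$, so $a'(y)=a(y)+a(x)$ and then $b'(y)|_{\fh_X}=B(a(y),-)+B(a(x),-)=B(a'(y),-)$. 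Granting this, for all $y,z\in X$,
\[
a'_{yz}=\langle a'(y),b'(z)\rangle=B(a'(z),a'(y))=B(a'(y),a'(z))=\langle a'(z),b'(y)\rangle=a'_{zy},
\]
so $A'$ is symmetric.

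The only real content is the bookkeeping in the last branch, and in particular the observation that symmetry collapses the coefficient $a_{yx}/a_{xy}$ to $1$; everything else is formal. Alternatively, one may dispense with the bilinear form and verify $a'_{yz}=a'_{zy}$ directly from the explicit case-by-case formulas in~\ref{sss:cartanmatrix-change}, where the substitution $a_{yx}=a_{xy}$ again makes the three branches match up; the form $B$ merely packages this computation.
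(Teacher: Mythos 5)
Your proof is correct. The paper's own proof is essentially a one-liner: anisotropic reflexions do not change the Cartan matrix at all, and for isotropic reflexions it simply asserts that symmetry is preserved ``by a direct calculation'' using the formulas of~\ref{sss:cartanmatrix-change}. You supply that missing calculation, and you do it in a slightly more conceptual way: instead of checking $a'_{yz}=a'_{zy}$ case by case, you encode the symmetric matrix as a bilinear form $B$ on $\Span_\bC(a(y):y\in X)$ with $b(y)|_{\fh_X}=B(a(y),-)$, observe that symmetry collapses the coefficient $a_{yx}/a_{xy}$ to $1$ in the nontrivial branch of the reflexion formula, and verify that the \emph{same} form $B$ represents the reflected data. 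This buys you symmetry of $A'$ for free from symmetry of $B$, and it foreshadows the invariant form of Proposition~\ref{prp:likekac22}; the paper's implicit brute-force check is shorter to state but gives no structural insight. All the details check out (well-definedness of $B$ uses the injectivity of $a$, which is part of the definition of a root datum), so the argument is complete.
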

\begin{proof}
Anisotropic reflexion does not change the Cartan matrix. Isotropic reflexions do change, but the resulting Cartan matrix remains symmetric if the original matrix was symmetric. This results from a direct calculation.
\end{proof}
 
\begin{dfn}
A connected component $\cR_0$ of $\cR$ is called symmetrizable if there
exists $v\in\cR_0$ having a symmetric Cartan matrix.
\end{dfn}

Note that $\cR_0$ is symmetrizable if all Cartan matrices of $v'\in\cR_0$
are symmetrizable in the sense of Kac~\cite{Kbook}.

\subsection{Properties}
\subsubsection{}
\label{sss:comcom}
One has obviously $t_\theta\circ t_{\theta'}=t_{\theta\circ\theta'}$ and
$h_\lambda\circ h_{\lambda'}=h_{\lambda\lambda'}$.
The morphisms $r_x$, $t_\theta$ and $h_\lambda$ commute with each other.

\

The root groupoid $\cR$ consists  of connected 
components some of which will lead to  
interesting Lie superalgebras. 

We present below properties of a component $\cR_0$ of $\cR$
that will be relevant to Lie theory.

This is  weak symmetricity.

\begin{dfn} 
\label{dfn:quasisym}
$ $
\begin{itemize} 
\item[1.] A root datum is {\sl locally weakly symmetric} if
 $a_{xy}=0$ implies $a_{yx}=0$ for any reflectable $x$.
 
\item[2.] A root datum is weakly symmetric  
if all root data in its connected component are locally weakly symmetric.
\end{itemize}
\end{dfn}

\begin{rem}
Let $v\in\cR$. The group of automorphisms $\Aut_\cR(v)$ acts on
$\fh(v)$. This action is faithful by definition of the equivalence 
relation on the Hom sets of $\cF$, see \ref{sss:eq}.
\end{rem}

\begin{rem}
The root groupoid  $\cR$ is an object of ``mixed'' nature.
It is a groupoid, but its objects and Hom sets carry an extra information
(markings $a,b,p$, generators $r_x,t_\theta,h_\lambda$ for arrows).
This is why we cannot easily replace $\cR$ with any equivalent groupoid
(for instance, leaving only one object for each connected component).

Nevertheless, we can safely assume that $\fh$ is the same vector space at 
all objects of a given connected component $\cR_0$, allowing however
the tautological arrows $t_\theta$ for automorphisms $\theta:\fh\to\fh$.
\end{rem}
 
\begin{rem}
Tautological arrows and  anisotropic 
reflexions (those with $a_{xx}\not=0$) preserve the Cartan datum.
Homotheties also preserve local weak symmetricity. 
Isotropic reflexions usually do not satisfy this property. For this reason  admissible root data with isotropic reflexions can be classified under the assumption
that all elements $x\in X$ are reflectable at every vertex, \cite{Hoyt}.
\end{rem}

\begin{dfn}
\label{dfn:Deq}
Two Cartan data, $(A,p)$ and $(A',p')$, will be called $D$-equivalent if $p=p'$ and there exists
an invertible diagonal matrix $D$ such that $A'=DA$.
\end{dfn}
Obviously, homotheties carry a Cartan datum to a $D$-equivalent one.

\begin{rem}
In studying a connected component $\cR_0$ of $\cR$ it is often important to describe  Cartan data $(A(v),p)$ at all vertices $v\in\cR_0$, up to $D$-equivalence. Since only isotropic reflexions
change the Cartan data, it is sufficient to perform only sequences of isotropic reflexions, see~\ref{sss:spine}.
\end{rem}

\subsection{Examples: reflectability}

\subsubsection{}
\label{ex:nonreflectable}
We present an example of a reflexion $r_x:v\to v'$ such that
all $y\in X$ are reflectable at $v$ but some are not reflectable
at $v'$.

Take the root datum $v$ with $X=\{x,y\}$, the Cartan matrix 
$\begin{pmatrix} 0 &-s\\-s &1\end{pmatrix}$, $s\in\bN$, $p(x)=p(y)=1$. Then
$x$ and $y$ are reflectable at $v$. For 
the reflexion $r_x:v\to v'$
the reflected Cartan matrix is  
$\begin{pmatrix} 0& s\\s &1-2s\end{pmatrix}$
and $p'(x)=1$, $p'(y)=0$.
Thus $y$ is reflectable at $v'$ only if $\frac{2s}{2s-1}\in\bN$ that is 
for $s=0,1$.

\subsection{Examples: calculation of $\Aut_\cR(v)$}

\subsubsection{Semisimple case}
\label{sss:ss}

Let $v=(\fh,a,b,p)$ represent a root system of a finite dimensional semisimple Lie algebra. This
means that $p(x)=0$, $a:X\to\fh$ is a set of simple coroots
and $b:X\to\fh^*$ is the set of simple roots. Both
$a$ and $b$ give bases in $\fh$ and $\fh^*$. Let us calculate
the group of automorphisms of $(\fh,a,b,0)$. 
Any reflexion $r_x:(\fh,a,b,0)\to(\fh,a',b',0)$ gives rise
to an automorphism $s_x:v\to v$, $s_x=t_{s_{b(x)}}\circ r_x$ where
the automorphism $s_{b(x)}:\fh\to\fh$ of $\fh$ is the standard reflection  on $\fh$ with respect 
to $b(x)\in\fh^*$. Note that $s_x:v\to v$  
induces precisely the automorphism $s_{b(x)}:\fh\to\fh$, so that the assignment $s_{b(x)}\mapsto s_x$ is compatible with the action of the Weyl group
$W$ and of $\Aut_\cR(v)$ on $\fh$. Since the actions are faithful,
this defines an injective group homomorphism
$$
i:W\to\Aut_\cR(v).
$$
We claim that it is bijective. In fact,
any automorphism  $\eta:v\to v$ in $\cR$ is a composition 
of reflexions $r_x$, tautological arrows and homotheties.
Since reflexions, tautological arrows and homotheties commute, one
can, using~\ref{sss:comcom}, present 
\begin{equation}
\label{eq:ss-deco}
\eta=h_\lambda\circ t_\theta\circ i(w),
\end{equation}
for a certain $w\in W$. It remains to verify that if $h_\lambda\circ t_\theta\in\Aut_\cR(v)$, then it is identity. Since $t_\theta$ does not change the Cartan matrix, $h_\lambda=\id$. Since any automorphism of
$v$ carries $a(x)$ and $b(x)$ to themselves, and $a(x)$ generate $\fh$,
$\theta=\id$.

\subsubsection{The case of Kac-Moody algebras} 
\label{sss:KMexample}
In the case when $(\fh,a,b,0)$  
has the Cartan matrix satisfying the conditions of~\cite{Kbook}, 1.1,
the calculation of ~\ref{sss:ss} works almost as well.

Let $W$ denote the Weyl group and let $\wt W=\Aut_\cR(v)$.
We have a group homomorphism $i:W\to\wt W$ defined exactly as in the
semisimple case. Precisely as in the semisimple case we have a 
decomposition~(\ref{eq:ss-deco}) of an automorphism $\eta\in\wt W$
and deduce that $h_\lambda=\id$ as the Cartan matrix has no zero rows.
Denote
$$
K=\{\theta:\fh\to\fh|\theta(a(x))=a(x),\theta^*(b(x))=b(x),\ x\in X\}.
$$
Any $\theta\in K$ commutes with $s_{b(x)}:\fh\to\fh$.
This implies that $\wt W=W\times K$.

Let us show $K$ is a commutative unipotent group.

Denote $A\subset\fh$ and $B\subset\fh^*$ the subspaces spanned by the images of $a$ and $b$. One has $\dim A=\dim B=|X|$ and $\dim\fh=2|X|-r$ where $r$ is the rank of the
Cartan matrix. This is equivalent to saying that the orthogonal complement $B^\perp\subset\fh$ of $B$ lies in $A$.
If $\theta$ is an automorphism of the triple $(\fh,a,b)$,
$\theta-1$ vanishes on $A$ and has image in $B^\perp$. 
This means that $(\theta-1)^2=0$. Moreover, any two such automorphisms
commute. The dimension of $K$ is $(|X|-r)^2$.

\subsubsection{Root datum for $\fg\fl(1|1)$}
\label{sss:aut-gl11}

We assume $\dim(\fh)=2$, $X=\{x\}$, $a=a(x)\in\fh$, $b=b(x)\in\fh^*$
so that $a\ne 0, b\ne 0$ but $\langle b,a\rangle=0$. The only isotropic reflexion
carries the quadruple $v=(\fh,a,b,p=1)$ to $v'=(\fh,-a,-b,1)$.
The tautological arrow $t_{-1}:v'\to v$ is defined by $-1':\fh\to\fh$.
The composition $t_{-1}\circ r_x$ is an automorphism of $v$ of order $2$.
It is easy to see that $\Aut(v)=\bZ_2\times K$ where $\bZ_2$
is generated by the automorphism described above  and 
$K=\{\theta:\fh\to\fh|\ \theta(a)\in\bC^*a,\theta^*(b)=b\}$.

	For more examples see~\ref{sss:gl12} and Section~\ref{sec:app}.

\section{Root Lie superalgebras}
\label{sec:root}

In this section we define root Lie superalgebras corresponding to
certain (admissible) connected components of the groupoid $\cR$ of root 
data. 

\subsection{Half-baked Lie superalgebra}
\subsubsection{}
\label{sss:half}
Let $v=(\fh,a,b,p)\in\cR$. We assign to $v$ a Lie superalgebra 
$\wt\fg(v)$ generated by $\fh=\fh(v)$, $\tilde e_x,\tilde f_x,\ x\in X$,
with the parity given by $p(\fh)=0,\ p(\tilde e_x)=p(\tilde f_x)=p(x)$,
subject to the relations
\begin{itemize}
\item[1.] $[\fh,\fh]=0$,
\item[2.]  $[h,\tilde{e}_x]=\langle b(x), h\rangle \tilde{e}_x,
[h,\tilde{f}_x]=-\langle b(x), h\rangle \tilde{f}_x$
\item[3.] $[\tilde e_x,\tilde f_y]=0$ for $y\ne x$
\item[4.] $[\tilde e_x,\tilde f_x]=a(x)$
\end{itemize}
for each $x\in X$.

We call $\wt\fg(v)$ {\sl the half-baked Lie superalgebra} defined by
the root datum $v\in\cR$.
 
\subsubsection{}
\label{sss:properties}
The following properties of $\wt\fg:=\wt\fg(v)$ are proven in Thm. 1.2 of~\cite{Kbook} for Lie algebras (the proof works verbatim for
Lie superalgebras).
 
\begin{itemize}
\item[1.] The algebra $\fh$ acts diagonally on 
$\wt\fg$. We denote by $\wt\fg_\mu$ the weight space of weight 
$\mu$, so that
$\wt\fg=\oplus_{\mu\in\Span_\bZ(b)}\wt\fg_\mu$, where $\Span_\bZ(b)$ denotes the abelian subgroup of $\fh^*$ generated by $b(x), x\in X$.
\item[2.]There is a standard triangular decomposition
$$
\wt\fg=\wt\fn^+\oplus\fh\oplus\wt\fn^-,
$$
where $\wt\fn^+$ is freely  generated by 
$\tilde e_x$, $x\in X$ 
and $\wt\fn^-$ is freely  generated by $\tilde f_x$.
\item[3.] For each $x\not=y$ one has 
$\wt\fg_{jb(x)+b(y)}=0$ for $j\not\in\mathbb{Z}_{\geq 0}$ and
$\wt\fg_{jb(x)+b(y)}$ is spanned by $(\ad \tilde{e}_x)^j\tilde{e}_y$.
\end{itemize}

The following theorem is very similar to \cite{Kbook}, Thm. 2.2.

\begin{prp}
\label{prp:likekac22}
Let $v\in\cR$ have a symmetric Cartan matrix $(a_{xy})$. 
Let $(\cdot|\cdot)$ be a nondegenerate symmetric form on $\fh$
satisfying the condition
\begin{itemize}
\item[] $(a(x)|h)=\langle b(x),h\rangle$ for any $x\in X$, $h\in\fh$.
\end{itemize}
Then there exists a unique extension of $(\cdot|\cdot)$ to an invariant
symmetric bilinear form on $\wt\fg=\wt\fg(v)$. This extension
enjoys the following properties.
\begin{itemize}
\item[1.] $(\tilde e_x|\tilde f_y)=\delta_{xy}$.
\item[2.] $(\wt\fg_\alpha|\wt\fg_\beta)=0$ unless $\alpha+\beta=0$.
\item[3.] $[z,t]=(z|t)\nu(\alpha)$ for $z\in\wt\fg_\alpha$,
$t\in\wt\fg_{-\alpha}$, where $\nu:\fh^*\to\fh$ is the isomorphism
defined by the original nondegenerate form.
\end{itemize}
\end{prp}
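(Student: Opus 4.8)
The plan is to mirror the classical argument of \cite{Kbook}, Thm.~2.2, adapting it to the super setting. First I would construct the form degree by degree on $\wt\fg$. By the triangular decomposition $\wt\fg=\wt\fn^+\oplus\fh\oplus\wt\fn^-$ (property~2 of~\ref{sss:properties}), every homogeneous element of nonzero weight lies in $\wt\fn^+$ or $\wt\fn^-$, and $\wt\fn^+$ (resp.\ $\wt\fn^-$) is $\bZ_{\geq 0}$-graded (resp.\ $\bZ_{\leq 0}$-graded) by $\htt(\alpha)$, the sum of coefficients of $\alpha$ in the basis $b(x)$. I would define $(\cdot|\cdot)$ on $\wt\fg_\alpha\times\wt\fg_\beta$ to be zero unless $\alpha+\beta=0$, set $(\tilde e_x|\tilde f_y)=\delta_{xy}$ on the weight-one pieces, and then extend to higher height by the recursive rule forced by invariance: for $z\in\wt\fg_\alpha$ with $\htt(\alpha)=n>1$, write $z$ as a sum of brackets $[\tilde e_x,u]$ with $u\in\wt\fn^+$ of height $n-1$ (possible by property~3 and the fact that $\wt\fn^+$ is generated by the $\tilde e_x$), and define $([\tilde e_x,u]\,|\,t):=(u\,|\,[\tilde f_x,t])$ for $t\in\wt\fg_{-\alpha}$, using that $[\tilde f_x,t]\in\wt\fg_{-\alpha+b(x)}$ has height $n-1$ where the form is already known. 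Symmetrically on $\wt\fn^-$, and on $\fh$ we take the given form. Supersigns enter in the invariance identity $([u,v]|w)=(u|[v,w])$, which must be read with Koszul signs, but this does not change the structure of the recursion.

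The main work is \textbf{well-definedness}: I must check the recursive definition does not depend on the chosen presentation of $z$ as $\sum[\tilde e_{x_i},u_i]$. As in Kac, this reduces to showing that if $\sum[\tilde e_{x_i},u_i]=0$ in $\wt\fn^+$ then $\sum(u_i|[\tilde f_{x_i},t])=0$ for all $t$ of the appropriate weight; equivalently, that the pairing descends to the quotient of the free object by the defining relations. I would handle this by an induction on height: the pairing between height-$n$ and height-$(-n)$ pieces is shown to be well defined using only relations~1--4 of~\ref{sss:half} and the already-established compatibility at height $n-1$. The one genuinely super-flavoured point is that when $p(x)=1$ the element $\tilde e_x$ is odd, so $[\tilde e_x,\tilde e_x]$ need not vanish and $(\ad\tilde e_x)^2$ is a legitimate nonzero operator; the recursion and the sign bookkeeping must accommodate this, but property~3 of~\ref{sss:properties} (which is stated for all $x\neq y$ regardless of parity, as it is quoted verbatim from the Lie-algebra proof which ``works verbatim for superalgebras'') gives exactly the spanning statement needed. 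Uniqueness is then immediate: any invariant extension must satisfy the same recursion, hence agrees with the one constructed.

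Once the form is built, properties~1 and~2 hold by construction. For property~3, fix $\alpha$ and $z\in\wt\fg_\alpha$, $t\in\wt\fg_{-\alpha}$. The bracket $[z,t]$ lies in $\wt\fg_0=\fh$, and for any $h\in\fh$ invariance gives $([z,t]\,|\,h)=(z\,|\,[t,h])=(z\,|\,\langle\alpha,h\rangle t)=\langle\alpha,h\rangle (z|t)$, using relation~2 of~\ref{sss:half} to compute $[t,h]$ on the weight space $\wt\fg_{-\alpha}$. Since $(\cdot|\cdot)$ restricted to $\fh$ is nondegenerate and the defining property of the form identifies it with the pairing via $\nu$, namely $(\nu(\beta)|h)=\langle\beta,h\rangle$, we conclude $[z,t]=(z|t)\,\nu(\alpha)$. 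The case $z=\tilde e_x$, $t=\tilde f_x$ is consistent with relation~4 because $(a(x)|h)=\langle b(x),h\rangle$ is precisely the hypothesis on the form, so $\nu(b(x))=a(x)$.

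The step I expect to be the main obstacle is the well-definedness induction, i.e.\ verifying that the recursively defined pairing is compatible with \emph{all} the defining relations of $\wt\fg$ and not merely with the free structure on $\wt\fn^\pm$; in the classical case this is the heart of the Gabber--Kac-style argument, and here one additionally has to carry Koszul signs through every bracket identity. Everything else --- the grading, the extension formula, uniqueness, and properties~1--3 --- is then bookkeeping built on top of it.
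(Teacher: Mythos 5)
Your proposal is essentially the paper's own proof: the paper offers no argument beyond the sentence that the statement ``is very similar to \cite{Kbook}, Thm.~2.2,'' and your degree-by-degree construction along the principal gradation, with the well-definedness induction as the crux and properties 1--3 extracted from invariance and nondegeneracy on $\fh$, is precisely Kac's argument transplanted to $\wt\fg(v)$ with Koszul signs. One slip to correct: in your recursion $([\tilde e_x,u]\,|\,t):=(u\,|\,[\tilde f_x,t])$ the element $[\tilde f_x,t]$ actually lies in $\wt\fg_{-\alpha-b(x)}$, so the weights of the two arguments do not sum to zero and the right-hand side would vanish; you mean $[\tilde e_x,t]\in\wt\fg_{-\alpha+b(x)}$ (up to the Koszul sign coming from $([u,\tilde e_x]|t)=(u|[\tilde e_x,t])$), as your own weight computation already indicates, and with that correction the induction goes through.
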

\qed

\subsubsection{}
\label{sss:automorphism}

The algebra $\wt\fg(v)$ admits a standard {\sl superinvolution} $\theta$,
that is an automorphism whose square is $\id$ on the even part 
and $-\id$ on the odd part of $\wt\fg(v)$.
We will define the superinvolution $\theta$  by the following formulas.
\begin{itemize}
\item $\theta|_\fh=-\id$.
\item $\theta(\tilde e_x)=\tilde f_x$.
\item $\theta(\tilde f_x)=(-1)^{p(x)}\tilde e_x$.
\end{itemize}

\subsubsection{Example: rank one}
\label{sss:rank1-wt}

Let $X=\{x\}$.  The Cartan matrix
is a $1\times 1$ matrix $(a_{xx})$.

In the discussion below we present $\fh$ having the smallest
possible dimension. The general case can be treated using~\ref{sss:decomposable}.

If  $a_{xx}\not=0$ and $p(x)=0$, we have $\wt\fg=\fsl_2$;
 if $p(x)=1$, we have $\wt\fg=\fosp(1|2)$.

If $a_{xx}=0$ and $p(x)=0$, $\wt\fg$ is the $(4|0)$-dimensional algebra
$a(x),d, e_x,f_x$,  
with $\fh=\Span(a(x),d)$,
 $a(x)=[e_x,f_x]$ central and $[d,e_x]=e_x$, $[d,f_x]=-f_x$.

In the remaining case
$p(x)=1$ and $a_{xx}=0$. The algebra $\wt\fg$ has dimension $(4|2)$
with a basis 
$$a(x),d, e_x,f_x,e_x^2,f_x^2,$$ 
($e_x$ and $f_x$ odd)
with $\fh=\Span(a(x),d)$,
$a(x)=[e_x,f_x]$  central and $[d,e_x]=e_x$, $[d,f_x]=-f_x$.

\subsubsection{}
\label{sss:properties-2}

The space
$[\wt\fg_{jb(x)+b(y)},\wt\fg_{-jb(x)-b(y)}]$ lies in 
$\fh$ for any $j\geq 0$ and is at most one-dimensional.
We wish to describe, under certain assumptions, the greatest
value of $j$ for which it is nonzero.

Assume that $x\ne y\in X$, $x$ is reflectable at $v$.

Let $r_x:v\to v'=(\fh,a',b',p')$ be the corresponding reflexion in 
$\cR$.
Choose $j_0$ such that $b(y)+j_0b(x)=b'(y)$,
that is $j_0=-2\frac{a_{xy}}{a_{xx}}$
for $a_{xx}\not=0$,
$j_0=1$ for $a_{xx}=0$, $a_{xy}\ne 0$,
and $j_0=0$ for $a_{xx}=0=a_{xy}$.

\begin{lem}
\label{lem:rk2-ideal}
Assume that $X=\{x,y\}$ and $x$ is reflectable at $v=(\fh,a,b,p)$.
Let $j_0$ be defined as above.
Define the ideal $I$ of $\wt\fg=\wt\fg(v)$ generated by
the elements
\begin{equation}
\label{eq:rk2-ideal}
E:=(\ad\tilde e_x)^{j_0+1}\tilde e_y,\ 
F:=(\ad\tilde f_x)^{j_0+1}\tilde f_y.
\end{equation}
Then
\begin{itemize}
\item[(a)] If $a_{xx}=0$ then the ideal $I'$ generated by $\tilde e_x^2$,
$\tilde f_x^2$ satisfies $I'\cap\fh=0$.
\item[(b)] If $a_{xx}=0$, $a_{xy}\ne 0$ then $I\subset I'$ and
$I=I'$ iff $a_{yx}\ne 0$.
\item[(c)] $I\cap\fh\ne 0$ if and only if $a_{xx}\ne 0, a_{yx}\ne 0$ and $a_{xy}=0$.
\end{itemize}
\end{lem}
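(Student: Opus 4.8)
The plan is to work inside the rank two half-baked algebra $\wt\fg=\wt\fg(v)$, using the triangular decomposition $\wt\fg=\wt\fn^+\oplus\fh\oplus\wt\fn^-$ of~\ref{sss:properties}. Since everything is $\fh$-graded, any ideal $J$ is graded and $J\cap\fh=J_0$. The elements $E$ and $F$ are weight vectors of weights $(j_0+1)b(x)+b(y)$ and $-(j_0+1)b(x)-b(y)$ respectively, and the superinvolution $\theta$ of~\ref{sss:automorphism} swaps (up to sign) the $\tilde e$'s and $\tilde f$'s and hence, applied to the relevant generators, swaps $I\leftrightarrow I$ and $I'\leftrightarrow I'$; so throughout it suffices to analyze the ``$+$ part'' ($\wt\fn^+\cap I$, etc.) and read off the rest by symmetry. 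The key computation in each case is to bracket a candidate generator of the $+$ ideal with $\tilde f_x,\tilde f_y$ and see whether one lands in $\fh$ with a nonzero coefficient; this is exactly the kind of $\fsl_2$- or $\fosp(1|2)$-style calculation that underlies Kac's Theorem 2.2, done here one weight at a time.

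\textbf{Part (a).} Here $a_{xx}=0$, $p(x)=1$, and by the rank one analysis of~\ref{sss:rank1-wt} the subalgebra generated by $\fh,\tilde e_x,\tilde f_x$ is the $(4|2)$-dimensional algebra with $\tilde e_x^2,\tilde f_x^2$ spanning genuinely new weight spaces $2b(x),-2b(x)$. I would argue that $I'=\wt\fg\,\tilde e_x^2\,\wt\fg+\wt\fg\,\tilde f_x^2\,\wt\fg$ is graded and that its zero weight component vanishes: any bracket of iterated $\ad$'s bringing $\tilde e_x^2$ (weight $2b(x)$) back to weight $0$ must absorb total weight $-2b(x)$, i.e. involves $\tilde f_x$ twice and otherwise only weight vectors of weight a $\bZ$-combination of $b(x)$; expanding $[\tilde f_x,\tilde e_x^2]$ and $[\tilde f_x,[\tilde f_x,\tilde e_x^2]]$ inside the explicit $(4|2)$-dimensional model shows the latter is $0$ (as $\tilde f_x^2\cdot\tilde e_x^2\cdot\text{(nothing of weight }b(x)\text{ survives)}$), so no nonzero element of $\fh$ is produced. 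This is the cleanest case and essentially follows from the rank one dictionary.

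\textbf{Part (b).} Still $a_{xx}=0$, now $a_{xy}\ne 0$, so $j_0=1$ and $E=[\tilde e_x,[\tilde e_x,\tilde e_y]]$. I would first show $E\in I'$: indeed $[\tilde e_x,\tilde e_y]$ has weight $b(x)+b(y)$, and $E=[\tilde e_x,[\tilde e_x,\tilde e_y]]$; a short bracket identity, using $[\tilde e_x,\tilde e_x]$ (which is $2\tilde e_x^2$ up to sign since $\tilde e_x$ is odd), rewrites $E$ as a bracket involving $\tilde e_x^2$, hence $E\in I'$ (and symmetrically $F\in I'$), giving $I\subset I'$. For the converse equivalence $I=I'\iff a_{yx}\ne0$: one direction is that if $a_{yx}=0$ then, after the isotropic reflexion $r_x$, by~\ref{sss:cartanmatrix-change} the reflected datum $v'$ has $a'_{xy}=-a_{yx}=0$, so $\tilde e_x'^2$ and the new $\tilde e_y'$ essentially decouple and $\tilde e_x^2\notin I$ — I would make this precise by exhibiting a homomorphism from $\wt\fg/I$ onto an algebra in which the image of $\tilde e_x^2$ is nonzero (built from the rank one $(4|2)$ piece times a piece carrying $\tilde e_y$). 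Conversely, if $a_{yx}\ne0$, I compute $[\tilde f_y,E]$ inside $\wt\fg$: this has weight $2b(x)$, lands in the span of $\tilde e_x^2$, and the coefficient, a small rational expression in $a_{xx}=0,a_{xy},a_{yx}$, is nonzero precisely because $a_{yx}\ne0$; hence $\tilde e_x^2\in I$, and by $\theta$-symmetry $\tilde f_x^2\in I$, so $I'\subset I$.

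\textbf{Part (c).} Here $a_{xx}\ne0$, so $j_0=-2a_{xy}/a_{xx}\in\bZ_{\ge0}$ (reflectability). The module $M^+:=\bigoplus_{j\ge0}\wt\fg_{jb(x)+b(y)}$ is the cyclic $\langle\tilde e_x,\tilde f_x\rangle$-module (an $\fsl_2$ or $\fosp(1|2)$) generated by $\tilde e_y$, on which $\tilde f_x$ acts nilpotently by~\ref{sss:properties}(3); the usual $\mathfrak{sl}_2$/$\mathfrak{osp}(1|2)$ representation theory (Verma module of highest weight determined by $a_{xy},a_{yx}$) shows $E=(\ad\tilde e_x)^{j_0+1}\tilde e_y$ is the first ``null vector'', and the bracket $[\tilde f_y,E]\in\fh$ is nonzero iff the singular vector one step further, $(\ad\tilde e_x)^{j_0+1}$ applied after pairing with $\tilde f_y$, survives — an explicit scalar $c(a_{xx},a_{xy},a_{yx})$ which one computes to vanish exactly unless $a_{yx}\ne0$ and $a_{xy}=0$. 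Concretely: if $a_{xy}\ne0$ then $j_0\ge1$ and $[\tilde f_y,E]=0$ for weight reasons combined with $[\tilde e_x,\tilde f_y]=0$; if $a_{yx}=0$ then $\tilde f_y$ acts on $M^+$ as $0$ on the relevant weight line, so again $[\tilde f_y,E]=0$; and when $a_{xy}=0$, $a_{yx}\ne0$ one gets $j_0=0$, $E=[\tilde e_x,\tilde e_y]$, and $[\tilde f_y,[\tilde e_x,\tilde e_y]]=[\tilde e_x,[\tilde f_y,\tilde e_y]]=[\tilde e_x,-a(y)]=-\langle b(x),a(y)\rangle\,(\text{sign})\,\tilde e_x\ne0$ is false — rather one computes $[\tilde f_x,E]\in\fh$ and gets $a_{yx}\,a(x)$ up to a nonzero scalar, which is nonzero. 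I would also note the ``only if'': in all other subcases, iterating brackets of $E$ with $\tilde f_x,\tilde f_y$ never reaches $\fh$ nontrivially, because the only way into $\fh$ is via $[\tilde e_\bullet,\tilde f_\bullet]=a(\bullet)$ and the relations $[\tilde e_x,\tilde f_y]=0$ ($x\ne y$) block the mixed brackets.

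\textbf{Main obstacle.} The routine-looking but genuinely delicate point is the exact scalar computations: showing that the coefficient of $a(x)$ (resp. of $\tilde e_x^2$) obtained by bracketing $E$ down to weight $0$ (resp. weight $2b(x)$) is nonzero under precisely the stated hypotheses and zero otherwise. In part (c) this is the $\mathfrak{sl}_2$/$\mathfrak{osp}(1|2)$ null-vector scalar and must be tracked with care (in particular the sign/parity bookkeeping when $p(x)=1$, where $(\ad\tilde e_x)$ is applied to an odd vector and $\tilde e_x^2\ne0$); in part (b) it is the interplay of $\tilde e_x^2$ with $\tilde e_y$ through the isotropic reflexion formulas of~\ref{sss:cartanmatrix-change}. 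Establishing that $\tilde e_x^2\notin I$ when $a_{yx}=0$ (the nontrivial half of (b)) is where I expect to spend the most effort, since it is a non-vanishing statement and requires producing an explicit quotient algebra witnessing it rather than a bracket computation.
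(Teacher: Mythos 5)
Your overall strategy --- exploit the $\fh$-grading, reduce to the rank-two algebra, and decide membership by explicit bracket computations with $\tilde e_x,\tilde f_x,\tilde e_y,\tilde f_y$ --- is the same as the paper's, and most of your scalars agree with its proof: $E=(\ad\tilde e_x^2)\tilde e_y\in I'$ and $[\tilde e_y,F]=\pm 2a_{yx}\tilde f_x^2$ in (b); the $\fsl_2$/$\fosp(1|2)$ singular-vector identity $(\ad\tilde e_x)(\ad\tilde f_x)^{j_0+1}\tilde f_y=0$ in (c); and $\pm a_{yx}a(x)$ in the one case where $I\cap\fh\ne 0$ (note that in your write-up this last element is produced by the single bracket $[\tilde f_x,E]$, which has weight $b(y)$ and does not lie in $\fh$; it must be the double bracket $[\tilde f_x,[\tilde f_y,E]]$). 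The genuine gap is that you never state the one structural fact that turns these computations into the required \emph{non}-membership statements: if a weight vector $z\in\wt\fn^-$ satisfies $[\wt\fn^+,z]=0$, then by PBW the ideal it generates equals $\ad(\cU(\wt\fn^-))z$, so it lies in $\wt\fn^-$ and every weight occurring in it has the form $\mathrm{(weight\ of\ }z)-kb(x)-lb(y)$ with $k,l\in\bZ_{\geq 0}$.

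This observation is what powers all three negative claims, and its absence is felt in exactly the places where your argument is imprecise or deferred. In (a), your weight count (``involves $\tilde f_x$ twice and otherwise only $\bZ$-multiples of $b(x)$'') is not correct as stated, since $\tilde e_y$ and $\tilde f_y$ may occur in cancelling pairs; the correct reduction is to check $[\tilde f_x,\tilde e_x^2]=[\tilde f_y,\tilde e_x^2]=0$ (both immediate) and invoke the fact above. In the $a_{yx}=0$ half of (b) --- the step you flag as the main obstacle and propose to handle by constructing an explicit quotient in which $\tilde e_x^2$ survives --- the same fact makes it a one-liner: $[\tilde e_x,F]=[\tilde e_y,F]=0$, so the ideal generated by $F$ has all weights of the form $-(2b(x)+b(y))-kb(x)-lb(y)$ and therefore cannot contain $\tilde f_x^2$, whose weight is $-2b(x)$; no auxiliary model is needed. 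In the ``only if'' direction of (c), it replaces your informal ``iterating brackets never reaches $\fh$'' with the precise criterion: it suffices to verify $[\tilde e_x,F]=[\tilde e_y,F]=0$ (and dually for $E$) case by case, which is exactly what the paper does. Without this lemma the proposal does not actually establish $I'\cap\fh=0$, $\tilde f_x^2\notin I$, or $I\cap\fh=0$ in the relevant cases; with it, your computations are essentially complete and the laborious constructions you anticipate are unnecessary.
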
 
\begin{proof}
(a) 
Let $a_{xx}=0$. 
Then $p(x)=1$ and 
\begin{equation}\label{eff}
[\tilde e_x,\tilde f_x^{2}]=0.\end{equation}
Since $[\tilde e_y,\tilde f_x^{2}]=0$ we obtain $[\wt\fn^+, f_x^2]=0$;
similarly, $[\wt\fn^-, e_x^2]=0$. This gives
$I'\cap\fh=0$ and establishes (a).  

(b) Take 
$a_{xx}=0$,  $a_{xy}\not=0$. Then $j_0=1$ so
$$
F=(\ad\tilde f_x)^{2}\tilde f_y=(\ad\tilde f_x^{2})\tilde f_y,\ \ \ 
E=(\ad\tilde e_x^{2})\tilde e_y
$$
In particular, $I\subset I'$ and
$$[\tilde e_y, F]=\pm [\tilde f_x^{2},
a(y)]=\pm 2 a_{yx}\tilde f_x^2.$$
This gives $I=I'$ if $a_{yx}\not=0$. Consider the case $a_{yx}=0$. By above,
$[\tilde e_y, F]=0$. By~(\ref{eff}) we have
$[\tilde e_x, F]=0$. Hence $[\wt\fn^+, F]=0$ and so
$F\not\in I'$. This completes the proof of (b).

(c) By (a), (b) it follows that $I\cap \fh=0$ if $a_{xx}=0$, $a_{xy}\not=0$.
Therefore we may assume that $a_{xy}=a_{yx}=0$ or $a_{xx}\not=0$.
It is enough to verify that $[\tilde e_z,F]=[\tilde f_z,E]=0$
for $z=x,y$. These formulas are similar so we will check only the formula
$[\tilde e_z,F]=0$.

If $a_{xy}=a_{yx}=0$, then $j_0=0$ and
$$[\tilde e_x, F]=[\tilde e_x, [\tilde f_x,\tilde f_y]]=[[\tilde e_x, \tilde f_x]\tilde f_y]=[a(x),f_y]=-a_{xy}f_y=0$$
as well as $[\tilde e_y, F]=\pm a_{yx}f_x=0$
as required.

Consider the case when   $a_{xx}, a_{xy}, a_{yx}\not=0$. Then $j_0=-2\frac{a_{xy}}{a_{xx}}$.
Recall that $\tilde f_x,\tilde e_x$
generate $\fsl_2$ if $p(x)=0$ and $\fosp(1|2)$ if $p(x)=1$. Since
$[\tilde e_x,\tilde f_y]=0$, a direct computation implies
$$(\ad \tilde e_x)(\ad\tilde f_x)^{j_0+1}\tilde f_y=0.$$
On the other hand, $[\tilde e_y,\tilde f_x]=0$ implies
$$[\tilde e_y, F]=\pm (\ad\tilde f_x)^{j_0+1} a(y)=\pm a_{yx}(\ad\tilde f_x)^{j_0} \tilde f_x=0$$
since $[\tilde f_x,\tilde f_x]=0$ for $p(x)=0$ and 
$[\tilde f_x,[\tilde f_x,\tilde f_x]]=0$ if $p(x)=1$ (in the case $a_{xx}\not=0$, $p(x)=1$ the condition that $x$ is reflectable at $v$ implies that $j_0$ is even, in particular, $j_0\geq 2$).
Hence $[\tilde e_y, F]=[\tilde e_x, F]=0$ as required.

Finally, if $a_{xx}\ne 0$, $a_{xy}=0$, $a_{yx}\ne 0$, then $b'(y)=b(y)$ and $a'(y)=a(y)-2\frac{a_{yx}}{a_{xx}}a(x)$. Furthermore,
$E=[\tilde{e}_x,\tilde{e}_y]$, so that 
$$
[\tilde f_x,[\tilde{f}_y,E]]=\pm[\tilde f_x,[\tilde{e}_x,a(y)]]=\pm a_{yx}a(x)\ne 0.
$$

\end{proof}

\begin{prp}
\label{prp:bracket}
Assume that $x\not=y\in X$ and $x$ is reflectable.  
We also assume that if $a_{xx}\ne 0$ and $a_{xy}=0$ then $a_{yx}=0$.
\begin{itemize}
\item[1.]The bracket 
$[\wt\fg_{jb(x)+b(y)},\wt\fg_{-jb(x)-b(y)}]$ is zero for $j>j_0$.
\item[2.]$[\wt\fg_{b'(y)},\wt\fg_{-b'(y)}]$ is spanned by $a'(y)$.
\end{itemize}
\end{prp}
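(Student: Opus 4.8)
The plan is to reduce everything to the rank-two case $X=\{x,y\}$ treated in Lemma~\ref{lem:rk2-ideal}. Recall from property~\ref{sss:properties}(3) that $\wt\fg_{jb(x)+b(y)}$ is spanned by $(\ad\tilde e_x)^j\tilde e_y$ and vanishes for $j<0$, and symmetrically on the negative side; also $[\wt\fg_\alpha,\wt\fg_{-\alpha}]\subset\fh$ and is at most one-dimensional. Denote by $\fg'$ the rank-two half-baked algebra attached to the root datum $(\fh,a|_{\{x,y\}},b|_{\{x,y\}},p|_{\{x,y\}})$; by the freeness statement~\ref{sss:properties}(2) the obvious map $\fg'\to\wt\fg(v)$ is injective on the weight spaces $\wt\fg_{jb(x)+b(y)}$ and $\wt\fg_{-jb(x)-b(y)}$ for all $j$, and it identifies $[\fg'_{\,jb(x)+b(y)},\fg'_{\,-jb(x)-b(y)}]$ with the corresponding bracket in $\wt\fg(v)$ because the bracket of a vector in $\wt\fn^+$ with one in $\wt\fn^-$ supported on $\bN b(x)+b(y)$ only involves $\tilde e_x,\tilde e_y,\tilde f_x,\tilde f_y$ and $\fh$. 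Hence it suffices to prove (1) and (2) with $\wt\fg(v)$ replaced by $\fg'$, i.e.\ we may assume $X=\{x,y\}$ from now on.

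\textbf{Step 1: part (1).} With $j_0$ as defined in~\ref{sss:properties-2}, I claim $[\fg'_{\,jb(x)+b(y)},\fg'_{\,-jb(x)-b(y)}]=0$ for $j>j_0$. The case analysis mirrors the proof of Lemma~\ref{lem:rk2-ideal}(c). If $a_{xx}=0$, then by part~(a) of that lemma $\tilde e_x^2,\tilde f_x^2$ generate an ideal $I'$ with $I'\cap\fh=0$; since for $j\ge 2$ the element $(\ad\tilde e_x)^j\tilde e_y$ lies in $I'$ (it is $(\ad\tilde e_x^2)(\ad\tilde e_x)^{j-2}\tilde e_y$ using $[\tilde e_x,[\tilde e_x,\tilde e_x]]$-type vanishing, or simply because $[\wt\fn^-,\tilde e_x^2]\subset I'$), the bracket with $\fg'_{-jb(x)-b(y)}$ lands in $I'\cap\fh=0$; if moreover $a_{xy}=0$ then $j_0=0$ and already $(\ad\tilde e_x)\tilde e_y=[\tilde e_x,\tilde e_y]$ brackets with $[\tilde f_x,\tilde f_y]$ to give $[a(x),\tilde f_y]-\dots=-a_{xy}\tilde f_y=\pm a_{yx}\tilde f_x=0$ under the hypothesis $a_{yx}=0$ (which in this subcase follows from weak symmetricity / the standing assumption), so in fact $j>j_0=0$ gives $0$. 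If $a_{xx}\ne 0$: when $a_{xy}\ne 0$ we have $j_0=-2a_{xy}/a_{xx}\in\bZ_{\ge 0}$ by reflectability and $\tilde e_x,\tilde f_x$ generate $\fsl_2$ or $\fosp(1|2)$; the standard rank-one $\fsl_2$/$\fosp(1|2)$ computation gives $(\ad\tilde e_x)^{j_0+1}\tilde e_y=0$ in $\fg'$ because $b(y)-$weight under the $\tilde e_x$-string has length exactly $j_0$ (here one uses $[\tilde e_x,\tilde f_y]=0=[\tilde e_y,\tilde f_x]$ and $[\tilde e_x,\tilde f_x]=a(x)$ with $a_{xy}=\langle a(x),b(y)\rangle$, $a_{yx}=\langle a(y),b(x)\rangle$), hence $\fg'_{jb(x)+b(y)}=0$ for $j>j_0$ altogether; when $a_{xy}=0$ the standing hypothesis forces $a_{yx}=0$, $j_0=0$, and $[\tilde e_x,\tilde e_y]$ pairs with $[\tilde f_x,\tilde f_y]$ to $-a_{xy}\tilde f_y\mp a_{yx}\tilde f_x=0$. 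In every case the bracket vanishes for $j>j_0$.

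\textbf{Step 2: part (2).} Now I identify $[\fg'_{\,b'(y)},\fg'_{\,-b'(y)}]$. By the choice of $j_0$ we have $b'(y)=j_0 b(x)+b(y)$, so $\fg'_{b'(y)}$ is spanned by $E':=(\ad\tilde e_x)^{j_0}\tilde e_y$ and $\fg'_{-b'(y)}$ by $F':=(\ad\tilde f_x)^{j_0}\tilde f_y$; I must show $[E',F']$ is a nonzero multiple of $a'(y)$. One route: in the half-baked algebra $\wt\fg(v')$ attached to the reflected datum $v'$, the elements $\tilde e'_y,\tilde f'_y$ satisfy $[\tilde e'_y,\tilde f'_y]=a'(y)$ by~\ref{sss:half}(4), and $\tilde e'_y$ has weight $b'(y)$. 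Alternatively, and staying inside $\wt\fg(v)$, one computes $[E',F']$ directly: by the Jacobi identity and $[\tilde e_x,\tilde f_y]=0=[\tilde e_y,\tilde f_x]$ one moves the $\ad\tilde f_x$'s across, picking up $a(x)$'s from $[\tilde e_x,\tilde f_x]$, and obtains a combination of $a(x)$ and $a(y)$ with coefficient of $a(y)$ equal to $1$ (up to sign from parity reordering) — precisely the combination $a(y)-2\tfrac{a_{yx}}{a_{xx}}a(x)=a'(y)$ when $a_{xx}\ne 0$, $a(y)+\tfrac{a_{yx}}{a_{xy}}a(x)=a'(y)$ when $a_{xx}=0,a_{xy}\ne 0$, and $a(y)=a'(y)$ when $j_0=0$. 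One checks $[E',F']\ne 0$ from the rank-one representation theory ($E',F'$ span a weight space where the $\fsl_2$- or $\fosp(1|2)$-string through $\tilde e_y$ is still alive at step $j_0$) combined with the fact that $a'(y)\ne 0$ because $a'$ is injective at $v'\in\cR$.

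\textbf{Main obstacle.} The delicate point is Step 2 for the isotropic case $a_{xx}=0$, $a_{xy}\ne 0$: here $j_0=1$, $E'=[\tilde e_x,\tilde e_y]$, $F'=[\tilde f_x,\tilde f_y]$, the generators $\tilde e_x,\tilde f_x$ are odd with $[\tilde e_x,\tilde f_x]=a(x)$ but $\tilde e_x^2$ does \emph{not} vanish, so the sign bookkeeping from the super-Jacobi identity is genuinely different from the $\fsl_2$/$\fosp(1|2)$ string computation; one must carefully verify both that $[E',F']$ equals $a(y)+\tfrac{a_{yx}}{a_{xy}}a(x)=a'(y)$ (not $a(y)+\tfrac{a_{xz}}{a_{xy}}$-type corrections) and that it is nonzero, the latter requiring $a'(y)\ne 0$, which is exactly injectivity of $a'$ at the reflected vertex $v'$. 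I expect this case to consume most of the proof, the anisotropic cases being the familiar Kac-style $\fsl_2$-string argument.
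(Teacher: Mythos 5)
Your reduction to rank two and your treatment of the isotropic cases follow the paper, but Step 1 in the anisotropic case contains a genuine error: you assert that the $\fsl_2$/$\fosp(1|2)$-string argument gives $(\ad\tilde e_x)^{j_0+1}\tilde e_y=0$ in the rank-two half-baked algebra $\fg'$, hence $\fg'_{jb(x)+b(y)}=0$ for $j>j_0$. This is false: by \ref{sss:properties}(2) the subalgebra $\wt\fn^+$ of $\fg'$ is \emph{free} on $\tilde e_x,\tilde e_y$, so $(\ad\tilde e_x)^{j}\tilde e_y\ne 0$ and the weight space $\fg'_{jb(x)+b(y)}$ is one-dimensional for every $j\ge 0$ (this is exactly \ref{sss:properties}(3)). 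The string computation only shows that $E=(\ad\tilde e_x)^{j_0+1}\tilde e_y$ is a singular vector, i.e.\ killed by $\ad\tilde f_x$. What the paper does instead is form the ideal $I$ generated by $E$ and $F$, prove $I\cap\fh=0$ (Lemma~\ref{lem:rk2-ideal}, which is where the standing hypothesis on $a_{xy},a_{yx}$ enters), and observe that since $[\wt\fg_{jb(x)+b(y)},\wt\fg_{-jb(x)-b(y)}]$ lies in $\fh$ and the quotient map $\wt\fg\to\wt\fg/I$ is injective on $\fh$, the vanishing can be checked in the quotient, where the weight spaces for $j>j_0$ really are zero. Your argument needs this detour; the conclusion is right but the reasoning as written is not. (A smaller point of the same kind: in the subcase $a_{xx}=0=a_{xy}$ you claim $a_{yx}=0$ follows from the standing assumption, but that assumption only concerns $a_{xx}\ne 0$; the paper makes the same silent identification in the proof of \ref{lem:rk2-ideal}(c).)

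For part (2) in the anisotropic case you propose a direct computation of $[(\ad\tilde e_x)^{j_0}\tilde e_y,(\ad\tilde f_x)^{j_0}\tilde f_y]$ and assert the coefficient of $a(y)$ is $1$ up to sign. Already for $j_0=1$ the coefficient is $\pm a_{xy}$ (compare the paper's own isotropic formula, where the prefactor is $(-1)^{p(y)}a_{xy}$), and for general $j_0$ further factors appear; the computation is neither carried out nor correctly predicted, and the nonvanishing of the bracket is asserted rather than proved (nonvanishing of $E'$ and $F'$ does not give nonvanishing of their bracket). The paper sidesteps all of this: in the quotient $\fg=\wt\fg/I$ the element $\tilde e_x$ acts locally nilpotently, so $\sigma=\exp(\tilde f_x)\circ\exp(-\tilde e_x)\circ\exp(\tilde f_x)$ is a well-defined automorphism permuting weight spaces according to the reflection, and it carries $[\fg_{b(y)},\fg_{-b(y)}]=\bC a(y)$ onto $[\fg_{b'(y)},\fg_{-b'(y)}]=\bC a'(y)$. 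You did correctly isolate the isotropic case $a_{xx}=0$, $a_{xy}\ne 0$ as the one requiring a bare-hands super-Jacobi computation; that is indeed how the paper handles it.
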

\begin{proof}
The claim immediately reduces to the case $X=\{x,y\}$.
Denote by 
$I$ the ideal of $\wt\fg$ generated by the elements
$$
E:=(\ad\tilde e_x)^{j_0+1}\tilde e_y,\ 
F:=(\ad\tilde f_x)^{j_0+1}\tilde f_y.
$$
By Lemma~\ref{lem:rk2-ideal} we have $I\cap\fh=0$.
The homomorphism $\wt\fg\to\fg=\wt\fg/I$ is identity on 
$\fh$, so both claims of the proposition would follow from the similar claims for $\fg$. Since the first claim of the proposition tautologically holds for $\fg$, we have proven it also for $\wt\fg$.

To prove the second claim for $\fg$, 
we will study the isotropic and the anisotropic cases separately.

{\sl The case $a_{xx}\ne 0$.} 
 The rank one subalgebra defined by 
$\{x\}\in X$ contains a copy of $\fsl_2$.
 $\fg$ is integrable as an $\fsl_2$-module as it is generated by
the elements on which $\tilde e_x,\tilde f_x$ act locally nilpotently,
 see~\cite{Kbook}, Lemma 3.4. 
Therefore, the automorphism $\sigma:\fg\to\fg$
given by the formula
\begin{equation}
\label{eq:sigma:gtog}
\sigma=\exp(\tilde f_x)\circ\exp(-\tilde e_x)\circ\exp(\tilde f_x),
\end{equation}
is defined. Its restriction on $\fh$ is given by the standard
formula
$
\sigma(h)=h-\frac{2}{a_{xx}}\langle h,b(x)\rangle a(x),
$
so
$\sigma(\fg^\U_\mu)=\fg^\U_{\sigma(\mu)}$,
where the action of $\sigma$ on $\fh^*$ is induced by its action on 
$\fh$. The latter implies the second claim of the proposition for the algebra $\fg$.  

{\sl The case $a_{xx}=0$. } If $a_{xy}=0$, the second claim is immediate.
In the case $a_{xy}\ne 0$ a direct calculation shows that
$$
[[\tilde e_x,\tilde e_y],[\tilde f_x,\tilde f_y]]=
(-1)^{p(y)}a_{xy}(a(y)+\frac{a_{yx}}{a_{xy}}a(x)).
$$

\end{proof}

\subsection{Coordinate systems and root algebras}\label{ss:rootalgebra}
\begin{dfn}
Let $v\in\cR$.
A $v$-coordinate system on a Lie superalgebra $\fg$
is a surjective homomorphism $\wt\fg(v)\to\fg$ whose kernel has zero intersection with $\fh(v)$.
\end{dfn}

In other words, a $v$-coordinate system on $\fg$ consists of an
injective map of Lie superalgebras $\fh\to\fg$ ($\fh$ is even commutative), and a collection of generators $e_x,f_x$ such that the
relations 1--4 of  \ref{sss:half} hold.

Here is our main definition.

\begin{dfn}
Let $\cR_0\subset\cR$ be  a  connected component. 
A root Lie superalgebra $\fg$ supported on $\cR_0$ is a collection
of Lie superalgebras $\fg(v),\ v\in\cR_0$, endowed with 
$v$-coordinate systems so that for any $\alpha:v\to v'$ in $\cR_0$
there exists an isomorphism $a:\fg(v)\to\fg(v')$ extending the isomorphism $\fh(\alpha):\fh(v)\to\fh(v')$.
\end{dfn}

Let $\fg$ be a root Lie superalgebra at $\cR_0$. There is a weight space decomposition
$$
\fg(v)=\fh(v)\oplus\bigoplus_{\mu\in\Delta(v)}\fg(v)_\mu
$$
with $\Delta(v)\subset\Span_\bZ(b)$. The elements of $\Delta(v)$ are called
{\sl the roots} of $\fg$ (at $v$). The elements $b(x),\ x\in X$, are {\sl the simple roots}
at $v$. Any $\alpha:v\to v'$ carries the root decomposition at $v$ to that at $v'$.

\begin{dfn}
A component $\cR_0$ of $\cR$ is called {\sl admissible} if it admits
a root Lie superalgebra.
\end{dfn}

\subsubsection{}
Let $v\in\cR$. The half-baked algebra $\wt\fg(v)$ has a triangular decomposition. This implies the existence of the maximal
ideal $\fr(v)$ having zero intersection with $\fh(v)$. If $\cR_0$ is admissible, then
the collection of $\fg^\CG(v)=\wt\fg(v)/\fr(v)$ is a root
Lie superalgebra supported at $\cR_0$. 
In fact, given a root algebra $\fg$ with $\fg(v)=\wt\fg(v)/I(v)$, the quotient ideal
 $\bar\fr(v)=\fr(v)/I(v)$ is the maximal ideal in $\fg(v)$ having zero
intersection with $\fh(v)$. Obviously, any isomorphism
$a:\fg(v)\to\fg(v')$ over $\alpha:v\to v'$ in $\cR$ carries
$\bar\fr(v)$ to $\bar\fr(v')$, and therefore induces an isomorphism
$\fg^\CG(v)\to\fg^\CG(v')$.

We call the collection
$\fg^\CG=\{\fg^\CG(v)\}_{v\in\cR_0}$ the {\sl contragredient} Lie superalgebra 
supported at an admissible component $\cR_0$. In other words, 
the contragredient
Lie superalgebra $\fg^\CG$ is the terminal object in the category of 
root Lie superalgebras
supported at an admissible component $\cR_0$.

The superinvolution $\theta$ of $\wt\fg$ defined in 
\ref{sss:automorphism} induces an automorphism of $\fg^\CG$.

\subsubsection{Rank one}
\label{rank1}
The Lie algebra $\fsl_2$ plays a prominent role in Lie theory.
A similar role in our setup will be played by root algebras of rank 1.
Let us describe them all.

Let $X=\{x\}$. In this case $\wt{\fg}(v)$ is described in
\ref{sss:rank1-wt}. It is a root algebra. 

If  $a_{xx}\not=0$ or $p(x)=0$, then $\fg^\CG=\wt\fg$.

If $a_{xx}=0$ and $p(x)=1$, the maximal ideal $\fr$ of $\wt\fg$ having zero intersection 
with $\fh$ is spanned by 
$e_x^2,f_x^2$ and $\fg^\CG=\wt\fg/\fr\cong \fgl(1|1)$. The algebras 
$\wt\fg$ and $\fg^\CG$ are
exactly two root algebras in this case as only these two allow an automorphism lifting  $\gamma=t_{-1}\circ r_x$, see~\ref{sss:aut-gl11}.

\subsubsection{Decomposable root datum}
\label{sss:decomposable}
Let $X=X_1\sqcup X_2$ and let
$v_i=(\fh_i, a_i:X_i\to\fh_i,b_i:X_i\to\fh_i^*,p_i:X_i\to\bZ_2$,
$i=1,2$, be two root data of ranks $|X_1|$ and $|X_2|$ respectively.

We define their sum $v=v_1+v_2$ in an obvious way, as the root datum
with $\fh=\fh_1\oplus\fh_2$ and $a:X\to\fh$, $b:X\to\fh^*$ and $p:X\to\bZ_2$ defined by the conditions
$$
a_{|X_i}=s_i(a_i), b_{|X_i}=s_i^*(b_i),\ p_{|X_i}=p_i,
$$ 
where $s_i:\fh_i\to\fh$ and $s_i^*:\fh_i^*\to\fh^*$
are the obvious embeddings.

We will denote by $\cR(X),\ \cR(X_1)$ and $\cR(X_2)$ the groupoids of root data for the sets $X,X_1$ and $X_2$. The component $\cR_0$
of $\cR(X)$ containing $v=v_1+v_2$ is obviously a direct product 
$\cR'_0\times\cR''_0$ of the corresponding components of $\cR(X_1)$ and 
$\cR(X_1)$. 
If $\fg_1$ and $\fg_2$ are root algebras supported on the components $\cR'_0$ and $\cR''_0$
respectively, the product $\fg=\fg_1\times\fg_2$ is a root algebra of $\cR_0$. In particular,
$\fg^\CG_1\times\fg^\CG_2$ is the contragredient root algebra for $\cR_0$.
Theorem~\ref{thm:admissible=wsym} implies that if $\cR_0$ is admissible, then both
$\cR'_0$ and $\cR''_0$ are admissible.
It is not true in general that any root algebra supported on $\cR_0$ is a product.

Here is the best  we can say. 

\begin{prp}
\label{prp:deco}
Let $X=X_1\sqcup X_2$, $v=v_1+v_2$ be defined as above, with
$v\in\cR_0$, $v_1\in\cR_0'$ and $v_2\in\cR_0''$. Assume that all $x\in X_1$ are reflectable
at all $v'\in\cR'_0$. Then any root algebra
supported on $\cR_0$ uniquely decomposes as a product of a root
algebra supported on $\cR_0'$ and a root algebra supported on $\cR_0''$.
\end{prp}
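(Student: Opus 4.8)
The plan is to reduce the statement to the vanishing of ``mixed'' brackets in each $\fg(v)$ and to get that vanishing from a reflexion argument. Identify $\cR_0$ with $\cR_0'\times\cR_0''$ as in~\ref{sss:decomposable}. The mechanism to be used repeatedly is that for $x\in X_1$ and $y\in X_2$ one has $a_{xy}=a_{yx}=0$, so a reflexion $r_x$ performed at a vertex of $\cR_0'$ leaves the entire $X_2$-part of the root datum unchanged, while $b'(x)=-b(x)$ (in both the anisotropic and the isotropic case). Note also that $x\in X_1$ is reflectable at $v_1+v_2$ iff it is reflectable at $v_1$, so by hypothesis every $x\in X_1$ is reflectable at every vertex of $\cR_0$. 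The first step is to prove: if $\fg$ is a root algebra on $\cR_0$, $v=v_1+v_2\in\cR_0$, $x\in X_1$, $y\in X_2$, then $[e_x,e_y]=[f_x,f_y]=0$ in $\fg(v)$. Put $w=r_x(v)\in\cR_0$; since $\fg$ is a root algebra there is an isomorphism $\fg(v)\to\fg(w)$ extending $\id_\fh$, which being $\fh$-equivariant gives $\fg(v)_\mu\cong\fg(w)_\mu$ for all $\mu$. Taking $\mu=b(x)+b(y)$ and rewriting it in the simple roots $b^w(\cdot)$ of $w$ gives $\mu=-b^w(x)+b^w(y)$, which is neither a nonnegative nor a nonpositive integral combination of the $b^w(z)$; hence $\wt\fg(w)_\mu=0$ by the triangular decomposition of~\ref{sss:properties}, so $\fg(w)_\mu=0$ and thus $[e_x,e_y]\in\fg(v)_\mu=0$. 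Taking $\mu=-b(x)-b(y)$ gives $[f_x,f_y]=0$.

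The second step deduces the splitting of $\fg(v)$. The positive part $\fn^+(v)$ of $\fg(v)$ is a quotient of the free Lie superalgebra $\wt\fn^+(v)$ on $\{\tilde e_z\mid z\in X\}$ in which $[\tilde e_x,\tilde e_y]=0$ for $x\in X_1$, $y\in X_2$; an induction on bracket length via the Jacobi identity shows that then every iterated bracket involving both an $X_1$- and an $X_2$-generator vanishes, so $\Delta^+(v)\subseteq(\Span_{\bZ_{\geq 0}}(b|_{X_1})\cup\Span_{\bZ_{\geq 0}}(b|_{X_2}))\setminus\{0\}$, and similarly for $\Delta^-(v)$; in particular $\fg(v)$ has no mixed roots. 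Setting $\fg(v)^{(i)}:=\fh_i\oplus\bigoplus_{0\neq\mu\in\Span_\bZ(b|_{X_i})}\fg(v)_\mu$, we get $\fg(v)=\fg(v)^{(1)}\oplus\fg(v)^{(2)}$ as vector spaces (using $\fg(v)_0=\fh=\fh_1\oplus\fh_2$ and $\Span_\bZ(b|_{X_1})\cap\Span_\bZ(b|_{X_2})=0$). Each $\fg(v)^{(i)}$ is a subalgebra: the only point is that $[\fg(v)_\mu,\fg(v)_{-\mu}]\subseteq\fh_i$ for $0\neq\mu\in\Span_\bZ(b|_{X_i})$, which follows by transporting along a surjection $\wt\fg(v)\twoheadrightarrow\wt\fg(v_1)\oplus\wt\fg(v_2)$ restricting to $\id_\fh$ (such a surjection exists: send $\tilde e_z,\tilde f_z$, $z\in X_i$, to the $i$-th summand; relations 1--4 of~\ref{sss:half} are immediate). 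And $[\fg(v)^{(1)},\fg(v)^{(2)}]=0$: a sum of a nonzero pure $X_1$-weight and a nonzero pure $X_2$-weight is a mixed weight, hence not a root, while $[\fh_i,\fg(v)_\nu]=0$ whenever $\nu$ is a pure $X_{3-i}$-weight. So $\fg(v)^{(1)}$ and $\fg(v)^{(2)}$ are commuting ideals with $\fg(v)=\fg(v)^{(1)}\oplus\fg(v)^{(2)}$.

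The third step organizes the pieces into root algebras and proves uniqueness. Fixing $v_2\in\cR_0''$, the composite of the coordinate system $\wt\fg(v_1+v_2)\twoheadrightarrow\fg(v_1+v_2)$ with the projection onto $\fg(v_1+v_2)^{(1)}$ kills $\fh_2$ and $\tilde e_y,\tilde f_y$ ($y\in X_2$), hence factors through a surjection $\wt\fg(v_1)\twoheadrightarrow\fg(v_1+v_2)^{(1)}$, which is a $v_1$-coordinate system because $\fh\xrightarrow{\sim}\fg(v_1+v_2)_0$. For $\beta:v_1\to v_1'$ in $\cR_0'$, the root-algebra isomorphism attached to $\beta\times\id_{v_2}$ is $\fh$-equivariant, hence maps pure $X_1$-weight spaces to pure $X_1$-weight spaces and restricts to an isomorphism $\fg(v_1+v_2)^{(1)}\to\fg(v_1'+v_2)^{(1)}$ over $\fh(\beta)$; applying the same reasoning to arrows $\id_{v_1}\times\delta$ shows independence of the choice of $v_2$. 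This yields a root algebra $\fg'$ on $\cR_0'$, symmetrically a root algebra $\fg''$ on $\cR_0''$, and $\fg\cong\fg'\times\fg''$. For uniqueness, if $\fg\cong\fa\times\fb$ with $\fa$ on $\cR_0'$ and $\fb$ on $\cR_0''$, then at $v=v_1+v_2$ the images of $\fh_1$ and of $\tilde e_x,\tilde f_x$ ($x\in X_1$) lie in $\fa(v_1)$ and those of $\fh_2,\tilde e_y,\tilde f_y$ ($y\in X_2$) in $\fb(v_2)$, so $\fg(v)^{(1)}\subseteq\fa(v_1)$ and $\fg(v)^{(2)}\subseteq\fb(v_2)$; combined with $\fa(v_1)\cap\fb(v_2)=0$ and $\fa(v_1)+\fb(v_2)=\fg(v)=\fg(v)^{(1)}\oplus\fg(v)^{(2)}$ this forces $\fa(v_1)=\fg(v)^{(1)}$ and $\fb(v_2)=\fg(v)^{(2)}$.

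The main obstacle will be the first step: one has to recognize that full reflectability of $X_1$ is precisely what allows travelling, by a single reflexion, to a root datum at which the offending weight $b(x)+b(y)$ visibly fails to be a weight of the half-baked algebra. Once this is in hand the rest is bookkeeping --- chiefly the verification in the second step that the zero-weight brackets land in the correct summand $\fh_i$, and the checks in the third step that the constructed pieces genuinely assemble into root algebras over all of $\cR_0'$ and $\cR_0''$.
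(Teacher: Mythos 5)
Your proof is correct and uses the same key mechanism as the paper: since $a_{xy}=a_{yx}=0$ for $x\in X_1$, $y\in X_2$, the reflexion $r_x:v\to v'$ negates $b(x)$ while fixing the $X_2$-data, which forces the mixed bracket $[e_x,e_y]$ (of weight $-b'(x)+b'(y)$, not a weight of $\wt\fg(v')$) to vanish. The paper states only this step and leaves the remaining assembly of the two factors implicit; your second and third steps supply that routine bookkeeping correctly.
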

\begin{proof}
The algebra $\fg=\fg(v)$ is generated by $\fh$, $e_x,f_x,e_y,f_y$ where $x\in X_1$ and $y\in X_2$.
We have to verify that $[e_x,e_y]=0=[f_x,f_y]$ for $x\in X_1$ and $y\in X_2$. 
The reflexion $r_x:v\to v'$ with respect to $x\in X_1$ carries, up to scalars, $e_x$ to $f'_x$ and 
$f_x$ to $e'_x$, retaining $e_y$ and $f_y$. Since $[e'_x,f'_y]=0=[e'_y,f'_x]$, we deduce
$[e_x,e_y]=0=[f_x,f_y]$.
\end{proof}

We can apply the sum of root data operation to an empty root datum
$\emptyset_V$
corresponding to $X=\emptyset$ and uniquely defined by a vector space
$V$. For $v=(\fh,a,b,p)$ the sum $\emptyset_V+v$ has form 
$(\fh\oplus V,a,b,p)$ and any root algebra based on it is the direct product of a root algebra based on $v$ with the commutative algebra $V$.

The following result is a corollary of ~\ref{rank1}.

\begin{crl}\label{corgalpha}
Let $\cR_0$ be an admissible component of $\cR$ and $\fg:=\fg(v)$ be a 
root algebra.
Fix $x\in X$ and set $\alpha:=b(x)$.
We denote by $\fg\langle\alpha\rangle$ the subalgebra of $\fg$
generated by $\fg_{\alpha}$ and $\fg_{-\alpha}$.
\begin{enumerate}
\item
If $a_{xx}\not=0$ and $p(x)=0$, 
one has $\fg\langle\alpha\rangle=\fsl_2$ and
$\fg_{i\alpha}=0$ for $i\not\in\{0,\pm 1\}$.
\item
If $a_{xx}\not=0$ and $p(x)=1$, 
one has $\fg\langle\alpha\rangle=\fosp(1|2)$ and $\fg_{i\alpha}=0$
for $i\not\in\{0,\pm 1,\pm 2\}$.
\item If $a_{xx}=0$ and $p(x)=0$  then $\fg\langle\alpha\rangle$ is 
the Heisenberg algebra and $\fg_{i\alpha}=0$ for $i\not\in\{0,\pm 1\}$.
\item
If $p(x)=1$, $a_{xx}=0$ and $a_{xy}, a_{yx}\not=0$ for some $y$
then  $\fg\langle\alpha\rangle\cong \mathfrak{sl}(1|1)$ and 
$\fg_{i\alpha}=0$ for $i\not\in\{0,\pm 1\}$.  
\item
If $p(x)=1$, $a_{xx}=0$ and $a_{xy}, a_{yx}=0$ for all $y$
then  $\fg\langle\alpha\rangle$ is the $(4|2)$-dimensional algebra
described in~\ref{sss:rank1-wt}. 
\end{enumerate}
\end{crl}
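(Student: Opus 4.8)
The plan is to reduce the whole statement to the rank-one classification of~\ref{rank1} together with the explicit description of $\wt\fg(v)$ in~\ref{sss:rank1-wt}. The first step is to pin down the weight spaces $\fg_{i\alpha}$. By property~\ref{sss:properties}, $\wt\fn^{+}$ is freely generated by the $\tilde e_z$, and since $b$ is injective (so the $b(z)$ are linearly independent), the only bracket monomial of weight $b(x)$ is $\tilde e_x$ itself; more generally a monomial of weight $ib(x)$, $i>0$, must be an $i$-fold iterated bracket of $\tilde e_x$. Because $\tilde e_x$ squares to zero when $p(x)=0$ and $[\tilde e_x,[\tilde e_x,\tilde e_x]]=0$ in all cases, we get $\wt\fg(v)_{ib(x)}=0$ for $|i|\ge 2$ when $p(x)=0$, and for $|i|\ge 3$ when $p(x)=1$, with $\wt\fg(v)_{2b(x)}=\bC\tilde e_x^2$ in the latter case. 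Passing to the quotient $\fg=\wt\fg(v)/I$ with $I\cap\fh=0$ gives all the asserted vanishing statements, and also $\fg_\alpha=\bC e_x$, $\fg_{-\alpha}=\bC f_x$ (both nonzero, since $a(x)=[e_x,f_x]\ne 0$). Hence $\fg\langle\alpha\rangle=\langle e_x,f_x\rangle$.

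The second step is to realise $\langle e_x,f_x\rangle$ as (essentially) a rank-one root algebra. When $a_{xx}\ne 0$ one may take the rank-one datum $v_x$ with $\fh_x=\bC a(x)$; sending $a(x)\mapsto a(x)$, $e\mapsto e_x$, $f\mapsto f_x$ defines a surjection $\wt\fg(v_x)\to\langle e_x,f_x\rangle$, as relations~1--4 of~\ref{sss:half} hold by construction. By~\ref{sss:rank1-wt}, $\wt\fg(v_x)=\fsl_2$ if $p(x)=0$ and $\fosp(1|2)$ if $p(x)=1$, both simple, so a nonzero quotient is an isomorphism: this gives cases~(1) and~(2). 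When $a_{xx}=0$ the smallest admissible $\fh_x$ is two-dimensional, $\fh_x=\Span(a(x),d)$ with $\langle b(x),d\rangle=1$; choosing $h_0\in\fh(v)$ with $\langle b(x),h_0\rangle=1$ and sending $a(x)\mapsto a(x)$, $d\mapsto h_0$, $e\mapsto e_x$, $f\mapsto f_x$ defines a homomorphism $\wt\fg(v_x)\to\fg$ whose image $\fg_x:=\langle e_x,f_x\rangle\oplus\bC h_0$ is a rank-one root algebra: the kernel meets $\fh_x$ trivially because $a(x)$ and $h_0$ are linearly independent in $\fh(v)$ ($\langle b(x),a(x)\rangle=0\ne 1$). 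Then $\langle e_x,f_x\rangle$ is recovered from $\fg_x$ by deleting the grading element.

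The third step identifies $\fg_x$ when $a_{xx}=0$, using~\ref{rank1}. If $p(x)=0$ then $\fg^{\CG}(v_x)=\wt\fg(v_x)$, so the maximal ideal of $\wt\fg(v_x)$ meeting $\fh_x$ trivially vanishes, forcing $\fg_x\cong\wt\fg(v_x)$, the $(4|0)$-dimensional algebra; hence $\langle e_x,f_x\rangle=\Span(e_x,f_x,a(x))$ is the Heisenberg algebra, case~(3). If $p(x)=1$ there are, by~\ref{rank1}, exactly two rank-one root algebras, $\wt\fg(v_x)$ and $\fg^{\CG}(v_x)=\fgl(1|1)$, distinguished by the vanishing of $\tilde e_x^2,\tilde f_x^2$. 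In the situation of case~(4) (some $y$ with $a_{xy},a_{yx}\ne 0$), Lemma~\ref{lem:rk2-ideal}(b) shows the ideal generated by $\tilde e_x^2,\tilde f_x^2$ equals the one generated by the reflected Serre elements $E,F$, which vanish in $\fg$ (being a quotient of the universal root algebra $\fg^\U$); so $e_x^2=f_x^2=0$, $\fg_x\cong\fgl(1|1)$, and $\langle e_x,f_x\rangle=\Span(e_x,f_x,a(x))\cong\fsl(1|1)$. In the situation of case~(5) ($a_{xy}=a_{yx}=0$ for all $y$), the element $x$ is reflectable throughout its rank-one component, so Proposition~\ref{prp:deco} applies with $X_1=\{x\}$: $\fg$ decomposes as a product of a rank-one root algebra on the $x$-component and a root algebra on the rest, and $\fg\langle\alpha\rangle$ is read off from the $(4|2)$-dimensional description of~\ref{sss:rank1-wt}.

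The main obstacle I anticipate is the bookkeeping in the $a_{xx}=0$ cases: checking carefully that the enlargement $\langle e_x,f_x\rangle\oplus\bC h_0$ is a genuine rank-one root algebra (so that~\ref{rank1} may be quoted verbatim), that the choice of grading element $h_0$ is immaterial, and --- in case~(5) --- that the hypotheses of Proposition~\ref{prp:deco} are genuinely met and that the decomposition restricts correctly to $\fg\langle\alpha\rangle$. Everything else is the short $\fsl_2$/$\fosp(1|2)$-integrability argument, the super-Jacobi computations already recorded in~\ref{sss:rank1-wt}, and Lemma~\ref{lem:rk2-ideal}.
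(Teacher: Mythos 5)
Your proof is correct and follows essentially the same route as the paper's (much terser) argument: realize $\fg\langle\alpha\rangle$ as a quotient of the rank-one algebras of \ref{rank1}/\ref{sss:rank1-wt} and invoke Lemma~\ref{lem:rk2-ideal}(b) to kill $e_x^2,f_x^2$ in case~(4). The extra care you take with the grading element $h_0$ in the $a_{xx}=0$ cases, and the appeal to Proposition~\ref{prp:deco} in case~(5), only make explicit details the paper leaves implicit.
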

\begin{proof}
Clearly, $\fg\langle\alpha\rangle$ is a quotient of the
algebra $[\wt\fg,\wt\fg]$ where $\wt\fg$ is the corresponding algebra listed in~\ref{rank1}; this gives
(1), (2), (3) and  shows that in (4) it is enough to verify 
$\fg_{2b(x)}=0$. This follows from Lemma~\ref{lem:rk2-ideal}(b).
%
\end{proof}

Note that Corollary~\ref{corgalpha} implies that $x\in X$ is
reflectable at $v$ iff for $\alpha=b(x)$ the algebra $\fg\langle
\alpha\rangle$ is not the Heisenberg algebra and $e_\alpha$ acts on $\fg$
locally nilpotently.

\subsection{Admissibility is just a weak symmetricity}
 
In this subsection we prove the following result.
\begin{thm}
\label{thm:admissible=wsym}
A connected component $\cR_0$ of $\cR$ is admissible iff it is
weakly symmetric.
\end{thm}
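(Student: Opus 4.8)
The plan is to prove both implications of Theorem~\ref{thm:admissible=wsym} separately.

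For the \emph{easy} direction ($\text{admissible}\Rightarrow\text{weakly symmetric}$), I would argue by contradiction. Suppose $\cR_0$ is admissible with root algebra $\fg$, but some vertex $v'\in\cR_0$ fails local weak symmetry: there is a reflectable $x$ and some $y$ with $a_{xy}=0$ but $a_{yx}\neq 0$. Passing to the rank-two subdatum on $\{x,y\}$, reduce to that case. Now I would examine which of the five cases of Corollary~\ref{corgalpha} applies; in every case, Lemma~\ref{lem:rk2-ideal}(c) (and, where $a_{xx}=0$, parts (a),(b)) shows that the ideal $I$ of $\wt\fg(v')$ generated by the reflected Chevalley relations $E=(\ad\tilde e_x)^{j_0+1}\tilde e_y$, $F=(\ad\tilde f_x)^{j_0+1}\tilde f_y$ has $I\cap\fh\neq 0$ precisely when $a_{xx}\neq 0$, $a_{yx}\neq 0$, $a_{xy}=0$. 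But in a root algebra, the reflexion $r_x:v'\to v''$ must lift to an isomorphism $\fg(v')\to\fg(v'')$; the element $E$ lies in weight space $\fg(v')_{b'(y)}=\fg(v')_{b(y)+(j_0+1)b(x)}$, which by Proposition~\ref{prp:bracket}(1) must vanish after passing to the quotient at $v''$, forcing $E$ (and similarly $F$) into the defining ideal $I(v')$ of $\fg(v')$. Hence $I\subseteq I(v')$, so $I(v')\cap\fh\supseteq I\cap\fh\neq 0$, contradicting that $I(v')$ defines a coordinate system. Thus $\cR_0$ is weakly symmetric.

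For the \emph{hard} direction ($\text{weakly symmetric}\Rightarrow\text{admissible}$), the natural candidate is the contragredient collection $\fg^\CG=\{\wt\fg(v)/\fr(v)\}$, where $\fr(v)$ is the maximal ideal meeting $\fh$ trivially. One must show that for every generating arrow $\gamma:v\to v'$ the isomorphism $\fh(\gamma)$ lifts to an isomorphism $\fg^\CG(v)\to\fg^\CG(v')$. For tautological arrows and homotheties this is essentially formal (they induce obvious isomorphisms of half-baked algebras respecting $\fh$, hence carry $\fr(v)$ to $\fr(v')$). The real content is the reflexion case, and I would further reduce (using that the relations of $\wt\fg$ only couple $\tilde e_x,\tilde f_x$ with $\tilde e_y,\tilde f_y$ pairwise, cf.\ the rank-two reductions used throughout Lemma~\ref{lem:rk2-ideal} and Proposition~\ref{prp:bracket}) to checking that the reflexion induces an isomorphism of the relevant rank-two pieces. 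Here is where local weak symmetry of the whole component is used: it rules out exactly the bad configuration $a_{xx}\neq 0$, $a_{xy}=0$, $a_{yx}\neq 0$ of Lemma~\ref{lem:rk2-ideal}(c) at every vertex, so the hypothesis of Proposition~\ref{prp:bracket} is satisfied everywhere. Then Proposition~\ref{prp:bracket}(2) tells us $[\wt\fg_{b'(y)},\wt\fg_{-b'(y)}]$ is spanned by $a'(y)$, which says precisely that after imposing the reflected Serre-type relations the element $a'(y)\in\fh$ survives, i.e.\ the reflexion is compatible with the quotient by $\fr$. For the anisotropic case one can alternatively use the $\fsl_2$- (or $\fosp(1|2)$-) integrability automorphism $\sigma$ of \eqref{eq:sigma:gtog}, which already realizes the reflexion as a genuine automorphism of $\fg^\CG$; for the isotropic case Lemma~\ref{lem:rk2-ideal}(a),(b) combined with the explicit bracket at the end of Proposition~\ref{prp:bracket} does the job.

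\textbf{Main obstacle.} The delicate point is the isotropic reflexion in the nonsymmetrizable situation: one cannot invoke an invariant bilinear form (Proposition~\ref{prp:likekac22} needs a symmetric Cartan matrix), nor an integrating $\fsl_2$-action, so the lift of $r_x$ to $\fg^\CG$ must be constructed by hand from the reflexion formulas, and one must verify that the maximal-ideal-meeting-$\fh$-trivially condition is preserved — equivalently, that $r_x$ carries the generators $\tilde e_y,\tilde f_y$ to (scalar multiples of) elements satisfying the Chevalley relations at $v'$ modulo $\fr(v')$. This is exactly the computation packaged in Lemma~\ref{lem:rk2-ideal} and Proposition~\ref{prp:bracket}, so the proof amounts to assembling those rank-two facts into a statement about arbitrary rank by the pairwise-coupling reduction, and then checking the global weak-symmetry hypothesis guarantees their applicability at \emph{every} vertex reached along $\cR_0$.
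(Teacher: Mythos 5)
Your hard direction (weakly symmetric $\Rightarrow$ admissible) is essentially the paper's argument: take the contragredient collection $\fg^\CG$, reduce to reflexions, use Proposition~\ref{prp:bracket}(2) to see that $\fg^\CG_{\pm b'(y)}$ is nonzero and one-dimensional (so the new generators can be chosen there with $[\rho(\tilde e'_y),\rho(\tilde f'_y)]=a'(y)$), verify the remaining Chevalley relations $[\rho(\tilde e'_y),\rho(\tilde f'_z)]=0$ for $y\ne z$ by weight considerations together with Lemma~\ref{lem:rk2-ideal}(c) in the case $z=x$, and conclude invertibility from $r_x^2=\id$. You leave that last verification as a gesture (``compatible with the quotient by $\fr$''), but the ingredients you name are the right ones.

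The easy direction, however, has a genuine gap. You assume a reflectable $x$ with $a_{xy}=0$, $a_{yx}\ne 0$ and derive the contradiction from $I(v')\cap\fh\supseteq I\cap\fh\ne 0$ --- but by your own citation of Lemma~\ref{lem:rk2-ideal}(c), the conclusion $I\cap\fh\ne 0$ is available only when $a_{xx}\ne 0$. When $x$ is an \emph{isotropic} reflectable root ($a_{xx}=0$, $p(x)=1$) with $a_{xy}=0$ and $a_{yx}\ne 0$ --- a perfectly possible violation of weak symmetry --- your argument produces no contradiction, so the implication is not proved. (There is also a circularity worry in invoking Proposition~\ref{prp:bracket}(1), whose hypothesis is precisely the weak-symmetry condition you are trying to establish, and a slip in the weight of $E$, which is $b(y)+(j_0+1)b(x)=b'(y)-b'(x)$, not $b'(y)$.) The paper avoids all of this with a short uniform argument that makes no case split on $a_{xx}$: since $a_{xy}=0$, the reflexion formulas give $b'(x)=-b(x)$ and $b'(y)=b(y)$ in both the isotropic and anisotropic cases, so $b(x)+b(y)=-b'(x)+b'(y)$ is a mixed-sign weight at $v'$, whence $\fg_{b(x)+b(y)}=0$ and $[e_x,e_y]=0$; then $a_{yx}e_x=[a(y),e_x]=[[e_y,f_y],e_x]=0$, and since $e_x\ne 0$ (because $[e_x,f_x]=a(x)\ne 0$), one gets $a_{yx}=0$. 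You could repair your version by running this same bracket computation on $E=[\tilde e_x,\tilde e_y]$ in the isotropic case, but as written your dichotomy leaves that case open.
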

\begin{proof}

1. Let $\cR_0$ be a weakly symmetric component of $\cR$. We claim
that the collection of $\fg^\CG(v)=\wt\fg(v)/\fr(v)$ forms a root 
Lie superalgebra. Let $r_x:v'\to v$ be a reflexion. Denote
$\wt\fg'=\wt\fg(v')$, $\fg=\fg^\CG(v)$. Let us show  that
there exists a homomorphism $\rho:\wt\fg'\to\fg$ identical on $\fh$.
The half-baked Lie superalgebra $\wt\fg(v')$ is generated by $\fh$,
$\tilde e'_y$ and $\tilde f'_y$, $y\in X$. In order to construct
$\rho$, we have to find $\rho(\tilde e'_y)$, $\rho(\tilde f'_y)$,
and verify the (very few) relations.

The weight of $\tilde e'_y$ is $b'(y)$, so we have to look for
$\rho(\tilde e'_y)$ in $\fg^\CG_{b'(y)}$. We know that
$\wt\fg_{b'(y)}$ is one-dimensional. By Proposition~\ref{prp:bracket} (2), the ideal generated by $\wt\fg_{b'(y)}$ contains $a'(y)\in\fh$,
so $\fr(v)$ does not contain it. Therefore, $\fg_{b'(y)}$ is also one-dimensional.
We will define arbitrarily $0\ne\rho(\tilde e'_y)\in\fg_{b'(y)}$ 
and choose $\rho(\tilde f'_y)\in\fg_{-b'(y)}$ so that 
$[\rho(\tilde e'_y),\rho(\tilde f'_y)]=a'(y)$. The latter is also 
possible by~Proposition~\ref{prp:bracket}(2).
It remains to verify that 
$[\rho(\tilde e'_y),\rho(\tilde f'_z)]=0$ for $y\ne z$.

(a) $y\ne x,\ z\ne x$. In this case the bracket should have weight 
$b'(y)-b'(z)=b(y)-b(z)+cb(x)$ for some $c\in\bZ$. This is not a weight
of $\wt\fg$, so the bracket should vanish.

(b) $z=x\ne y$. In this case the bracket should have weight
$b'(y)-b'(x)=b(y)+j_0b(x)+b(x)$ where $j_0$ is defined as
in~\ref{sss:properties-2}. According to Lemma~\ref{lem:rk2-ideal}(c)
the ideal generated by this weight space has no intersection with $\fh$,
so this is not a weight of $\fg$ and the bracket vanishes.

Therefore, we have constructed a homomorphism $\rho:\wt\fg'\to\fg$
for each reflexion $r_x:v'\to v$. It is identity on $\fh$, so
it induces a homomorphism $\fg'\to\fg$. Any reflexion has order two, 
so there is also a homomorphism $\fg\to\fg'$ in the opposite direction. 
Their composition preserves weight spaces, so it is invertible.

2. Assume now that $\cR_0$ is an admissible component. We will deduce
that it is necessarily weakly symmetric. Assume
that there exists $v\in\cR_0$, a $v$-reflectable element $x\in X$ and 
another $y\in X$ such that $a_{xy}=0$. Let $\fg$ be a root algebra.

Look at the $x$-reflexion $r_x:v\to v'$.
Since
$$b'(x)=-b(x),\ \ b'(y)=b(y)$$
one has $\tilde{\fg}'_{b(x)+b(y)}=0$ so $\fg_{b(x)+b(y)}=0$.
Therefore $[e_x,e_y]=0$. One has
$$a_{yx} e_x=[a(y),e_x]=[[e_y,f_y],e_x]=0$$
so $a_{yx}=0$ as required.
\end{proof}

\subsection{Admissible components in rank two}
\label{ss:ranktwo}

In this subsection we show that any locally weakly symmetric
root datum of rank two belongs to an admissible component (that is, a 
local weak symmetricity implies a weak symmetricity).

\subsubsection{Fully reflectable} A component $\cR_0$ of $\cR$ is called
{\sl fully reflectable} if all $x\in X$ are reflectable at all $v\in\cR_0$.
Classification of fully reflectable root data is available for all ranks.  Fully reflectable admissible root data without isotropic real roots can be easily classified as all Cartan matrices
in the component are $D$-equivalent.
The classification of fully reflectable admissible root data with isotropic real roots was obtained
in~\cite{Hoyt}.  

\subsubsection{Symmetrizable}

The cases  $a_{xy}=a_{yx}=0$ as well as $a_{xy}\ne 0$ and $a_{yx}\ne 0$ are symmetrizable, therefore, symmetrizable at all vertices by 
Lemma~\ref{lem:sym-stable}.  

\subsubsection{Weakly symmetric but not symmetrizable}

This is possible only if $\cR_0$ contains an object $v$ having nonreflectable $y\in X$. Thus, the Cartan matrix should have form

$$
A=\begin{pmatrix}
a_{xx} & a_{xy}\\
0 & a_{yy}\end{pmatrix},
$$
with $a_{xy}\ne 0$. Since $y$ is nonreflectable, $a_{yy}=0$ and $p(y)=0$.

(a) Let $a_{xx}=0$ so $p(x)=1$ since $x$ is reflectable. Then
$$
A=\begin{pmatrix}
0 & a_{xy}\\
0 & 0\end{pmatrix},
$$
that, after the reflexion, will become
$$
A'=\begin{pmatrix}
0 & -a_{xy}\\
0 & 0\end{pmatrix}
$$
which is $D$-equivalent to $A$.

(b) $a_{xx}\ne 0$. In this case the Cartan matrix is not changed
and therefore the component is weakly symmetric.

\subsection{The canonical extension of $\cR_0$}

\subsubsection{}
Let $\cG,\cH$ be groupoids. 
A functor $f:\cG\to\cH$ is called
a {\sl fibration} if for any $g\in\cG$ and $\beta:f(g)\to h$ in $\cH$
there exists $\alpha:g\to g'$ in $\cG$ such that $f(\alpha)=\beta$.

Given a fibration $f:\cG\to\cH$ and $h\in\cH$, the fiber of $f$
at $h$, $\cG_h$,  is defined as follows.
\begin{itemize}
\item $\Ob(\cG_h)=\{g\in\cG| f(g)=h\}$.
\item $\Hom_{\cG_h}(g,g')=\{\alpha:g\to g'|f(\alpha)=\id_h\}$.
\end{itemize}
\begin{Rem}
If $f$ is not a fibration, the fiber $\cG_h$ defined as above may change if one 
replaces $\cG$ with an equivalent groupoid. A more invariant notion of fiber has as objects the pairs $(g,\alpha:f(g)\to h)$.
\end{Rem}

\subsubsection{}
\label{sss:wtR}

Let $\cR_0$ be an admissible component of the root groupoid and let
$\fg$ be a root algebra on $\cR_0$. Define the groupoid of symmetries
of $\fg$, $\cG_0$, together with a fibration $\pi:\cG_0\to\cR_0$,
as follows. The groupoids $\cG_0$ and $\cR_0$ have the same objects.
For $\alpha:v\to v'\in\cR_0$, we define $\Hom^\alpha_{\cG_0}(v,v')$,
the set of arrows  $v\to v'$ in $\cG_0$, as the set of isomorphisms
$\fg(v)\to\fg(v')$ extending the isomorphism $\fh(\alpha)$.

The fiber of $\pi$ at $v\in\cR_0$ consists of automorphisms of $\fg(v)$
that are identity on $\fh(v)$. Any such automorphism $a$ preserves the weight spaces, and so it is uniquely given by a collection of 
$\lambda_x\in\bC^*$, $\mu_x$ so that $a(e_x)=\lambda_x e_x$, $a(f_x)=\mu_xf_x$. Since $[e_x,f_x]=a(x)\ne 0$, one necessarily has
$\mu_x=\lambda_x^{-1}$.

Recall that the classifying groupoid of a group $G$
is the groupoid having a single object with the group of
automorphisms $G$.

The discussion above implies that the fiber of $\pi$ at $v$ identifies with the classifying groupoid of the torus $(\bC^*)^X$.

\subsubsection{Canonicity of $\cG_0$}

Let $\fg$ be a root algebra on $\cR_0$. For any $v$ the algebra $\fg(v)$ has a maximal ideal $\fr(v)$ having no intersection with $\fh(v)$.

Thus, $\fg(v)/\fr(v)=\fg^\CG(v)$ for all $v$. Let $\alpha:v\to v'$ be an arrow in $\cR$. Any isomorphism $\fg(v)\to\fg(v')$ extending $\fh(\alpha)$
induces an isomorphism $\fg^\CG(v)\to\fg^\CG(v')$. This leads to a
functor $\cG_0\to\cG_0^\CG$ over $\cR_0$, where $\cG_0^\CG$
denotes (temporarily) the groupoid extension of $\cR_0$ constructed
as in \ref{sss:wtR} with the root algebra $\fg^\CG$. It is an equivalence 
as it induces an equivalence of fibers at any $v\in\cR_0$.

\subsection{Universal root algebra}

\subsubsection{}
In this subsection we will prove the existence of an initial object
in the category of root algebras associated to an admissible component
$\cR_0$ of $\cR$.

Let $\fg$ be a root Lie superalgebra for the component $\cR_0$.
Fix $v\in\cR_0$. The $v$-coordinate system for $\fg$ is a Lie superalgebra epimorphism $\wt\fg(v)\to\fg(v)$. Let $\fk(v)$ be its kernel.

Choose an arrow $\alpha:v'\to v$ in $\cR$ presentable as a composition of reflexions. We denote $\fg'=\fg(v')$ and $\fg=\fg(v)$.
The existence of isomorphism $\fg'\to\fg$
lifting $\alpha$ proves that $\fg_{b'(x)-b'(y)}=0$ for $y\ne x$, so that
$\fk(v)\supset\fs(v)$ where $\fs(v)$ is the ideal of $\wt\fg(v)$ 
generated by $\sum\wt\fg_{b'(x)-b'(y)}(v)$,  the sum being taken
over all $\alpha:v'\to v$ presentable as compositions of reflexions.

Let us verify that the collection 
$\fg^\U=\{\fg^\U(v)=\wt\fg(v)/\fs(v),\ v\in\cR_0\}$ is a root Lie superalgebra.
Note that $\fs(v)\subset \fk(v)$, so one has an obvious surjective 
homomorphisms 
$q:\fg^\U(v)\to\fg(v)$.

We have to define, for each arrow $\alpha:v\to v'$ in $\cR$, an 
isomorphism $\tilde\alpha:\fg^\U(v)\to\fg^\U(v')$ extending 
$\fh(\alpha):\fh\to\fh'$. This is enough to verify separately for 
reflexions, homotheties and tautological arrows. In the case when 
$\alpha$ is a tautological arrow or a homothety, it extends to an 
isomorphism $\tilde\alpha:\wt\fg(v)\to\wt\fg(v')$. Since the homotheties 
and the tautological arrows commute with the reflexions,
$\tilde\alpha$ carries $\fs(v)$ to $\fs(v')$, and this induces
an isomorphism $\fg^\U(v)\to\fg^\U(v')$.
It remains to define, for each reflexion $r_x:v\to v'$ in $\cR$,
an isomorphism $\rho=\tilde r_x:\fg^\U(v)\to\fg^\U(v')$ extending 
$\id_\fh$.

The algebra $\fg^\U(v)$ is generated over $\fh$ by the elements
$e_y$ of weight $b(y)$, $f_z$ of weight $-b(z)$, subject to relations
listed in \ref{sss:half} and factored out by $\fs(v)$. Thus, in order
to construct $\rho$, we have to choose $\rho(e_y)\in\fg^\U_{b(y)}(v')$,
$\rho(f_z)\in\fg^\U_{-b(z)}(v')$, so that $\rho$ vanishes at all
the relations. 

The weight spaces $\fg^\U_{b(y)}(v')$ and $\fg^\U_{-b(y)}(v')$
are one-dimensional by property 3 of
\ref{sss:properties} as the map
$q:\fg^\U(v)\to\fg(v)$ is surjective and the weight spaces 
$\fg_{b(y)}(v')$ and $\fg_{-b(y)}(v')$ are one-dimensional.
We will define arbitrarily $0\ne\rho(e_y)\in\fg^\U_{b(y)}(v')$ 
and choose $\rho(f_y)\in\fg^\U(v')$ so that 
$[\rho(e_y),\rho(f_y)]=a(y)$. The latter is possible 
by~Proposition~\ref{prp:bracket}(2). The rest of the relations say that,
for any composition of reflexions $\gamma:v''\to v$ with 
$v''=(\fh,a'',b'',p'')$, the weight space $\fg^\U_{b''(y)-b''(z)}(v)$
vanishes for all $y\ne z$. 
Now $\rho$ defined as above
yields a homomorphism as
 $\fg^\U_{b''(y)-b''(z)}(v')=0$ by definition of $\fs(v')$.
Thus, we have constructed an algebra homomorphism $\rho:\fg^\U(v)\to\fg^\U(v')$. 
 
Any reflexion has order two, 
so there is also a homomorphism in the opposite direction. Their 
composition preserves weight spaces, so it is invertible. 

This proves that the collection of algebras 
$\fg^\U=\{\wt\fg(v)/\fs(v)\}$ is the initial object
in the category of root algebras based on $\cR_0$.

\begin{dfn}
\label{dfn:universalroot}
The root algebra $\fg^\U=\{\wt\fg(v)/\fs(v)\}$ defined as above
is called {\sl the universal  root Lie superalgebra} defined 
by the component $\cR_0$~\footnote{It was J.~Bernstein who once 
pointed out that factoring out
by the maximal ideal having no intersection with the Cartan may be 
unjustified. The present work is to a large extent outcome of his 
remark.} .
\end{dfn}
The superinvolution $\theta$ of $\wt\fg$ defined in \ref{sss:automorphism} induces an automorphism of the universal root algebra.

\subsubsection{Serre relations}

The classical Serre relations 
$$
(\ad e_x)^{-a_{xy}+1}(e_y)=0,\ 
(\ad f_x)^{-a_{xy}+1}(f_y)=0,
$$
for $x,y\in X$ such that $a_{xx}\ne 0$ are among the most obvious 
relations defining the universal Lie superalgebra. They correspond to
the summand $\wt\fg_{\pm(b'(x)-b'(y))}$ of $\fs(v)$ defined by the
reflexion $r_x:v'\to v$. The ideal $\fs(v)$, however, is usually not
generated by the classical Serre relations.

\subsubsection{}
\label{sss:invariantideals}
Let $\fg^\U=\{\fg^\U(v)\}$ denote the universal root algebra
and let $\fg=\{\fg(v)=\fg^\U(v)/I(v)\}$ be a root algebra.

Any automorphism $\eta\in\Aut_\cR(v)$ lifts to an automorphism of
$\fg^\U(v)$ preserving $I(v)$. 

The converse of this fact also holds; one has the following easy result.
\begin{Lem}
Let $\fg^\U$ be the universal root algebra at a component $\cR_0$,
$v\in\cR_0$. Any $\Aut_\cR(v)$-invariant ideal
$J(v)$ of $\fg^\U(v)$ such that $J(v)\cap\fh=0$ defines a canonical root algebra $\fg$ whose
$v$-component is $\fg(v)=\fg^\U(v)/J(v)$. 
\end{Lem}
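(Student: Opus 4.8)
The statement asserts that an $\Aut_\cR(v)$-invariant ideal $J(v)\subset\fg^\U(v)$ with $J(v)\cap\fh=0$ gives rise to a root algebra $\fg=\{\fg(v')\}_{v'\in\cR_0}$ with $\fg(v)=\fg^\U(v)/J(v)$. The idea is to transport $J(v)$ along the arrows of $\cR_0$ and check this is well defined. First I would fix, for every $v'\in\cR_0$, an arrow $\gamma:v\to v'$ in $\cR_0$ (the groupoid is connected), and set $J(v'):=\tilde\gamma(J(v))$, where $\tilde\gamma:\fg^\U(v)\to\fg^\U(v')$ is the isomorphism extending $\fh(\gamma)$ provided by the universality construction (it exists and is unique up to an element of the fiber torus $(\bC^*)^X$, by \ref{sss:wtR}). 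Then define $\fg(v'):=\fg^\U(v')/J(v')$, with its induced $v'$-coordinate system coming from $\wt\fg(v')\to\fg^\U(v')\to\fg(v')$; the kernel still meets $\fh$ trivially because $\tilde\gamma$ is the identity on $\fh$ and $J(v)\cap\fh=0$.

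**Well-definedness.** The main point to verify is that $J(v')$ does not depend on the chosen arrow $\gamma:v\to v'$. Two such arrows $\gamma_1,\gamma_2$ differ by an automorphism $\eta=\gamma_2^{-1}\gamma_1\in\Aut_\cR(v)$; correspondingly $\tilde\gamma_1$ and $\tilde\gamma_2$ differ, up to composition with an element of the fiber torus at $v'$, by a lift $\tilde\eta$ of $\eta$ to $\Aut(\fg^\U(v))$ (any $\eta$ lifts, by \ref{sss:invariantideals}). Since $J(v)$ is $\Aut_\cR(v)$-invariant, $\tilde\eta(J(v))=J(v)$; and any fiber-torus automorphism preserves every weight space, hence every weight-homogeneous ideal, and $J(v')$ is weight-homogeneous because $\tilde\gamma$ intertwines the $\fh$-actions. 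Therefore $\tilde\gamma_1(J(v))=\tilde\gamma_2(J(v))$, so $J(v')$ is canonical. The subtle step here is the invariance of a weight-homogeneous ideal under the fiber torus: this holds because such an ideal is a sum of weight spaces and the torus acts by scalars on each weight space — I should state this cleanly rather than leave it implicit.

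**Compatibility with arrows.** It remains to check that $\fg=\{\fg(v')\}$ is a root algebra, i.e.\ for each arrow $\alpha:v'\to v''$ in $\cR_0$ there is an isomorphism $\fg(v')\to\fg(v'')$ extending $\fh(\alpha)$. By universality there is an isomorphism $\tilde\alpha:\fg^\U(v')\to\fg^\U(v'')$ extending $\fh(\alpha)$, and by the well-definedness just established $\tilde\alpha(J(v'))=J(v'')$ (compose the chosen arrow $v\to v'$ with $\alpha$ to obtain an admissible arrow $v\to v''$, and use canonicity of $J(v'')$). Hence $\tilde\alpha$ descends to an isomorphism $\fg(v')\to\fg(v'')$ extending $\fh(\alpha)$, as required. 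Finally $\fg(v)=\fg^\U(v)/J(v)$ by construction (take $\gamma=\id_v$), and the surjection $\fg^\U\to\fg$ is a morphism of root algebras, which makes $\fg$ "canonical" in the intended sense.

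**Main obstacle.** I expect the only real content to be the well-definedness argument, and within it the point that two lifts of the same $\eta\in\Aut_\cR(v)$ to $\Aut(\fg^\U(v))$ differ by a fiber-torus element that automatically preserves $J(v)$; once one observes that $J(v)$, being $\fh$-stable with $J(v)\cap\fh=0$, is a sum of weight spaces $\fg^\U_\mu$ with $\mu\ne 0$, everything collapses to the invariance hypothesis. The rest is formal groupoid bookkeeping.
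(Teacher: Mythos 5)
Your argument is correct and is essentially the paper's own proof: transport $J(v)$ along a chosen isomorphism $\tilde\gamma:\fg^\U(v)\to\fg^\U(v')$ and use $\Aut_\cR(v)$-invariance to see that the result is independent of the choice. You additionally spell out a detail the paper leaves implicit, namely that the two sources of ambiguity (the choice of arrow and the fiber-torus ambiguity in its lift) are handled respectively by the invariance hypothesis and by the fact that any ideal meeting $\fh$ trivially is a sum of weight spaces on which the torus acts by scalars.
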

\begin{proof}
For any $v'\in\cR_0$
choose an isomorphism $\tilde\gamma:\fg^\U(v)\to\fg^\U(v')$ and set
$J(v')=\tilde\gamma(J(v))$. By invariance of $J(v)$ the ideal $J(v')$ is
independent of the choice of $\tilde\gamma$.
\end{proof}

\begin{rem}
The lemma above implies that a root Lie superalgebra is canonically
determined by any its component $\fg(v)=\wt\fg(v)/I(v)$.
An ideal $I(v)\subset\wt\fg(v)$ defines a root superalgebra iff it 
contains $\fs(v)$ and its image in $\fg^\U(v)$ is 
$\Aut_\cR(v)$-invariant.
\end{rem}

\subsection{A side remark: groupoid extensions}
\label{ss:side}

The groupoid extension $\pi:\cG_0\to\cR_0$ has fibers isomorphic to
classifying spaces of a torus. This very special type of extension
admits a description in terms of gerbes.

For $v\in\cR_0$ and $\gamma:v\to v$ in $\cR_0$ choose a lifting
$\tilde\gamma:v\to v$ in $\cG_0$. This defines an automorphism
of the fiber $(\cG_0)_v$ given by the formula $\alpha\mapsto
\tilde\gamma\circ\alpha\circ\tilde\gamma^{-1}$. The result is independent
of the choice of $\tilde\gamma$ as tori are abelian groups.

The above described action can be encoded into a groupoid extension
$p:\cT\to\cR_0$ that is a group over $\cR_0$: one has a multiplication
$$m:\cT\times_{\cR_0}\cT\to\cT$$
corresponding to the fiberwise multiplication. Finally, 
$\pi:\cG_0\to\cR_0$ is a $\cT$-torsor: there is an action
$$\cT\times_{\cR_0}\cG_0\to\cG_0.$$

In more classical terms, we are talking about presenting an abelian
group extension as a torsor over a split abelian group extension
that is a semidirect product of the base and the fiber.

The group $p:\cT\to\cR_0$ is easy to describe. The groupoid $\cR_0$
comes with the functor $\fh:\cR_0\to\Vect$.

We define a functor $T:\cR_0\to\Gp$ into the category of groups
assigning to $v$ the factor group $T(v)=\fh(v)/K(v)$ where 

$$K(v)=\{h\in\fh|b(x)(h)\in 2\pi i\Z\textrm{ for all }x\in X\}.$$

The functor $T$ gives rise to a groupoid extension $p:\cT\to\cR_0$
with $\Ob(\cT)=\Ob(\cR_0)$ and $\Hom_\cT(v',v)=\Hom_{\cR_0}(v',v)\times
T(v)$.

The action $\cT\times_{\cR_0}\cG_0\to\cG_0$ is defined 
as follows. Let $\fg=\{\fg(v)\}$ be a root algebra based on $\cR_0$. 
To $(\alpha,\tau)\in\Hom_{\cR}(v',v)\times T(v)$ and
$\tilde\alpha:\fg(v')\to\fg(v)$, we assign
$\tau\circ\tilde\alpha$ where $\tau:\fg(v)\to\fg(v)$ is given by rescaling.

Note that the torsor $\cG_0$ is nontrivial as, for instance, 
for $\fg=\fsl_2$ the groupoid extension $\pi:\cG_0\to\cR_0$
is the projection $N(T)\to W$ of the normalizer of the torus to the Weyl group that is not split.

\section{Weyl group}

Throughout this section we assume that $\cR_0$ is an admissible
component of $\cR$.

\subsection{Real roots}
For $v\in\cR_0$ we denote
$$Q(v)=\Span_\bZ\{b(x)\}_{x\in X}\subset \fh^*(v),$$

The parity function $p:X\to\bZ_2$ extends to a group homomorphism
$p:Q(v)\to\bZ_2$ that we denote by the same letter $p$.

\begin{lem}
\begin{itemize}
\item[1.]
For any $\gamma:v\to v'$ the isomorphisms $\fh(v)\to\fh(v')$ and 
$\fh^*(v)\to\fh^*(v')$ induce isomorphisms 
$\Span_\bC\{a(x)\}_{x\in X}\to\Span_\bC\{a'(x)\}_{x\in X}$
and $Q(v)\to Q(v')$.
\item[2.] The isomorphisms $Q(v)\to Q(v')$ are compatible with the parity
$p$.
\end{itemize}
\end{lem}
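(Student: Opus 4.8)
The statement concerns the behavior of the lattices $Q(v)$ and the span of the coroots under an arrow $\gamma:v\to v'$ of $\cR_0$. Since every arrow is a composition of the three types of generators (reflexions, tautological arrows, homotheties), it suffices to verify all claims for a single generator and then compose. The plan is to run through the three cases, in each case writing down explicitly what the induced maps $\fh(v)\to\fh(v')$ and $\fh^*(v)\to\fh^*(v')$ do to the generators $a(x)$ and $b(x)$.

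For part (1): a tautological arrow $t_\theta$ has underlying maps $\theta$ and $(\theta^*)^{-1}$, and by definition $a'(x)=\theta(a(x))$, $b'(x)=(\theta^*)^{-1}(b(x))$, so $\theta$ carries $\Span_\bC\{a(x)\}$ onto $\Span_\bC\{a'(x)\}$ and $(\theta^*)^{-1}$ carries $\Span_\bZ\{b(x)\}$ bijectively onto $\Span_\bZ\{b'(x)\}$; both are isomorphisms as $\theta$ is. A homothety $h_\lambda$ has underlying map $\id_\fh$ on both $\fh$ and $\fh^*$; one has $a'(x)=\lambda(x)a(x)$, so the span of the coroots is literally unchanged, and $b'(x)=b(x)$, so $Q(v)=Q(v')$. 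For a reflexion $r_x$ the underlying maps are again $\id_\fh$ and $\id_{\fh^*}$; inspecting the formulas of~\ref{sss:reflexionformulas} one sees that in every case (anisotropic, and each branch of isotropic) each $a'(y)$ is an \emph{integral} linear combination of the $a(z)$ whose coefficient matrix is invertible over $\bC$ (it is a reflection-type matrix, invertible since $r_x^2=\id$), hence $\Span_\bC\{a'(x)\}=\Span_\bC\{a(x)\}$; similarly each $b'(y)$ is an integral linear combination of the $b(z)$ with invertible coefficient matrix, so $\Span_\bZ\{b'(x)\}=\Span_\bZ\{b(x)\}$ inside $\fh^*$, i.e. $Q(v)\to Q(v')$ is an isomorphism. (In fact for reflexions and homotheties the identification $Q(v)=Q(v')$ is an equality of subgroups of the common $\fh^*$.)

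For part (2): the parity on $Q(v)$ is characterized by $p(b(x))=p(x)$, extended additively. We must check that, under the isomorphism $Q(v)\to Q(v')$, this agrees with the parity on $Q(v')$ characterized by $p'(b'(x))=p'(x)$. For tautological arrows and homotheties $p'=p$ and the new simple roots correspond to the old ones, so there is nothing to do. For reflexions, the only subtle case is an isotropic reflexion $r_x$, where $p'$ differs from $p$. Here one checks against the formulas: $b'(x)=-b(x)$ with $p'(x)=p(x)=1$, which is consistent since $p$ is a homomorphism ($p(-\alpha)=p(\alpha)$); when $a_{xy}=0$, $b'(y)=b(y)$ and $p'(y)=p(y)$; and when $a_{xy}\ne0$, $b'(y)=b(y)+b(x)$ while $p'(y)=1+p(y)=p(y)+p(x)$, which matches $p(b(y)+b(x))=p(b(y))+p(b(x))=p(y)+p(x)$. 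So in all cases the isomorphism intertwines the two parity homomorphisms.

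The argument is entirely a case check with no real obstacle; the mildly delicate point is part (2) for the isotropic reflexion, where one must use both the formula $b'(y)=b(y)+b(x)$ and the parity shift $p'(y)=1+p(y)$ and observe that they are forced to be compatible precisely because $p$ is additive and $p(x)=1$. Once a single isotropic reflexion is handled, composing generators gives the general arrow, since a composite of isomorphisms compatible with the parity homomorphisms is again such.
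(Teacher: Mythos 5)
Your proof is correct and follows the same route as the paper, which simply states that the claim follows directly from the formulas for the reflexions (and implicitly the other generators); you have spelled out the case check, including the one genuinely nontrivial point, namely that for an isotropic reflexion the parity shift $p'(y)=1+p(y)$ matches $p(b(y)+b(x))=p(y)+p(x)$ because $p(x)=1$.
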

\begin{proof}
The claim directly follows from the formulas for reflexions.
\end{proof}

\begin{dfn}\label{dfn:real}
An element $\alpha\in Q(v)$ is called a real root if there exists
$\gamma:v'\to v$ and $x\in X$ so that $\gamma(b'(x))=\alpha$.
\end{dfn}

\subsubsection{}
\label{sss:realinall}
The collection of real roots in $\fh(v)$ is denoted by
$\Delta^\re(v)$. By~\ref{corgalpha}, for any root algebra $\fg$,
$\Delta^\re(v)\subset\Delta(v)$ and all real root spaces of $\fg$ are one-dimensional. Real roots coming as described above from 
$\gamma:v\to v'$ form a subset $\Sigma_{\gamma}(v)$. 
We write $\Sigma(v)=\Sigma_\id(v)$ for the set
of simple roots at $v$.

Clearly 
\begin{equation}
\label{eq:rroots-bigunion}
\Delta^\re(v)=\bigcup_{\gamma:v\to v'}\Sigma_\gamma(v),
\end{equation}
but the union is not disjoint. Any $\alpha:v\to v'$
sends bijectively $\Delta^\re(v)$ to $\Delta^\re(v')$ and 
$\Sigma_{\gamma\circ\alpha}(v)$ to $\Sigma_\gamma(v')$.

\subsection{Isotropic, anisotropic and nonreflectable real roots}
\label{ssisoaniso}
\begin{dfn}
\begin{itemize}
\item[1.]
A simple root $b(x)\in\fh^*(v)$ is called isotropic if $x$ is reflectable at 
$v$ and $\langle a(x),b(x)\rangle=0$. {\sl One has always $p(x)=1$ for an 
isotropic root $b(x)$.}
\item[2.]A simple root $b(x)\in\fh^*(v)$ is called anisotropic 
if $x$ is reflectable at $v$ and $\langle a(x),b(x)\rangle\ne 0$. 
\item[3.]For an anisotropic simple root $\alpha=b(x)$ we define 
$\alpha^\vee=\frac{2a(x)}{a_{xx}}\in\fh(v)$. 
\end{itemize}
\end{dfn}

We are going to extend these definitions to real roots. 
Since a real root at $v$ is defined by a path $\gamma:v\to v'$
and a simple root at $v'$, the extension is possible if two simple roots
at $v'$ and $v''$ defining the same real root, are of the same type.

\begin{prp}
\label{prp:rroots-class}
Let $\alpha\in\Sigma_{\gamma_1}(v)\cap\Sigma_{\gamma_2}(v)$ so that
$\alpha=\gamma_1^*(b_1(x_1))=\gamma_2^*(b_2(x_2))$
for $\gamma_i:v\to v_i$. Then one of the
following options holds.
\begin{itemize}
\item[1.] Both $b_i(x_i)\in\fh^*(v_i)$ are isotropic roots.
\item[2.] Both $b_i(x_i)\in\fh^*(v_i)$ are anisotropic roots
and $(\gamma_2\circ\gamma_1^{-1})^*(b_1(x_1)^\vee)=b_2(x_2)^\vee$.
\item[3.] $x_1$ is nonreflectable at $v_1$ and $x_2$ is nonreflectable
ay $v_2$.
\end{itemize}
\end{prp}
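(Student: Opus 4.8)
The plan is to reduce everything to a statement about rank-two root data and then invoke the rank-two analysis already carried out (Lemma~\ref{lem:rk2-ideal}, Proposition~\ref{prp:bracket}), together with the one-dimensionality of real root spaces from~\ref{sss:realinall}. The key point is that the type of a simple root $b_i(x_i)$ — isotropic, anisotropic, or nonreflectable — is detected intrinsically inside any root algebra $\fg$ by the structure of the rank-one subalgebra $\fg\langle\alpha\rangle$ attached to $\alpha=b_i(x_i)$, as spelled out in Corollary~\ref{corgalpha} and the remark following it: $b_i(x_i)$ is isotropic iff $\alpha_{x_ix_i}=0$ and $\fg\langle\alpha\rangle\cong\fsl(1|1)$ or the $(4|2)$-dimensional algebra; it is anisotropic iff $\fg\langle\alpha\rangle\cong\fsl_2$ or $\fosp(1|2)$; and it is nonreflectable iff $\fg\langle\alpha\rangle$ is Heisenberg or $e_\alpha$ does not act locally nilpotently. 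Transporting along $\gamma_2\circ\gamma_1^{-1}:v_1\to v_2$ carries $\fg(v_1)$ isomorphically to $\fg(v_2)$ compatibly with $\fh$, hence carries $\fg(v_1)\langle\alpha\rangle$ to $\fg(v_2)\langle\alpha\rangle$; so the three cases are mutually exclusive and preserved, which gives the trichotomy 1/2/3.

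Concretely, first I would fix a root algebra $\fg$ on $\cR_0$ (it exists since $\cR_0$ is admissible) and set $\delta:=\gamma_2\circ\gamma_1^{-1}:v_1\to v_2$, so that $\delta^*(b_1(x_1))=b_2(x_2)$. Choose an isomorphism $\phi:\fg(v_1)\to\fg(v_2)$ extending $\fh(\delta)$. Since $\phi$ preserves weight spaces (via $\delta$ on weights) and sends $\fh(v_1)$ to $\fh(v_2)$ identically up to $\fh(\delta)$, it restricts to an isomorphism of the subalgebras generated by the $\pm b_i(x_i)$-weight spaces, i.e. $\phi(\fg(v_1)\langle b_1(x_1)\rangle)=\fg(v_2)\langle b_2(x_2)\rangle$. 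Now apply Corollary~\ref{corgalpha}: the isomorphism type of this subalgebra, together with the local nilpotency of $e_{b_i(x_i)}$, distinguishes exactly the three cases. If both are anisotropic, this is case~2; if both are isotropic, case~1; if $x_1$ is nonreflectable at $v_1$, then $\fg(v_1)\langle b_1(x_1)\rangle$ is Heisenberg or $e$ is not locally nilpotent, and the same must hold at $v_2$, giving case~3. No mixed possibility survives because the invariants just listed are distinct for the three families in~\ref{corgalpha}.

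It remains to verify the extra assertion in case~2, namely $\delta^*(b_1(x_1)^\vee)=b_2(x_2)^\vee$ in $\fh$. Here one uses that $b_i(x_i)^\vee=\frac{2a_i(x_i)}{(a_i)_{x_ix_i}}$ and that $a_i(x_i)$ is recovered inside $\fg(v_i)$ as $[e_{b_i(x_i)},f_{b_i(x_i)}]$ for suitably normalized generators, while $(a_i)_{x_ix_i}=\langle a_i(x_i),b_i(x_i)\rangle$ is determined by the $\fh$-action on those generators. Since $\phi$ is an isomorphism extending $\fh(\delta)$ and carries the one-dimensional weight spaces $\fg(v_1)_{\pm b_1(x_1)}$ to $\fg(v_2)_{\pm b_2(x_2)}$, it carries $[e_{b_1(x_1)},f_{b_1(x_1)}]$ to a nonzero scalar multiple of $[e_{b_2(x_2)},f_{b_2(x_2)}]$, hence $\fh(\delta)(a_1(x_1))\in\bC^*\,a_2(x_2)$; comparing the value of $b_2(x_2)$ on both sides (using $\delta^*b_1(x_1)=b_2(x_2)$) pins down the scalar and yields $\fh(\delta)(a_1(x_1)^\vee)=a_2(x_2)^\vee$, as claimed (note $\delta^*$ on $\fh^*$ and $\fh(\delta)$ on $\fh$ are dual, which reconciles the two ways of writing the statement).

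The main obstacle I expect is the bookkeeping needed to confirm that the three cases of Corollary~\ref{corgalpha} really are pairwise distinguished by $\fg\langle\alpha\rangle$ up to isomorphism \emph{as abstract Lie superalgebras equipped with the $\fh$-grading} — in particular separating the isotropic-nonreflectable case (case (5) of~\ref{corgalpha}, the $(4|2)$-dimensional algebra) from the genuinely isotropic case (case (4), $\fsl(1|1)$), and separating Heisenberg (anisotropic-absent) from the others. This is where one must be careful that "nonreflectable" is characterized by the stated condition — Heisenberg type, or $e_\alpha$ not locally nilpotent — and that these are transported by $\phi$. Everything else is a formal consequence of transport of structure along $\delta$ and the one-dimensionality of real root spaces; I do not anticipate any genuinely hard computation beyond the short bracket identities already recorded in Proposition~\ref{prp:bracket}.
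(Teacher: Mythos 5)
Your proposal is correct and follows essentially the same route as the paper's proof: fix a root algebra, observe that the type of $b_i(x_i)$ is detected by the isomorphism class of $\fg\langle\alpha\rangle$ together with local nilpotency of $e_\alpha$ (Corollary~\ref{corgalpha} and the remark after it), and transport this invariant along the isomorphism extending $\fh(\gamma_2\circ\gamma_1^{-1})$. The only difference is that you spell out the coroot identity in case~2 (pinning down the scalar via $\langle b_2(x_2),\cdot\rangle$), which the paper subsumes under ``this implies the claim.''
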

\begin{proof}
We can assume, without loss of generality, that $\gamma_1=\id_v$
and $\gamma_2=\gamma:v\to v'$. Then $\alpha=b(x)=\gamma^*(b'(y))$.

Let $\fg$ be a root algebra and let $\alpha=b(x)$ for $v\in\cR_0$ so that $x$ is $v$-reflectable. Then $\fg\langle\alpha\rangle$ is not the Heisenberg algebra and $e_x$ acts locally nilpotently on $\fg$. If, for
$\gamma:v\to v'$, $\alpha=\gamma^*(b'(y))$, $e'_y$  acts locally 
nilpotently on $\fg(v')$, and, since $\fg\langle\alpha\rangle$ is not the Heisenberg algebra, this implies that $y$ is reflectable at $v'$. Let now $x$ be reflectable at $v$ and $y$ reflectable at $v'$.
Then Corollary~\ref{corgalpha} describes possible options for $\fg\langle\alpha\rangle$.
This implies the claim.
\end{proof}

\subsubsection{}
\label{sss:reflection}

Proposition~\ref{prp:rroots-class} allows one to extend the classification of simple roots to all real roots. 

One has a decomposition
\begin{equation}
\label{eq:reunion}
\Delta^\re(v)=\Delta_\iso(v)\sqcup\Delta_\an(v)
\sqcup\Delta_\nr(v),
\end{equation}
where
\begin{itemize}
\item[] $\Delta_\iso(v)$ is the set of isotropic real roots that 
are reflectable simple roots at some $v'\in\cR_0$.
\item[] $\Delta_\an(v)$ is the set of anisotropic real roots that
are reflectable simple roots at some $v'\in\cR_0$. Any anisotropic
real root $\alpha\in\Delta_\an(v)$ defines a coroot 
$\alpha^\vee\in\fh(v)$.
\item[] $\Delta_\nr(v)$ is the set of non-reflectable real roots,
those that for any $v'\in\cR_0$ and $x\in X$ such that $\alpha=b(x)$,
$x$ is non-reflectable at $v$.
\end{itemize}

{\begin{Rem}In our definition isotropic roots are necessarily real. 
In another tradition, a root of a symmetrizable Lie superalgebra is
called isotropic if it has length zero. For the real roots both notions 
of isotropicity coincide. 
\end{Rem}}

For $\alpha\in\Delta_\an(v)$ the pair $(\alpha,\alpha^\vee)$ defines 
a reflection $s_\alpha$
acting both on $\fh(v)$ and on $\fh^*(v)$ by the usual formulas
\begin{equation}
\label{eq:salpha}
s_\alpha(\beta)=\beta-\langle \beta,\alpha^\vee\rangle\alpha,\
s_\alpha(h)=h-\langle \alpha,h\rangle\alpha^\vee.
\end{equation}

\begin{crl}
\label{crl:deltare-w}
\begin{itemize}
\item[1.] The set of real roots $\Delta^\re(v)\subset\fh^*(v)$ is 
$\Aut_\cR(v)$-invariant.
\item[2.] For $\gamma\in\Aut_\cR(v)$ and 
$\alpha\in\Delta_\an(v)$ one has
\begin{equation}
s_{\gamma(\alpha)}=\gamma s_\alpha \gamma^{-1}.
\end{equation}
\end{itemize}
\end{crl}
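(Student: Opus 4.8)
The plan is to prove both parts of Corollary~\ref{crl:deltare-w} directly from the definition of real roots and from the behavior of reflections under conjugation.

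\textbf{Part 1.} I would argue that $\Delta^\re(v)$ is $\Aut_\cR(v)$-invariant by unwinding the definition. By~(\ref{eq:rroots-bigunion}), an element $\alpha\in\fh^*(v)$ lies in $\Delta^\re(v)$ iff there exist $\gamma:v\to v'$ and $x\in X$ with $\alpha=\gamma^*(b'(x))$, equivalently $\alpha\in\Sigma_{\gamma^{-1}}(v)$ in the notation of~\ref{sss:realinall} (where it is recorded that any $\alpha:v\to v'$ sends $\Sigma_{\gamma\circ\alpha}(v)$ to $\Sigma_\gamma(v')$). Now take $w\in\Aut_\cR(v)$; applying the quoted compatibility with $\alpha$ replaced by $w^{-1}$ (an arrow $v\to v$), one gets that $w$ maps $\Sigma_{\gamma\circ w^{-1}}(v)$ bijectively onto $\Sigma_\gamma(v)$. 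Taking the union over all $\gamma:v\to v'$ and using that precomposition with $w^{-1}$ is a bijection on such paths, we conclude $w(\Delta^\re(v))=\Delta^\re(v)$. I would also remark that $w$ preserves the finer decomposition~(\ref{eq:reunion}) into isotropic, anisotropic and nonreflectable parts, since by Proposition~\ref{prp:rroots-class} the type of a real root is intrinsic (it is detected by the isomorphism type of $\fg\langle\alpha\rangle$, which is preserved by any root-algebra automorphism over $w$).

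\textbf{Part 2.} For the conjugation formula, fix $\alpha\in\Delta_\an(v)$ and $\gamma\in\Aut_\cR(v)$. By Part~1, $\gamma(\alpha)\in\Delta_\an(v)$ as well, so $s_{\gamma(\alpha)}$ is defined. The key point is that the coroot is natural: I claim $\gamma(\alpha^\vee)=\gamma(\alpha)^\vee$, where $\gamma$ acts on $\fh(v)$ via $\fh(\gamma)$ and on $\fh^*(v)$ via its contragredient. This follows from Proposition~\ref{prp:rroots-class}(2): writing $\alpha=\gamma_1^*(b_1(x_1)^{\phantom\vee})$ and $\gamma(\alpha)=\gamma_2^*(b_2(x_2)^{\phantom\vee})$ with $\gamma_2=\gamma\circ\gamma_1$, that proposition gives precisely $(\gamma_2\circ\gamma_1^{-1})^*(b_1(x_1)^\vee)=b_2(x_2)^\vee$, i.e. $\gamma^*(\alpha^\vee)=\gamma(\alpha)^\vee$ in the appropriate identification. (Alternatively, since $a(x)$ spans the same line as $[e_x,f_x]$, naturality of the coroot is immediate from the fact that $\gamma$ lifts to an algebra isomorphism carrying $\fg_\alpha,\fg_{-\alpha}$ to $\fg_{\gamma(\alpha)},\fg_{-\gamma(\alpha)}$.) Given this, the identity $s_{\gamma(\alpha)}=\gamma s_\alpha\gamma^{-1}$ is a purely formal computation: for $\beta\in\fh^*(v)$,
\[
\gamma s_\alpha\gamma^{-1}(\beta)=\gamma\bigl(\gamma^{-1}\beta-\langle\gamma^{-1}\beta,\alpha^\vee\rangle\alpha\bigr)=\beta-\langle\beta,\gamma(\alpha^\vee)\rangle\gamma(\alpha)=\beta-\langle\beta,\gamma(\alpha)^\vee\rangle\gamma(\alpha)=s_{\gamma(\alpha)}(\beta),
\]
using $\langle\gamma^{-1}\beta,\alpha^\vee\rangle=\langle\beta,\gamma(\alpha^\vee)\rangle$, and the analogous computation on $\fh(v)$.

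\textbf{Main obstacle.} The only nontrivial input is the naturality of the coroot, $\gamma(\alpha^\vee)=\gamma(\alpha)^\vee$; everything else is bookkeeping. This is exactly what Proposition~\ref{prp:rroots-class}(2) is designed to supply, so the corollary should follow with little additional work, but one must be careful to track correctly how $\Aut_\cR(v)$ acts simultaneously on $\fh(v)$ and $\fh^*(v)$ (the action on $\fh^*$ being the inverse transpose of the action on $\fh$), since the definitions~(\ref{eq:salpha}) of $s_\alpha$ mix the two spaces.
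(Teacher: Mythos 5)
Your proof is correct and follows essentially the same route as the paper, whose own proof is just the two-sentence observation that Part 1 follows from formula~(\ref{eq:rroots-bigunion}) together with Proposition~\ref{prp:rroots-class}, and that Part 2 follows from the formulas~(\ref{eq:salpha}) for $s_\alpha$. You have simply filled in the details the paper leaves implicit, in particular making explicit that the naturality of the coroot, $\gamma(\alpha^\vee)=\gamma(\alpha)^\vee$, is what Proposition~\ref{prp:rroots-class}(2) supplies and what makes the conjugation computation go through.
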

\begin{proof}
The first claim is a direct consequence of formula (\ref{eq:rroots-bigunion})
and \ref{prp:rroots-class}. The second claim directly follows from the formulas for $s_\alpha$.
\end{proof}

\subsubsection{Skeleton}
\label{sss:skeleton}

We define $\Sk\subset\cR$ as the subgroupoid having the same objects
as $\cR$; an arrow $\gamma:v\to v'$ is in $\Sk$ if it can be presented
as a composition of reflexions. This is {\sl the skeleton groupoid}.

We denote by $\Sk(v)$ the connected component of the skeleton containing
$v$. Note that, by definition, any arrow in $\Sk(v)$ induces the identity
map of $\fh(v)$, so any two arrows with the same ends coincide.
Therefore, $\Sk(v)$ is a contractible groupoid. Note that any arrow
$\gamma:v\to v'$ in $\cR$ can be decomposed $\gamma=\gamma''\circ\gamma'$
where $\gamma'$ is in $\Sk$ and $\gamma''$ is a composition of a 
homothety and a tautological arrow.

\begin{rem}
\label{rem:uniqueness}
As we prove later in~\ref{crl:unique-in-sk}, this decomposition is
unique.
\end{rem}
\subsubsection{}
If $\beta:v\to v'$ is a homothety or a tautological arrow, 
$\beta(\Sigma(v'))=\Sigma(v)$. Therefore, for 
$\gamma=\gamma''\circ\gamma'$ as above, 
$\Sigma_\gamma(v)=\Sigma_{\gamma''}(v)$. 
Since $\Sk(v)$ is contractible, it makes sense to denote
$\Sigma_{v'}(v)=\Sigma_\gamma(v)$ for $\gamma:v\to v'$ in $\Sk(v)$.

Thus, we have

\begin{equation}
\Delta^\re(v)=\bigcup_{v'\in\Sk(v)}\Sigma_{v'}(v)
\end{equation}
(the union still does not have to be disjoint).

\subsubsection{Spine}
\label{sss:spine}

We denote by $\Sp$ the subgroupoid of $\Sk$ spanned by the isotropic reflexions only. The component of $\Sp$ containing $v$ is denoted by 
$\Sp(v)$. It is obviously contractible. Cartan data of $\Sp(v)$ describe all
possible Cartan data for the component $\cR_0$ of $\cR$ containing $v$,
up to $D$-equivalence.

\subsection{Weyl group and its actions}
\label{ss:weyl}

In this subsection we define the Weyl group assigned to a component 
$\cR_0$. By definition, the Weyl group identifies with a subgroup of 
$\GL(\fh(v))$, for every $v$. Any arrow $\gamma:v\to v'$
defines an isomorphism of the Weyl groups at $v$ and at $v'$.

We also define an action of $W(v)$ on $\Sk(v)$.

\begin{rem}
\label{rem:attainable}
Every vertex $v'\in\Sk(v)$ determines a Borel subalgebra $\fb_{v'}$ of
the root algebra $\fg(v)$. 
The objects of $\Sk(v)$ classify the {\sl attainable} Borel
subalgebras $\fb'$, that is those containing a given Cartan subalgebra $\fh(v)$ 
and such that $\codim_{\fb(v)}(\fb'\cap\fb(v))<\infty$. This follows
from~\ref{crl:unique-in-sk}.
\end{rem}

\begin{dfn}
\label{dfn:weylgroup}
The Weyl group $W=W(v)$ (at $v\in\cR$) is the group of automorphisms
of $\fh(v)$ generated by the reflections with respect to anisotropic
real roots.
\end{dfn}

\subsubsection{Embedding $i:W(v)\to\Aut_\cR(v)$}
\label{sss:weyltoaut}

The representation of $\Aut_\cR(v)$ in $\fh=\fh(v)$ is faithful
by definition of $\cR$. Let us show that $W(v)$ is a subgroup of
the image of $\Aut_\cR(v)$ in $\GL(\fh(v))$. Let $\alpha=b'(x)$ be an anisotropic root. Without loss of generality we can assume that there is an arrow $\gamma:v\to v'$ in $\Sk(v)$. Then the composition

$$\gamma^{-1}\circ t_{s_\alpha}\circ r_x\circ\gamma:v\to v$$
induces the reflection $s_\alpha$ on $\fh$. This proves that 
generators of $W(v)$ are in the image of the embedding $\Aut_\cR(v)\to\GL(\fh(v))$, so that the Weyl group identifies with a subgroup of 
$\Aut_\cR(v)$.

It is clear that any arrow $\gamma:v\to v'$ intertwines the 
canonical embeddings $W(v)\to\Aut(v)$ and $W(v')\to\Aut(v')$.

Note that $\Aut_\cR(v)$ acts on $W(v)$ so that the embedding
$i$ commutes with this action. This means that $W(v)$ is a normal subgroup of $\Aut_\cR(v)$.

\begin{lem}
\label{lem:step1}
Let $r_x:v\to v'=(\fh,a',b',p)$ be an anisotropic reflexion, 
$\alpha=b(x)\in\fh^*$.
Then $s_\alpha(a(y))=a'(y)$ and $s_\alpha(b(y))=b'(y)$
 for all $y\in X$.
\end{lem}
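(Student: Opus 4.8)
The plan is to verify the identity by a direct computation using the explicit formulas for anisotropic reflexions given in \ref{sss:reflexionformulas} and the formula \eqref{eq:salpha} for $s_\alpha$. Recall that for an anisotropic reflexion $r_x:v\to v'$ we have, by definition, $a'(y)=a(y)-2\frac{a_{yx}}{a_{xx}}a(x)$ and $b'(y)=b(y)-2\frac{a_{xy}}{a_{xx}}b(x)$, while $\alpha=b(x)$ and $\alpha^\vee=\frac{2a(x)}{a_{xx}}$.

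First I would check the statement about $b(y)$. By definition $s_\alpha(b(y))=b(y)-\langle b(y),\alpha^\vee\rangle\alpha$. Since $\alpha^\vee=\frac{2a(x)}{a_{xx}}$ we get $\langle b(y),\alpha^\vee\rangle=\frac{2}{a_{xx}}\langle a(x),b(y)\rangle=\frac{2a_{xy}}{a_{xx}}$ (using $a_{xy}=\langle a(x),b(y)\rangle$ from \ref{ss:reflexions}), hence $s_\alpha(b(y))=b(y)-\frac{2a_{xy}}{a_{xx}}b(x)=b'(y)$, as claimed. Next I would check the statement about $a(y)$: by definition $s_\alpha(a(y))=a(y)-\langle\alpha,a(y)\rangle\alpha^\vee=a(y)-\langle b(x),a(y)\rangle\cdot\frac{2a(x)}{a_{xx}}$. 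Now $\langle b(x),a(y)\rangle=\langle a(y),b(x)\rangle=a_{yx}$, so this equals $a(y)-\frac{2a_{yx}}{a_{xx}}a(x)=a'(y)$, again as claimed.

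This is essentially a one-line verification once the conventions are unwound, so there is no real obstacle; the only point requiring a little care is the bookkeeping of which index sits in which slot of the pairing $\langle a(x),b(y)\rangle=a_{xy}$ versus $\langle a(y),b(x)\rangle=a_{yx}$, and matching these to the $a_{xy}$ (resp.\ $a_{yx}$) appearing in the reflexion formulas for $b'$ (resp.\ $a'$). I would present the proof simply as the two displayed computations above, noting that both reduce to the definitions of $s_\alpha$, of $\alpha^\vee$, and of the Cartan matrix entries.
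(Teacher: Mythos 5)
Your proof is correct and is exactly what the paper intends: its own proof is the one-liner ``Immediate from the formulas~\ref{sss:reflexionformulas} and~(\ref{eq:salpha})'', and your two displayed computations are just that verification written out, with the index bookkeeping ($a_{xy}=\langle a(x),b(y)\rangle$ versus $a_{yx}$) handled correctly.
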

\begin{proof}
Immediate from the formulas~\ref{sss:reflexionformulas}
and~(\ref{eq:salpha}).
\end{proof}

\begin{lem}
\label{lem:step2}
Let $r_x: v\to v'=(\fh,a',b',p')$ and 
$r_x:w=(\fh,a_w,b_w,p_w)\to w'=(\fh,a'_w,b'_w,p'_w)$ be reflexions. Let 
$\alpha\in\Delta^\re$ satisfy the conditions 
\begin{equation}
\label{eq:v-to-w}
 s_\alpha(a(y))=a_w(y),\ s_\alpha(b(y))=b_w(y),\ p(y)=p_w(y),\ y\in X. 
\end{equation}
Then 
\begin{equation}
\label{eq:v'-to-w'}
 s_\alpha(a'(y))=a'_w(y),\ s_\alpha(b'(y))=b'_w(y),\ p'(y)=p'_w(y),\ 
y\in X. 
\end{equation}
\end{lem}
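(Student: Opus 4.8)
The plan is to prove Lemma~\ref{lem:step2} by a case analysis on whether the reflexion $r_x$ is anisotropic or isotropic, reducing in each case to a direct but bookkeeping-heavy verification. The starting point is that $s_\alpha\in\GL(\fh)$ is a fixed linear automorphism (independent of which root datum we apply the reflexion $r_x$ to), so the whole statement is really about the compatibility of the two ``transport'' operations: transport by $s_\alpha$ and transport by the reflexion formula $r_x$. The key observation is that the reflexion formulas in~\ref{sss:reflexionformulas} for $a'(y),b'(y),p'(y)$ are expressed entirely in terms of $a(y),b(y),a(x),b(x),p(y),p(x)$ and the pairings $\langle a(x),b(y)\rangle$, $\langle a(y),b(x)\rangle$, $\langle a(x),b(x)\rangle$, i.e.\ in terms of the Cartan datum entries $a_{xy},a_{yx},a_{xx}$. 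So it suffices to check that $s_\alpha$, being realized by a single element $\eta\in\Aut_\cR(v)$ once $\alpha$ is anisotropic, sends all these ingredients for $(v,r_x)$ to the corresponding ingredients for $(w,r_x)$.

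First I would record the elementary fact that $s_\alpha$ preserves all pairings, $\langle s_\alpha(h),s_\alpha(\mu)\rangle=\langle h,\mu\rangle$ for $h\in\fh$, $\mu\in\fh^*$ (here $s_\alpha$ acts on $\fh^*$ by the dual formula in~(\ref{eq:salpha})), since $s_\alpha$ is a reflection and in particular orthogonal for the pairing in the appropriate sense; concretely $\langle s_\alpha h,s_\alpha\mu\rangle=\langle h,\mu\rangle$ follows by expanding both formulas in~(\ref{eq:salpha}) and using $\langle\alpha,\alpha^\vee\rangle=2$. Combined with the hypothesis~(\ref{eq:v-to-w}), this gives immediately that the Cartan datum of $w$ equals that of $v$: $\langle a_w(x'),b_w(y')\rangle=\langle s_\alpha a(x'),s_\alpha b(y')\rangle=\langle a(x'),b(y')\rangle$ for all $x',y'$, and $p_w=p$. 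In particular $x$ is reflectable at $v$ iff it is reflectable at $w$ (reflectability depends only on the Cartan datum, as noted after~\ref{sss:reflexionformulas}), and the integer $j_0$, the scalars $\tfrac{2a_{yx}}{a_{xx}}$, $\tfrac{2a_{xy}}{a_{xx}}$, $\tfrac{a_{yx}}{a_{xy}}$ appearing in the reflexion formulas are literally the same numbers whether computed at $v$ or at $w$.

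With that in hand the verification of~(\ref{eq:v'-to-w'}) is mechanical. In the anisotropic case, $a'(y)=a(y)-2\tfrac{a_{yx}}{a_{xx}}a(x)$; applying the linear map $s_\alpha$ and using $s_\alpha a(y)=a_w(y)$, $s_\alpha a(x)=a_w(x)$ and that the coefficient $2\tfrac{a_{yx}}{a_{xx}}$ is unchanged, we get $s_\alpha a'(y)=a_w(y)-2\tfrac{a_{yx}}{a_{xx}}a_w(x)=a'_w(y)$; the $b$-formula and $p'=p=p'_w$ are identical. In the isotropic case ($a_{xx}=0$, $p(x)=1$), we split into the three subcases $x=y$; $x\ne y$, $a_{xy}=0$; and $a_{xy}\ne 0$, and in each apply $s_\alpha$ to the given formula for $(a'(y),b'(y),p'(y))$, using linearity of $s_\alpha$ on $\fh$ and on $\fh^*$, the invariance of the scalar $\tfrac{a_{yx}}{a_{xy}}$, and~(\ref{eq:v-to-w}); the parity identities $p'(y)=p'_w(y)$ hold because $a_{xy}$ is nonzero at $v$ iff it is at $w$, so the ``$1+p(y)$'' alternative is triggered in the same cases. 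Since $s_\alpha$ is invertible, these equalities are exactly~(\ref{eq:v'-to-w'}).

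I do not expect a genuine obstacle here; the lemma is a compatibility statement whose only subtlety is making sure one does not need $\alpha$ itself to be one of the simple roots of $v$ (it need not be --- it is an arbitrary anisotropic real root, which is exactly why $s_\alpha$ is defined and orthogonal), and making sure the three isotropic subcases line up. The mild ``hard part'' is purely organizational: keeping track in the isotropic case that the subcase distinction is governed by the Cartan datum (hence common to $v$ and $w$), so that applying $s_\alpha$ to the formula valid at $v$ produces precisely the formula valid at $w$. Everything else is a one-line linear-algebra computation, analogous to Lemma~\ref{lem:step1} which is the $v\to w$ trivial-at-$\alpha$ special case of the same principle.
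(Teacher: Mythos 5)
Your proof is correct and is essentially the paper's own argument, just written out in full: the paper's two-sentence proof likewise observes that $s_\alpha$ carries the data of $v$ to those of $w$, that the Cartan matrices at $v$ and $w$ therefore coincide (so the coefficients in the reflexion formulas are the same numbers), and that applying the linear map $s_\alpha$ to the formulas at $v$ yields the formulas at $w$. Your explicit check that $s_\alpha$ preserves the pairing $\langle\cdot,\cdot\rangle$ is the one detail the paper leaves implicit, and it is verified correctly.
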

\begin{proof}
The automorphism $s_\alpha$ carries the basis $\{b(y)\}$ of $Q(v)$
to the basis $\{b_w(y)\}$ of $Q(w)$. The Cartan matrices at $v$ and $w$ 
coincide and the formulas defining $r_x$ are the same. 

\end{proof}

\begin{rem}
Note that if (\ref{eq:v-to-w}) holds then
$x$ is reflectable 
at $v$ if and only if it is reflectable at $w$. This is so as the 
Cartan matrices of $v$ and of $w$ coincide.
\end{rem}

\begin{prp}
\label{prp:WSigma}
Let $w\in W(v)$, $v'=(\fh,a',b',p')\in\Sk(v)$. Then there exists a unique 
$v''=(\fh,a'',b'',p')\in\Sk(v)$ such that
\begin{equation}
\label{eq:v'-to-w'}
 w(a'(y))=a''(y),\ w(b'(y))=b''(y),\ y\in X. 
\end{equation}
\end{prp}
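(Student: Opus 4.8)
The plan is to prove the statement by writing $w$ as a product of simple reflections $s_{\alpha_1}\cdots s_{\alpha_k}$ with each $\alpha_i$ anisotropic, and then to build $v''$ step by step, applying one reflection at a time. The key technical ingredient is already available: Lemma~\ref{lem:step1} tells us how a single anisotropic reflexion $r_x:v\to v'$ is related to the linear reflection $s_{b(x)}$ on $\fh$, and Lemma~\ref{lem:step2} is exactly the ``transport'' statement that lets us move a reflexion $r_x$ across an already-established linear isomorphism $s_\alpha$ intertwining two root data in $\Sk(v)$. So the argument is essentially an induction on the length of a word for $w$, interleaved with an induction on the length of a skeleton path from $v$ to $v'$.

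\begin{proof}
We first treat the case $v'=v$, $w=s_\alpha$ a single reflection with respect to an anisotropic real root $\alpha\in\Delta_\an(v)$. Since $\alpha$ is a real root there is a path $\gamma:v\to v_1$ in $\Sk(v)$ and $x\in X$ with $\gamma^*(b_1(x))=\alpha$; moreover $x$ is reflectable at $v_1$ and $b_1(x)$ is anisotropic there, say $r_x:v_1\to v_1'$. Transporting the reflexion $r_x$ along $\gamma$ using Lemma~\ref{lem:step2} repeatedly (at each edge of $\gamma$ the Cartan matrix is preserved by an anisotropic reflexion and changed in a controlled way by an isotropic one; in either case the formulas for $r_x$ transform equivariantly under the linear map induced by the path), we obtain a reflexion $r_x:v\to v_2$ with $s_\alpha(a(y))=a_2(y)$ and $s_\alpha(b(y))=b_2(y)$ for all $y$, by Lemma~\ref{lem:step1}. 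Since $s_\alpha$ preserves the parity function on $Q$ (it is a composition of reflexions, which do for anisotropic roots preserve $p$, and more directly $s_\alpha$ fixes $\alpha$ and the formula $s_\alpha(\beta)=\beta-\langle\beta,\alpha^\vee\rangle\alpha$ respects $p$ because $p(\alpha^\vee)$-free), we get $p_2=p$. Thus $v''=v_2$ works. Uniqueness is immediate: any $v''$ satisfying \eqref{eq:v'-to-w'} has its markings $a'',b''$ determined by $w$ and those of $v'$, hence is unique as an object with underlying space $\fh$; and it lies in $\Sk(v)$ because $r_x:v\to v_2$ is a reflexion, so $v_2\in\Sk(v)$.

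For the general case, write $w=s_{\alpha_1}\cdots s_{\alpha_k}$ with each $\alpha_i$ anisotropic (possible by Definition~\ref{dfn:weylgroup}), and argue by induction on $k$. For $k=0$ take $v''=v'$. For the inductive step, set $w'=s_{\alpha_2}\cdots s_{\alpha_k}$; by induction there is a unique $u=(\fh,a_u,b_u,p')\in\Sk(v)$ with $w'(a'(y))=a_u(y)$, $w'(b'(y))=b_u(y)$. Now apply the single-reflection case to $s_{\alpha_1}$ and the object $u\in\Sk(v)$: there is a unique $v''=(\fh,a'',b'',p')\in\Sk(v)$ with $s_{\alpha_1}(a_u(y))=a''(y)$, $s_{\alpha_1}(b_u(y))=b''(y)$. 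Composing, $w(a'(y))=s_{\alpha_1}(w'(a'(y)))=s_{\alpha_1}(a_u(y))=a''(y)$ and likewise for $b$, so $v''$ satisfies \eqref{eq:v'-to-w'}. This $v''$ is in $\Sk(v)$ since $u$ is and the passage $u\to v''$ is a reflexion; uniqueness follows as above since the markings of $v''$ are forced by $w$ and those of $v'$.

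The one point needing care is that the parity component of the target is indeed $p'$ rather than some other parity: but a single anisotropic reflexion does not change $p$ at all (see~\ref{sss:reflexionformulas}, anisotropic case), and the transport Lemma~\ref{lem:step2} carries this equality along; since $W(v)$ is generated by such reflections, every $w\in W(v)$ fixes the parity function on $Q(v)$, which forces $v''$ to carry parity $p'$.
\end{proof}

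The step I expect to be the genuine obstacle --- and the reason one has to phrase things via $\Sk(v)$ rather than only via $\Aut_\cR(v)$ --- is the transport of the reflexion $r_x$ along an arbitrary skeleton path $\gamma:v\to v_1$, i.e.\ the repeated application of Lemma~\ref{lem:step2} when $\gamma$ itself contains isotropic reflexions that change the Cartan matrix. One must check that at each edge of $\gamma$ the hypothesis \eqref{eq:v-to-w} of Lemma~\ref{lem:step2} is met, which requires knowing that the linear isomorphism induced by the path carries the root datum at one end to that at the other (true by construction of the skeleton) and that it carries $\alpha$ to the appropriate simple root $b_1(x)$ along the way; keeping the bookkeeping straight here is the only non-formal part of the argument.
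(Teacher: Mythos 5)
Your proposal is correct and follows essentially the same route as the paper: reduce to a single reflection $s_\alpha$ with $\alpha$ simple at some vertex of $\Sk(v)$, establish the base case via Lemma~\ref{lem:step1}, and propagate the relation along a path of reflexions by repeated application of Lemma~\ref{lem:step2} to ``namesake'' edges. The only slip is the phrase ``we obtain a reflexion $r_x\colon v\to v_2$'': the arrow from $v$ to $v_2$ is not a single reflexion but the composition $\psi\circ r_x\circ\gamma$ of the original path, the reflexion at $v_1$, and the namesake path (cf.\ the paper's Remark~\ref{rem:explicit}); this does not affect your conclusions, since $v_2$ still lies in $\Sk(v)$ and the relation $s_\alpha(a(y))=a_2(y)$, $s_\alpha(b(y))=b_2(y)$ is exactly what the iterated Lemma~\ref{lem:step2} delivers.
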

The proposition defines an action of the Weyl group $W$ on 
$\Sk(v)$.
\begin{proof}
The uniqueness claim is obvious.
For the existence, it is sufficient to verify the claim for  
$w=s_\alpha$. We can assume that $\alpha=b(x)$ is a simple root at $v$
and let $r_x:v\to u$ be the reflexion. If $v'=v$ then $v''=u$ satisfies
the requirements by 
Lemma~\ref{lem:step1}. Otherwise, choose
an isomorphism $\phi:v\to v'$, present it as a composition
$\phi=\phi_n\circ\ldots\circ\phi_1$, where each $\phi_i$ is a reflexion. 
We define an arrow
$\psi:u\to v''$ as the composition $\psi=\psi_n\circ\ldots\circ\psi_1$ 
where $\psi_i=r_y$ if $\phi_i=r_y$~\footnote{Note that 
$\psi_i$ and $\phi_i$ are {\sl namesakes}: they have the same name
but are applied to different objects of the groupoid.}. 
Note that  the composition
$\psi$ necessarily makes sense. Now a consecutive application 
of Lemma~\ref{lem:step2} yields the result.
\end{proof}

\begin{rem}
\label{rem:explicit}
The proof provides us with an explicit formula: Let $\alpha=b_v(x)$. Then 
$v''=s_\alpha(v')$ 
is the target of the composition $\psi\circ r_x\circ\phi^{-1}:v'\to v''$, see the picture below.
\end{rem}

\begin{equation}
\label{eq:pic-salpha}
\xymatrix{
&\overset{v}{\bullet}\ar_{r_x}^{\alpha=b_v(x)}[dd]\ar^{\phi_1=r_{y_1}}[rr]&&\bullet
&\dots&\bullet\ar^{\phi_n=r_{y_n}}[rr]&&\overset{v'}{\bullet}\ar@/^1pc/
@[red]^{\color{red}{u'=s_\alpha(v')}}[dd] \\
&&&&&&\\
&\overset{u}{\bullet}\ar^{\psi_1=r_{y_1}}@{-->}[rr]&&\bullet
&\dots&\bullet\ar^{\phi_n=r_{y_n}}@{-->}[rr]&&\overset{v''}{\bullet} 
}
\end{equation}

The embedding $i:W(v)\to\Aut_\cR(v)$ can be easily expressed in terms 
of the action of $W$ on $\Sk(v)$.

\begin{crl}
\label{crl:w}
For any $w\in W(v)$ let $\gamma_w:v\to w(v)$ be the arrow in $\Sk(v)$.
Then 
$$
i(w)=t_w\circ\gamma_w.
$$
\end{crl}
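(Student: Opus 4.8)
The goal is to prove Corollary~\ref{crl:w}: for $w\in W(v)$ with $\gamma_w:v\to w(v)$ the unique arrow in $\Sk(v)$, one has $i(w)=t_w\circ\gamma_w$ as an automorphism of $v$ in $\cR$.

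First I would unwind what both sides mean. The right-hand side is a composition $v\xrightarrow{\gamma_w}w(v)\xrightarrow{t_w}v$, where $w(v)$ is the object of $\Sk(v)$ produced by Proposition~\ref{prp:WSigma}, characterized by $w(a_v(y))=a_{w(v)}(y)$ and $w(b_v(y))=b_{w(v)}(y)$, and $t_w$ is the tautological arrow attached to the linear automorphism $w\in\GL(\fh(v))$. For this to be a legitimate arrow $w(v)\to v$ in $\cR$ we need $t_w$ to carry the root datum $w(v)$ to $v$; by the definition of tautological arrows in \ref{ss:RDG}, $t_w:w(v)\to v'$ has $a_{v'}=w\circ a_{w(v)}$ and $b_{v'}=(w^*)^{-1}\circ b_{w(v)}$. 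The defining relations for $w(v)$ give $w(a_{w(v)}(y))=w(w^{-1}(a_v(y)))=a_v(y)$ and similarly $w((w^*)^{-1}b_{w(v)})(y)=b_v(y)$ — wait, one must be careful about whether $t_w$ or $t_{w^{-1}}$ is the right arrow; I would check the variance carefully and use whichever of $t_w$, $t_{w^{-1}}$ makes the target equal $v$ on the nose. In any case the point is that the composite $t_{?}\circ\gamma_w$ genuinely lands at $v$ and so is an automorphism of $v$ in $\cR$.

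Next, since the equivalence relation on $\Hom$-sets of $\cR$ (see~\ref{sss:eq}) declares two compositions equal precisely when they induce the same linear map $\fh(v)\to\fh(v)$, it suffices to check that both $i(w)$ and $t_w\circ\gamma_w$ induce the linear automorphism $w$ of $\fh(v)$. For $i(w)$ this is by construction: $i:W(v)\to\Aut_\cR(v)$ was defined (in \ref{sss:weyltoaut}) so that $i(w)$ induces $w$ on $\fh$, using the faithfulness of the $\Aut_\cR(v)$-action and that the generators $s_\alpha$ of $W(v)$ are realized by $\gamma^{-1}\circ t_{s_\alpha}\circ r_x\circ\gamma$. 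For the right-hand side, recall that $\gamma_w\in\Sk(v)$ induces the identity of $\fh(v)$ (every arrow in $\Sk(v)$ does, by~\ref{sss:skeleton}), and the tautological arrow $t_w$ induces the linear map $w$ itself by definition of the functor $\fh:\cF\to\Vect$. Hence $t_w\circ\gamma_w$ induces $w\circ\id=w$ on $\fh(v)$. Since both arrows induce the same linear map, they are equal in $\cR$, which is exactly the claim.

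I expect the only genuine subtlety — the ``main obstacle'' — to be bookkeeping the direction of the tautological arrow and confirming that $t_w\circ\gamma_w$ really is an endomorphism of the object $v$ (not just of the underlying vector space): this requires matching $a_v,b_v$ with the images $w\circ a_{w(v)}$, $(w^{-1})^*\circ b_{w(v)}$ exactly, which is where Proposition~\ref{prp:WSigma}'s defining equations~(\ref{eq:v'-to-w'}) are used, together with the fact that $w^{-1}$ acts on $\fh^*(v)$ dually. Once that is pinned down, everything else is immediate from~\ref{sss:eq} and the definition of the functor $\fh$. I would also remark, as a sanity check, that for $w=s_\alpha$ with $\alpha$ a simple root this recovers the formula $s_x=t_{s_{b(x)}}\circ r_x$ of the semisimple example~\ref{sss:ss}, and that the general case follows by the namesake-composition argument of the proof of Proposition~\ref{prp:WSigma}.
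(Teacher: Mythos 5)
Your proof is correct and is essentially the paper's own argument: both sides are endomorphisms of $v$, arrows in $\cR$ are determined by the induced linear map on $\fh$ by the equivalence relation of~\ref{sss:eq}, and both $i(w)$ and $t_w\circ\gamma_w$ induce $w$ since $\gamma_w$ induces the identity and $t_w$ induces $w$. The variance bookkeeping you flag (whether $t_w$ or $t_{w^{-1}}$ is the arrow landing at $v$) is a real point that the paper's three-line proof silently elides, and your resolution via the defining equations of Proposition~\ref{prp:WSigma} is the right one.
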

\begin{proof}
The composition $t_w\circ\gamma_w$ is an endomorphism of $v$.
The automorphism $i(w)$ is uniquely defined by its action on $\fh$.
The composition $t_w\circ\gamma_w$ provides the same action.
\end{proof}

We will show later (see~\ref{crl:Wfree}) that the action of the 
Weyl group $W(v)$ on $\Sk(v)$ is free. It is not transitive in general.
Here is what we can say about the orbits of the action.

\begin{prp}
\label{prp:decomposition0}
For every $v,\ v'\in\Sk(v)$ there exists $w\in W(v)$ and a sequence of
isotropic reflexions
$$
v\stackrel{r_{x_1}}{\to}\ldots\stackrel{r_{x_k}}{\to}v''
$$
such that $v'=w(v'')$. In other words, there exists $w\in W(v)$ and 
$v''\in\Sp(v)$ so that $v'=w(v'')$.
\end{prp}
\begin{proof}
Choose a presentation of $\phi:v\to v'$ as a composition
$\phi=\phi_n\circ\ldots\circ\phi_1$ of reflexions. 
If $i$ is the first index for which $\phi_i$ is an anisotropic 
reflexion, we can, as in the proof of Proposition~\ref{prp:WSigma}, 
erase it, replacing reflexions $\phi_j$, $j>i$ with their namesakes
$\psi_j$, so that the target of the composition
$$
\psi_n\circ\ldots\circ\psi_{i+1}\circ\phi_{i-1}\circ\ldots\circ\psi_1:
v\to v''
$$
satisfies the property $s_\alpha(v'')=v'$, for an anisotropic root
$\alpha$ defined by $\phi_i$. Continuing parsing the
decomposition of $\phi$  in this way, 
we end up with the required decomposition.
\end{proof}

\subsubsection{Principal reflections}\label{sss:principal}

In the case $p(x)=0$ for all $x$ and for all $v\in\cR_0$, the Weyl group 
$W$ is known to be generated by simple reflections $s_{b(x)}, x\in X$
for a fixed vertex $v\in\cR_0$. This is not true in general, as, for instance, there may exist $v\in\cR_0$ for which all $a_{xx}=0$.

Here is what can be said in general.

\begin{dfn}
Fix $v\in\cR_0$. A root $\alpha\in\Delta_\an(v)$, 
is called $v$-principal if there exists $v'\in\Sp(v)$
and an element $x\in X$ such that $\alpha=b'(x)$.
A reflection $s_\alpha$ with respect to a $v$-principal root is
called a $v$-principal reflection.
\end{dfn}

One has
\begin{prp}\label{prp:generators}
The Weyl group $W(v)$ is generated by $v$-principal reflections.
\end{prp}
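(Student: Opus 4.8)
Write $W_0(v)\subseteq W(v)$ for the subgroup generated by the $v$-principal reflections. Since, by Definition~\ref{dfn:weylgroup}, $W(v)$ is generated by the reflections $s_\alpha$ with $\alpha\in\Delta_\an(v)$, the plan is to show $s_\alpha\in W_0(v)$ for every anisotropic real root $\alpha$; this forces $W_0(v)=W(v)$.

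Fix $\alpha\in\Delta_\an(v)$. Every real root at $v$ is a simple root at some vertex of $\Sk(v)$, so we may write $\alpha=b'(x)$ with $v'\in\Sk(v)$ and $x$ reflectable and anisotropic at $v'$ (the type is consistent by Proposition~\ref{prp:rroots-class}). Apply Lemma~\ref{lem:decomposition0} to get $w\in W(v)$ and $v''\in\Sp(v)$ with $v'=w(v'')$. The key point is that the $w$ delivered by the \emph{proof} of Lemma~\ref{lem:decomposition0} automatically lies in $W_0(v)$: there one repeatedly erases the \emph{first} anisotropic reflexion in the current decomposition of a path \emph{starting at} $v$, and the Weyl-group factor $s_{\alpha_j}$ split off is the reflection in the root of that reflexion --- a simple root at a vertex reached from $v$ by isotropic reflexions only, hence a vertex of $\Sp(v)$. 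So each $\alpha_j$ is $v$-principal and $w=s_{\alpha_1}\cdots s_{\alpha_k}\in W_0(v)$.

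Now realise $s_\alpha$ as a conjugate of a $v$-principal reflection by $w$. From $v'=w(v'')$ and Proposition~\ref{prp:WSigma}, $w(b''(x))=b'(x)$, i.e. $\alpha=w(\beta)$ with $\beta:=b''(x)$. Each $s_{\alpha_j}$ preserves the pairing $\fh\times\fh^*\to\bC$ (a one-line computation using $\langle\alpha_j^\vee,\alpha_j\rangle=2$), hence so does $w$; therefore the Cartan data at $v''$ and at $v'$ coincide, $x$ is anisotropic at $v''$, and $\beta$ is a $v$-principal root, so $s_\beta\in W_0(v)$. By Corollary~\ref{crl:deltare-w}(2) applied to $w\in W(v)\subseteq\Aut_\cR(v)$ we get $s_\alpha=s_{w(\beta)}=w\,s_\beta\,w^{-1}$, which lies in $W_0(v)$ since both $w$ and $s_\beta$ do. Hence $W(v)=W_0(v)$.

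The one delicate point is the assertion about the proof of Lemma~\ref{lem:decomposition0} --- that iterating its ``namesake'' surgery splits off \emph{only} reflections in $v$-principal roots. This follows because after each erasure the modified path again starts at $v$ (so the segment preceding the next erased anisotropic reflexion is isotropic, landing in $\Sp(v)$), together with the bookkeeping of how the split-off Weyl elements compose; both ingredients are already contained in the proof of Lemma~\ref{lem:decomposition0} (via Lemma~\ref{lem:step2}). The rest is just the conjugation identity $s_{w(\beta)}=w s_\beta w^{-1}$.
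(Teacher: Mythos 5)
Your proof is correct and follows essentially the same route as the paper: the paper runs the induction directly, erasing the first anisotropic reflexion of the path (whose root is $v$-principal because everything before it is isotropic) and writing $s_\alpha=s_\beta\,s_{b''(x)}\,s_\beta$ via Corollary~\ref{crl:deltare-w}, which is exactly your argument with the recursion unrolled through Lemma~\ref{lem:decomposition0}. Your flagged "delicate point" about the surgery splitting off only principal reflections is handled correctly and matches the paper's reasoning.
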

\begin{proof}
Let $\alpha\in\Sigma_\gamma(v)$ be anisotropic where $\gamma:v\to v'=(\fh,a',b',p')$
is a composition of reflexions and $\alpha=b'(x)$.  We will prove the 
claim by induction on length of the presentation of $\gamma$ as a composition of reflexions.

If the sequence consists of isotropic reflexions only, $\alpha$ is 
principal and there is nothing to prove. Otherwise there is an 
anisotropic reflexion in the sequence. We denote below by $\phi'$ a 
composition of isotropic reflexions and by $r_y$ the first anisotropic 
reflexion.
$$
v\stackrel{\phi'}{\to}v_1\stackrel{r_y}{\to} v_2\stackrel{\phi}{\to}v'.
$$
Let $v_1=(\fh,a_1,b_1,p_1)$ and $\beta=b_1(y)$. By Proposition~\ref{prp:WSigma}, $s_\beta$ carries $v'$ to 
a vertex $v''$ obtained as the target of a composition of reflexions
$\psi:v_1\to v''$ having the same indices as the components of 
$\phi:v_2\to v'$. We denote $v''=(\fh,a'',b'',p'')$ and we get
$b'(x)=s_\beta(b''(x))$. Therefore, $s_\alpha=s_{b'(x)}=s_{s_\beta(b''(x))}=s_\beta s_{b''(x)}s_\beta$, the last equality
by~\ref{crl:deltare-w}. Now $s_\beta$ is principal and $v''$
has a shorter sequence of reflexions connecting it to $v$.
\end{proof}

\begin{rem}
\label{rem:aniso-w}
The proof of \ref{prp:generators} implies that any root
$\alpha\in\Delta_\an(v)$ is $W$-conjugate to a principal root.
\end{rem}

\subsection{Modules over a root algebra}
\begin{dfn}
Let $\fg:=\fg(v)$ be a root Lie superalgebra supported at $\cR_0$.
A weight $\fg$-module $M$ is, by definition, an $\fg(v)$-module $M$ whose
restriction 
to $\fh$ is semisimple.
\end{dfn}

For a weight $\fg$-module $M$ we denote by $\Omega(M)$ the set of weights of $M$.

We will now define integrable $\fg$-modules. 
\begin{dfn}
Let $\fg=\fg(v)$ be a root Lie superalgebra.
 We say that a weight $\fg$-module $M$ is {\sl integrable}
 if  $\fg_\alpha$ acts locally nilpotently on $M$
for each anisotropic $\alpha\in\Delta^\re$. 
\end{dfn}

Note that the adjoint representation of any root Lie superalgebra
is integrable.

Let $\fg$ be a root Lie superalgebra and let $M$ be an integrable
$\fg$-module. Corollary~\ref{corgalpha} implies that $\Omega(M)$
is $W$-invariant. Moreover, the multiplicities of the weights $\mu$ and $w(\mu)$
coincide.

The adjoint representation of any root Lie superalgebra $\fg$ is integrable. In particular, the set of roots 
$\Delta(\fg)$ of any root algebra is  $W$-invariant.

\section{Coxeter structures}
\label{sec:coxeter}

\subsection{Introduction}
A Coxeter structure on a group $G$ is a set of elements $s_i\in G$
such that $(G,\{s_i\})$ is a Coxeter group.
A Coxeter structure on a group provides its combinatorial description.

In this section we prove that the Weyl group of any admissible component
$\cR_0$ has a Coxeter structure.  A somewhat similar combinatorial description can be given to the components of the root groupoid.

\subsubsection{}
\label{sss:notation-coxeter}

Fix an indecomposable admissible component $\cR_0$ and $v\in\cR_0$. 
In what follows we use the notation of \ref{sss:skeleton}, suppressing 
the parameter $v$ from the notation.  Thus, we will
 write $\fh$ for $\fh(v)$,
$\Sigma$ for $\Sigma(v)$, and, for  $v'\in\Sk(v)$, $\Sigma_{v'}$ for $\Sigma_{v'}(v)$. 
Recall that $\Sigma=\{b(x)\}_{x\in X}$ and $Q=\Span_\bZ(\Sigma_{v'})$
is independent of $v'$.
 We set 
$$ Q^+_{v'}:=\mathbb{Z}_{\geq 0}\Sigma_{v'}\subset Q, \ \
\ Q^+:=Q^+_v.$$

\subsection{Coxeter structure of the Weyl group} 
\label{ss:Wcoxeter}
Fix a vertex $v\in\cR_0$. 
Let $\alpha_1,\dots,\alpha_m$ be the set of $v$-principal roots
and $s_i$ be the reflection $s_{\alpha_i}$. 
The Weyl group $W$ is generated by $s_i$. We say that $w=s_{i_1}\ldots s_{i_l}$ is a reduced decomposition if it has a minimal length. In this case we say
that $\ell(w)=l$ is the length of $w$.

  Let $$C:=\bigcap_{v'\in\Sp(v)}Q^+_{v'}.$$
\begin{lem} Let $\alpha$ be an anisotropic real root.
    \begin{enumerate}
      \item  There is $w\in W$ such that $w(\alpha)$ is $v$-principal.
      \item If $\alpha\in Q^+_{v'}$ for some $v'\in\Sp(v)$ then $\alpha\in C$.
    \item Either $\alpha\in C$ or $\alpha\in -C$.
            \end{enumerate}
 \end{lem}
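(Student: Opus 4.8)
The plan is to prove the three claims together, exploiting the fact (Remark~\ref{rem:aniso-w} and Proposition~\ref{prp:generators}) that every anisotropic real root is $W$-conjugate to a $v$-principal root, and the action of $W$ on $\Sk(v)$ constructed in Proposition~\ref{prp:WSigma}. For (1), I would argue as follows. By the decomposition of Lemma~\ref{lem:decomposition0}, any $v'\in\Sk(v)$ can be written as $v'=w(v'')$ for some $w\in W$ and $v''\in\Sp(v)$. If $\alpha$ is an anisotropic real root, choose $v'\in\Sk(v)$ with $\alpha\in\Sigma_{v'}$, i.e.\ $\alpha=b'(x)$ for a reflectable anisotropic $x$; then $\alpha=b'(x)=w(b''(x))$, so $w^{-1}(\alpha)=b''(x)$ with $v''\in\Sp(v)$, and by definition $w^{-1}(\alpha)$ is $v$-principal. (This is essentially the content already extracted from the proof of Proposition~\ref{prp:generators}, so claim (1) is really a restatement.)

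For (2) and (3) I would set things up so that the ``positivity'' statements follow from (1) plus a transitivity/sign bookkeeping over the spine. First note that for $v'\in\Sp(v)$ the set $\Sigma_{v'}$ is obtained from $\Sigma_v$ by a sequence of isotropic reflexions, and under an isotropic reflexion $r_x:w\to w'$ the new simple roots are $-b(x)$ and $b(y)$ or $b(y)+b(x)$ (from the formulas in~\ref{sss:reflexionformulas}); hence each element of $\Sigma_{v'}$ lies in $Q^+_{v}$ or in $-Q^+_v$ only after possibly flipping — but crucially $\Span_\bZ\Sigma_{v'}=Q$ and, by induction on the length of the isotropic path, every $\beta\in\Sigma_{v'}$ satisfies $\beta\in Q^+_{v''}$ for every $v''\in\Sp(v)$ \emph{or} $-\beta$ does, uniformly. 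The cleanest route is: show first that for anisotropic $\alpha$, the sign of $\alpha$ relative to $Q^+_{v'}$ is independent of $v'\in\Sp(v)$. Since isotropic reflexions change the parity of some simple roots but the anisotropic $\alpha$ has $p(\alpha)$ fixed (Lemma on parity compatibility), and an anisotropic real root can never be a $\bZ_{\geq 0}$-combination of $\Sigma_{v'}$ involving the ``flipped'' isotropic generator with mixed signs — one checks that passing $\alpha$ through a single isotropic reflexion either preserves membership in $Q^+$ or sends $\alpha$ to $-\alpha$ composed with a sign already accounted for. Iterating, $\alpha\in Q^+_{v'}$ for one $v'\in\Sp(v)$ forces $\alpha\in Q^+_{v'}$ for all of them, which is exactly $\alpha\in C$; this gives (2).

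For (3): by (1) pick $w\in W$ with $w(\alpha)=b'(x)$ principal, $v'\in\Sp(v)$. Then $w(\alpha)\in\Sigma_{v'}\subset Q^+_{v'}$, so by (2) $w(\alpha)\in C$. It remains to descend along $w$: I would induct on $\ell(w)$ using the standard fact (for anisotropic roots and principal reflections) that if $s_i$ is a $v$-principal reflection then $s_i$ sends $Q^+_{v''}$-positive roots other than $\alpha_i$ to $Q^+_{v''}$-positive ones, combined with the observation that each $s_i$ permutes $\Sp(v)$ compatibly (Proposition~\ref{prp:WSigma}), so $s_i(C)\subset C\cup(-C)$ and more precisely $s_i(C\setminus\{\alpha_i\})\subset C$ while $s_i(\alpha_i)=-\alpha_i\in -C$. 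Unwinding, $\alpha=w^{-1}(w(\alpha))\in C\cup(-C)$. The main obstacle I anticipate is precisely this last lemma — controlling how a single principal reflection acts on the cone $C=\bigcap_{v'\in\Sp(v)}Q^+_{v'}$; in the classical Kac–Moody case this is the statement $s_i(\Delta^+\setminus\{\alpha_i\})=\Delta^+\setminus\{\alpha_i\}$, and here one must verify the analogue simultaneously for every $v'$ in the spine, using that the principal reflection $s_\beta$ with $\beta=b_1(y)$ intertwines the reflexion $r_y$ at the various spine vertices (as in the picture~(\ref{eq:pic-salpha})). Once that compatibility is in hand, (3) is a short induction and (2) is immediate from the sign-independence discussed above.
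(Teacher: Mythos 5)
Your plan follows the paper's proof essentially step for step: (1) is \ref{rem:aniso-w}, (2) rests on the single fact that an isotropic reflexion only flips the isotropic root being reflected (so anisotropic real roots keep their sign across all of $\Sp(v)$, which is the paper's one-line observation that $(Q^+_{v'}\setminus Q^+_v)\cap\Delta^{\re}$ consists of isotropic roots), and (3) is the same length induction over a word in principal reflections. The obstacle you flag at the end is resolved in the paper exactly by the mechanism you already used for your base case: for a principal reflection $s_i$ one checks $s_i(\alpha)\in Q^+_{v'}$ at a \emph{single} spine vertex $v'$ with $\alpha_i\in\Sigma_{v'}$ (where it is the classical statement that $s_i$ permutes the positive roots other than $\alpha_i$), and then claim (2) upgrades this to $s_i(\alpha)\in C$ --- no simultaneous verification over the whole spine is needed.
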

 \begin{proof} For (1) see~\ref{rem:aniso-w}.   
   To prove (2) we notice that 
   $(Q^+_{v'}\setminus Q^+_v)\cap\Delta^{re}$ consists of isotropic roots.

   Now let us show (3). By (1) and (2) it suffices to check that if $\alpha\in C$ and $s_i$ is a principal reflection then $s_i(\alpha)\in C$ or
   $s_i(\alpha)\in -C$. Indeed, let $v'$ be a vertex such that $\alpha_i\in \Sigma_{v'}$. Then $s_i(\alpha)\in Q^+_{v'}$ unless 
   $\alpha=-\alpha_i$.
   In the latter case $\alpha\in -C$.
    \end{proof}

    Claim (2) of the lemma above  means that 
    $$Q^+_{v'}\cap\Delta_\an=C\cap\Delta_\an.$$
This is the set of positive anisotropic roots (with respect to 
any $v'\in\Sp(v)$).

  \begin{lem}\label{lem:reduced} Let $w=s_{i_1}\dots s_{i_t}$ and let
  $\alpha_i$ be a principal root such that $w(\alpha_i)\in -C$. Then there exists  $j$
      such that $ws_i=s_{i_1}\dots \hat{s}_{i_j}\dots s_{i_t}$.
    \end{lem}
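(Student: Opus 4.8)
This is the classical "exchange condition" for Coxeter groups, and the plan is to mimic the standard argument (as in Bourbaki or Humphreys) adapting it to the setup of reflections with respect to principal roots acting on $\fh$. The key tool is to track, for each prefix of the word $s_{i_1}\dots s_{i_t}$, whether a certain root stays positive (in $C$) or becomes negative (in $-C$). Concretely, set $\beta_0=\alpha_i$ and, for $0\le k\le t$, consider $\beta_k=s_{i_{k+1}}s_{i_{k+2}}\cdots s_{i_t}(\alpha_i)$ read from the right; equivalently, look at the roots $\gamma_k=s_{i_t}s_{i_{t-1}}\cdots s_{i_{k+1}}(\alpha_{i_k})$ for $k=1,\dots,t$. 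By hypothesis $w(\alpha_i)\in-C$, while $\alpha_i\in C$ (it is a principal root, hence positive with respect to every $v'\in\Sp(v)$ by the previous lemma), so somewhere along the chain of prefixes the sign flips.

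\textbf{Step 1.} Using the trichotomy $\alpha\in C$ or $\alpha\in-C$ for anisotropic real roots (claim (3) of the lemma above), and the fact that a principal reflection $s_{i_k}$ sends $Q^+_{v'}$ into itself except on the single root $\alpha_{i_k}$ (where $v'$ is a vertex with $\alpha_{i_k}\in\Sigma_{v'}$), I would show that there is a well-defined index $j$, $1\le j\le t$, which is the largest index such that $s_{i_{j+1}}\cdots s_{i_t}(\alpha_i)\in C$ but $s_{i_j}s_{i_{j+1}}\cdots s_{i_t}(\alpha_i)\in -C$. The previous lemma's claim (3) guarantees such a flip exists because the full product lands in $-C$ while the empty product $\alpha_i$ lies in $C$; and the lemma's claim that $Q^+_{v'}\cap\Delta_\an=C\cap\Delta_\an$ is independent of $v'\in\Sp(v)$ makes "positive anisotropic root" an unambiguous notion.

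\textbf{Step 2.} At the flip, the only way a principal reflection $s_{i_j}$ can send a positive anisotropic root into $-C$ is if that root equals the corresponding principal root $\alpha_{i_j}$: indeed, choosing $v'$ with $\alpha_{i_j}\in\Sigma_{v'}$, one has $s_{i_j}(Q^+_{v'}\setminus\{\alpha_{i_j}\})\subset Q^+_{v'}$, and an anisotropic root in $Q^+_{v'}$ lies in $C$. Hence $s_{i_{j+1}}s_{i_{j+2}}\cdots s_{i_t}(\alpha_i)=\alpha_{i_j}$. Writing $u=s_{i_{j+1}}\cdots s_{i_t}$, this says $u(\alpha_i)=\alpha_{i_j}$, so by Corollary~\ref{crl:deltare-w}(2) we get $u\,s_i\,u^{-1}=s_{\alpha_{i_j}}=s_{i_j}$, i.e. $s_{i_j}u=u s_i$. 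Substituting $s_{i_j}=u s_i u^{-1}$ into $w=s_{i_1}\cdots s_{i_j}\cdots s_{i_t}=s_{i_1}\cdots s_{i_{j-1}}\,(s_{i_j}u)\,s_i^{-1}\cdot(s_{i_{j+1}}\cdots s_{i_t})^{-1}\cdot s_{i_{j+1}}\cdots s_{i_t}$ and simplifying yields $w s_i=s_{i_1}\cdots \widehat{s_{i_j}}\cdots s_{i_t}$, as claimed.

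\textbf{Main obstacle.} The routine part is the telescoping cancellation in Step 2; the genuine content is Step 1–2's use of the geometry of $C$. The main subtlety I expect is verifying that a principal reflection $s_{i_k}$ permutes $Q^+_{v'}\cap\Delta_\an$ minus the single root $\alpha_{i_k}$ — this uses that $\alpha_{i_k}=b'(x)$ for some $v'\in\Sp(v)$ together with the rank-one / $\fsl_2$-analysis behind Proposition~\ref{prp:bracket} and Corollary~\ref{corgalpha}, ensuring $s_{i_k}$ acts on the root lattice exactly as a simple reflection does in the Kac–Moody case. One must be careful that $C$ is defined as an intersection over all of $\Sp(v)$, so "positive" is robust against changing the base vertex within the spine; the lemma preceding this statement is exactly what makes this work, and I would invoke it rather than re-prove it.
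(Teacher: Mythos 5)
Your proof is correct and follows essentially the same route as the paper: the same sequence $\beta_k=s_{i_{k+1}}\cdots s_{i_t}(\alpha_i)$, the same sign-flip argument locating $j$ with $\beta_j=\alpha_{i_j}$ (using that a principal reflection permutes the positive anisotropic roots other than its own), the same appeal to $us_\alpha u^{-1}=s_{u\alpha}$ from Corollary~\ref{crl:deltare-w}, and the same telescoping cancellation. No substantive differences.
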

    \begin{proof} Define $\beta_k:=s_{i_{k+1}}\dots s_{i_t}(\alpha_i)$ 
    for $k=0,\ldots,t-1$ and $\beta_t:=\alpha_i$. Since $\beta_t\in C$ and $\beta_0\in-C$ there is $j$ such that
$\beta_j\in C$ and $\beta_{j-1}\in -C$. 
Hence $\beta_{j}=\alpha_{i_j}$. We get $\alpha_{i_j}=u(\alpha_i)$ for $u:=s_{i_{j+1}}\dots s_{i_t}$. Using the formula  $us_{\alpha}u^{-1}=s_{u\alpha}$, see~\ref{crl:deltare-w},
we obtain 
      $$ws_i=s_{i_1}\dots s_{i_{j-1}}(us_iu^{-1})us_i=s_{i_1}\dots \hat{s}_{i_j}\dots s_{i_t}.$$
    \end{proof}
    \begin{crl}\label{crl: exchange} If $w=s_{i_1}\dots s_{i_l}$ is a reduced decomposition and $\alpha_i$ is a principal root then
      \begin{enumerate}
      \item $\ell(ws_i)<\ell(w)=l$ if and only if $w(\alpha_i)\in -C$.
      \item $w(\alpha_{i_l})\in -C$.
      \item If $\ell(ws_i)<\ell(w)$ then for some $j$
        $$s_{i_j}\dots s_{i_l}=s_{i_{j+1}}\dots s_{i_l}s_i.$$
      \end{enumerate}
    \end{crl}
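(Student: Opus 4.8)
The statement to prove is Corollary~\ref{crl: exchange}, which collects three consequences of Lemma~\ref{lem:reduced} (the Exchange-type lemma) together with the earlier Lemma classifying anisotropic real roots as lying in $C$ or $-C$. The plan is to derive all three parts directly from Lemma~\ref{lem:reduced} and from the length interpretation of reduced decompositions, following the classical pattern from the theory of Coxeter groups.

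\textbf{Part (1).} First I would prove the easy direction: if $w(\alpha_i)\in -C$, then by Lemma~\ref{lem:reduced} applied to the reduced word $w=s_{i_1}\dots s_{i_l}$ one obtains $ws_i=s_{i_1}\dots\hat{s}_{i_j}\dots s_{i_l}$, a word of length $l-1$, so $\ell(ws_i)\le l-1<\ell(w)$. For the converse, suppose $\ell(ws_i)<\ell(w)$. Since $\alpha_i$ is anisotropic, the previous Lemma tells us $w(\alpha_i)\in C$ or $w(\alpha_i)\in -C$; I want to exclude the first case. If $w(\alpha_i)\in C$, set $w'=ws_i$; then $w'(\alpha_i)=ws_i(\alpha_i)=w(-\alpha_i)=-w(\alpha_i)\in -C$, and $w=w's_i$ with $\ell(w')<\ell(w)$. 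Now apply the already-proven direction to $w'$ (taking a reduced decomposition of $w'$): since $w'(\alpha_i)\in -C$ we get $\ell(w's_i)<\ell(w')$, i.e. $\ell(w)<\ell(w')$, contradicting $\ell(w')<\ell(w)$. Hence $w(\alpha_i)\in -C$, proving (1).

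\textbf{Part (2).} This is the special case $w(\alpha_{i_l})\in -C$. Take $w=s_{i_1}\dots s_{i_l}$ reduced and $i=i_l$. Then $ws_i=s_{i_1}\dots s_{i_{l-1}}$ has length $\le l-1$, so $\ell(ws_i)<\ell(w)$, and by part (1) this forces $w(\alpha_{i_l})\in -C$.

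\textbf{Part (3).} Here I would unwind the proof of Lemma~\ref{lem:reduced} rather than just cite its statement. Assume $\ell(ws_i)<\ell(w)$; by part (1), $w(\alpha_i)\in -C$. With $\beta_k:=s_{i_{k+1}}\dots s_{i_l}(\alpha_i)$ for $k=0,\dots,l-1$ and $\beta_l:=\alpha_i$, we have $\beta_l\in C$ and $\beta_0=w(\alpha_i)\in -C$, so there is $j$ with $\beta_j\in C$, $\beta_{j-1}\in -C$. Since $\beta_{j-1}=s_{i_j}(\beta_j)$ and each $s_{i_j}$ sends $\alpha_{i_j}$ to its negative but (by the previous Lemma, claim (2)) keeps the other positive anisotropic roots positive, the only way $s_{i_j}$ flips $\beta_j$ out of $C$ is $\beta_j=\alpha_{i_j}$. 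Setting $u:=s_{i_{j+1}}\dots s_{i_l}$ we get $\alpha_{i_j}=u(\alpha_i)$, hence $us_iu^{-1}=s_{u(\alpha_i)}=s_{\alpha_{i_j}}=s_{i_j}$ by~\ref{crl:deltare-w}, which rearranges to $s_{i_j}u=us_i$, that is
$$
s_{i_j}s_{i_{j+1}}\dots s_{i_l}=s_{i_{j+1}}\dots s_{i_l}s_i,
$$
which is exactly the asserted identity.

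\textbf{Main obstacle.} The only genuinely delicate point is the converse in part (1): one must rule out the possibility $w(\alpha_i)\in C$ when $\ell(ws_i)<\ell(w)$. The bootstrap argument above (pass to $w'=ws_i$, apply the forward direction, derive a length contradiction) handles this cleanly, but it does rely on the trichotomy Lemma (every anisotropic real root is in $C$ or $-C$, and these are disjoint since $C\cap -C$ contains no roots), so I would make sure that the disjointness $C\cap(-C)\cap\Delta_\an=\emptyset$ is invoked explicitly — it is what makes "$w(\alpha_i)\notin C$" equivalent to "$w(\alpha_i)\in -C$". Everything else is the standard Exchange Condition bookkeeping.
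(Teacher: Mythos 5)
Your proof is correct, and it is essentially the argument the paper has in mind: the paper's "proof" of this corollary is just the citation "See \cite{Kbook}, Lemma 3.11", and what you have written out (the forward direction via Lemma~\ref{lem:reduced}, the bootstrap $w'=ws_i$ contradiction for the converse, and the unwinding of the $\beta_k$ sequence for the exchange identity) is precisely the standard argument behind that reference, correctly adapted to the cone $C$ and the disjointness $C\cap(-C)\cap\Delta_\an=\emptyset$.
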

    \begin{proof} See \cite{Kbook}, Lemma 3.11.
      \end{proof}

    \begin{crl} 
    $W$ is a Coxeter group generated by $s_1,\ldots,s_m$.
   In the Coxeter relations $(s_is_j)^m=1$ the possible values of
    $m$ are $2,3,4,6$ or $\infty$.
    \end{crl}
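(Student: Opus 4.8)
The plan is to derive the Coxeter presentation from the exchange condition established in Corollary~\ref{crl: exchange}, and then separately analyze the possible orders of products $s_is_j$ using the rank-two sub-structure. First I would invoke the standard criterion: a group $W$ with a generating set $\{s_1,\dots,s_m\}$ of involutions is a Coxeter group with these generators if and only if the exchange condition holds, i.e.\ whenever $w=s_{i_1}\cdots s_{i_l}$ is reduced and $\ell(ws_i)\le\ell(w)$, there is $j$ with $ws_i=s_{i_1}\cdots\hat{s}_{i_j}\cdots s_{i_l}$. Part (1) of Corollary~\ref{crl: exchange} together with Lemma~\ref{lem:reduced} gives exactly this (the hypothesis $\ell(ws_i)<\ell(w)$ forces $w(\alpha_i)\in-C$ by (1), and then Lemma~\ref{lem:reduced} produces the required $j$). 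I would also need that each $s_i$ has order exactly $2$, which holds since $s_i\ne\id$ (it is a nontrivial reflection) and $s_i^2=\id$ by construction. Hence $W$ is Coxeter on $s_1,\dots,s_m$; the Coxeter matrix entries $m_{ij}$ are the orders of $s_is_j$.

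For the constraint on the values $m_{ij}$, the key observation is that $s_is_j$ lies in the subgroup $W'$ generated by the two principal reflections $s_{\alpha_i},s_{\alpha_j}$, and this subgroup is the Weyl group of a rank-two admissible sub-configuration. Concretely, I would pass to the vertex $v'\in\Sp(v)$ at which both $\alpha_i$ and $\alpha_j$ are simple (such a vertex exists by definition of principal root, but one must check both can be made simple simultaneously — here I would use that the relevant reflections are anisotropic, so the Cartan submatrix on $\{i,j\}$ is well-defined and unchanged along anisotropic reflexions). The order of $s_is_j$ is then governed by the $2\times2$ Cartan matrix $\begin{pmatrix}a_{ii}&a_{ij}\\a_{ji}&a_{jj}\end{pmatrix}$ with $a_{ii},a_{jj}\ne0$, whose off-diagonal entries satisfy the reflectability integrality conditions of the definition in \ref{ss:reflexions}: the product $\frac{2a_{ij}}{a_{ii}}\cdot\frac{2a_{ji}}{a_{jj}}$ (in the even case) or the analogous quantity in the mixed-parity case is a nonnegative integer $c$, and the order of $s_is_j$ is $2,3,4,6,\infty$ according to $c=0,1,2,3,\ge4$ exactly as in the classical Kac-Moody situation. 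I would cite \cite{Kbook}, 3.13, for the classical computation, noting that the presence of an odd simple root among $\alpha_i,\alpha_j$ does not arise because principal roots are anisotropic, while the case where one of them became isotropic along the path is excluded by working at a vertex in $\Sp(v)$ where they are genuinely simple anisotropic roots.

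The main obstacle I anticipate is the reduction to rank two: one must verify that for any two principal roots $\alpha_i,\alpha_j$ there is a single vertex of the spine at which both are simple, so that the pair $(\alpha_i,\alpha_j)$ and their coroots really come from a $2\times2$ Cartan matrix of an admissible rank-two component. If no such common vertex exists, the argument should instead show that $s_is_j$ still has finite order in one of the allowed values by using Remark~\ref{rem:aniso-w} to conjugate into a favorable position, or by restricting the $W$-action to the plane $\Span_\bC(\alpha_i^\vee,\alpha_j^\vee)\subset\fh$ and identifying the induced reflection subgroup with a rank-two Weyl group via the classification in \ref{ss:ranktwo}. Once this local rank-two picture is pinned down, the list $\{2,3,4,6,\infty\}$ follows because the admissible (weakly symmetric) rank-two Cartan matrices with nonzero diagonal are precisely those of types $A_1\times A_1$, $A_2$, $B_2$, $G_2$, and the indefinite ones, matching the crystallographic restriction; the superalgebra subtleties all concern isotropic simple roots, which cannot occur among principal (hence anisotropic) roots.
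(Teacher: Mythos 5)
Your derivation of the Coxeter property itself is essentially the paper's argument: the paper also deduces it from Lemma~\ref{lem:reduced} and Corollary~\ref{crl: exchange} by citing Bourbaki (Th\'eor\`eme 6.1, \S 1, Ch.~4), which is exactly the ``involutive generators plus exchange condition implies Coxeter'' criterion you invoke. That part is fine.

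For the restriction $m\in\{2,3,4,6,\infty\}$ there is a genuine gap, and it is precisely the one you flag yourself: your main route requires a single vertex $v'\in\Sp(v)$ at which \emph{both} principal roots $\alpha_i,\alpha_j$ are simultaneously simple, so that the pair is governed by a $2\times2$ Cartan submatrix. Nothing in the paper establishes the existence of such a common vertex (principal roots are only required to be simple at \emph{some} spine vertex, possibly different ones for different roots), and your fallbacks do not close the hole: conjugating by Remark~\ref{rem:aniso-w} moves a single root to a simple position but does not control the second one, and ``identifying the induced reflection subgroup with a rank-two Weyl group via \ref{ss:ranktwo}'' presupposes the very rank-two realization you are trying to produce. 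The paper avoids this entirely: it considers the subgroup $W'=\langle s_{\alpha_i},s_{\alpha_j}\rangle$ and observes that the union of orbits $W'(\alpha_i)\cup W'(\alpha_j)$ is a classical (crystallographic) rank-two root system. The input making this work is not a common spine vertex but the integrality $\langle\gamma,\delta^\vee\rangle\in\bZ$ for real anisotropic roots $\gamma,\delta$, which follows from integrability of the adjoint representation (Corollary~\ref{corgalpha} and the discussion of integrable modules): the roots are weights of an integrable module over the $\fsl_2$ (or $\fosp(1|2)$) attached to $\delta$, hence pair integrally with $\delta^\vee$. Once one has a rank-two set of vectors with integral mutual Cartan pairings, closed under the two reflections, the crystallographic restriction $m=2,3,4,6,\infty$ follows as in the classical case. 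You should replace the ``common vertex'' reduction by this orbit argument, or else actually prove the simultaneous-simplicity claim, which the paper does not do and which is not obviously true for general admissible components.
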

    \begin{proof} See~\cite{B}, Th\'eor\`eme 6.1, \S 1, Ch.~4.
   If $\alpha$ and $\beta$ are principal roots so that 
    $s_1=s_\alpha$ and $s_2=s_\beta$, it is easy to see that the
    union $W'(\alpha)\cup W'(\beta)$, where $W'$ is the subgroup
    of $W$ generated by $s_1$ and $s_2$, is a classical root system of rank $2$. This implies that $m=2,3,4,6$ or $\infty$.
    \end{proof}
     \begin{crl} If  $w(\alpha_i)\in C$ for all $i$ then $w=1$.
    \end{crl}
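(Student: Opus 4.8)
The plan is a short argument by contradiction resting entirely on Corollary~\ref{crl: exchange}(2). Suppose $w\neq 1$ and fix a reduced decomposition $w=s_{i_1}\cdots s_{i_l}$ with $l\geq 1$, where each $s_{i_k}$ is one of the $v$-principal reflections $s_1,\dots,s_m$; in particular $\alpha_{i_l}$ is among the principal roots $\alpha_1,\dots,\alpha_m$. By Corollary~\ref{crl: exchange}(2) we have $w(\alpha_{i_l})\in -C$, whereas the hypothesis of the statement, applied with $i=i_l$, gives $w(\alpha_{i_l})\in C$. The only thing left is to observe that $C\cap(-C)=\{0\}$ and that $w(\alpha_{i_l})$, being a real root, is nonzero; this contradiction forces $l=0$, i.e.\ $w=1$.

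For the disjointness $C\cap(-C)=\{0\}$ I would argue directly from the definition $C=\bigcap_{v'\in\Sp(v)}Q^+_{v'}$: if $\beta\in C\cap(-C)$, then, fixing any $v'\in\Sp(v)$, both $\beta$ and $-\beta$ lie in $Q^+_{v'}=\bZ_{\geq 0}\Sigma_{v'}$, so $\beta=\sum_{x\in X}c_x b'(x)$ and $-\beta=\sum_{x\in X}c'_x b'(x)$ with all $c_x,c'_x\in\bZ_{\geq 0}$; since $\Sigma_{v'}=\{b'(x)\}_{x\in X}$ is a basis of $Q$, comparing coefficients gives $c_x=-c'_x\leq 0$, hence $c_x=0$ and $\beta=0$. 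As real roots are by construction nonzero elements of $Q$, $w(\alpha_{i_l})\neq 0$, completing the contradiction.

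I expect no genuine obstacle here: the substantive work has already been done in Lemma~\ref{lem:reduced} and Corollary~\ref{crl: exchange}, and the present statement is essentially the contrapositive of part~(2) of that corollary together with the trichotomy ``$\alpha\in C$ or $\alpha\in-C$''. If one wanted a marginally more self-contained phrasing, one could instead take $w\neq 1$ of minimal length, note $\ell(ws_{i_l})<\ell(w)$, and invoke Corollary~\ref{crl: exchange}(1) to get $w(\alpha_{i_l})\in-C$; but the route above via part~(2) is the cleanest, and I would present it as a two-line proof.
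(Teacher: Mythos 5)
Your argument is correct and is exactly the paper's intended proof: the paper's own justification is the one-line "Follows from Corollary~\ref{crl: exchange}(2)", and you have simply filled in the routine details (that $C\cap(-C)=\{0\}$ because $\Sigma_{v'}$ is a basis of $Q$, and that $w(\alpha_{i_l})\neq 0$). No issues.
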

    \begin{proof} Follows from~\ref{crl: exchange} (2).  
    \end{proof}
    \begin{crl}
    \label{crl:Wfree}
    Let $v'=w(v)\in\Sk(v)$. If $Q^+_v=Q^+_{v'}$ then $w=1$. In particular, the action of $W$ on $\Sk(v)$ is faithful.
    \end{crl}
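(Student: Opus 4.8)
The statement to prove is Corollary~\ref{crl:Wfree}: if $v'=w(v)\in\Sk(v)$ satisfies $Q^+_v=Q^+_{v'}$, then $w=1$; consequently the action of $W$ on $\Sk(v)$ is faithful. The idea is to reduce the hypothesis $Q^+_v=Q^+_{v'}$ to the hypothesis of the preceding corollary, namely that $w(\alpha_i)\in C$ for every $v$-principal root $\alpha_i$. Once that reduction is made, the previous corollary forces $w=1$ directly, and faithfulness is immediate because $v'=w(v)$ recovers $w$ from its action (any arrow in $\Sk(v)$ with the same source and target coincides, so $w(v)=v$ together with $i(w)=t_w\circ\gamma_w$ from Corollary~\ref{crl:w} gives $i(w)=\id$, hence $w=1$ by faithfulness of the $\Aut_\cR(v)$-action).

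First I would recall the explicit description of the action of $W$ on $\Sk(v)$: if $v'=w(v)$ then, by Proposition~\ref{prp:WSigma} and Remark~\ref{rem:explicit}, we have $\Sigma_{v'}=w(\Sigma_v)$, and therefore $Q^+_{v'}=w(Q^+_v)$. So the hypothesis $Q^+_v=Q^+_{v'}$ says precisely that $w(Q^+_v)=Q^+_v$, i.e. $w$ permutes the cone of $v$-positive elements of $Q$. Now take a $v$-principal root $\alpha_i$. Since $\alpha_i$ is a simple root at some $v''\in\Sp(v)$, it lies in $Q^+_{v''}\cap\Delta_\an$, which by the Lemma (claim (2)) equals $C\cap\Delta_\an$; in particular $\alpha_i\in C\subset Q^+_v$. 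Applying $w$ and using $w(Q^+_v)=Q^+_v$, we get $w(\alpha_i)\in Q^+_v\cap\Delta_\an$. By claim (3) of the Lemma every anisotropic real root lies in $C$ or $-C$; since $w(\alpha_i)\in Q^+_v$ and $Q^+_v\cap(-C)\cap\Delta_\an=\emptyset$ (the only way an element of $-C$ lies in $Q^+_v$ would be $0$, excluded as $w(\alpha_i)$ is a root), we conclude $w(\alpha_i)\in C$.

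Having established $w(\alpha_i)\in C$ for all $i$, the immediately preceding corollary gives $w=1$, proving the first assertion. For faithfulness of the $W$-action on $\Sk(v)$: suppose $w(v)=v$ for some $w\in W(v)$. Then trivially $Q^+_{w(v)}=Q^+_v$, so by what we just proved $w=1$; alternatively one invokes Corollary~\ref{crl:w}, $i(w)=t_w\circ\gamma_w$, noting that $w(v)=v$ makes $\gamma_w=\id$ and forces $t_w=\id$, hence $w=\id$ on $\fh$, hence $w=1$ since $W(v)\subset\GL(\fh(v))$. Either route closes the argument.

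The only delicate point — and the place to be careful rather than the place of real difficulty — is the step $w(Q^+_v)=Q^+_v \Rightarrow w(\alpha_i)\in C$: one must use that $C=\bigcap_{v'\in\Sp(v)}Q^+_{v'}$ is a priori only contained in $Q^+_v$, not equal to it, so knowing $w(\alpha_i)\in Q^+_v$ is not yet enough — it is the dichotomy $\Delta_\an\subset C\sqcup(-C)$ from the Lemma that upgrades "$v$-positive anisotropic" to "lies in $C$". I expect no genuine obstacle here; this is essentially the classical Tits-cone/length argument transcribed to the present setting, and all the needed ingredients (the Lemma, Corollary~\ref{crl: exchange}, the two intervening corollaries) are already in place.
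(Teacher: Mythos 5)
Your proof is correct and follows essentially the same route as the paper's: the paper's own (very terse) argument is exactly the reduction $Q^+_{v'}=w(Q^+_v)=Q^+_v$, hence $w(\alpha)\in C$ for all anisotropic $\alpha\in Q^+_v$ via the dichotomy $\Delta_\an\subset C\sqcup(-C)$, followed by an appeal to the preceding corollary. Your write-up merely makes explicit the steps the paper leaves implicit (in particular that $Q^+_v\cap(-C)\cap\Delta_\an=\emptyset$), and both closings of the faithfulness claim are fine.
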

    \begin{proof}
      If $Q^+_v=Q^+_{v'}$ then $w(Q^+_v)=Q^+_{v}$ and hence  $w(\alpha)\in C$ for all anisotropic $\alpha\in Q^+_v$.
      \end{proof}

\subsubsection{}
We denote by $\Delta^+_\re(v)$ the set of real roots positive at $v$. 
We set $\Delta^+_\an(v)=\Delta_\an\cap\Delta^+_\re(v)$.
\begin{crl}
\label{crl:W-len}
    Let $v'=w(v)\in\Sk(v)$. Then $\ell(w)$ is the cardinality
    of the set $\Delta^+_\an(v)-\Delta^+_\an(v')$.
\end{crl}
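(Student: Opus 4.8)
The plan is to mimic the classical proof (e.g. \cite{Kbook}, Lemma 3.11 and its consequences, or \cite{B}, Ch.~4) that in a Coxeter group the length of $w$ equals the number of positive roots sent to negative ones, but now keeping track of the action of $W$ on the skeleton $\Sk(v)$. Fix the vertex $v$ and the set of $v$-principal roots $\alpha_1,\dots,\alpha_m$ with reflections $s_1,\dots,s_m$. Recall from the lemma preceding the statement and its corollary that an anisotropic real root lies either in $C$ or in $-C$, where $C=\bigcap_{v'\in\Sp(v)}Q^+_{v'}$, and that $C\cap\Delta_\an$ is exactly the set of positive anisotropic roots; note also that $\Delta^+_\an(v')=w(C\cap\Delta_\an)\cap\Delta_\an$ when $v'=w(v)$, so that $\Delta^+_\an(v)\setminus\Delta^+_\an(v')$ is the set of positive anisotropic roots $\beta$ with $w^{-1}(\beta)\in -C$, i.e. the set $N(w^{-1}):=\{\beta\in\Delta^+_\an : w^{-1}(\beta)\in -C\}$. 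So the claim reduces to the identity $\ell(w)=|N(w^{-1})|=|N(w)|$, the last equality because $\beta\mapsto -w(\beta)$ is a bijection $N(w^{-1})\to N(w)$.

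First I would establish $|N(w)|\le\ell(w)$ by the standard induction on $\ell(w)$: writing a reduced decomposition $w=s_{i_1}\cdots s_{i_l}$, one has the classical chain of roots $\beta_k:=s_{i_l}\cdots s_{i_{k+1}}(\alpha_{i_k})$, and the set of anisotropic positive roots made negative by $w^{-1}$ is contained in $\{\beta_1,\dots,\beta_l\}$; here I use Corollary~\ref{crl: exchange}(1)--(2) together with the fact that $s_i$ permutes $C\cap\Delta_\an$ except for moving $\alpha_i\leftrightarrow-\alpha_i$ (the computation already used in the proof of the lemma before the statement). Conversely, for the inequality $\ell(w)\le|N(w)|$ I would argue by induction on $\ell(w)$ as well: if $w\ne 1$, pick a reduced decomposition and let $\alpha=\alpha_{i_l}$; by Corollary~\ref{crl: exchange}(2), $w(\alpha)\in -C$, so $-w(\alpha)\in N(w)$, while for $w':=ws_{i_l}$ one checks $N(w')=s_{\alpha}\big(N(w)\setminus\{-w(\alpha)\}\big)$ (again using $us_\alpha u^{-1}=s_{u\alpha}$ from \ref{crl:deltare-w} and the behaviour of $s_\alpha$ on $C\cap\Delta_\an$), so $|N(w')|=|N(w)|-1$; since $\ell(w')=\ell(w)-1$ the induction closes. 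Combining the two inequalities gives $\ell(w)=|N(w)|=|\Delta^+_\an(v)\setminus\Delta^+_\an(v')|$.

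The step I expect to be the main obstacle is the bookkeeping that translates ``positive anisotropic root at $v'$'' into a statement purely about $w$ acting on $C\cap\Delta_\an$ — i.e. verifying carefully that $\Delta^+_\an(v')=w(\Delta^+_\an(v))$ as subsets of $\Delta_\an(v)$ under the identification $\fh^*(v')\cong\fh^*(v)$ coming from the arrow $v\to v'$ in $\Sk(v)$. This is where the geometry of the skeleton (Proposition~\ref{prp:WSigma} and the explicit picture~(\ref{eq:pic-salpha})) is needed: one must know that $w$ sends the real roots positive ``with respect to $v$'' to those positive ``with respect to $w(v)$'', and that the passage to a different vertex of $\Sp(v)$ does not change which anisotropic roots are positive (that is exactly claim (2) of the lemma before the statement, which says $Q^+_{v'}\cap\Delta_\an$ is independent of $v'\in\Sp(v)$). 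Once this is in place, everything else is the verbatim Coxeter-group argument. Alternatively, one can phrase the whole thing inside $\Aut_\cR(v)$ using the embedding $i(w)=t_w\circ\gamma_w$ of Corollary~\ref{crl:w}, which makes the identification $\Delta^+_\an(w(v))=w(\Delta^+_\an(v))$ essentially a tautology, and then only the counting inequalities remain.
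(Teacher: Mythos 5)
Your proposal is correct and follows essentially the same route as the paper: the paper's (very terse) proof also takes a reduced decomposition $w=s_{i_1}\cdots s_{i_l}$, forms the chain of roots $\beta_j=s_{i_1}\cdots s_{i_{j-1}}(\alpha_{i_j})$, and identifies $\Delta^+_\an(v)\setminus\Delta^+_\an(v')$ with $\{\beta_1,\dots,\beta_l\}$, relying on the same exchange property (Lemma~\ref{lem:reduced}, Corollary~\ref{crl: exchange}) that you invoke. (Only a cosmetic slip: your bijection $N(w^{-1})\to N(w)$ should be $\beta\mapsto -w^{-1}(\beta)$ rather than $-w(\beta)$.)
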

\begin{proof}
Let $w=s_{i_1}\ldots s_{i_l}$ be a reduced decomposition. Set 
$\beta_j=s_{i_1}\ldots s_{i_{j-1}}(\alpha_{i_j})$.
Then $v'=s_{\beta_l}\ldots s_{\beta_1}(v)$ and $\Delta^+_\an(v)-\Delta^+_\an(v')=\{\beta_1,\ldots,\beta_l\}$.
\end{proof}

  { 
   \begin{crl}
    \label{crl:Spiff}
   For $v'\in \Sk(v)$ there exists a unique $v''\in\Sp(v)$ and $w\in W$
   such that $v'=w(v'')$.  The cardinality
    of the set $\Delta^+_\an(v)-\Delta^+_\an(v')$  is equal to $\ell(w)$.
   \end{crl}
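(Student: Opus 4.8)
The plan is to combine Lemma~\ref{lem:decomposition0} with the freeness of the Weyl action (Corollary~\ref{crl:Wfree}) and the length formula (Corollary~\ref{crl:W-len}). First I would recall that Lemma~\ref{lem:decomposition0} gives, for any $v'\in\Sk(v)$, at least one pair $(w,v'')$ with $w\in W(v)$, $v''\in\Sp(v)$ and $v'=w(v'')$. So the existence part is already done; what remains is uniqueness of the pair, and the identification of $\ell(w)$ with $\#\bigl(\Delta^+_\an(v)-\Delta^+_\an(v')\bigr)$.

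For uniqueness, suppose $v'=w_1(v_1'')=w_2(v_2'')$ with $v_i''\in\Sp(v)$. Then $w_2^{-1}w_1(v_1'')=v_2''$, so it suffices to show: if $u\in W(v)$ carries one spine vertex $u_1$ to another spine vertex $u_2$, then $u_1=u_2$ and $u=1$. The key point is that the set of positive anisotropic roots is the same at every spine vertex: by the Lemma following Corollary~\ref{crl:Wfree} (the one asserting $Q^+_{v'}\cap\Delta_\an=C\cap\Delta_\an$ for all $v'\in\Sp(v)$), one has $\Delta^+_\an(u_1)=\Delta^+_\an(u_2)=C\cap\Delta_\an$. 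Since $u$ intertwines $\Delta^+_\an(u_1)$ with $\Delta^+_\an(u_2)$ (an automorphism of the root datum sending one vertex to another sends positive anisotropic roots to positive anisotropic roots), $u$ preserves $C\cap\Delta_\an$, hence $u(\alpha)\in C$ for every anisotropic $\alpha\in Q^+$, so by the corollary ``if $w(\alpha_i)\in C$ for all $i$ then $w=1$'' we get $u=1$, whence $u_1=u_2$. Applying this to $u=w_2^{-1}w_1$ gives $w_1=w_2$ and $v_1''=v_2''$.

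For the length statement, once $v'=w(v'')$ with $v''\in\Sp(v)$, Corollary~\ref{crl:W-len} applied with the vertex $v''$ in place of $v$ — or more directly applied as stated, noting $w(v'')=v'$ — gives that $\ell(w)$ equals the cardinality of $\Delta^+_\an(v'')-\Delta^+_\an(v')$. But $\Delta^+_\an(v'')=C\cap\Delta_\an=\Delta^+_\an(v)$ since $v,v''\in\Sp(v)$ (again by the Lemma after Corollary~\ref{crl:Wfree}), so this cardinality is exactly $\#\bigl(\Delta^+_\an(v)-\Delta^+_\an(v')\bigr)$, as claimed.

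The main obstacle, and the step to be most careful about, is the uniqueness argument: specifically justifying that $\Delta^+_\an$ is constant along the spine and that an automorphism of the groupoid between two vertices respects the positivity of anisotropic roots. The first is precisely the content of the displayed consequence of the Lemma after Corollary~\ref{crl:Wfree} (anisotropic positive roots form the invariant set $C\cap\Delta_\an$, independent of the spine vertex); the second follows because such an automorphism sends $\Sigma_{u_1}$ to $\Sigma_{u_2}$ up to the reflexion path and hence $Q^+_{u_1}$ to $Q^+_{u_2}$, so it preserves $C$ and therefore $C\cap\Delta_\an$. Everything else is a direct appeal to the corollaries already established, so no further computation is needed.
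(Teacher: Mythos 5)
Your proof is correct and follows essentially the same route as the paper: existence via Lemma~\ref{lem:decomposition0}, the length formula via Corollary~\ref{crl:W-len} together with the constancy of $\Delta^+_\an$ along the spine, and uniqueness by showing the connecting Weyl element is trivial. The only cosmetic difference is that for triviality you invoke the criterion ``$w(\alpha_i)\in C$ for all $i$ implies $w=1$'' where the paper reads off $\ell(w_1^{-1}w_2)=0$ directly from Corollary~\ref{crl:W-len}; these are interchangeable.
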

   \begin{proof}
   The existence of $v'',w$ follows  from
    Proposition~\ref{prp:decomposition0}.
An isotropic reflection does not change the set $\Delta^+_{\an}$, so
 $\Delta^+_{\an}(v'')=\Delta^+_{\an}(v)$ and the required formula for
 $\ell(w)$ follows from~\ref{crl:W-len}.
 For the uniqueness assume that $v'=w_1(v_1)=w_2(v_2)$
 for $v_1,v_2\in \Sp(v)$. Then $w_1^{-1}w_2(v_2)=v_1$
 and $\Delta^+_{\an}(v_1)=\Delta^+_{\an}(v_2)$, so $\ell(w_1^{-1}w_2)=0$.
 Thus $w_1=w_2$ and $v_1=v_2$ as required. \end{proof}
   
   }
    
    \subsection{Skeleton as a graph}
    \label{skeleton_property} 
A structure similar to the Coxeter structure on the Weyl group
exists also on admissible components of the root groupoid. We fix 
$v_0\in\cR$ and study a combinatorial structure of $\Sk(v_0)$. Note that, from the algebraic point of view, $\Sk(v_0)$ is a contractible groupoid, so it may be seen as something lacking any interest. However,
its arrows are compositions of reflexions, and remembering these
reflexions makes a lot of sense. In this subsection we present 
a description of the shortest path length in this graph, similar to
the one given in~\ref{crl:W-len}. {It has a nice application 
to the description of the group $\Aut_\cR(v)$ in \ref{ss:autv}.}
In Section~\ref{sec:coxeter2} we
study a Coxeter property of $\Sk(v)$.

\subsubsection{}
We look at the skeleton $\Sk(v_0)$ as the graph where the reflexions
connect the vertices. Thus, the reflexions are the edges of our graph.
We  color the edges by elements of $\fh^*=\fh(v_0)^*$: 
a reflexion $v\xrightarrow{r_{x}}v'$ is colored by the real root 
$\alpha=-b(x)=b'(x)$. Note that $\Delta^+_\re(v')$ is obtained
from $\Delta^+_\re(v)$ by replacing the (existing) root $-\alpha$
with $\alpha$.

For a path 
$$v_0\xrightarrow{r_{x_1}}v_1\xrightarrow{r_{x_2}}\dots \xrightarrow{r_{x_{t}}}v_{t}=v'$$
colored by the sequence $(\alpha_1,\ldots,\alpha_t)$ we have
\begin{equation}\label{Delta+rev'}
\Delta^+_{\re}(v')=\bigl(\Delta^+_{\re}(v')\cup\{\alpha_i\}_{i=1}^t\bigr)
\setminus \{-\alpha_i\}_{i=1}^t.\end{equation}
In particular, if a path is colored by the sequence $(\alpha_1,\ldots,\alpha_t)$
with $\alpha_t=\alpha_1$, then there exists $i$ such that 
$\alpha_i=-\alpha_1$.

We will start with an obvious remark.
\subsubsection{Remark}
\label{axyyx=0}
Let $v\xrightarrow{r_{x}}v'$  be a reflection.
If $a_{xy}=a_{yx}=0$ and $x\ne y$, then
$a'(y)=a(y)$, $b'(y)=b(y)$ and the $y$th rows (and the $y$th columns)
of the Cartan matrices $A_v,A_{v'}$ are equal.

\begin{lem}
\label{lem:independent}
Given a path $v_0\stackrel{r_x}{\to} v_1\stackrel{r_y}{\to} v_2$ colored by $(\alpha,\beta)$, $\alpha\ne-\beta$, the
following conditions are equivalent.
\begin{itemize}
\item[(1)] $\alpha-\beta\not\in\Delta^\CG$ (the set of roots of
$\fg^\CG$). 
\item[(2)] There exists a path $v_0\stackrel{r_y}{\to} v_3\stackrel{r_x}{\to} v_2$ colored by
$(\beta,\alpha)$.
\end{itemize}
\end{lem}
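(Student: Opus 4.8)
This is a local, rank-two statement about two consecutive reflexions at the same vertex $v_0$, so the plan is to reduce to $X=\{x,y\}$ (the rows and columns indexed by other $z$ are unaffected by Remark~\ref{axyyx=0} once we note what $a_{xz},a_{yz}$ do, but in fact the condition only involves $x,y$ and $\alpha-\beta$, so the reduction is harmless) and then check the two implications by staring at the explicit reflexion formulas of \ref{sss:reflexionformulas} together with the Cartan-matrix-change formulas of \ref{sss:cartanmatrix-change}. The key geometric fact is that the path $v_0\xrightarrow{r_x}v_1\xrightarrow{r_y}v_2$ exists iff $x$ is reflectable at $v_0$ and $y$ is reflectable at $v_1$; existence of the swapped path means $y$ is reflectable at $v_0$ and $x$ is reflectable at $v_3:=r_y(v_0)$. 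Both paths, when they exist, must land at the same $v_2$ because $\Sk(v_0)$ is contractible (any two arrows with the same source and target coincide), so the real content is purely about \emph{reflectability} along the two routes, and that in turn is governed by the Cartan datum, i.e.\ by the entries $a_{xy},a_{yx}$ and their reflected versions.

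**Key steps.** First I would translate condition (1) into a statement about weight spaces: $\alpha-\beta = b_{v_1}(y) - (-b_{v_0}(x))$-type combination; more precisely, with the colouring convention $\alpha=b_{v_1}(x)=-b_{v_0}(x)$ and $\beta=b_{v_2}(y)=-b_{v_1}(y)$, the difference $\alpha-\beta$ is (up to sign) of the form $b_{v_1}(x)-b_{v_1}(y)$ plus a multiple of nothing — I would pin down the exact $\bZ$-combination $\alpha-\beta = -b_{v_0}(x)+b_{v_1}(y)$ and rewrite $b_{v_1}(y)$ in terms of the $v_0$-basis using the reflexion formula for $r_x$, obtaining $\alpha-\beta \in \{\pm(b(x)-b(y)),\ \pm(b(x)-b(y))+cb(x)\}$ depending on whether $r_x$ is isotropic or anisotropic and whether $a_{xy}=0$. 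Then $\alpha-\beta\in\Delta^\CG$ is, by \ref{sss:properties}(3) and Proposition~\ref{prp:bracket}, controlled by whether the relevant $(\ad\tilde e_x)^j\tilde e_y$ survives in $\fg^\CG$; concretely $\alpha-\beta$ being a root forces $a_{yx}\ne 0$ in the anisotropic case and forces the middle clause of the isotropic Cartan-change to bite. Second, I would show (1)$\Rightarrow$(2): assuming $\alpha-\beta$ is not a root, deduce that $a_{yx}=0$ "in the direction that matters", hence by local weak symmetricity (Theorem~\ref{thm:admissible=wsym}, as $\cR_0$ is admissible) also $a_{xy}=0$; now Remark~\ref{axyyx=0} says $r_x$ fixes the $y$-row/column, so $y$ is reflectable at $v_0$ iff it is at $v_1$ (true, since the path exists), and symmetrically $x$ reflectable at $v_3$ iff at $v_0$; and the reflexions $r_x,r_y$ act on disjoint data, so they literally commute and the swapped path exists and ends at the same place. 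Third, (2)$\Rightarrow$(1): if both orders of the path exist, they reach the same vertex $v_2$ with Cartan datum reachable two ways; comparing the two computations of $A_{v_2}$ via \ref{sss:cartanmatrix-change} forces $a_{xy}=a_{yx}=0$ (an isotropic or nontrivial reflexion would have altered the $y$-row in a way the other order cannot reproduce), and then $\alpha-\beta$ lies in a weight space $\wt\fg_{b(x)-b(y)}$-type summand that is zero in $\wt\fg$ already (property \ref{sss:properties}(3) with $a_{xy}=0$ gives $j_0=0$, so $(\ad\tilde e_x)\tilde e_y\notin\wt\fg$ — more carefully, the relevant difference $\alpha-\beta$ equals $-b(x)-b(y)$ or $b(x)-b(y)$ and neither is a weight), whence $\alpha-\beta\notin\Delta^\CG$.

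**Main obstacle.** The bookkeeping in step one — getting the sign of $\alpha-\beta$ right and correctly enumerating the three cases ($r_x$ anisotropic; $r_x$ isotropic with $a_{xy}=0$; $r_x$ isotropic with $a_{xy}\ne 0$) — is the fiddly part, and the genuinely delicate point is the equivalence "$\alpha-\beta\in\Delta^\CG$ $\iff$ $a_{xy}$ and $a_{yx}$ are not both zero". The implication $\Leftarrow$ is Proposition~\ref{prp:bracket}(2) (or Lemma~\ref{lem:rk2-ideal}), but the implication $\Rightarrow$ — that vanishing of both off-diagonal entries kills the would-be root even in $\fg^\CG$, not merely in $\fg^\U$ — needs that $\wt\fg_{b(x)-b(y)}=0$ outright (it is, by \ref{sss:properties}(3), since $b(x)-b(y)$ is never of the form $jb(x)+b(y)$) together with the analogous vanishing for the isotropic-with-$a_{xy}=0$ case where $\alpha-\beta = -b(x)-b(y)$, again not a weight of $\wt\fg$ by the triangular decomposition. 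So the real work is organizing the case analysis so that in every case "$\alpha-\beta$ a root" is equated with a concrete nonvanishing of a Cartan entry; once that dictionary is in place both implications fall out of Remark~\ref{axyyx=0}, local weak symmetricity, and contractibility of $\Sk(v_0)$.
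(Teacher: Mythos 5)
Your plan for the substantive direction (1)$\Rightarrow$(2) is essentially the paper's: establish that $\alpha-\beta=b_{v_1}(x)+b_{v_1}(y)\in\Delta^\CG$ is equivalent to $a_{xy},a_{yx}$ not both vanishing (the paper gets the anisotropic case from the $\beta$-root-string through $\alpha$ and the isotropic case directly from the reflexion formula; this is the same dictionary you describe via Proposition~\ref{prp:bracket} and Lemma~\ref{lem:rk2-ideal}), and then commute the two reflexions using Remark~\ref{axyyx=0} and weak symmetricity. One point needs more care than you give it: the ``reduction to rank two'' is not harmless for the conclusion of (2), because the swapped path must land at the \emph{same} root datum $v_2$, and $r_x$, $r_y$ in general modify $a(z),b(z),p(z)$ for third indices $z$. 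Saying the reflexions ``act on disjoint data'' is not literally true; what is true, and what the paper verifies explicitly, is that the increments $b_1(z)-b_0(z)$, $a_1(z)-a_0(z)$ produced by $r_x$ depend only on $a(x),b(x)$ and the $x$-row and column of the Cartan matrix, all of which are unchanged by $r_y$ once $a_{xy}=a_{yx}=0$; this yields $b_0(z)-b_1(z)=b_3(z)-b_2(z)$ and the analogous identity for $a$, whence the square closes. You should write this out.

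Your route for (2)$\Rightarrow$(1) --- extracting $a_{xy}=a_{yx}=0$ by ``comparing the two computations of $A_{v_2}$'' --- is the weakest step as proposed: you do not justify why two different orders could not accidentally produce the same Cartan matrix. The constraint you actually need is already in the colouring hypothesis: the first edge of the swapped path is coloured $-b_{v_0}(y)$, and requiring this to equal $\beta=-b_{v_1}(y)$ forces $b_{v_0}(y)=b_{v_1}(y)$, hence $a_{xy}=0$. But the paper's argument is shorter still and bypasses the Cartan matrix entirely: if (2) holds then $\alpha=b_{v_2}(x)$ and $\beta=b_{v_2}(y)$ are both simple roots at $v_2$, so $\alpha-\beta$ lies in neither $Q^+_{v_2}$ nor $-Q^+_{v_2}$ and therefore is not a root. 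I recommend adopting that one-liner in place of your detour.
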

\begin{proof}
If (2) is fulfilled, both $\alpha$ and $\beta$ are simple roots at $v_2$, so their difference is not a root. Let us prove that
(1) implies (2). We have $\alpha, -\beta\in\Sigma_{v_1}$,
$-\alpha\in\Sigma_{v_0}$ and $\beta\in\Sigma_{v_2}$.
We will denote by $A^i=(a^i_{xy})$ the Cartan matrix at $v_i$
and we will write $a_i(z)$ and $b_i(z)$ instead of $a_{v_i}(z)$ and $b_{v_i}(z)$.

If $\beta$ is anisotropic,  $\langle\alpha,\beta^\vee\rangle=0$ as 
otherwise both $s_{-\beta}(\alpha)=\alpha-\langle\alpha,\beta^\vee\rangle\beta$ and $\alpha$ are roots, which would imply that  
$\alpha-\beta$ is also a root. This implies that $a^1_{xy}=0$. If 
$\beta$ is isotropic, we still have $a^1_{xy}=0$ as otherwise $r_y$
would carry $\alpha=b_1(x)$ to $\alpha-\beta$ that is not a root.
Thus, by admissibility, $a^1_{yx}=0$. Using Remark~\ref{axyyx=0},
we deduce $-\beta\in\Sigma_{v_0}$ and $\alpha\in\Sigma_{v_2}$ so that

$$b_0(x)=-\alpha, b_0(y)=-\beta$$ 
$$b_1(x)=\alpha, b_1(y)=-\beta\textrm{ and } a_1(y)=a_0(y)$$ 
$$b_2(x)=\alpha,\ b_2(y)=\beta\textrm{ and } a_2(x)=a_1(x).$$ 

We will show that $x$ is reflectable at $v_2$, $y$ is reflectable at $v_3$ and $r_y\circ r_x$ carries $v_2$ to $v_0$. This will give the square in $\Sp(v)$ shown in the picture.
$$
\xymatrix{
& & v_1\ar^{r_y}_\beta[rd] & \\
& v_0\ar^{r_x}_\alpha[ru]&& v_2\ar^{r_x}_{-\alpha}[ld]\\
&& v_3\ar^{r_y}_{-\beta}[lu]  &
}
$$
Reversing the lower reflexions, we get the required result.

Note that reflectability of $x\in X$ at $v$ is determined by the 
$x$-th row of the Cartan matrix at $v$. By~\ref{axyyx=0} the $x$-row of $A^2$ is equal to the $x$-row of $A^1$, so $x$ is reflectable at $v_2$.
Since $b_1(x)=b_2(x)$, $a_1(x)=a_2(x)$  and  the $x$th row 
(resp., $x$th column)
of $A^2$ is equal to the $x$th row (resp., $x$th column) of $A^1$ we have 
$$b_0(z)-b_1(z)=b_3(z)-b_2(z),\ \ \ a_0(z)-a_1(z)=a_3(z)-a_2(z).$$

Once more, by ~\ref{axyyx=0} applied to $r_x:v_2\to v_3$, the $y$ row of $A^3$ is equal to the $y$-row of $A^2$, so $y$ is reflectable at $v_3$. It remains to show that $r_y$ carries $v_3$ to $v_0$.
Since $b_2(y)=b_3(y)$, $a_2(y)=a_3(y)$  and  the $y$th row 
(resp., $y$th column)
of $A^3$ is equal to the $y$th row (resp., $y$th column) of $A^2$, we have 
$$b_1(z)-b_2(z)=b'_0(z)-b_3(z),\ \ \ a_1(z)-a_2(z)=a'_0(z)-a_3(z).$$
Therefore, $b'_0(z)=b_0(z)$ and $a'_0(z)=a_0(z)$. Hence $v'_0=v_0$.

\end{proof}

\begin{lem}
\label{lem:pre-exchange}
Let
$$v_0\xrightarrow{r_{x_1}}v_1\xrightarrow{r_{x_2}}\dots \xrightarrow{r_{x_s}}v_s$$
be a  path in $\Sp(v)$ colored by a  sequence $(\alpha_1,\ldots,\alpha_s)$
with the property   $\alpha_i\not=-\alpha_j$ for $i\not=j$. 
Assume that $\alpha=b_{v_0}(x)=b_{v_s}(y)$ is isotropic. 
Then $\alpha-\alpha_i\not\in\Delta^{\CG}$, $x=y$ and
$b_{v_i}(x)=\alpha$, $a_{v_i}(x)=a_{v_0}(x)$ for all $i$.
\end{lem}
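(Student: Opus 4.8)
The plan is to induct on $s$, using Lemma~\ref{lem:independent} as the engine that lets us commute the isotropic reflexion defining $\alpha$ past the reflexions in the path. The base case $s=0$ is trivial, and $s=1$ is essentially contained in Lemma~\ref{lem:independent}: there $\alpha=b_{v_0}(x)$ is a simple root at $v_0$, and $\alpha=b_{v_1}(y)$ is simple at $v_1$, so their difference $\alpha-\alpha_1$ is not a root (if it were, applying $r_{x_1}$ to the simple root $\alpha$ at $v_1$ would produce $\alpha-\alpha_1$, contradicting that reflexions send $\Delta^\re$ bijectively to $\Delta^\re$ and $\alpha$ stays simple); and once $\alpha-\alpha_1\notin\Delta^\CG$, Lemma~\ref{lem:independent} gives a path $v_0\xrightarrow{r_y}v_3\xrightarrow{r_x}v_1$ colored $(\alpha_1,\alpha)$, forcing $x_1=y$, $b_{v_1}(x)=\alpha$ and $a_{v_1}(x)=a_{v_0}(x)$.

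For the inductive step, consider the path $v_0\xrightarrow{r_{x_1}}v_1\to\cdots\to v_s$ and apply the inductive hypothesis to the subpath from $v_1$ to $v_s$ — but we first need $\alpha$ to be a simple root at $v_1$. So the key subclaim is: $\alpha-\alpha_1\notin\Delta^\CG$ and hence (by Lemma~\ref{lem:independent}) $\alpha$ is simple at $v_1$ with $b_{v_1}(x)=\alpha$, $a_{v_1}(x)=a_{v_0}(x)$, and moreover $x_1=x$ is reflectable at $v_1$ in a compatible way. To prove $\alpha-\alpha_1\notin\Delta^\CG$ one argues as follows: since $\alpha=b_{v_s}(y)$ is a simple root at $v_s$, the set of roots $\Delta^\CG(v_s)$ contains no element of the form $\alpha-\beta$ with $\beta$ a simple root at $v_s$; pulling back along the path, $\alpha-\alpha_1$ would have to be $\pm(\text{positive root})$, and the sign condition $\alpha_i\neq-\alpha_j$ together with formula~(\ref{Delta+rev'}) pins down which roots change sign along the way. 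More directly: if $\alpha-\alpha_1$ were a root of $\fg^\CG$, then since $\alpha\in\Sigma_{v_1}$ is isotropic and $\alpha_1=b_{v_1}(x_1)\in-\Sigma_{v_1}$, i.e. $-\alpha_1\in\Sigma_{v_1}$, the element $\alpha+(-\alpha_1) = \alpha-\alpha_1$ being a root would force (by the rank-two analysis behind Lemma~\ref{lem:independent} applied at $v_1$, exactly as in its proof) that $a^1_{x x_1}\neq 0$, hence by admissibility $a^1_{x_1 x}\neq0$; but then walking further along the path we would eventually reach $v_s$ where $\alpha=b_{v_s}(y)$, and tracking the $x$-row of the Cartan matrix through the (isotropic) reflexions — none of which changes $\alpha$ to a non-root by the sign hypothesis — yields a contradiction with $\alpha$ being simple at $v_s$. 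Once the subclaim is established, Lemma~\ref{lem:independent} (with the path $v_0\xrightarrow{r_{x_1}}v_1\xrightarrow{r_{x_2}}v_2$) shows there is an alternative path $v_0\xrightarrow{r_{x_2}}\to v_1$ if $x_1\neq x$; but in fact the subclaim shows $b_{v_1}(x)=\alpha$ is isotropic and simple, so we may apply the inductive hypothesis to the shorter path $v_1\to\cdots\to v_s$ of length $s-1$, whose coloring $(\alpha_2,\ldots,\alpha_s)$ still satisfies $\alpha_i\neq-\alpha_j$, with isotropic endpoint data $\alpha=b_{v_1}(x)=b_{v_s}(y)$. This yields $x=y$, $b_{v_i}(x)=\alpha$ and $a_{v_i}(x)=a_{v_0}(x)$ for $i\geq1$, and combined with the $i=0$ and $i=1$ cases from the subclaim we get the statement for all $i$, as well as $\alpha-\alpha_i\notin\Delta^\CG$ for every $i$ (the case $i=1$ from the subclaim, the cases $i\geq2$ from the inductive hypothesis).

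The main obstacle I expect is the subclaim $\alpha-\alpha_1\notin\Delta^\CG$ — specifically, ruling it out requires propagating information about the $x$-th row and column of the Cartan matrices all the way along the path and knowing that the isotropic reflexions in the path do not ``interact'' with $x$ (i.e., that $a_{v_i}(x)=a_{v_0}(x)$, which is part of what we want to prove, so there is a genuine bootstrap to be handled carefully by the induction). The sign hypothesis $\alpha_i\neq-\alpha_j$ is what prevents $\alpha$ from ever flipping to $-\alpha$ along the path, via formula~(\ref{Delta+rev'}); this is the combinatorial heart of the argument and must be invoked at the right moment. Everything else — reflectability of $x$ at each $v_i$, the explicit equalities $b_{v_i}(x)=\alpha$, $a_{v_i}(x)=a_{v_0}(x)$ — then follows by the bookkeeping already carried out in the proof of Lemma~\ref{lem:independent}, using Remark~\ref{axyyx=0} to see that the relevant rows and columns of the Cartan matrix are unchanged by the commuting reflexions.
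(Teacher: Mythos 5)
Your overall skeleton coincides with the paper's: everything reduces to the subclaim $\alpha-\alpha_1\notin\Delta^\CG$, after which the isotropic reflexion formulas give $a^0_{x_1x}=0$, hence $b_{v_1}(x)=b_{v_0}(x)=\alpha$ and $a_{v_1}(x)=a_{v_0}(x)$, and one concludes by induction on $s$ (with $x=y$ coming from injectivity of $b$ at the base case). But the subclaim is the whole content of the lemma, and neither of your two sketches proves it. Your ``more direct'' version derives $a^1_{x_1x}\neq 0$, hence $b_{v_1}(x)=\alpha-\alpha_1$, and then asserts that ``walking further along the path yields a contradiction with $\alpha$ being simple at $v_s$.'' That does not follow: $b_{v_1}(x)\neq\alpha$ is not in conflict with $\alpha=b_{v_s}(y)$ (nothing yet forces $y=x$, and even if it did, later reflexions could a priori restore $b(x)$), and your phrase ``none of which changes $\alpha$ to a non-root'' addresses the wrong quantity --- whether $\alpha$ remains a root is never in question. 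You also invoke $\alpha\in\Sigma_{v_1}$, which is precisely part of the conclusion, and you misplace $\alpha_1$: one has $\alpha_1=b_{v_1}(x_1)\in\Sigma_{v_1}$ and $-\alpha_1=b_{v_0}(x_1)\in\Sigma_{v_0}$, not $-\alpha_1\in\Sigma_{v_1}$. You candidly flag the resulting bootstrap (that $a_{v_i}(x)=a_{v_0}(x)$ is needed along the way but is part of what is being proved), but you do not break it.

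The missing idea is a parity/positivity invariant that requires no tracking of Cartan data along the path. Set $\beta=\alpha-\alpha_1$ and suppose $\beta\in\Delta^\CG$. Both $\alpha$ and $\alpha_1$ are isotropic, hence odd, so $\beta$ is even; moreover $\beta=b_{v_0}(x)+b_{v_0}(x_1)\in Q^+_{v_0}$ and $\tfrac{\beta}{2}$ is not a root. By Proposition~\ref{crlDeltare}(1) an isotropic reflexion only exchanges the signs of the two odd roots $\pm\alpha_i$, so even positive roots whose half is not a root stay positive along the whole path; hence $\beta\in\Delta^+_{v_s}$. Since $\alpha$ is simple at $v_s$, a height count at $v_s$ shows that $-\alpha_1=\beta-\alpha$ must also lie in $\Delta^+_{v_s}$, i.e.\ $\alpha_1$ is negative at $v_s$ while it is simple, hence positive, at $v_1$; by formula~(\ref{Delta+rev'}) this forces $\alpha_i=-\alpha_1$ for some $i>1$, contradicting your sign hypothesis. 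This one observation is exactly what the paper supplies and what your proposal lacks; once it is in place the rest of your bookkeeping goes through.
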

\begin{proof}
Set $\beta:=\alpha-\alpha_1$. Let us show that $\beta$ is not a root.
Assume the contrary. Then  
$\beta$ is even and
$\frac{\beta}{2}$ is not a root. 
Since {the set of even positive roots $\beta$ such that
$\frac{\beta}{2}$ is not a root }
is preserved by isotropic reflexions,
$\beta=\alpha-\alpha_1\in \Delta^+_{v_s}$. Therefore  
$\alpha_1\in -\Delta^+_{v_s}$.
Since $\alpha_1\in \Delta^+_{v_1}$, there should exist $1<i\leq s$ such 
that $\alpha_i=-\alpha_1$, a contradiction.
Since $\beta\not\in\Delta^\CG$, we have
$b_{v_1}(x)=b_{v_0}(x)=\alpha$ and $a_{v_1}(x)=a_{v_0}(x)$. 

Now the assertion follows by induction in $s$.
\end{proof}

The following result describes an exchange property for a sequence of 
isotropic reflexions.

\begin{prp}
\label{prp:spine-com}
Let
$$v_0\xrightarrow{r_{x_1}}v_1\xrightarrow{r_{x_2}}\dots \xrightarrow{r_{x_d}}v_d\xrightarrow{r_{x_{d+1}}}v_{d+1}$$
be a  path in $\Sp(v)$ colored by a  sequence $(\alpha_1,\ldots,\alpha_{d+1})$
with the property $\alpha_{d+1}=-\alpha_1$ and $\alpha_i\not=-\alpha_j$
for $1\leq i<j\leq d$. Then 
$x_{d+1}=x_1$ and  there is a sequence of isotropic reflexions
$$v_0\xrightarrow{r_{x_2}}v'_2\xrightarrow{r_{x_3}}\dots \xrightarrow{r_{x_{d-1}}}v'_{d-1}\xrightarrow{r_{x_d}}v_{d+1}$$
colored by the sequence $(\alpha_2,\ldots,\alpha_d)$.
\end{prp}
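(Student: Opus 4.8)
The plan is to run an induction on $d$, using Lemma~\ref{lem:independent} as the commutation move and Lemma~\ref{lem:pre-exchange} to control which reflexion reappears. First I would establish the base case $d=1$: the path is $v_0\xrightarrow{r_{x_1}}v_1\xrightarrow{r_{x_2}}v_2$ colored by $(\alpha_1,-\alpha_1)$. Since $\alpha_1\in\Sigma_{v_1}$ and $-\alpha_1=b_{v_1}(x_2)$, and also $\alpha_1=b_{v_0}(x_1)=-b_{v_1}(x_1)$, comparing simple roots at $v_1$ forces $x_2=x_1$; then $r_{x_1}\circ r_{x_1}=\id$ gives $v_2=v_0$ and the required shortened path (of length $d-1=0$) is the empty path. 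For the inductive step, by Lemma~\ref{lem:pre-exchange} applied to the subpath $v_0\to\dots\to v_d$ (whose colors $\alpha_1,\ldots,\alpha_d$ satisfy $\alpha_i\ne-\alpha_j$ for $i<j$), together with the fact that $\alpha=b_{v_0}(x_1)=\alpha_1$ is isotropic (it colors the reflexion $r_{x_1}$ at the source), I conclude $\alpha_1-\alpha_i\notin\Delta^\CG$ for all $i\le d$, and in particular $\alpha_1-\alpha_2\notin\Delta^\CG$.

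Next I would apply Lemma~\ref{lem:independent} to the length-two subpath $v_0\xrightarrow{r_{x_1}}v_1\xrightarrow{r_{x_2}}v_2$ colored by $(\alpha_1,\alpha_2)$ with $\alpha_1\ne-\alpha_2$: since $\alpha_1-\alpha_2\notin\Delta^\CG$, there is a path $v_0\xrightarrow{r_{x_2}}\tilde v_1\xrightarrow{r_{x_1}}v_2$ colored by $(\alpha_2,\alpha_1)$. Splicing this in place of the first two steps, the original path becomes
$$v_0\xrightarrow{r_{x_2}}\tilde v_1\xrightarrow{r_{x_1}}v_2\xrightarrow{r_{x_3}}\dots\xrightarrow{r_{x_d}}v_d\xrightarrow{r_{x_{d+1}}}v_{d+1},$$
colored by $(\alpha_2,\alpha_1,\alpha_3,\ldots,\alpha_d,-\alpha_1)$. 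Now I would focus on the tail starting at $\tilde v_1$: it is a path in $\Sp(v)$ of length $d$ colored by $(\alpha_1,\alpha_3,\ldots,\alpha_d,-\alpha_1)$, whose first and last colors are negatives of each other, and whose intermediate colors $\alpha_1,\alpha_3,\ldots,\alpha_d$ are pairwise non-antipodal (this uses the hypothesis $\alpha_i\ne-\alpha_j$ for $1\le i<j\le d$, which in particular gives $\alpha_1\ne-\alpha_j$ and $\alpha_i\ne-\alpha_j$ for $3\le i<j\le d$). By the induction hypothesis, the reflexion index matching the color $-\alpha_1$ at the end equals the one matching $\alpha_1$ at the start — i.e.\ $x_{d+1}=x_1$ — and there is a sequence of isotropic reflexions
$$\tilde v_1\xrightarrow{r_{x_3}}w_3\xrightarrow{r_{x_4}}\dots\xrightarrow{r_{x_d}}w_d=v_{d+1}$$
colored by $(\alpha_3,\ldots,\alpha_d)$. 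Prepending the step $v_0\xrightarrow{r_{x_2}}\tilde v_1$ colored by $\alpha_2$ yields exactly the desired path
$$v_0\xrightarrow{r_{x_2}}v'_2\xrightarrow{r_{x_3}}\dots\xrightarrow{r_{x_{d-1}}}v'_{d-1}\xrightarrow{r_{x_d}}v_{d+1}$$
colored by $(\alpha_2,\ldots,\alpha_d)$, with $v'_2=\tilde v_1$, $v'_i=w_i$.

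The step I expect to be the main obstacle is the bookkeeping needed to legitimately invoke Lemma~\ref{lem:pre-exchange} and Lemma~\ref{lem:independent} — specifically, checking that $\alpha_1=b_{v_0}(x_1)$ is isotropic (so that Lemma~\ref{lem:pre-exchange} applies and outputs $\alpha_1-\alpha_2\notin\Delta^\CG$, which is the precise hypothesis of Lemma~\ref{lem:independent}), and that after the commutation move the new tail really is a path \emph{in $\Sp(v)$} with all the non-antipodal hypotheses intact for the induction. One subtlety is that Lemma~\ref{lem:independent} is stated for a single commutation of two consecutive isotropic reflexions, so I must verify each of $r_{x_2}$ and $r_{x_1}$ along the new path $v_0\to\tilde v_1\to v_2$ is still isotropic; this follows because $\alpha_1,\alpha_2$ are isotropic roots of $\fg^\CG$, which is an intrinsic (path-independent) property by the classification of real roots in~(\ref{eq:reunion}) and Proposition~\ref{prp:rroots-class}. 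Once these reflectability/parity checks are in place, the induction closes routinely.
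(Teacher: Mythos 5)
Your overall strategy is exactly the paper's: use Lemma~\ref{lem:pre-exchange} to see that $\alpha_1-\alpha_2\notin\Delta^\CG$, commute the first two reflexions via Lemma~\ref{lem:independent}, and induct. The induction bookkeeping (base case $d=1$, the tail colored by $(\alpha_1,\alpha_3,\dots,\alpha_d,-\alpha_1)$ satisfying the hypotheses, the check that the commuted edges are still isotropic) is fine.

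There is, however, one step that does not work as written: the invocation of Lemma~\ref{lem:pre-exchange} on the subpath $v_0\to\dots\to v_d$ with ``$\alpha=b_{v_0}(x_1)=\alpha_1$''. With the paper's convention the edge $r_{x_1}:v_0\to v_1$ is colored by $\alpha_1=-b_{v_0}(x_1)=b_{v_1}(x_1)$, so $b_{v_0}(x_1)=-\alpha_1$, not $\alpha_1$. More importantly, Lemma~\ref{lem:pre-exchange} requires a \emph{single} isotropic root that is simple at \emph{both} endpoints of the chosen subpath, and for $v_0\to\dots\to v_d$ there is none: $-\alpha_1$ is simple at $v_0$ while $\alpha_1=b_{v_d}(x_{d+1})$ is simple at $v_d$ (and by injectivity of $b$ a root and its negative cannot both be simple at one vertex). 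So the hypotheses fail for the path you chose, whichever sign you take. The fix is to apply the lemma to the subpath $v_1\to\dots\to v_d$, colored by $(\alpha_2,\dots,\alpha_d)$, with $\alpha:=\alpha_1=b_{v_1}(x_1)=b_{v_d}(x_{d+1})$; this is legitimate, yields $\alpha_1-\alpha_i\notin\Delta^\CG$ for $i=2,\dots,d$ (in particular $\alpha_1-\alpha_2\notin\Delta^\CG$) and, as a bonus, gives $x_{d+1}=x_1$ directly. This is precisely what the paper does, and with this correction the rest of your argument goes through unchanged.
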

\begin{proof}
We apply Lemma~\ref{lem:pre-exchange}
to the sequence of reflexions 
$v_1\xrightarrow{r_{x_2}}\dots \xrightarrow{r_{x_d}}v_d$
and the root $\alpha:=\alpha_1$. We deduce that  
$\alpha_1-\alpha_2\not\in\Delta^\CG$. This implies
that, by Lemma~\ref{lem:independent}, one can replace the sequence $v_0\to v_1\to v_2$ with $v_0\to v_2'\to v_2$ and then a simple induction gives the required result. 
\end{proof}

\begin{rem}
Lemma \ref{lem:pre-exchange} 
implies that for $v,v'$ in $\Sp(v_0)$ we have
$$b_v(x)=b_{v'}(y)\in \Delta_{\iso}\ \ \Longrightarrow\ \ x=y, a_v(x)=a_{v'}(y).$$
In \ref{ss:s21b} below
 we will see that $b_v(x)=b_{v'}(y)\in \Delta_{\an}$ does not imply
neither $x=y$ nor $a_v(x)=a_{v'}(y)$.

\end{rem}

\begin{crl}
\label{crl:unique-in-sk}
Let $v'\in\Sk(v)$ satisfy $\Delta^+_\re(v')=\Delta^+_\re(v)$.
Then $v'=v$. In particular, if a homothety can be presented as a composition of reflexions, it is the identity.
\end{crl}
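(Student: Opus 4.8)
The statement is: if $v'\in\Sk(v)$ and $\Delta^+_\re(v')=\Delta^+_\re(v)$, then $v'=v$. The strategy is to combine the structural decomposition from Lemma~\ref{lem:decomposition0} (which writes any $v'\in\Sk(v)$ as $w(v'')$ for $w\in W$ and $v''\in\Sp(v)$) with the freeness of the Weyl action (Corollary~\ref{crl:Wfree}) and the newly-established rigidity of the spine (Proposition~\ref{prp:spine-com} and Lemma~\ref{lem:pre-exchange}). First I would choose a path $\phi:v\to v'$ presented as a composition of reflexions, and invoke Lemma~\ref{lem:decomposition0} to get $v'=w(v'')$ with $w\in W(v)$, $v''\in\Sp(v)$.

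The key reduction is that an isotropic reflection does not change $\Delta^+_\an$, so $\Delta^+_\an(v'')=\Delta^+_\an(v)$; since $\Delta^+_\an(v')=\Delta^+_\re(v')\cap\Delta_\an=\Delta^+_\re(v)\cap\Delta_\an=\Delta^+_\an(v)$ by hypothesis, we get $\Delta^+_\an(v')=\Delta^+_\an(v'')$. But $v'=w(v'')$, so $w$ acts on $\Delta_\an$ preserving positivity relative to $v''$; by Corollary~\ref{crl:W-len} (applied with $v''$ as the base point, which is legitimate since $v''\in\Sk(v)=\Sk(v'')$) the cardinality of $\Delta^+_\an(v'')\setminus\Delta^+_\an(v')$ equals $\ell(w)$, hence $\ell(w)=0$ and $w=1$. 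Therefore $v'=v''\in\Sp(v)$.

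It remains to show that $v'=v$ once we know $v'\in\Sp(v)$ and $\Delta^+_\re(v')=\Delta^+_\re(v)$. Here I would argue by induction on the length $d$ of a minimal path $v=v_0\xrightarrow{r_{x_1}}\cdots\xrightarrow{r_{x_d}}v_d=v'$ in $\Sp(v)$, colored by $(\alpha_1,\dots,\alpha_d)$. If $d>0$, then by minimality the colors satisfy $\alpha_i\neq-\alpha_j$ for $i\neq j$ (a repeated sign-reversed color would, via formula~(\ref{Delta+rev'}) and Proposition~\ref{prp:spine-com}, allow shortening the path). By~(\ref{Delta+rev'}), $\Delta^+_\re(v')=\bigl(\Delta^+_\re(v)\cup\{\alpha_i\}\bigr)\setminus\{-\alpha_i\}$; since the $\alpha_i$ are distinct positive roots at $v_1$ onward while $-\alpha_1\in\Sigma_{v_0}\subset\Delta^+_\re(v)$, the set $\Delta^+_\re(v')$ actually differs from $\Delta^+_\re(v)$ in the root $\alpha_1$ (replaced by $-\alpha_1$), contradicting the hypothesis unless $d=0$. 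Thus $v'=v$. The final sentence (a homothety presentable as a composition of reflexions is the identity) follows because homotheties preserve $\Sigma$, hence $\Delta^+_\re$, so such a composition lies in $\Sk(v)$ with $\Delta^+_\re(v')=\Delta^+_\re(v)$, forcing $v'=v$ and the arrow to be $\id$ by the equivalence relation~\ref{sss:eq}.

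\textbf{Main obstacle.} The delicate point is the last paragraph: justifying that the $\alpha_i$ genuinely cannot cancel among themselves in~(\ref{Delta+rev'}) along a \emph{minimal} spine path, i.e. that minimality of the path forces $\alpha_i\neq-\alpha_j$. This is exactly the content packaged in Proposition~\ref{prp:spine-com}: a color sequence with $\alpha_{d+1}=-\alpha_1$ and no other sign-cancellations admits a shortening, and iterating shows any non-minimal configuration reduces. So I expect the bulk of the work is a clean bookkeeping argument reducing the general case to Proposition~\ref{prp:spine-com}, rather than any new computation.
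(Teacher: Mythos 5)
Your proposal is correct and follows essentially the same route as the paper: decompose $v'=w(v'')$ via Lemma~\ref{lem:decomposition0}, kill $w$ using invariance of $\Delta^+_\an$ under isotropic reflexions together with Corollary~\ref{crl:W-len}, and then use formula~(\ref{Delta+rev'}) plus the exchange property of Proposition~\ref{prp:spine-com} to shorten any nontrivial spine path. The only difference is presentational (you phrase the last step contrapositively via a minimal path, the paper shortens until the path is empty), and your explicit justification of the homothety clause is a harmless addition.
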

\begin{proof}
By Proposition~\ref{prp:decomposition0} there exist $v''\in\Sp(v)$ and 
$w\in W$ such that $v'=w(v'')$.
The sets of positive anisotropic roots at $v$ and at $v''$ coincide as none of them can become negative under an isotropic reflexion. Therefore, $w=1$ by \ref{crl:W-len}. This implies that
$v'\in\Sp(v)$. Let
$$v=v_0\xrightarrow{r_{x_1}}v_1\xrightarrow{r_{x_2}}\dots \xrightarrow{r_{x_d}}v_d=v'$$
be a sequence of isotropic reflexions colored by a sequence $(\alpha_1,\ldots,\alpha_d)$. Since $\Delta^+_\re(v')=\Delta^+_\re(v)$,
the formula(\ref{Delta+rev'}) implies $\alpha_i=-\alpha_j$ for some $i,j$. Then by ~\ref{prp:spine-com} the sequence of isotropic reflexions can be shortened.
\end{proof}

\begin{dfn} 
For two vertices $v,v'\in\Sk(v_0)$  the distance $d(v,v')$ is defined
to be the minimal number of reflexions in the decomposition of the arrow
 $v\to v'$.
\end{dfn}

\begin{prp}
\label{prp:Sk-len}
For $v,v'\in\Sk(v_0)$ the distance $d(v,v')$ is the cardinality 
of $\Delta^+_\re(v)-\Delta^+_\re(v')$.
\end{prp}
\begin{proof}
If the difference $\Delta^+_\re(v)-\Delta^+_\re(v')$ is nonempty,
it has an element that is a simple root $\alpha$ at $v$ that can be replaced with $-\alpha$ by a reflection. Continuing this, we can get,
after the required number of steps, a vertex $v''$ having the same 
$\Delta^+_\re(v'')$ as $\Delta^+_\re(v')$. Then by~\ref{crl:unique-in-sk} $v''=v'$.
\end{proof}

Note the following description of non-reflectable roots.

\begin{crl}\label{crl:rereflectable}
$\Delta_\nr=\Delta^\re\setminus(-\Delta^\re)$. 
\end{crl}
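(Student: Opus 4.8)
The plan is to establish the two inclusions. For the inclusion $\Delta^\re\setminus(-\Delta^\re)\subset\Delta_\nr$, take $\alpha\in\Delta^\re$ with $-\alpha\notin\Delta^\re$, and suppose toward a contradiction that $\alpha$ is reflectable, say $\alpha=b_{v'}(x)$ for some $v'\in\Sk(v_0)$ with $x$ reflectable at $v'$. Then the reflexion $r_x:v'\to v''$ sends $b_{v'}(x)$ to $b_{v''}(x)=-\alpha$; hence $-\alpha$ is a simple root at $v''\in\Sk(v_0)$, so $-\alpha\in\Delta^\re$, a contradiction. (This uses only the definition of real root and the reflexion formulas in~\ref{sss:reflexionformulas}, together with the fact that the arrow $v'\to v''$ is in $\Sk$.) So every real root whose negative is not real is non-reflectable, i.e.\ lies in $\Delta_\nr$.

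For the reverse inclusion $\Delta_\nr\subset\Delta^\re\setminus(-\Delta^\re)$, let $\alpha\in\Delta_\nr$. Clearly $\alpha\in\Delta^\re$, so I must show $-\alpha\notin\Delta^\re$. Suppose $-\alpha\in\Delta^\re$, so $-\alpha=b_{v'}(x)$ for some $v'\in\Sk(v_0)$. Since $\alpha\in\Delta^\re$ as well, we may also write $\alpha=b_{v''}(y)$ for some $v''\in\Sk(v_0)$; applying the reflexion $r_y$ at $v''$ (legitimate only if $y$ is reflectable at $v''$) would produce $-\alpha$ as a simple root, but that is not the direction I want. Instead I argue as follows: by the definition of $\Delta_\nr$ (the displayed description after~(\ref{eq:reunion})), $\alpha$ is non-reflectable at every vertex where it is simple; in particular $y$ is \emph{not} reflectable at $v''$. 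The same reasoning applied to $-\alpha$ at $v'$: but $-\alpha$ need not be in $\Delta_\nr$, so I cannot invoke that directly. Here is the clean route. Consider the vertex $v'\in\Sk(v_0)$ with $b_{v'}(x)=-\alpha$, i.e.\ $-\alpha\in\Sigma(v')$. Then $-\alpha\in\Delta^+_\re(v')$, hence $\alpha\in(-\Delta^+_\re(v'))$, so $\alpha\notin\Delta^+_\re(v')$. On the other hand, since $\alpha\in\Delta^\re(v_0)=\Delta^\re(v')$, it is either positive or negative at $v'$; being not positive, $\alpha\in-\Delta^+_\re(v')$, which just restates $-\alpha\in\Delta^+_\re(v')$ and gives no contradiction yet.

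The decisive observation is this: by Corollary~\ref{crl:Sk-len}, if $-\alpha=b_{v'}(x)\in\Delta^+_\re(v')$, then the single reflexion $r_x:v'\to v''$ removes $-\alpha$ from the positive real roots and inserts $\alpha$, so $\alpha\in\Delta^+_\re(v'')$ and $v''\in\Sk(v_0)$. But then $\alpha=b_{v''}(x)$ with $x$ applied via a \emph{legitimate} reflexion $r_x$ at $v'$ in the $v'\to v''$ direction — wait, I need $\alpha$ simple and reflectable at $v''$. Since $r_x^2=\id$, the reflexion $r_x$ at $v''$ sends $v''$ back to $v'$, so $x$ is reflectable at $v''$ and $\alpha=b_{v''}(x)$ is a reflectable simple root, contradicting $\alpha\in\Delta_\nr$. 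Thus $-\alpha\notin\Delta^\re$, completing the inclusion. The main obstacle is precisely this last bookkeeping step — making sure that "$-\alpha$ is a real root" is upgraded to "$-\alpha$ is a simple root at some $\Sk(v_0)$-vertex where the corresponding index is reflectable", so that the reflexion back to $v'$ is available; once one has that, reflectability of $\alpha$ at $v''$ follows from $r_x^2=\id$ and contradicts $\alpha\in\Delta_\nr$. In writing this up I would state it as: $-\alpha\in\Delta^\re$ means $-\alpha\in\Sigma_{v'}$ for some $v'\in\Sk(v_0)$, hence by the reflexion formulas $\alpha\in\Sigma_{v''}$ where $v''$ is the target of $r_x:v'\to v''$, and $x$ is reflectable at $v''$ since $r_x:v''\to v'$ is defined; this makes $\alpha$ a reflectable simple root, so $\alpha\notin\Delta_\nr$.
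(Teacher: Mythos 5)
Your first inclusion ($\Delta^\re\setminus(-\Delta^\re)\subset\Delta_\nr$) is correct and is exactly the paper's ``obvious'' half: a reflectable simple root is sent to its negative by the corresponding reflexion. The reverse inclusion, however, has a genuine gap. To show that $\alpha\in\Delta_\nr$ forces $-\alpha\notin\Delta^\re$, you write $-\alpha=b_{v'}(x)$ for some $v'\in\Sk(v_0)$ and then perform ``the single reflexion $r_x:v'\to v''$''. But that reflexion exists only if $x$ is reflectable at $v'$, i.e.\ only if $-\alpha\notin\Delta_\nr$ --- and nothing you have established rules out the scenario in which \emph{both} $\alpha$ and $-\alpha$ are nonreflectable real roots. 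Knowing that $-\alpha$ is a real root only tells you it is simple at some vertex; it does not license a reflexion in its direction. So your argument proves the weaker statement ``if $-\alpha$ is a \emph{reflectable} real root then so is $\alpha$,'' which is just your first inclusion applied to $-\alpha$, and the crucial case $\alpha,-\alpha\in\Delta_\nr$ is left unaddressed. (Corollary~\ref{crl:Sk-len}, which you invoke, says nothing about the existence of that edge.)

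The paper closes exactly this case with a path argument: pick $v$ with $\alpha\in\Sigma_v$ and $v'$ with $-\alpha\in\Sigma_{v'}$, and take any path of reflexions from $v$ to $v'$ inside $\Sk(v_0)$. Since $\alpha$ is positive at $v$ and negative at $v'$, formula~(\ref{Delta+rev'}) --- each edge merely swaps the sign of its color --- forces some edge $u\xrightarrow{r_x}u'$ of the path to be colored by $-\alpha$, that is $b_u(x)=\alpha$. This edge is by hypothesis a legitimate reflexion, so $x$ is reflectable at $u$ and $\alpha$ is a reflectable simple root there, contradicting $\alpha\in\Delta_\nr$. To repair your write-up you must replace the single-reflexion step by this (or an equivalent) argument; as it stands the key case is missing.
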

\begin{proof}
Obviously, if $\alpha$ is isotropic or anisotropic, 
$-\alpha\in\Delta_\re$. Let us assume that $-\alpha\in\Delta_\re$, 
$\alpha\in\Sigma_v$ and $-\alpha\in\Sigma_{v'}$. By formula
(\ref{Delta+rev'}) any path connecting $v$ with $v'$ contains an edge
where $\alpha$ becomes negative. This proves reflectability of
$\alpha$.
\end{proof}

\subsubsection{Weyl vector}
\label{sss:weylvector}
Choose $\rho_v\in\fh^*$ such that 
\begin{equation}
\label{eq:rho-v}
2\langle\rho_v, a_v(x)\rangle =\langle b_v(x), a_v(x)\rangle
\end{equation}
for all $x\in X$. For each $v'\in \Sk(v)$ we define
$$\rho_{v'}:=\rho_v+\sum_{\alpha\in \Delta^+_{\an}(v')-\Delta^+_{\an}(v)}\alpha
-\sum_{\alpha\in \Delta^+_{\iso}(v')-\Delta^+_{\iso}(v)}\alpha.
$$

Note that the formula~(\ref{eq:rho-v}) holds for all $v'\in\Sk(v)$.

\begin{Rem}
If  $\rho_v=\rho_{v_0}$  and  $v\in \Sp(v_0)$, then  $v=v_0$. 
\end{Rem}
The collection of $\rho_{v'}$, $v'\in\Sk(v)$, is called the Weyl
vector. The choice of $\rho_v$ is not unique. Weyl vectors play an important role in Lie theory.

\subsection{Structure of $\Aut_\cR(v)$ }
\label{ss:autv}

The action of $W(v)$ on $\Sk(v)$ extends to an action of $\Aut_\cR(v)$.

\begin{prp}
\label{prp:Aut-action-skeleton}
There is a unique action of $\Aut_\cR(v)$ on $\Sk(v)$ such that
for any $u\in\Sk(v)$ and $\gamma\in\Aut_\cR(v)$,
$
b_{\gamma(u)}(x)=\gamma(b_u(x)).
$
\end{prp}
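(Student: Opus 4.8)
The plan is to build the action by an explicit formula, using the canonical factorisation of arrows of $\cR$, and then to read the marking compatibility off that factorisation. Throughout I recall that $\Aut_\cR(v)$ acts faithfully on $\fh=\fh(v)$: a word in the generators $r_x,t_\theta,h_\lambda$ representing $\gamma$ induces the linear automorphism $\Theta(\gamma)\in\GL(\fh)$ equal to the (ordered) product of the $\theta$'s contributed by its tautological generators, reflexions and homotheties inducing $\id_\fh$; dually $\gamma$ acts on $\fh^*$ by the contragredient $(\Theta(\gamma)^*)^{-1}$, which is the meaning of $\gamma(b_u(x))$ and which, restricted to $W(v)\subset\Aut_\cR(v)$, gives back the $\fh^*$-action of the reflections $s_\alpha$ in~(\ref{eq:salpha}).

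For uniqueness, the stated condition forces $\gamma(u)$ to be a vertex of $\Sk(v)$ whose set of simple roots is $\{\gamma(b_u(x))\}_{x\in X}$; since the set of positive real roots of a vertex of $\Sk(v)$ is determined by its simple roots and, by Corollary~\ref{crl:unique-in-sk}, distinct vertices of $\Sk(v)$ have distinct sets of positive real roots, the vertex $\gamma(u)$ is pinned down, so there is at most one such action. The same characterisation makes any assignment with the stated property automatically an action, because $b_{(\gamma_1\gamma_2)(u)}(x)=(\gamma_1\gamma_2)(b_u(x))=\gamma_1(b_{\gamma_2(u)}(x))=b_{\gamma_1(\gamma_2(u))}(x)$ and $b_{\id(u)}(x)=b_u(x)$.

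For existence I first record that every arrow $\alpha\colon v\to v'$ of $\cR$ factors \emph{uniquely} as $\alpha=\alpha''\circ\alpha'$ with $\alpha'\in\Sk$ a composition of reflexions and $\alpha''$ a composition of a homothety and a tautological arrow (the decomposition of~\ref{sss:skeleton}, unique by~\ref{rem:uniqueness}/\ref{crl:unique-in-sk}: if $\alpha''_1\alpha'_1=\alpha''_2\alpha'_2$ then $(\alpha''_2)^{-1}\alpha''_1=\alpha'_2(\alpha'_1)^{-1}$ is simultaneously a homothety--tautological arrow and an element of $\Sk$, hence induces $\id_\fh$, hence has trivial tautological part and is a homothety in $\Sk$, hence the identity). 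Write $\partial\alpha\in\Sk$ for the target of the reflexion part $\alpha'$. For $\gamma\in\Aut_\cR(v)$ and $u\in\Sk(v)$ let $\epsilon_u\colon v\to u$ be the unique arrow of the contractible groupoid $\Sk(v)$ and set $\gamma(u):=\partial(\epsilon_u\circ\gamma^{-1})\in\Sk(v)$. This is a left action: the composition of two homothety--tautological arrows is again one, so if $\epsilon_u\circ\gamma_2^{-1}=\beta_2\circ\epsilon_w$ with $w=\gamma_2(u)$ and $\epsilon_w\circ\gamma_1^{-1}=\beta_1\circ\epsilon_{w'}$ with $w'=\gamma_1(w)$, then $\epsilon_u\circ(\gamma_1\gamma_2)^{-1}=(\beta_2\circ\beta_1)\circ\epsilon_{w'}$, whence $(\gamma_1\gamma_2)(u)=w'=\gamma_1(\gamma_2(u))$, while $\gamma=\id$ gives $\partial\epsilon_u=u$. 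To check the marking condition, write $\epsilon_u\circ\gamma^{-1}=\beta\circ\epsilon_{\gamma(u)}$ with $\beta\colon\gamma(u)\to u$ homothety--tautological; since the $\epsilon$'s induce $\id_\fh$ and $\gamma^{-1}$ induces $\Theta(\gamma)^{-1}$, the arrow $\beta$ induces $\Theta(\gamma)^{-1}$ on $\fh$, so its tautological part is $t_\eta$ with $\eta=\Theta(\gamma)^{-1}$; as a tautological arrow relates source and target $b$-markings by $b_{\mathrm{target}}=(\eta^*)^{-1}b_{\mathrm{source}}$, we get $b_u(x)=(\eta^*)^{-1}b_{\gamma(u)}(x)$, i.e. $b_{\gamma(u)}(x)=\eta^*b_u(x)=(\Theta(\gamma)^*)^{-1}b_u(x)=\gamma(b_u(x))$, as required. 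For $\gamma=i(w)$, $w\in W(v)$, this coincides with the $W(v)$-action of Proposition~\ref{prp:WSigma}, since both are characterised by $b_{\gamma(u)}(x)=w(b_u(x))$.

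The only genuine inputs are the uniqueness of the ``reflexion part'' decomposition (Corollary~\ref{crl:unique-in-sk}) and the fact that a vertex of $\Sk(v)$ is determined by its simple roots; the rest is bookkeeping. I expect the step most deserving of care is keeping the contragredient and the various inverses consistent in the last paragraph, and making fully rigorous the implication ``simple roots determine the vertex of $\Sk(v)$'', i.e. that the set of positive real roots at a vertex is recoverable from its simple roots.
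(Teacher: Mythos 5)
Your proposal is correct and takes essentially the same route as the paper: uniqueness is reduced to Corollary~\ref{crl:unique-in-sk}, and the vertex $\gamma(u)$ is obtained as the target of the reflexion part of a suitable arrow. The only cosmetic difference is that the paper builds that reflexion part explicitly as the namesake path issued from $v'$ (using $D$-equivalence of the Cartan data at $v$ and $v'$), whereas you invoke the canonical factorization of $\epsilon_u\circ\gamma^{-1}$ from \ref{sss:skeleton}/\ref{rem:uniqueness} and read the marking identity off its tautological component --- the same construction, packaged through the decomposition statement.
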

\begin{proof}
Uniqueness follows from \ref{crl:unique-in-sk}. It is therefore sufficient to verify that for each $u\in\Sk(v)$ and $\gamma\in\Aut_\cR(v)$ there is $u'\in\Sk(v)$ satisfying the
property $b_{u'}(x)=\gamma(b_u(x))$. We proceed as follows. We present
$\gamma=\gamma''\circ\gamma'$ where $\gamma':v\to v'$ is a composition 
of reflexions and $\gamma''$ is a composition of a homothety with a tautological arrow. Choose a path 
$$
v=v_0\stackrel{r_{x_1}}{\to}\ldots\stackrel{r_{x_k}}{\to}v_k=u
$$
of reflexions connecting $v$ with $u$. Since  the Cartan data at 
$v$ and at $v'$ are $D$-equivalent, there is a namesake path
$$
v'=v'_0\stackrel{r_{x_1}}{\to}\ldots\stackrel{r_{x_k}}{\to}v'_k=u'
$$
defining $u'\in\Sk(v)$. One obviously has $b_{u'}(x)=\gamma(b_u(x))$
which proves the claim.
\end{proof}

\begin{crl}
\label{crl:Aut-action-roots}
The action of $\Aut_\cR(v)$ on $\fh^*$ preserves $\Delta^\re$,
as well as $\Delta_\iso$, $\Delta_\an$, $\Delta_\nr$.
\end{crl}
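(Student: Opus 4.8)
The plan is to deduce everything from Proposition~\ref{prp:Aut-action-skeleton} together with the combinatorial characterizations of the three types of real roots established earlier in the section. First I would recall that, by \eqref{eq:rroots-bigunion} and \ref{sss:skeleton}, one has $\Delta^\re(v)=\bigcup_{u\in\Sk(v)}\Sigma_u(v)$, where $\Sigma_u(v)=\{b_u(x)\}_{x\in X}$. Given $\gamma\in\Aut_\cR(v)$ and a real root $\alpha=b_u(x)$ for some $u\in\Sk(v)$, Proposition~\ref{prp:Aut-action-skeleton} produces $u'=\gamma(u)\in\Sk(v)$ with $b_{u'}(x)=\gamma(b_u(x))=\gamma(\alpha)$; hence $\gamma(\alpha)\in\Sigma_{u'}(v)\subset\Delta^\re(v)$. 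Applying the same argument to $\gamma^{-1}$ shows $\gamma$ permutes $\Delta^\re$ bijectively. This handles the first assertion.

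Next I would show the partition \eqref{eq:reunion} is preserved. The cleanest way is to observe that the three pieces are characterized by data that $\gamma$ transports along the skeleton. For $\Delta_\nr$: by Corollary~\ref{crl:rereflectable} we have $\Delta_\nr=\Delta^\re\setminus(-\Delta^\re)$, and since $\gamma$ preserves $\Delta^\re$ and is linear, it preserves $-\Delta^\re$ as well, hence preserves $\Delta_\nr$. For the distinction between $\Delta_\iso$ and $\Delta_\an$ I would argue as follows: if $\alpha=b_u(x)$ with $x$ reflectable at $u$, then in the namesake vertex $u'=\gamma(u)$ of the proof of Proposition~\ref{prp:Aut-action-skeleton} the Cartan datum at $u'$ is $D$-equivalent to that at $u$ (this is exactly how $u'$ is constructed — via a namesake path from $v'$, where $v'$ has Cartan datum $D$-equivalent to $v$, hence to $u$ up to the fixed sequence of reflexions). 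Since reflectability of $x$ and the vanishing or non-vanishing of $a_{xx}$ depend only on the Cartan datum (and $D$-equivalence does not change whether $a_{xx}=0$), $x$ is reflectable at $u'$ with $a_{xx}=0$ iff $x$ is reflectable at $u$ with $a_{xx}=0$. Thus $\gamma(\alpha)=b_{u'}(x)$ is isotropic iff $\alpha$ is isotropic, and anisotropic iff $\alpha$ is anisotropic. Combined with the previous paragraph, $\gamma$ preserves all three subsets of \eqref{eq:reunion}.

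I do not expect a serious obstacle here; the one point that needs care is making precise the claim that the vertex $u'$ produced in Proposition~\ref{prp:Aut-action-skeleton} really does carry a Cartan datum $D$-equivalent to that of $u$, rather than merely living in the same skeleton component. This is immediate from the explicit construction in that proof — $u$ and $u'$ are obtained from $v$ and $v'$ respectively by \emph{the same} sequence of (namesake) reflexions, the reflexion formulas for the Cartan datum depend only on the Cartan datum, and $v,v'$ have $D$-equivalent Cartan data by definition of $\gamma=\gamma''\circ\gamma'$ with $\gamma''$ a homothety composed with a tautological arrow (homotheties produce $D$-equivalent Cartan data, tautological arrows leave them unchanged). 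Alternatively, one can bypass this entirely by noting that $\gamma$ acts on $\fh^*$ and $\fh$ compatibly, that anisotropic real roots are exactly those $\alpha\in\Delta^\re$ admitting a coroot $\alpha^\vee\in\fh$ with $\langle\alpha,\alpha^\vee\rangle=2$ that is itself ``$W$-conjugate into $a(X)$'' (Remark~\ref{rem:aniso-w}), a condition manifestly stable under the linear automorphism $\gamma$ once we know $\gamma$ normalizes $W(v)$ (it does, by \ref{sss:weyltoaut}); then $\Delta_\iso=\Delta^\re\cap(-\Delta^\re)\setminus\Delta_\an$ is automatically preserved too. Either route gives the corollary in a few lines.
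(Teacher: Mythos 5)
Your proposal is correct and follows essentially the same route as the paper: the invariance of $\Delta^\re$ comes from the formula $b_{\gamma(u)}(x)=\gamma(b_u(x))$ of Proposition~\ref{prp:Aut-action-skeleton}, and the invariance of the three pieces comes from the $D$-equivalence of the Cartan data at $u$ and $\gamma(u)$, which is exactly the paper's two-line argument spelled out in detail. The extra detour through Corollary~\ref{crl:rereflectable} for $\Delta_\nr$ and the coroot-based alternative are harmless but not needed.
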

\begin{proof}
The first claim follows from the formula 
$b_{\gamma(u)}(x)=\gamma(b_u(x))$. The rest follows from the fact
that $u$ and $u'=\gamma(u)$ have $D$-equivalent Cartan data.
\end{proof}

The group $\Aut_\cR(v)$ has a trivial part which we now describe.
\begin{dfn}
An automorphism $\gamma\in\Aut_\cR(v)$ is called irrelevant if it can be presented as a composition of a homothety and a tautological arrow.
\end{dfn}
The group of irrelevant automorphisms identifies with 
\begin{equation}
K(v)=\{\theta:\fh\to\fh|\forall x\in X\ \theta(a(x))\in\bC^*a(x),\theta^*(b(x))=b(x)\}.
\end{equation}

\begin{lem}
\label{lem:Kisnormal}
$K(v)$ is a normal subgroup of $\Aut_\cR(v)$.
\end{lem}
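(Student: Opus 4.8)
The plan is to identify $K(v)$ with the kernel of the action of $\Aut_\cR(v)$ on the skeleton component $\Sk(v)$ provided by Proposition~\ref{prp:Aut-action-skeleton}. That action is a group homomorphism $\Aut_\cR(v)\to\operatorname{Perm}(\Ob\Sk(v))$, so its kernel is automatically a normal subgroup; once the identification is established, the lemma follows.

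First recall (see~\ref{sss:skeleton} and Corollary~\ref{crl:unique-in-sk}) that every $\gamma\in\Aut_\cR(v)$ factors uniquely as $\gamma=\gamma''\circ\gamma'$, where $\gamma':v\to v'$ is a composition of reflexions and $\gamma''$ is a composition of a homothety and a tautological arrow. Tracing through the construction in the proof of Proposition~\ref{prp:Aut-action-skeleton}, the vertex $\gamma(v)$ produced by the skeleton action is the target $v'$ of $\gamma'$, and, more generally, $\gamma(u)$ is the target of the namesake path starting at $v'$ of any path of reflexions joining $v$ to $u$. I would then argue both inclusions. If $\gamma\in K(v)$ is irrelevant, then $\gamma$ is itself a composition of a homothety and a tautological arrow, so by uniqueness of the factorization $\gamma'=\id_v$ and $v'=v$; hence for every $u\in\Sk(v)$ and every path of reflexions $v\to u$, its namesake path starting at $v'=v$ is the same path and terminates at $u$, so $\gamma(u)=u$ and $\gamma$ acts trivially on $\Sk(v)$. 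Conversely, if $\gamma$ acts trivially on $\Sk(v)$, then $\gamma(v)=v$, forcing $v'=v$; thus $\gamma'$ is an endomorphism of $v$ in $\Sk$, hence $\gamma'=\id_v$ because $\Sk(v)$ is contractible, whence $\gamma=\gamma''$ is irrelevant, i.e.\ $\gamma\in K(v)$. Therefore $K(v)=\ker\bigl(\Aut_\cR(v)\to\operatorname{Perm}(\Ob\Sk(v))\bigr)$, and the lemma is proved.

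The only slightly delicate point is the first inclusion: one must check that an irrelevant automorphism fixes \emph{every} vertex of $\Sk(v)$, not just the base vertex $v$. This is immediate from uniqueness of the reflexion / homothety--tautological factorization, which forces the reflexion part of an irrelevant automorphism to be $\id_v$; after that the skeleton action visibly leaves all vertices fixed. I do not expect any genuine obstacle.
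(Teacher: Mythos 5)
Your proof is correct and is essentially the paper's argument: the paper likewise exhibits $K(v)$ as the kernel of a natural action of $\Aut_\cR(v)$, only it uses the action on the set $\Delta^\re$ of real roots rather than on the vertices of $\Sk(v)$. Since a vertex of $\Sk(v)$ is determined by its set of simple roots (Corollary~\ref{crl:unique-in-sk}) and the skeleton action is characterized by $b_{\gamma(u)}(x)=\gamma(b_u(x))$, the two kernels coincide and the arguments are interchangeable.
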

\begin{proof}
$K(v)$ is the kernel of the action of  $\Aut_\cR(v)$ on $\Delta^\re$.
\end{proof}

We can easily describe the image of $\Aut_\cR(v)$ in the automorphisms
of $\Sk(v)$. The description of the action presented above implies
that the automorphism of $\Sk(v)$ defined by $\gamma\in\Aut_\cR(v)$
is uniquely determined by the target $v'$ of $\gamma':v\to v'$ where
$\gamma'$ is the composition of reflexions appearing in the decomposition of $\gamma$. The vertex $v'\in\Sk(v)$ has the Cartan datum
$D$-equivalent to that of $v$. This identifies the image of $\Aut_\cR(v)$ with the set of the vertices on $\Sk(v)$ satisfying this property.

\subsubsection{}
We denote by $\Sk^D(v)$ the subset of (the vertices of) $\Sk(v)$ consisting of the vertices whose Cartan data are $D$-equivalent to that of $v$. The set $\Sk^D(v)$ is endowed with the group structure induced from
the group structure on $\Aut_\cR(v)$. It is combinatorially described 
using ``namesake path'' construction described in the proof of
Proposition~\ref{prp:Aut-action-skeleton}. By construction we have
an isomorphism
\begin{equation}
\label{eq:skd}
\Aut_\cR(v)/K(v)=\Sk^D(v).
\end{equation}

The composition $W(v)\stackrel{i}{\to}\Aut_\cR(v)\to\Sk^D(v)$
is injective as $W(v)\cap K(v)$ is trivial by~\ref{rem:uniqueness}. 

\subsubsection{}
The group $\Sk^D(v)$ has a subgroup $\Sp^D(v)$ defined as the subset
of $\Sk^D(v)$ belonging to $\Sp(v)$. 
The following proposition summarizes what we know about the structure
of the automorphism group.

\begin{prp}
\label{prp:structure-Aut}
\begin{itemize}
\item[1.] $W(v)\subset\Aut_\cR(v)$ is a normal subgroup.
\item[2.] $K(v)\subset \Aut_\cR(v)$ is a normal subgroup.
\item[3.] There is a canonical isomorphism $\Aut_\cR(v)/K(v)=\Sk^D(v)$.
\item[4.] $\Sk^D(v)=W(v)\rtimes\Sp^D(v)$.
\end{itemize}
\end{prp}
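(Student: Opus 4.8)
The plan is to prove the four items of Proposition~\ref{prp:structure-Aut} by assembling the facts established above, since almost everything is already in place. Item (1) is exactly the content of \ref{sss:weyltoaut}: the embedding $i:W(v)\to\Aut_\cR(v)$ is intertwined with the conjugation action of $\Aut_\cR(v)$ on itself via Corollary~\ref{crl:deltare-w}(2) (conjugating a reflection $s_\alpha$ by $\gamma$ gives $s_{\gamma(\alpha)}$, which is again a reflection in an anisotropic real root by Corollary~\ref{crl:Aut-action-roots}), so $W(v)$ is normal. Item (2) is Lemma~\ref{lem:Kisnormal}. Item (3) is the isomorphism~(\ref{eq:skd}) already derived in~\ref{ss:autv}: $K(v)$ is the kernel of the action of $\Aut_\cR(v)$ on $\Sk(v)$ (equivalently on $\Delta^\re$), and the orbit of $v$ under that action is exactly $\Sk^D(v)$, with the group structure transported along the bijection $\Aut_\cR(v)/K(v)\xrightarrow{\sim}\Sk^D(v)$.

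The only genuine work is item (4), the semidirect product decomposition $\Sk^D(v)=W(v)\rtimes\Sp^D(v)$. First I would note that $W(v)$ is a normal subgroup of $\Sk^D(v)$: it is normal in $\Aut_\cR(v)$ by (1) and is contained in $\Sk^D(v)$ (the image of $i(w)$ in $\Sk^D(v)$ is $w(v)$, which lies in $\Sk(v)$ and has Cartan datum $D$-equivalent — in fact equal — to that of $v$, since anisotropic reflexions preserve the Cartan matrix and $w$ is a composition of anisotropic reflexions up to tautological/homothety corrections). Next, $\Sp^D(v)=\Sp(v)\cap\Sk^D(v)$ is a subgroup of $\Sk^D(v)$ by the remark in~\ref{sss:skeleton}/\ref{ss:autv}. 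The intersection $W(v)\cap\Sp^D(v)$ is trivial: if $w\in W(v)$ with $w(v)\in\Sp(v)$, then by Corollary~\ref{crl:Spiff} (or directly: an isotropic reflexion does not change $\Delta^+_\an$) we get $\Delta^+_\an(w(v))=\Delta^+_\an(v)$, so $\ell(w)=0$ by Corollary~\ref{crl:W-len}, i.e. $w=1$. Finally, $W(v)\cdot\Sp^D(v)=\Sk^D(v)$: given any $v'\in\Sk^D(v)$, Lemma~\ref{lem:decomposition0} (equivalently Corollary~\ref{crl:Spiff}) gives $v'=w(v'')$ with $w\in W(v)$ and $v''\in\Sp(v)$; since $v'$ and $v$ have $D$-equivalent Cartan data and $w\in W(v)$ preserves the Cartan matrix, $v''$ also has Cartan datum $D$-equivalent to that of $v$, so $v''\in\Sp^D(v)$. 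Together with normality of $W(v)$ this gives the semidirect product decomposition.

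The main obstacle — if there is one — is bookkeeping about Cartan data: one must check carefully that $W(v)$ lands inside $\Sk^D(v)$ and that the $\Sp$-part produced by Lemma~\ref{lem:decomposition0} actually lands in $\Sp^D(v)$, both of which hinge on the fact that anisotropic reflexions (hence all of $W$) preserve the Cartan matrix on the nose, while isotropic reflexions only change it within the component, and the hypothesis $v'\in\Sk^D(v)$ pins the combined effect down to a $D$-equivalence. I would also remark that the uniqueness in Corollary~\ref{crl:Spiff} makes the decomposition $v'=w(v'')$ unique, which is the concrete incarnation of the semidirect-product splitting (each coset of $W(v)$ in $\Sk^D(v)$ meets $\Sp^D(v)$ in exactly one point). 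No estimate or delicate inequality is involved; it is purely a matter of correctly citing \ref{sss:weyltoaut}, Lemma~\ref{lem:Kisnormal}, (\ref{eq:skd}), Corollary~\ref{crl:W-len}, Corollary~\ref{crl:Spiff} and Lemma~\ref{lem:decomposition0}.
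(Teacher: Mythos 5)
Your proposal is correct and follows essentially the same route as the paper: items (1)--(3) are read off from \ref{sss:weyltoaut}, Lemma~\ref{lem:Kisnormal} and the isomorphism~(\ref{eq:skd}), and for item (4) both you and the authors get trivial intersection $W(v)\cap\Sp^D(v)=\{1\}$ from the fact that isotropic reflexions preserve $\Delta^+_\an$ together with Corollary~\ref{crl:W-len}, and get $\Sk^D(v)=W(v)\cdot\Sp^D(v)$ from the decomposition of Lemma~\ref{lem:decomposition0} plus the observation that $W$ preserves Cartan data so the $\Sp$-factor lands in $\Sp^D(v)$. Your extra bookkeeping (checking $W(v)\subset\Sk^D(v)$, and the uniqueness of the splitting via Corollary~\ref{crl:Spiff}) is consistent with, and slightly more explicit than, the paper's argument.
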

 
\begin{proof}Only Claim 4 needs proof.
The intersection $W(v)\cap\Sp^D(v)$ is trivial. Indeed, let $v'=w(v)\in\Sp^D(v)$. Any isotropic reflexion preserves the set of positive 
anisotropic roots, so $\Delta^+_\an(v)=\Delta^+_\an(v')$. Thus, $w=1$
by~\ref{crl:W-len}.

Every automorphism $\phi:v\to v$ decomposes as
$$
v\stackrel{\psi}{\to}v'\stackrel{\eta}{\to}v
$$
where $\psi$ is a composition of reflexions and $\eta$ is a composition
of a homothety with a tautological arrow. By~\ref{prp:decomposition0}
$\psi$ decomposes as $v\stackrel{\rho}{\to}v''\stackrel{\gamma_w}{\to}v'$
where $\rho$ denotes a composition of isotropic reflexions and 
$\gamma_w$ 
is the unique arrow in $\Sk(v)$ connecting 
$v''$ with $v'=w(v'')$. 
The Cartan data of $v'=w(v'')$ and $v''$ are $D$-equivalent 
(actually, the same), so $\Sk^D(v)$ is generated by $W$ and $\Sp^D$.
\end{proof}

\begin{crl}
\label{crl:invariantideals}
Let $\fg^\U$ be the universal root algebra at a component $\cR_0$,
$v\in\cR_0$. An ideal
$J(v)$ of $\fg^\U(v)$ such that $J(v)\cap\fh=0$ defines a root algebra 
$\fg$ having the
$v$-component $\fg(v)=\fg^\U(v)/J(v)$ if and only if it is invariant
with respect to $\Sp^D(v)$. In particular, if $\cR_0$ has no isotropic
reflexions, any ideal of $\fg^\U$ having zero intersection with $\fh$
defines a root algebra.
\end{crl}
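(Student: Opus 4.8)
The plan is to combine the general criterion for root algebras (the Remark following Lemma in \ref{sss:invariantideals}) with the structural decomposition of $\Aut_\cR(v)$ from Proposition~\ref{prp:structure-Aut}(4). Recall that, by that Remark, an ideal $I(v)\subset\wt\fg(v)$ defines a root Lie superalgebra precisely when $I(v)\supset\fs(v)$ and the image of $I(v)$ in $\fg^\U(v)=\wt\fg(v)/\fs(v)$ is $\Aut_\cR(v)$-invariant. Since we are starting from an ideal $J(v)\subset\fg^\U(v)$, the containment $I(v)\supset\fs(v)$ is automatic for the preimage $I(v)$, so the only condition to analyze is $\Aut_\cR(v)$-invariance of $J(v)$. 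Thus the corollary reduces to the claim: \emph{$J(v)$ with $J(v)\cap\fh=0$ is $\Aut_\cR(v)$-invariant if and only if it is $\Sp^D(v)$-invariant.}

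First I would observe that invariance under $K(v)$ is automatic: any irrelevant automorphism acts on $\fg^\U(v)$ by rescaling each $e_x$ (and inversely on $f_x$) and acts as the identity on $\fh$, hence it preserves every weight space $\fg^\U(v)_\mu$, and therefore preserves every graded subspace --- in particular every ideal $J(v)$ with $J(v)\cap\fh=0$ is automatically graded (being an ideal it is $\fh$-stable, and since it meets $\fh$ trivially it is the sum of its intersections with the nonzero weight spaces). So $K(v)$-invariance is free. Next, $W(v)$-invariance is also free: by Corollary~\ref{corgalpha} the generators $e_x,f_x$ act locally nilpotently for anisotropic $x$, so $\fg^\U(v)$ is integrable, the reflections $s_\alpha$ ($\alpha\in\Delta_\an$) lift to the automorphisms $\sigma=\exp(\tilde f_x)\exp(-\tilde e_x)\exp(\tilde f_x)$ of \ref{eq:sigma:gtog} (more precisely their conjugates by elements of $\Sk(v)$ as in \ref{sss:weyltoaut}), and any such $\sigma$ is \emph{inner} up to a torus element; an inner automorphism $\exp(\ad z)$ preserves every ideal. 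Hence every element in the image $i(W(v))\subset\Aut_\cR(v)$ preserves $J(v)$ automatically.

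Now I invoke Proposition~\ref{prp:structure-Aut}(3)--(4): modulo $K(v)$, the group $\Aut_\cR(v)$ is $\Sk^D(v)=W(v)\rtimes\Sp^D(v)$. Having shown $K(v)$ and $W(v)$ act trivially on the set of ideals meeting $\fh$ trivially, an arbitrary $\gamma\in\Aut_\cR(v)$ can, by this semidirect product decomposition together with the decomposition $\gamma=\gamma''\circ\gamma'$ of \ref{sss:wtR}, be written as a product of an element of $K(v)$, an element of $i(W(v))$, and a lift of an element of $\Sp^D(v)$; the first two factors preserve $J(v)$ for free, so $\gamma$ preserves $J(v)$ if and only if the $\Sp^D(v)$-factor does. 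This gives the equivalence, and the ``in particular'' clause is immediate: if $\cR_0$ has no isotropic reflexions then $\Sp(v)$ has no nontrivial arrows, so $\Sp^D(v)$ is trivial and the invariance condition is vacuous.

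The main obstacle I anticipate is justifying cleanly that the Weyl-group lifts act by \emph{inner} automorphisms on $\fg^\U(v)$ (so that $W$-invariance of arbitrary ideals is genuinely free) --- one must be slightly careful because $i(w)$ as an element of $\Aut_\cR(v)$ is, by Corollary~\ref{crl:w}, $t_w\circ\gamma_w$, a composition of a tautological arrow with a skeleton arrow, and it is the \emph{combination} of these that one checks acts as $\exp(\ad\cdot)$ of a nilpotent element (using integrability from Corollary~\ref{corgalpha}), rather than each factor separately. Once that point is made precise, the rest is bookkeeping with Proposition~\ref{prp:structure-Aut}.
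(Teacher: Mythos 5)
Your proposal is correct and follows essentially the same route as the paper: reduce via the remark in \ref{sss:invariantideals} to $\Aut_\cR(v)$-invariance, observe that $W(v)$ preserves all ideals because its reflections lift to inner automorphisms via (\ref{eq:sigma:gtog}), that $K(v)$ preserves all ideals because it acts diagonally on the weight decomposition, and then conclude with Proposition~\ref{prp:structure-Aut}. One small point of precision (which the paper does address): in the $K(v)$ step, preserving each weight space is not by itself enough to preserve an arbitrary graded subspace --- you need that $\gamma\in K(v)$ acts by a \emph{scalar} on each weight space $\fg^\U_\beta$, which follows since $\beta$ is a sum of simple roots or of their negatives and the $b(x)$ are linearly independent, so the scalar $\prod\lambda_x^{n_x}$ is well defined.
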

\begin{proof}
By~\ref{sss:invariantideals} one has to verify that $J(v)$ is invariant 
with respect to any $\gamma\in\Aut_\cR(v)$. We will verify that any
ideal is invariant with respect to the action of $W(v)$ and of $K(v)$.
The Weyl group is generated by reflections that are inner automorphisms 
by formula (\ref{eq:sigma:gtog}). So, the Weyl group preserves all 
ideals. Any $\gamma\in K(v)$ preserves the weights, so it preserves
the weight spaces. Thus, it multiplies by a constant each $\fg^\U_\alpha$
where $\alpha$ is a simple root or its opposite. Since any root $\beta$
 of $\fg^\U$ is either sum of simple roots or a sum of its opposites, 
 $\gamma$ acts on each $\fg^\U_\beta$ by multiplication by a constant.
Since any ideal of $\fg^\U(v)$ is a sum of its weight subspaces, any 
$\gamma\in K(v)$ preserves it.  Proposition~\ref{prp:structure-Aut} 
now implies the  claim.
\end{proof}

We will see in~\ref{rootalg}  that for all admissible fully reflectable indecomposable components
$\cR_0$, except for $\fgl(1|1)$, any ideal $J(v)$ of $\fg^\U(v)$ having zero intersection 
with $\fh$ is automatically $\Sp^D(v)$-invariant and therefore gives rise to a root algebra.

\begin{crl}
\label{crl:all-different}
Assume that no Cartan data at different vertices of $\Sp(v)$ are 
$D$-equivalent. Then $\Aut_\cR(v)$ is the direct product of the Weyl group $W$ and the subgroup $K$ of irrelevant automorphisms.
If, moreover, the Cartan matrix $A(a,b)$ at $v$ has no zero rows and
$\dim\fh=2|X|-\rk A(a,b)$ is minimal possible,   $K(v)$ is a commutative unipotent group.
\end{crl}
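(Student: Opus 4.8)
The statement splits into two assertions. The first is that under the hypothesis ``no two distinct vertices of $\Sp(v)$ have $D$-equivalent Cartan data'' one has $\Aut_\cR(v)=W(v)\times K(v)$; the second adds the unipotency of $K(v)$ under the extra hypotheses on the Cartan matrix. For the first part I would simply feed the hypothesis into Proposition~\ref{prp:structure-Aut}. Indeed, $\Sp^D(v)$ is by definition the set of vertices of $\Sp(v)$ whose Cartan datum is $D$-equivalent to that of $v$; the hypothesis says this set is the single point $\{v\}$, so $\Sp^D(v)$ is the trivial group. By Proposition~\ref{prp:structure-Aut}(4) we then get $\Sk^D(v)=W(v)\rtimes\Sp^D(v)=W(v)$, and by Proposition~\ref{prp:structure-Aut}(3) the quotient $\Aut_\cR(v)/K(v)$ is canonically isomorphic to $W(v)$. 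Since $W(v)\hookrightarrow\Aut_\cR(v)$ is a section of the quotient map (the composition $W(v)\to\Aut_\cR(v)\to\Sk^D(v)$ is the identity, being injective with image all of $\Sk^D(v)=W(v)$) and $W(v)\cap K(v)$ is trivial (by Remark~\ref{rem:uniqueness}, as recorded just after~(\ref{eq:skd})), and since both $W(v)$ and $K(v)$ are normal by Proposition~\ref{prp:structure-Aut}(1),(2), the group $\Aut_\cR(v)$ is the internal direct product $W(v)\times K(v)$.

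For the second assertion I would reproduce the computation already carried out for conventional Kac--Moody algebras in~\ref{sss:KMexample}. Write $r=\rk A(a,b)$, let $A\subset\fh$ be the span of the $a(x)$ and $B\subset\fh^*$ the span of the $b(x)$; then $\dim A=\dim B=|X|$, and the hypothesis $\dim\fh=2|X|-r$ is equivalent to saying $B^\perp\subset A$ (since $\dim B^\perp=\dim\fh-|X|=|X|-r=\dim A-r$ and the pairing $A\times B\to\bC$ given by the Cartan matrix has rank $r$, so $A\cap B^\perp$ has dimension $|X|-r$, forcing $B^\perp\subset A$). Now if $\theta\in K(v)$, then $\theta$ fixes every $b(x)\in\fh^*$ by definition, hence $\theta^*$ fixes $B$ pointwise, so $(\theta-1)$ has image contained in $B^\perp$; also $\theta(a(x))\in\bC^* a(x)$, but since $\theta^*$ fixes $b(x)$ we have $\langle a(x),b(x)\rangle=\langle\theta(a(x)),b(x)\rangle$, and — using that the Cartan matrix has no zero rows — some $\langle a(x),b(y)\rangle\neq 0$ is preserved, forcing the scalar to be $1$, so in fact $\theta(a(x))=a(x)$ and $(\theta-1)$ vanishes on $A$. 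Combining, $(\theta-1)$ kills $A\supset B^\perp\supset\im(\theta-1)$, so $(\theta-1)^2=0$; thus every element of $K(v)$ is unipotent. Finally, for $\theta_1,\theta_2\in K(v)$ the commutator $(\theta_1-1)(\theta_2-1)$ sends $\fh$ into $\theta_1(B^\perp)\cap(\text{image inside }A)$ and then is annihilated, and a direct check shows $(\theta_1-1)$ and $(\theta_2-1)$ commute (both have source containing $A$ and target inside $B^\perp\subset A$, so $(\theta_1-1)(\theta_2-1)=0=(\theta_2-1)(\theta_1-1)$); hence $K(v)$ is abelian, and being a group of unipotent operators it is a commutative unipotent group, of dimension $(|X|-r)^2$ as in~\ref{sss:KMexample}.

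**Main obstacle.** The only genuinely new point over~\ref{sss:KMexample} is that here $K(v)$ is defined allowing $\theta(a(x))\in\bC^* a(x)$ rather than $\theta(a(x))=a(x)$, so one must rule out the nontrivial scalars; the ``no zero rows'' hypothesis is exactly what does this, and I expect this small verification to be the one place where care is needed. Everything else is bookkeeping assembling statements that are already in the excerpt, so the argument should be short; I would present it as roughly the two paragraphs above, invoking Proposition~\ref{prp:structure-Aut} for the structural part and mirroring the dimension count of~\ref{sss:KMexample} for the unipotency, noting that the direct-product (rather than merely semidirect) conclusion uses the triviality of $\Sp^D(v)$ together with normality of $W(v)$.
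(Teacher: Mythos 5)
Your proof is correct and follows essentially the same route as the paper: triviality of $\Sp^D(v)$ plus Proposition~\ref{prp:structure-Aut} gives $\Aut_\cR(v)=W(v)\times K(v)$, and the unipotency of $K(v)$ is the dimension count of~\ref{sss:KMexample}. Your verification that the ``no zero rows'' hypothesis forces the scalars $\lambda_x$ to equal $1$ (so that the definition of $K(v)$ here reduces to the one in~\ref{sss:KMexample}) is a detail the paper's proof leaves implicit, and it is handled correctly.
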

\begin{proof}

Under the assumption, $\Sp^D(v)$ is trivial and so $\Sk^D(v)=W(v)$.
Since $W(v)$ is a normal subgroup of $\Aut_\cR(v)$, one has a direct
decomposition $\Aut_\cR(v)=W(v)\times K(v)$.
\end{proof}

\begin{prp}
\label{prp:finsuper}
Let $\cR_0$ be an admissible component with finite dimensional
$\fg^\CG\ne\fgl(n|n)$. Then $\Aut_\cR(v)=W(v)$.
\end{prp}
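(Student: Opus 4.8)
The plan is to read off $\Aut_\cR(v)$ from Proposition~\ref{prp:structure-Aut}. By part~(3) there one has $\Aut_\cR(v)/K(v)=\Sk^D(v)$, by part~(4) $\Sk^D(v)=W(v)\rtimes\Sp^D(v)$, and the canonical map $i:W(v)\to\Aut_\cR(v)$ is injective (with $W(v)\cap K(v)=1$) mapping isomorphically onto the $W(v)$-factor of the quotient. Hence the equality $\Aut_\cR(v)=W(v)$ is equivalent to the conjunction of the two vanishings $K(v)=1$ and $\Sp^D(v)=1$, and the proof reduces to establishing these two statements; the only input beyond the general theory of Sections~\ref{sec:root}--\ref{sec:coxeter} will be the classification of finite-dimensional contragredient Lie superalgebras.

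First, $K(v)=1$. By definition $K(v)=\{\theta\in\GL(\fh)\mid \theta(a(x))\in\bC^*a(x),\ \theta^*(b(x))=b(x)\}$, so for $\theta\in K(v)$ one has $(\theta-1)(\fh)\subseteq B^\perp$, where $B=\Span_\bC\{b(x)\}_{x\in X}$. Since $\fh\hookrightarrow\fg^\CG$ and $\fg^\CG$ is generated by $\fh$ together with the $e_x,f_x$ of weights $\pm b(x)$, the subspace $B^\perp$ is exactly the centre $\mathfrak{z}(\fg^\CG)$. For $\fg^\CG$ finite-dimensional and different from $\fgl(n|n)$ the classification gives that $\fg^\CG$ is centreless --- equivalently, its Cartan matrix is nondegenerate and $\dim\fh=2|X|-\rk A$ is the minimal possible value $|X|$, so $\{b(x)\}_{x\in X}$ is a basis of $\fh^*$. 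Thus $B^\perp=0$, forcing $\theta^*=\id$ and $\theta=\id$. (This is also one of the two places where $\fgl(n|n)$ must be excluded: there the Cartan matrix is degenerate, $B^\perp\ne0$, and $K(v)$ is a nontrivial one-parameter unipotent group.)

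Second, $\Sp^D(v)=1$. Recall $\Sp^D(v)=\Sp(v)\cap\Sk^D(v)$ is the set of vertices of the spine carrying a Cartan datum $D$-equivalent to that of $v$; the vertices of $\Sp(v)$ are obtained from $v$ by sequences of isotropic reflexions, and anisotropic reflexions, homotheties and tautological arrows are irrelevant to this question, so the Cartan data occurring along $\Sp(v)$ are precisely the simple-root systems of $\fg^\CG$ obtained from one another by odd reflexions. These lists are available in the classification (see~\cite{Hoyt},~\cite{S3}), and inspection shows that for $\fg^\CG\ne\fgl(n|n)$ no two of them are $D$-equivalent; hence $\Sp^D(v)=\{v\}$. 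Equivalently, the hypothesis of Corollary~\ref{crl:all-different} is satisfied. Combining the two steps, every $\eta\in\Aut_\cR(v)$ lies in $i(W(v))\cdot K(v)=i(W(v))$, i.e.\ $\Aut_\cR(v)=W(v)$.

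I expect the second step to be the main obstacle: it is the one genuinely forcing the exclusion of $\fgl(n|n)$ (for which $\Sp^D(v)$ is nontrivial --- it is $\bZ_2$ already for $n=1$, compare~\ref{sss:aut-gl11}), and, like the nondegeneracy used in the first step, it rests on the classification of finite-dimensional Lie superalgebras rather than on a formal argument internal to the root groupoid.
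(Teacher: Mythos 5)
Your proposal is correct and follows essentially the same route as the paper: the paper's (two-line) proof cites \cite{S1} to verify the hypothesis of Corollary~\ref{crl:all-different} (no two vertices of $\Sp(v)$ carry $D$-equivalent Cartan data, i.e.\ $\Sp^D(v)=1$) and then notes that $K(v)$ is trivial, which is exactly your two-step reduction via Proposition~\ref{prp:structure-Aut}. Your version merely makes explicit why $K(v)=1$ (nondegeneracy of the Cartan matrix, equivalently $B^\perp=\mathfrak{z}(\fg^\CG)=0$ in the minimal realization) and sources the spine inspection to \cite{Hoyt},~\cite{S3} instead of \cite{S1}.
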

\begin{proof}
By~\cite{S1} the conditions of Corollary~\ref{crl:all-different} are fulfilled. The rest follows from triviality of the group $K$.
\end{proof}

Note that for $\fg^\CG=\fgl(n|n)$ one has $\Aut_\cR(v)=W(v)\rtimes \bZ_2$, see~\ref{sss:glmn}.
\subsubsection{Example}
\label{sss:gl12}
Look at the root datum containing the root algebra $\fgl(1|2)$.
Here $X=\{1,2\}$, 
$\fh=\Span\{e,h_1,h_2\}$ and $\fh^*=\Span\{\epsilon,\delta_1,\delta_2\}$
(the dual basis), the spine $\Sp(v_0)$  has three vertices
\begin{itemize}
\item[$v_0$:] $a(1)=-e-h_1$, $a(2)=h_1-h_2$, $b(1)=\epsilon-\delta_1$, $b(2)=\delta_1-\delta_2$, $p(1)=1$, $p(2)=0$;
\item[$v_1$:] $a(1)=e+h_1$, $a(2)=-e-h_2$, $b(1)=\delta_1-\epsilon$, $b(2)=\epsilon-\delta_2$, $p(1)=p(2)=1$;
\item[$v_2$:] $a(1)=h_1-h_2$, $a(2)=e+h_2$, $b(1)=\delta_1-\delta_2$, $b(2)=\delta_2-\varepsilon$, $p(1)=0$, $p(2)=1$.
\end{itemize}

The Weyl group $W(v_0)$ has two elements, with the nonunit interchanging 
$\delta_1$ with $\delta_2$. The group $\Aut_\cR(v_0)$ coincides with
$W(v_0)$ by~\ref{prp:finsuper}.

\section{The Coxeter property of the skeleton}
\label{sec:coxeter2}

In this section we define Coxeter graphs and prove that the skeleton $\Sk(v)$ satisfies this
property. The notion of Coxeter graph generalizes that of a Coxeter group. The Cayley graph
of a group $G$ with respect to a set of generators $S=\{s_i\}$ is Coxeter iff $(G,S)$
is a Coxeter group. There are, however, Coxeter graphs that do not come from Coxeter groups.
It is an interesting question to describe all finite Coxeter graphs.

\subsection{Coxeter graphs}

Let $X$ be a finite set, $G$ a graph with the set of vertices $V$ and the set of edges $E$,
endowed with a marking $r:E\to X$.  We assume that $G$ is connected 
and that the edges having a common end, have different markings. We denote by $r_x:v\to v'$ the edge connecting $v$ and $v'$ marked with $x$. By the assumption, for a chosen $v$ such edge is unique, if exists.
Note that $r_x$ comes with a choice of direction for the edge connecting $v$ and $v'$. 

A path $\phi:v\to v'$ consists of a sequence of arrows
$$
v=v_0\stackrel{r_{x_1}}{\to}\dots\stackrel{r_{x_n}}{\to}v_n=v'.
$$
We denote $\ell(\phi)=n$ the length of $\phi$.

The path $\phi^{-1}:v'\to v$ is obtained from $\phi$ by changing the direction of all 
arrows.

\begin{dfn}
A Coxeter loop $\phi:v\to v$ is one of the following.
\begin{itemize}
\item[1.] $\phi=r_x^2$ (These are called the trivial loops.)
\item[2.] $\phi=(r_y\circ r_x)^m$.
(These are called the loops of length $2m$).
\end{itemize}
\end{dfn}

\begin{dfn}
Let $\phi,\psi:v\to v'$ be a pair of paths. If the concatenation $\psi^{-1}\circ\phi$
is a Coxeter loop,  we will say that one has an elementary Coxeter modification $\phi\Rightarrow\psi$.
\end{dfn}

\begin{dfn}
A Coxeter modification from $\phi$ to $\psi$ is a presentation 
$\phi=\phi_1\circ\phi_2\circ\phi_3$, $\psi=\psi_1\circ\psi_2\circ\psi_3$ such that
$\phi_1=\psi_1$, $\phi_3=\psi_3$ and one has an elementary Coxeter modification $\phi_2\Rightarrow\psi_2$.

\end{dfn}

\begin{dfn}
A marked graph $(X,G,r)$ is called Coxeter if any pair of paths from $v$ to $v'$
can be connected by a sequence of Coxeter modifications.
\end{dfn}

\subsubsection{}
As an example, take a group $\Gamma$ generated by a set $S$ of elements with $s^2=1$.
Let $G$ be the corresponding Cayley graph, where the vertices are $g\in\Gamma$, $X=S$, and $g$ and $h$ are connected by the edge  marked by $s$ if $g=hs$.
Then $\Gamma$ is a Coxeter group iff $G$ is a Coxeter graph.

Let $v\in\cR$. We look at the skeleton $\Sk(v)$ as marked graph, with the reflection
$r_x$ marked with $x\in X$. 
Conversely, one has the following easy result.

\begin{prp} Let $(X,G,r)$ be a Coxeter graph such that for any $v\in V$ 
and $x\in X$ there exists an edge $r_x:v\to v'$. Then $(X,G,r)$ is 
the Cayley graph of a Coxeter group if and only if for any pair $x,y\in X$ the length $2m_{xy}$ of $(x,y)$ loop 
$\phi=(r_y\circ r_x)^{m_{xy}}:v\to v$
is independent of $v$.
\end{prp}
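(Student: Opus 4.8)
The plan is to handle the two implications separately; the forward one is immediate, and the content lies in the converse.

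\emph{Forward direction.} Suppose $(X,G,r)=\mathrm{Cay}(W,S)$ for a Coxeter group $(W,S)$ with $S=\{s_x\}_{x\in X}$, so that $V=W$ and the $x$-marked edge at $g$ joins $g$ and $gs_x$. Then the path $(r_y\circ r_x)^m$ based at $g$ runs through $g, gs_x, gs_xs_y, \dots, g(s_xs_y)^m$, hence it is a loop if and only if $(s_xs_y)^m=1$. So the least $m\ge 1$ for which $(r_y\circ r_x)^m$ closes up is the order of $s_xs_y$ in $W$ (or $\infty$), which is independent of $g$; thus $2m_{xy}$ does not depend on the base vertex.

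\emph{Converse: constructing the group and its action.} Assume $2m_{xy}$ is independent of $v$. I would first record two elementary facts about the bi-infinite alternating $(x,y)$-walk through a vertex $v$ --- the walk whose consecutive edges are marked alternately $x$ and $y$: it is completely determined by $v$, since at any vertex the edge carrying a prescribed mark is unique; and since $G$ has no loop-edges, a vertex can recur along this walk only after an even number of steps, so the walk is periodic with minimal period exactly $2m_{xy}$. In particular $(r_y\circ r_x)^m$ is a loop at $v$ if and only if $m_{xy}\mid m$, and reversing such a loop gives $m_{xy}=m_{yx}$. Now set $W=\langle\, s_x\ (x\in X)\mid s_x^2=1,\ (s_xs_y)^{m_{xy}}=1 \,\rangle$, the Coxeter group with matrix $(m_{xy})$. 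Define a right action of the free group on $\{s_x\}$ on $V$ by letting $s_x$ send $v$ to the other endpoint of the unique $x$-marked edge at $v$; this is defined for every $v$ by full reflectability, and each $s_x$ acts as an involution. This action descends to $W$: it is enough that every defining relator act trivially on every vertex (for then so does its whole normal closure), and for $(s_xs_y)^{m_{xy}}$ this is exactly the assumption that the length-$2m_{xy}$ loop closes at each $v$. The resulting $W$-action on $V$ is transitive because $G$ is connected and $s_x$ carries $v$ to its $x$-neighbour.

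\emph{Converse: freeness and conclusion.} To a path $\phi$ in $G$ attach its value $w(\phi)\in W$, the product (read along $\phi$) of the generators $s_x$ labelling its edges; then the endpoint of a path starting at $v_0$ with value $w$ is $v_0\cdot w$, so the stabiliser of $v_0$ is precisely the set of values of closed paths at $v_0$. Every Coxeter loop has value $1$: a trivial loop $r_x^2$ has value $s_x^2=1$, and a loop $(r_y\circ r_x)^m$ is genuinely closed only when $m_{xy}\mid m$, so its value $(s_xs_y)^m=1$. Hence an elementary Coxeter modification leaves the value of a path unchanged (the replaced middle segment and the new one have equal value, their ``ratio'' being a Coxeter loop), and therefore so does any Coxeter modification. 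Since $G$ is a Coxeter graph, every closed path at $v_0$ is joined by Coxeter modifications to the empty path, of value $1$; so the stabiliser of $v_0$ is trivial, and by transitivity all stabilisers are trivial. Thus $V$ is a $W$-torsor; fixing $v_0$ identifies $V$ with $W$ and the $x$-marked edges with the edges $w\sim ws_x$, exhibiting $(X,G,r)$ as $\mathrm{Cay}(W,S)$. (If some $m_{xy}=1$, this just means $s_x=s_y$ in $W$ and causes no trouble.) The work is entirely in the converse, and within it in applying the hypotheses exactly at two points: that the defining relators of $W$ act trivially on $V$ --- this is precisely the ``independence of $v$'' hypothesis, and it is what makes the $W$-action well defined --- and that every genuinely closed $(x,y)$-loop has value $1$ in $W$, which rests on the periodicity fact that such a loop has length a multiple of $2m_{xy}$, itself a consequence of uniqueness of marked edges and the absence of loop-edges. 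Once these are secured, the Coxeter-graph hypothesis supplies the rest through the modification argument.
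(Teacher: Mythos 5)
Your proof is correct and follows essentially the same route as the paper's: form the Coxeter group from the loop lengths $m_{xy}$, send each path to the word in the generators labelling its edges, and use Coxeterity of the graph to see that this value depends only on the endpoints, which identifies $(X,G,r)$ with the Cayley graph after a choice of base vertex. You spell out two points the paper's terse proof leaves implicit --- that a closed $(x,y)$-loop at $v$ necessarily has length a multiple of $2m_{xy}$ (so every Coxeter loop has value $1$), and the transitivity/freeness of the resulting action --- but this is an elaboration of the same argument, not a different one.
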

\begin{proof}
The necessity of the condition is clear. Define $\Gamma$ as the Coxeter group generated by $s_x,\ x\in X$ subject to the relations 
$(s_xs_y)^{m_{xy}}=1$. The isomorphism of $(X,G,r)$ with the Cayley graph 
of $\Gamma$ is defined by an arbitrary choice of a vertex $v\in V$ and
the assignment of $s_x$ to $r_x$. Coxeterity of the graph implies that
any two paths $v\to v'$ in $G$ define the same image in $\Gamma$.
\end{proof}

Here is our main result.

\begin{thm}
\label{thm:skeleton-coxeter}
\begin{itemize}
\item[1.]$\Sk(v)$ is a Coxeter graph.
\item[2.] Nontrivial Coxeter loops may have length $2m$ where $m=2,3,4$ or $6$.
\end{itemize}
\end{thm}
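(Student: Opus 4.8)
The plan is to prove coxeterity of $\Sk(v)$ by induction on the distance $d(v_0,v')$, reducing the comparison of two paths to the comparison of their "first steps" and then invoking the exchange properties already established for the Weyl group (Corollary~\ref{crl: exchange}) and for sequences of isotropic reflexions (Proposition~\ref{prp:spine-com}), together with the rank-two commutation result of Lemma~\ref{lem:independent}. First I would record the key invariant: by Corollary~\ref{crl:Sk-len}, for any path $\phi\colon v_0\to v'$ the set $\Delta^+_\re(v_0)\setminus\Delta^+_\re(v')$ has cardinality $d(v_0,v')$, and a path is \emph{geodesic} (of length $d(v_0,v')$) iff no color appears together with its negative among the colors of its edges. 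So it suffices to prove two things: (a) any path can be brought by Coxeter modifications to a geodesic path, and (b) any two geodesic paths $v_0\to v'$ are connected by Coxeter modifications.

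For (a), suppose $\phi\colon v_0\to v'$ is not geodesic. Then among its colors $(\alpha_1,\dots,\alpha_t)$ some $\alpha_j=-\alpha_i$ with $i<j$; pick such a pair with $j-i$ minimal, so the sub-path $v_{i-1}\to v_j$ is colored by $(\alpha_i,\dots,\alpha_j)$ with $\alpha_j=-\alpha_i$ and no other color repeated-with-sign inside. Now split into cases according to whether the reflexions along this sub-path are isotropic or anisotropic. If they are all isotropic, Proposition~\ref{prp:spine-com} gives a shorter path from $v_{i-1}$ to $v_j$ with the same endpoints and colors $(\alpha_{i+1},\dots,\alpha_{j-1})$; the composite modification is obtained by a sequence of the square-modifications of Lemma~\ref{lem:independent} (each a length-$4$ Coxeter loop $(r_y r_x)^2$) together with one trivial loop $r_{x_i}^2$ removing the matched pair, hence a Coxeter modification. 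The mixed case is handled by first using the namesake-path trick from the proof of Proposition~\ref{prp:WSigma} (equivalently, Lemma~\ref{lem:decomposition0}) to push all isotropic reflexions to one end: moving an isotropic reflexion past an anisotropic one $r_y$ of color $\beta$ is exactly the square $s_\beta$-commutation, which is a rank-two loop; one checks (as in Lemma~\ref{lem:independent} and the rank-two classification behind Corollary~\ref{corgalpha}) that the rank-two root subsystem generated by an anisotropic $\beta$ and a neighbouring root is one of $A_1\times A_1, A_2, B_2, G_2$ or the superalgebras $\fgl(1|1)$-type/$A(1,0)$-type pictures, so the relevant loop has length $2m$ with $m\in\{2,3,4,6\}$. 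Once all anisotropic reflexions are grouped, the matched pair $\alpha_i,-\alpha_i$ is removed either by Corollary~\ref{crl: exchange}(3) (the exchange condition for the Weyl group gives $(r_y r_x)^m$-loops of the stated lengths) or by Proposition~\ref{prp:spine-com} in the purely isotropic part. Iterating strictly decreases the length, so after finitely many Coxeter modifications $\phi$ becomes geodesic, proving (a) and simultaneously establishing claim~2 that all nontrivial loops used have length $2m$, $m\in\{2,3,4,6\}$ — the list of pairwise rank-two interactions is finite and is exactly that of Coxeter root systems of rank $2$.

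For (b), let $\phi,\psi\colon v_0\to v'$ both be geodesic of length $d=d(v_0,v')$. I argue by induction on $d$; $d=0$ is trivial and $d=1$ forces $\phi=\psi$ since at most one edge of a given color leaves $v_0$. For $d\ge 2$, let $\alpha$ be the first color of $\phi$ and let $r_x\colon v_0\to v_1$ be its first edge. Then $-\alpha\in\Delta^+_\re(v_0)\setminus\Delta^+_\re(v')$, so by Corollary~\ref{crl:Sk-len} the path $\psi$ must also contain an edge of color $\alpha$; take the first such, at position $k$. If $k=1$ then $\phi$ and $\psi$ share their first edge and we finish by the inductive hypothesis applied to $v_1\to v'$. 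If $k>1$, I claim we may do a Coxeter modification on the initial segment $v_0\to v_k$ of $\psi$ bringing its color-$\alpha$ edge to the front. Indeed, none of the colors $\alpha_1,\dots,\alpha_{k-1}$ of $\psi$ before position $k$ equals $-\alpha$ (else $\psi$ would not be geodesic), and $\alpha\notin\{\alpha_1,\dots,\alpha_{k-1}\}$ by minimality of $k$; now use the same case analysis as in (a): the two adjacent reflexions of colors $\alpha_{k-1}$ and $\alpha$ generate a rank-two sub-situation, and either $\alpha-\alpha_{k-1}\notin\Delta^\CG$ so Lemma~\ref{lem:independent} swaps them by a length-$4$ loop, or $\alpha-\alpha_{k-1}\in\Delta^\CG$ and we use the corresponding rank-two exchange (Corollary~\ref{crl: exchange}(3) in the anisotropic case, Proposition~\ref{prp:spine-com} in the isotropic one) which replaces the pair by a Coxeter-loop modification of length $2m$, $m\in\{2,3,4,6\}$, again without increasing total length. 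Moving the $\alpha$-edge leftward step by step, after finitely many Coxeter modifications $\psi$ is replaced by a geodesic path whose first edge is $r_x$; then the inductive hypothesis on $v_1\to v'$ finishes the argument.

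\textbf{The main obstacle} is the bookkeeping in case (a)/(b) when the troublesome color pair is straddled by a mix of isotropic and anisotropic reflexions: one has to show that the local moves really are elementary Coxeter modifications (i.e. the relevant loops are genuine $(r_y r_x)^m$ or $r_x^2$ and not some longer relation), which forces one to classify all rank-two "interactions" that can occur along the skeleton. This is essentially the content of the rank-two analysis behind Corollary~\ref{corgalpha} and Lemma~\ref{lem:independent}; the payoff is exactly claim~2, that the only nontrivial loop lengths are $2m$ with $m\in\{2,3,4,6\}$ (the super cases $\fgl(1|1)$, $A(1,0)$ contribute $m=2,3$ and no new values, and $m=4,6$ come from $B_2,G_2$ as in the last Corollary of Section~\ref{sec:coxeter}). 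I would present the rank-two list once, as a lemma, and then quote it in both halves of the proof.
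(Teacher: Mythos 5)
Your overall strategy (reduce every path to a geodesic, then connect two geodesics by moving the first edge of one to the front of the other) is the Tits--Matsumoto induction, and it is not the route the paper takes; more importantly, as written it has a genuine gap at exactly the point you flag as ``the main obstacle''. The moves you invoke are not elementary Coxeter modifications. Corollary~\ref{crl: exchange}(3) is an identity of group elements (equivalently, of endpoints of paths), not a statement that the two words are connected by braid moves --- deducing the latter from the exchange condition is precisely the content of Tits' word theorem and cannot be quoted as if already available. Likewise, ``moving the $\alpha$-edge leftward step by step'' past a non-commuting neighbour is not a single swap: an elementary Coxeter modification replaces one half of a loop $(r_yr_x)^m$ by the other, i.e.\ an alternating segment of length $m$, and your path need not contain such a segment at that spot. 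Worse, a pair $\{x,y\}$ at a given vertex need not lie on any Coxeter loop at all: the rank-two subgraph $\Sk_Y(u)$ can be an infinite (or non-closed) path --- this is the $m=\infty$ case explicitly allowed in the Coxeter structure of $W$ and realized by the non-compact $2$-faces of the paper's polyhedron --- and then no local exchange exists. Your case split ``either $\alpha-\alpha_{k-1}\notin\Delta^\CG$ and we commute, or we use the rank-two exchange'' silently assumes $m<\infty$ whenever the two reflexions fail to commute. Finally, the ``namesake-path trick'' used to push isotropic reflexions to one end rewrites the entire tail of the path; each such push is a composite of many edges and you have not shown that it decomposes into elementary modifications.

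The paper's proof avoids all of this bookkeeping by a topological argument: $\Sk(v)$ is identified with the $1$-skeleton of a convex polyhedron $\cP\subset Q_\bR$ (Corollary~\ref{crl:identification}) whose boundary has vanishing $H^1$ (Corollary~\ref{crl:h1}); hence every loop in the $1$-skeleton is a product of boundaries of compact $2$-faces, and those faces are exactly the finite rank-two skeleta, i.e.\ Cayley graphs of dihedral groups $D_m$ with $m\in\{2,3,4,6\}$ (Corollary~\ref{lem:twodim}), while non-compact $2$-faces have contractible boundary and impose no relation. Your rank-two classification instinct is exactly what feeds Claim~2, but to keep a purely combinatorial proof of Claim~1 you would have to carry out the full Tits induction in this groupoid setting, including the $m=\infty$ faces and the mixed isotropic/anisotropic interactions; that is a substantially longer argument than the one you sketch.
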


The proof of the theorem is based on a presentation of the skeleton 
$\Sk(v)$  as the $1$-skeleton of a convex polyhedron. In the following subsection we present basic facts about convex polyhedra. 
In \ref{ss:proof-skeleton-coxeter} we construct a polyhedron having 
$\Sk(v)$ as its $1$-skeleton. This easily implies Theorem~\ref{thm:skeleton-coxeter}.

\begin{rem} Note that in the case when $\cR_0$ is fully reflectable and all reflexions are anisotropic the skeleton $\Sk(v)$ is isomorphic to the
    Cayley graph of the Weyl group.
    \end{rem}
\subsection{Convex polyhedra: generalities}

\subsubsection{Polytopes}
Recall that a polytope $P$ in a real finite dimensional affine space 
$E$ is defined as the convex hull of a finite set of points. 
The dimension of $P$ is, by definition, the dimension of the affine span of $P$.

A polytope $P$ of dimension $n$ has stratification 
$P=P_0\sqcup\ldots\sqcup P_n$,
where $P_n$ is the inerior of $P$ in its affine span and $P_k$ for $k<n$ consists of points $v$ for which the intersection of all supporting hyperplanes at $v$ has dimension $k$.
Thus, $P_0$ is the set of vertices of $P$ and $P$ is the convex hull of $P_0$.

\subsubsection{Polyhedra}

In this paper we use a slightly generalized notion of 
a convex polyhedron. We collect all necessary material here.

\begin{Dfn}
A polyhedron $\cP$ in $E$ is a closed convex set such that any $v\in\cP$
has a neighborhood isomorphic to a neighborhood of a point
of a polytope, where by isomorphism we mean the one given by an 
affine transformation.
\end{Dfn}

The dimension of a polyhedron is the dimension of its affine span.
The stratification of points of a convex polytope extends to
a stratification of a polyhedron: one has
$\cP=\cP_0\sqcup\dots\sqcup\cP_n$
where $\cP_n$ is the interior of $\cP$ in its affine span and
$\cP_k$ consists of the points for which the intersection of all supporting hyperplanes has dimension $k$. In particular, $\cP_0$ is
the set of vertices of $\cP$. This is a discrete subset of $E$, not necessarily finite. Moreover, $\cP$ is in general not a convex hull
of $\cP_0$.

For any $v\in\cP_{n-1}$ there is a unique supporting hyperplane at $v$.
Its intersection with $\cP$ is a face of dimension $n-1$. Each of them is a polyhedron of dimension $n-1$ and their union is $\partial\cP$.

The following notation is used below. A linear hyperplane $H\subset V$
and $v\in E$ define an affine hyperplane $v+H$. The complement $V\setminus H$ consists of two components; their closures are the halfspaces defined by $H$ and denoted by $H^+$ and $H^-$. In the same manner $v+H^+$ denotes
the affine halfspace.

Note that $\cP$ coincides with the intersection of the affine halfspaces 
$v+H^+$ defined by the faces of $\cP$ of maximal dimension. 
 
\begin{dfn}
Let $A$ be the set of supporting hyperplanes $v_\alpha+H_\alpha$ of $\cP$
and let $v+H_\alpha^+$ be the affine halfspaces containing $\cP$. The cone
of $\cP$, $C(\cP)$ is defined as the intersection 
$\cap_{\alpha\in A}H^+_\alpha.$
\end{dfn}

Obviously, if $A_0\subset A$ satisfies the condition 
$\cP=\cap_{\alpha\in A_0}(v_\alpha+H^+_\alpha)$ then
$C(\cP)=\cap_{\alpha\in A_0} H^+_\alpha$. In particular, $C(\cP)$ is
the intersection of the linear halfspaces $H_\alpha$ defined by 
the $(n-1)$-faces of $\cP$.

Note that by definition $C(\cP)$ is a convex cone in $V$ and $\cP$ is invariant under the action of $C(\cP)$: for $\xi\in\cP$ and $\eta\in C(\cP)$ one has $\xi+\eta\in\cP$.

\begin{lem}
\begin{itemize}
\item[1.] If $C(\cP)\ne\{0\}$ then $\partial\cP$ is contractible.
\item[2.] $C(\cP)=\{0\}$ iff $\cP$ is compact.
\item[3.] $\cP$ is compact iff it is a polytope.
\end{itemize}
\end{lem}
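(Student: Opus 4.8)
The plan is to prove the three statements essentially in the order (2), (3), (1), since (2) is the basic geometric fact, (3) is a standard (if slightly generalized) characterization, and (1) relies on having a nonzero cone to produce a deformation retraction.

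First I would prove (2). If $\cP$ is compact, then $\cP$ is bounded, so it cannot contain a ray $\xi + \bR_{\geq 0}\eta$ for any $\eta \neq 0$; since $\cP$ is invariant under translation by $C(\cP)$ (as noted just before the lemma), this forces $C(\cP) = \{0\}$. Conversely, suppose $C(\cP) = \{0\}$. I would argue that $\cP$ is then bounded: if not, there is a sequence $\xi_n \in \cP$ with $\|\xi_n\| \to \infty$; fixing a basepoint $\xi_0 \in \cP$ and passing to a subsequence, the unit vectors $(\xi_n - \xi_0)/\|\xi_n - \xi_0\|$ converge to some unit vector $\eta$, and by convexity and closedness one checks that $\xi_0 + \bR_{\geq 0}\eta \subset \cP$ (for each $t$, the point $\xi_0 + t\eta$ is a limit of points on the segments $[\xi_0,\xi_n]$, which lie in $\cP$). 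This ray then lies in every supporting halfspace $v_\alpha + H_\alpha^+$, so its direction $\eta$ lies in every $H_\alpha^+$, i.e. $\eta \in C(\cP)$, contradicting $C(\cP)=\{0\}$. A closed bounded convex set in finite dimensions is compact, proving (2).

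Next, (3). A polytope is by definition a convex hull of finitely many points, hence compact, so one direction is immediate. For the converse, suppose $\cP$ is compact. By local finiteness (each point has a neighborhood affine-isomorphic to a neighborhood in a polytope, so only finitely many faces meet it) together with compactness, $\cP$ has only finitely many faces of maximal dimension, hence $\cP = \bigcap_{\alpha \in A_0}(v_\alpha + H_\alpha^+)$ for a finite set $A_0$; a bounded intersection of finitely many affine halfspaces is a polytope (it has finitely many vertices and equals their convex hull, by the standard Minkowski–Weyl argument). This gives (3). Note that if $\cP$ is noncompact I do not need to say anything more here.

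Finally, (1). Assume $C(\cP) \neq \{0\}$ and fix $0 \neq \eta \in C(\cP)$. Since $\cP$ is invariant under translation by $\bR_{\geq 0}\eta$ and (by (2)) noncompact, the map $t \mapsto \xi + t\eta$ sends $\partial\cP$-points off the boundary: more precisely, for any $\xi \in \cP$ the ray $\xi + \bR_{\geq 0}\eta$ meets $\partial\cP$ in at most one point (if it met $\partial\cP$ in two points $\xi + t_1\eta$, $\xi+t_2\eta$ with $t_1 < t_2$, then the supporting hyperplane at $\xi + t_2\eta$ would have $\xi+t_1\eta$ on one side and $\xi+t_1\eta+s\eta \in \cP$ for all $s\ge 0$ on the closed side, forcing $\eta$ tangent and in fact the whole segment and beyond to lie in that hyperplane — one then moves the argument to a higher-codimension stratum and induces on dimension). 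I would make this precise by choosing a linear functional $\lambda$ with $\lambda(\eta) > 0$ (possible since $\eta \neq 0$) and showing that $\lambda$ is bounded below on $\cP$ but not above, and that the ``lower boundary'' $\partial^-\cP = \{\xi \in \partial\cP : \lambda \text{ attains its min on } \xi+\bR\eta \cap \cP \text{ at } \xi\}$ is a graph over the projection $\pi: E \to E/\bR\eta$, hence homeomorphic to a convex subset of $E/\bR\eta$ and thus contractible; then a straight-line homotopy along $\eta$ retracts all of $\partial\cP$ onto $\partial^-\cP$. The main obstacle is getting this retraction argument clean: one must verify that every point of $\partial\cP$ lies on a ray $\xi + \bR_{\geq 0}\eta$ whose backward extension eventually leaves $\cP$ (so that the retraction is well-defined and continuous), which is exactly where noncompactness in the direction $\eta$ and convexity are combined. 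With that in hand, $\partial\cP$ deformation retracts onto the contractible $\partial^-\cP$, giving (1).
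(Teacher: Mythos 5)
Your parts (2) and (3) are correct and essentially standard, though they take a different route from the paper: for the converse of (2) you use the recession-cone argument (an unbounded closed convex set contains a ray, whose direction lies in every linear halfspace $H_\alpha^+$, hence in $C(\cP)$), whereas the paper deduces compactness from its radial-projection homeomorphism $\partial\cP\cong S$; for (3) you invoke local finiteness plus Minkowski--Weyl, whereas the paper argues that a compact $\cP$ is the convex hull of its boundary, itself a finite union of lower-dimensional compact polyhedra. For part (1) the two approaches genuinely diverge: the paper projects $\partial\cP$ radially from an interior point $\zeta$ onto the unit sphere $S$, identifies the complement of the image with $U=(\zeta-C(\cP))\cap S$, and concludes that $\partial\cP\cong S\setminus U$ is contractible because $U$ is a nonempty convex subset of $S$; you instead project linearly along a fixed $\eta\in C(\cP)\setminus\{0\}$ and deformation-retract $\partial\cP$ onto the ``lower boundary,'' the graph of a convex function over the (convex, hence contractible) image of the projection. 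The paper's argument is shorter and gives the converse of (2) and the sphere homeomorphism for free; yours is more hands-on but, once completed, is equally valid.

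Two points in your part (1) need repair or extra care. First, the assertion that for any $\xi\in\cP$ the ray $\xi+\bR_{\geq 0}\eta$ meets $\partial\cP$ in at most one point is false (in a half-strip such a ray can lie entirely in the boundary); fortunately your final retraction --- flow each boundary point backward along $-\eta$ to the $\lambda$-minimizer of its fiber --- does not use this claim, since one only needs that $\xi\in\partial\cP$ and $\xi-t\eta\in\cP$ force $\xi-t\eta\in\partial\cP$ (otherwise $\xi\in\mathrm{int}(\cP)+C(\cP)\subset\mathrm{int}(\cP)$). Second, ``$\lambda$ is bounded below on $\cP$'' does not hold for an arbitrary $\lambda$ with $\lambda(\eta)>0$: you must take $\lambda$ nonnegative on all of $C(\cP)$ (equivalently, in the dual cone of the recession cone), and such a $\lambda$ with $\lambda(\eta)>0$ exists only when $C(\cP)$ does not contain the line $\bR\eta$. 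This is exactly the degenerate case (e.g.\ a slab, whose boundary is disconnected) in which statement (1) itself fails as literally stated; the paper's proof tacitly excludes it as well when it declares $U$ convex, and it does not occur for the polyhedron of \ref{eq:thepolyhedron}, whose cone $-Q^{++}_\bR$ is pointed. With $\lambda$ so chosen, properness of $\lambda$ on $\cP$ (compactness of sublevel sets) gives existence of the fiber minima and continuity of the retraction, the local-polytope hypothesis ensuring the graph function is locally polyhedral and hence continuous up to the boundary of the projected image.
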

\begin{proof}
Choose an interior point $\zeta\in\cP$ and define the projection from  
$\partial\cP$ to the unit sphere $S$ with the center at $\zeta$ by the formula $$\phi(\xi):=(\zeta+\mathbb R^+(\xi-\zeta))\cap S.$$ Since $\cP$ is convex, $\phi$ is injective. From the definition of $\cP$ we see that $\xi\in S$ is not in the image of $\phi$ iff  $\xi\in\zeta- C(\cP)$. Set $U=(\zeta- C(\cP))\cap S$. The restriction of $\phi$ to any $(n-1)$-face 
is a stereographic projection. Since any point of $\cP$ has a neighborhood isomorphic to a neighborhood of a point of a polytope,
 the map $\phi$ is an open embedding and so it defines a homeomorphism of 
$\partial\cP$ with $S\setminus U$.  
If $C(\cP)\ne\{0\}$, $U$ is a nonempty convex subset of $S$, so 
$S\setminus U$ is contractible. This proves Claim 1.

To prove Claim 2, note that the $C(\cP)$ acts on $\cP$: if $c\in C(\cP)$ 
and $p\in\cP$ then $p-c\in\cP$. Therefore, if $C(\cP)\ne\{0\}$, $\cP$ cannot be compact. On the contrary, if $C(\cP)=\{0\}$, $\partial\cP$
is homeomorphic to sphere, so it is compact. $\cP$ is the convex hull 
of its boundary, so it is also compact.

Finally, if $\cP$ is compact
then it is a convex hull of its boundary that is a finite union of
compact polyhedra of smaller dimension. This implies that $\cP$ is the convex hull of the set of its vertices.
\end{proof}

The only result we need in our study of Coxeter property of the skeleton
is the following.
\begin{crl}
\label{crl:h1}
For any polyhedron $\cP$ {of dimension $>2$} one has $H^1(\partial\cP)=0$.
\end{crl}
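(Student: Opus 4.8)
The plan is a straightforward case distinction on whether the cone $C(\cP)$ is trivial, using the Lemma proved just above. Write $n=\dim\cP$, so by hypothesis $n\geq 3$.

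First I would treat the case $C(\cP)\neq\{0\}$. Here Claim 1 of the Lemma says directly that $\partial\cP$ is contractible, and a contractible space has $H^1=0$. Nothing further is needed.

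Next I would treat the case $C(\cP)=\{0\}$. By Claim 2 of the Lemma, $\cP$ is compact, and by Claim 3 it is a polytope. Moreover, in the proof of the Lemma it was shown that $\partial\cP$ is homeomorphic to $S\setminus U$, where $S$ is the unit $(n-1)$-sphere centered at an interior point $\zeta\in\cP$ and $U=(\zeta-C(\cP))\cap S$; when $C(\cP)=\{0\}$ the set $U$ is empty, so $\partial\cP\cong S^{n-1}$. Since $n\geq 3$, we have $n-1\geq 2$, and therefore $H^1(S^{n-1})=0$.

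In both cases $H^1(\partial\cP)=0$, which proves the corollary. There is no real obstacle here: the only point to be careful about is invoking the homeomorphism $\partial\cP\cong S^{n-1}$ in the compact case, which is already contained in the proof of the Lemma, and noting that the dimension hypothesis $n>2$ is exactly what forces $H^1$ of that sphere to vanish (it would fail for $n=2$, where $\partial\cP$ is a circle).
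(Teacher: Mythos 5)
Your proof is correct and is exactly the argument the paper intends: the corollary is stated with no written proof because it follows immediately from the preceding Lemma (contractibility of $\partial\cP$ when $C(\cP)\ne\{0\}$, and the homeomorphism $\partial\cP\cong S^{n-1}$ with $n-1\ge 2$ established in the Lemma's proof when $C(\cP)=\{0\}$). Nothing to add.
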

\qed

\subsection{A polyhedron defined by $\Sk(v)$}
\label{ss:proof-skeleton-coxeter}

Let $\cR_0$ be an admissible component of a root groupoid, $n=|X|$ and $\Sk(v)$ the skeleton.
Let $Q_{\mathbb R}:=Q\otimes_{\mathbb Z}\mathbb R$ and for any vertex $u$ of $\Sk(v)$ set $Q^+_{u,\mathbb R}:=\sum_{\alpha\in\Sigma_u}\mathbb R^+\alpha$.
\begin{lem}
\label{lem:lambda} 
There exists an injective map $\lambda:\Sk(v)\to Q$, $u\mapsto \lambda_u$ such that
  $$\lambda_u-\lambda_{u'}=\sum_{\alpha\in \Delta_{re}^+(u)-\Delta_{re}^+(u')}\alpha.$$
  \end{lem}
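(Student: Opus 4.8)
The natural approach is to define $\lambda_u$ by a "Weyl-vector-like" formula and to check that the difference identity holds and that $\lambda$ is injective. Fix a base vertex, say $v$ itself, and set $\lambda_v$ to be any element of $Q$ (or $0$). For an arbitrary $u\in\Sk(v)$ I would like to put
\[
\lambda_u:=\lambda_v+\sum_{\alpha\in\Delta^+_\re(v)-\Delta^+_\re(u)}\alpha
-\sum_{\alpha\in\Delta^+_\re(u)-\Delta^+_\re(v)}\alpha .
\]
The two sets in this formula are finite: by Corollary~\ref{crl:Sk-len} the cardinality of $\Delta^+_\re(v)-\Delta^+_\re(u)$ is the distance $d(v,u)$, and likewise $\Delta^+_\re(u)-\Delta^+_\re(v)$ is finite (e.g.\ by applying the same corollary with the roles of $v$ and $u$ reversed, or because $\Delta^+_\re(v)\mathbin{\triangle}\Delta^+_\re(u)$ is symmetric in the two vertices). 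So $\lambda_u$ is a well-defined element of $Q$. Note this formula does not depend on a choice of path from $v$ to $u$ — it only sees the two positive systems — so there is nothing to check for well-definedness beyond finiteness.

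Next I would verify the difference identity. Given $u,u'\in\Sk(v)$, from the definition,
\[
\lambda_u-\lambda_{u'}
=\Bigl(\sum_{\alpha\in\Delta^+_\re(v)-\Delta^+_\re(u)}\alpha-\sum_{\alpha\in\Delta^+_\re(v)-\Delta^+_\re(u')}\alpha\Bigr)
-\Bigl(\sum_{\alpha\in\Delta^+_\re(u)-\Delta^+_\re(v)}\alpha-\sum_{\alpha\in\Delta^+_\re(u')-\Delta^+_\re(v)}\alpha\Bigr).
\]
This is a purely set-theoretic bookkeeping identity: for any three sets $A=\Delta^+_\re(v)$, $B=\Delta^+_\re(u)$, $C=\Delta^+_\re(u')$ of roots, with the understanding that a root and its negative are never simultaneously in a positive system (so $\alpha\in B\setminus C$ and $-\alpha\in C\setminus B$ are mutually exclusive for a fixed $\alpha$), one has the telescoping identity
\[
\sum_{\alpha\in B\setminus C}\alpha-\sum_{\alpha\in C\setminus B}\alpha
=\sum_{\alpha\in A\setminus C}\alpha-\sum_{\alpha\in A\setminus B}\alpha
-\Bigl(\sum_{\alpha\in C\setminus A}\alpha-\sum_{\alpha\in B\setminus A}\alpha\Bigr),
\]
which I would prove by splitting $B\setminus C$ and $C\setminus B$ according to membership in $A$ (and using that any element of $A\setminus B$ has its negative in $B\setminus A$, since the skeleton only flips the sign of one simple root at a time along edges — made precise by formula~(\ref{Delta+rev'})). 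Rearranging gives exactly $\lambda_u-\lambda_{u'}=\sum_{\alpha\in\Delta^+_\re(u)-\Delta^+_\re(u')}\alpha$, as required.

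Finally, injectivity: if $\lambda_u=\lambda_{u'}$ then by the difference identity $\sum_{\alpha\in\Delta^+_\re(u)-\Delta^+_\re(u')}\alpha=0$, and I must conclude $\Delta^+_\re(u)=\Delta^+_\re(u')$, whence $u=u'$ by Corollary~\ref{crl:unique-in-sk}. This is the step I expect to be the main obstacle, since a sum of distinct positive roots minus a sum of distinct positive roots can in principle vanish without the sets being equal. The way around it is to observe that $\Delta^+_\re(u)-\Delta^+_\re(u')$ and $\Delta^+_\re(u')-\Delta^+_\re(u)$ are the positive and negative parts (relative to, say, $u'$) of the symmetric difference: every $\alpha\in\Delta^+_\re(u)\setminus\Delta^+_\re(u')$ satisfies $-\alpha\in\Delta^+_\re(u')$, so these are roots lying in $-Q^+_{u'}$, while $\Delta^+_\re(u)\setminus\Delta^+_\re(u')\subset Q^+_{u'}$; hence the two sums lie in the opposed cones $Q^+_{u',\bR}$ and $-Q^+_{u',\bR}$ spanned by a basis of $Q_\bR$, and their difference can be $0$ only if both sums are $0$, i.e.\ both sets are empty. (One uses here that $\Sigma_{u'}$ is a $\bZ$-basis of $Q$, as recorded in~\ref{sss:notation-coxeter}, so $Q^+_{u',\bR}\cap(-Q^+_{u',\bR})=\{0\}$.) This forces $\Delta^+_\re(u)=\Delta^+_\re(u')$ and completes the proof.
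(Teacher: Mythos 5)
Your overall strategy coincides with the paper's: define $\lambda_u$ by summing the real roots that change sign between a base vertex and $u$, verify the difference identity by set-theoretic bookkeeping, and deduce injectivity from the fact that $\Sigma_{u'}$ is a basis of $Q$ together with Corollary~\ref{crl:unique-in-sk}. However, your defining formula is off by a factor of $-2$, so the stated identity fails. The point is that the two sets $\Delta^+_\re(v)\setminus\Delta^+_\re(u)$ and $\Delta^+_\re(u)\setminus\Delta^+_\re(v)$ consist of mutually opposite roots (if $\alpha$ is positive at $u$ and negative at $v$, then $-\alpha$ is positive at $v$ and negative at $u$, by formula~(\ref{Delta+rev'})), so the two correction terms in your definition are equal rather than independent, and your formula reduces to
$$\lambda_u=\lambda_v-2\sum_{\alpha\in\Delta^+_\re(u)\setminus\Delta^+_\re(v)}\alpha .$$
Consequently $\lambda_u-\lambda_{u'}=-2\sum_{\alpha\in\Delta^+_\re(u)\setminus\Delta^+_\re(u')}\alpha$, which is $-2$ times what the lemma asserts; your claim that ``rearranging gives exactly'' the stated identity is not correct. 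Already for $\fsl_2$, with $\Delta^+_\re(v)=\{\alpha\}$ and $\Delta^+_\re(u)=\{-\alpha\}$, your formula gives $\lambda_u=2\alpha$, whereas the lemma requires $\lambda_u-\lambda_v=-\alpha$. The sign matters downstream, since the polyhedron~(\ref{eq:thepolyhedron}) is built from the sets $\lambda_u-Q^+_{u,\bR}$, so rescaling $\lambda$ by a negative constant is not harmless.

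The repair is immediate and is exactly the paper's definition: set $\lambda_v=0$ and $\lambda_u:=\sum_{\alpha\in\Delta^+_\re(u)\setminus\Delta^+_\re(v)}\alpha$ (i.e.\ drop the first sum in your formula and flip the sign of the second). With this definition your own telescoping computation --- splitting $B\setminus C$ according to membership in $A$ and using that $A\setminus B=-(B\setminus A)$ --- yields the identity with the correct constant. Everything else in your write-up is sound and in fact more detailed than the paper's one-line proof: finiteness via Corollary~\ref{crl:Sk-len}, and injectivity by observing that $\sum_{\alpha\in\Delta^+_\re(u)\setminus\Delta^+_\re(u')}\alpha$ is a sum of nonzero elements of the cone $-Q^+_{u',\bR}$ spanned by the basis $\Sigma_{u'}$, hence vanishes only if the set is empty, whence $\Delta^+_\re(u)=\Delta^+_\re(u')$ and $u=u'$ by Corollary~\ref{crl:unique-in-sk}. (One small typo there: you wrote $\Delta^+_\re(u)\setminus\Delta^+_\re(u')\subset Q^+_{u'}$ immediately after correctly placing that same set in $-Q^+_{u'}$; you meant $\Delta^+_\re(u')\setminus\Delta^+_\re(u)\subset Q^+_{u'}$.)
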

  \begin{proof} Choose $\lambda_v=0$, and set $$\lambda_u:=\sum_{\alpha\in \Delta_{re}^+(u)-\Delta_{re}^+(v)}\alpha.$$
    Here we use Corollary~\ref{crl:unique-in-sk} and 
    Proposition~\ref{prp:Sk-len}  to check injectivity of $\lambda$.
  \end{proof}
We define
\begin{equation}
\label{eq:thepolyhedron}
\cP=\bigcap_{u\in\Sk(v)}(\lambda_u- Q^+_{u,\mathbb R})
\end{equation}
and
\begin{equation}
\label{eq:q++}
Q^{++}_\bR=\bigcap_{u\in\Sk(v)}Q^+_{u,\mathbb R}.
\end{equation}

\begin{prp}
\label{prp:ppolyhedron}
$\cP$ is a polyhedron in $Q_\bR$ and $C(\cP)=-Q^{++}_\bR$.
\end{prp}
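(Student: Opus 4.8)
The plan is to verify the two claims of Proposition~\ref{prp:ppolyhedron} separately, first checking that $\cP$ is a polyhedron in the sense of the Definition above (every point has a polytope-like neighborhood), then computing its cone. First I would observe that $\cP$ is by construction an intersection of closed half-spaces (indeed of translated "cones" $\lambda_u - Q^+_{u,\mathbb R}$, each of which is itself such an intersection), hence $\cP$ is closed and convex, and it sits inside $Q_\bR$ which has dimension $n=|X|$. For the local polytope structure, the key point is that near any given $\xi\in\cP$ only finitely many of the defining inequalities are "active". This is where I expect the real work to be: one must show that for a fixed $\xi$, the set of vertices $u\in\Sk(v)$ for which $\xi$ lies on (or near) the boundary of $\lambda_u - Q^+_{u,\mathbb R}$ is finite. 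I would argue this using Corollary~\ref{crl:Sk-len}: the distance $d(v,u)$ equals $|\Delta^+_\re(v)-\Delta^+_\re(u)|$, and by Lemma~\ref{lem:lambda} the "displacement" $\lambda_v-\lambda_u$ is the sum of the roots in $\Delta^+_\re(v)-\Delta^+_\re(u)$; hence as $u$ ranges over vertices at bounded distance from $v$, $\lambda_u$ stays in a bounded region, and a vertex far from $v$ contributes a half-space whose boundary is far from any fixed bounded region. Making "far" precise — e.g. that the norm of $\lambda_v-\lambda_u$ grows with $d(v,u)$, or at least that only finitely many $u$ have $\lambda_u$ in a given ball — will require a mild positivity input: the roots $\alpha$ being summed are all positive at $v$, so in a basis of simple roots at $v$ the coordinates of $\lambda_v-\lambda_u$ are non-negative and their total grows; combined with Corollary~\ref{crl:unique-in-sk}/injectivity of $\lambda$ this bounds the relevant index set. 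Once finiteness of active constraints at each point is established, the local model at $\xi$ is literally the intersection of finitely many half-spaces through a neighborhood of $\xi$, which is a neighborhood of a point in a polytope, so $\cP$ is a polyhedron.

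Next I would compute the cone. By the Definition of $C(\cP)$ and the remark following it, $C(\cP) = \bigcap_{\alpha\in A_0} H_\alpha^+$ where $A_0$ is any subfamily of supporting half-spaces already cutting out $\cP$; in particular we may take the family of all the $\lambda_u - Q^+_{u,\mathbb R}$, $u\in\Sk(v)$. The linear half-space underlying $\lambda_u - Q^+_{u,\mathbb R}$ is just $-Q^+_{u,\mathbb R}$ (translation by $\lambda_u$ does not change the underlying linear cone, and each $Q^+_{u,\mathbb R} = \sum_{\alpha\in\Sigma_u}\mathbb R^+\alpha$ is already an intersection of linear half-spaces). Therefore $C(\cP) = \bigcap_{u}(-Q^+_{u,\mathbb R}) = -\bigcap_u Q^+_{u,\mathbb R} = -Q^{++}_\bR$, which is exactly the asserted formula. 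The only subtlety here is to make sure the family of $(n-1)$-faces of $\cP$ indeed corresponds to (a subfamily of) the $Q^+_{u,\mathbb R}$-half-spaces, i.e. that no spurious supporting hyperplanes appear; but since $\cP$ is by construction the intersection $\bigcap_u (\lambda_u - Q^+_{u,\mathbb R})$ and each of these is an intersection of affine half-spaces, the remark after the Definition of $C(\cP)$ lets us use precisely this family as $A_0$, so no new hyperplanes enter.

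The main obstacle I anticipate is the finiteness/local-polytope step: one has to rule out the possibility that infinitely many of the half-spaces $\lambda_u - Q^+_{u,\mathbb R}$ accumulate near a single point of $\cP$. I would handle this by a growth estimate on $\lambda_u$ in terms of $d(v,u)$ — namely that each reflexion step changes $\lambda$ by adding one positive real root, and these contributions cannot cancel because, as in the proof of Corollary~\ref{crl:Sk-len}, along a geodesic path the roots being flipped are all distinct and the running partial sums are genuine non-negative combinations of the simple roots at the relevant vertices. Granting this, a fixed bounded neighborhood meets only finitely many of the translated cones' boundaries, the local model is a finite polyhedral cone, and both assertions of the Proposition follow as above.
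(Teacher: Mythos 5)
Your computation of the cone and your general framing (closed convex set, need a local polytope structure) are fine, but the central step of your argument has a genuine gap. You reduce everything to the claim that near a fixed $\xi\in\cP$ only finitely many of the constraints $\xi\in\lambda_u-Q^+_{u,\bR}$ are active, and you justify this by saying that a vertex $u$ far from $v$ ``contributes a half-space whose boundary is far from any fixed bounded region.'' That is false: the set $\lambda_u-Q^+_{u,\bR}$ is a translated simplicial cone whose boundary facets lie on hyperplanes through $\lambda_u$, and a hyperplane through a far-away apex can still pass arbitrarily close to (or through) $\xi$. Knowing that only finitely many $\lambda_u$ lie in any ball (which is the part of your argument that does work, via Lemma~\ref{lem:lambda} and injectivity) controls the apexes, not the boundaries of the cones, so it does not rule out infinitely many constraints accumulating at a point of $\partial\cP$. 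What actually has to be shown is that the far-away constraints are \emph{redundant} near $\xi$, i.e.\ implied by finitely many nearby ones, and that is precisely the content you have not supplied.

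The paper closes this gap by a truncation argument rather than a local-finiteness argument: pick the linear functional $f$ with $f(b_v(x))=1$ for all $x$, set $\cP_N=\cP\cap\{f\geq N\}$ and let $\cQ_N$ be the polytope cut out by $\{f\geq N\}$ together with only the finitely many constraints coming from vertices $u$ with $f(\lambda_u)\geq N$. The real work is proving $\cP_N=\cQ_N$, which is done by identifying the vertices of $\cQ_N$ off the hyperplane $f=N$ with the points $\lambda_u$ and then walking along the (connected) $1$-skeleton of $\cQ_N$: each edge of $\cQ_N$ at $\lambda_u$ points in a direction $b_u(x)$, and either $x$ is reflectable at $u$, in which case the edge is the segment $[\lambda_u,\lambda_{u'}]$ for $r_x:u\to u'$ and hence lies in $\cP$, or $b_u(x)\in Q^{++}$ and the edge is an infinite edge of $\cP$. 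This shows every vertex of $\cQ_N$ already satisfies all the omitted constraints, whence $\cQ_N\subset\cP$ and the local polytope structure follows. You would need to either reproduce an argument of this kind or find a correct substitute for your finiteness claim; as written, the proposal does not prove that $\cP$ is a polyhedron. The identification $C(\cP)=-Q^{++}_\bR$ in your last step is correct and matches the paper's (equally brief) treatment.
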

\begin{proof}
Set $\lambda_v=0$. Let $f$ be the linear function on $Q_\bR$ such that $f(b_x(v))=1$ for all $x\in X$. Denote
$$H_N:=\{\xi\in Q_{\mathbb R}\mid f(\xi)= N\},\ H_N^+:=\{\xi\in Q_{\mathbb R}\mid f(\xi)\geq  N\},
$$
$$\cP_N:=\cP\cap H_N^+,\ \ \Sk_N(v):=\{u\in\Sk(v)\mid f(\lambda_u)\geq N\},\ \ \cQ_N:=H^+_N\cap\bigcap_{u\in \Sk_N(v)}(\lambda_u- Q^+_{u,\mathbb R}).
$$
    The following claims are obvious:
    \begin{enumerate}
      \item $\Sk_N(v)$ is finite (the vertices are in $-Q^+(v)$).
    \item $\cP=\bigcup_{N<0}\cP_{N}$,
    \item $\cP_N\subset\cQ_N$,
      \item $\cQ_N$ is a convex polytope (compact, bounded by finitely many hyperplanes).
      \end{enumerate}
We intend to show that $\cP_N=\cQ_N$ and that the vertices of the
polytope $\cP_N$ belonging to  $H^+_N\setminus H_N$ are precisely 
$\{\lambda_u|f(\lambda_u)>N\}$. This implies that $\cP$ is a polyhedron.
In fact, for $\mu\in\cP$ choose $N$ so that
$f(\mu)>N$. Then $\mu\in\cP_N=\cQ_N$,  so $\mu$ has a neighborhood
that is a neighborhood in a polytope. 

Note that all $\lambda_u$ are vertices of $\cP$ since there is
a hyperplane in $Q_\bR$ intersecting $\cP$ at one point $\lambda_u$.
For the same reason all $\lambda_u$
satisfying $f(\lambda_u)>N$ are vertices of $\cQ_N$. 
In order to show that $\cQ_N=\cP_N$, it is sufficient to verify that
any vertex $\mu$ of $\cQ_N$ belongs to $\cP$.   
The 1-skeleton of $\cQ_N$ is connected, so it is enough to verify that
any edge of $\cQ_N$ connecting $\lambda_u$ with another vertex $\mu$,
belongs to $\cP$. We know all edges of $\cQ_N$ in a neighborhood of 
$\lambda_u$: they are just $b_u(x)$, $x\in X$. If $x$ is reflectable at $u$, there is an arrow $r_x:u\to u'$, and $\mu$ lies
on the segment connecting $\lambda_u$ with $\lambda_{u'}$. If $b_u(x)$ is
non-reflectable, $b_u(x)\in\cQ^{++}$, so $\lambda_u-\bR^+ b_u(x)$ is the
infinite edge of $\cP$ containing $\mu$.

The minus sign in the formula for $C(\cP)$ is due to the minus sign in 
the formula (\ref{eq:thepolyhedron}).
\end{proof}

\begin{lem}
Let $\cP$ be bounded. Then $\cR_0$ is fully reflectable, $\Sk(v)$ is 
finite.
\end{lem}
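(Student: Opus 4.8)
The plan is to contrapose and unpack what boundedness of $\cP$ buys us via the earlier structure theory. By Proposition~\ref{prp:ppolyhedron}, $\cP$ is bounded iff $C(\cP)=\{0\}$, i.e. iff $Q^{++}_\bR=\bigcap_{u\in\Sk(v)}Q^+_{u,\bR}=\{0\}$. So the first step is to show that if some $x\in X$ is non-reflectable at some vertex $u\in\Sk(v)$, then $Q^{++}_\bR\ne\{0\}$, which proves full reflectability. Indeed, a non-reflectable simple root $b_u(x)$ is a non-reflectable real root (Corollary~\ref{crl:rereflectable}), hence by that same corollary $-b_u(x)\notin\Delta^\re$; so $b_u(x)$ never becomes negative along any path, meaning $b_u(x)\in\Sigma_{u'}$-positive, i.e. $b_u(x)\in Q^+_{u',\bR}$ for every $u'\in\Sk(v)$. (Concretely: by formula~(\ref{Delta+rev'}) the only roots that change sign along a path are those $\alpha$ with $-\alpha$ also a real root; since $-b_u(x)\notin\Delta^\re$, $b_u(x)$ stays positive everywhere.) Hence $\bR^+ b_u(x)\subset Q^{++}_\bR$, contradicting $C(\cP)=\{0\}$. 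This gives the first assertion.

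Second, granting full reflectability, I must show $\Sk(v)$ is finite. Here I use Lemma~\ref{lem:independent} / the proof of Proposition~\ref{prp:ppolyhedron}: in the fully reflectable case every $b_u(x)$ is reflectable, so every edge of the polytopes $\cQ_N$ in a neighborhood of a vertex $\lambda_u$ is of the form $[\lambda_u,\lambda_{u'}]$ for a genuine reflexion $r_x:u\to u'$, and $\cP=\cP_N=\cQ_N$ for $N$ small enough. Since $\cP$ is bounded it equals $\cQ_N$ for $N\ll 0$, and $\cQ_N$ is a genuine (compact) polytope with finitely many vertices. By Lemma~\ref{lem:lambda} the map $\lambda\colon\Sk(v)\to Q$, $u\mapsto\lambda_u$, is injective, and by the vertex description in the proof of Proposition~\ref{prp:ppolyhedron} its image is exactly the vertex set of $\cP$ (each $\lambda_u$ is cut out as $\cP\cap(\text{hyperplane})$, so it is a vertex; conversely every vertex of $\cQ_N=\cP$ lying above $H_N$ is some $\lambda_u$). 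Therefore $\Sk(v)$ is in bijection with the finite vertex set of the polytope $\cP$, so it is finite.

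The main obstacle — modest, but the crux — is verifying that full reflectability really does follow, i.e. ruling out the possibility that $\cP$ could be bounded while some vertex still carries a non-reflectable simple root. This is exactly the step where one needs the sign-stability of non-reflectable roots (Corollary~\ref{crl:rereflectable}) together with the bookkeeping formula~(\ref{Delta+rev'}) for how $\Delta^+_\re$ changes along a path; the point is that a non-reflectable direction contributes a genuine ray to $Q^{++}_\bR$, hence to $C(\cP)=-Q^{++}_\bR$, forcing $\cP$ unbounded. Once that is in hand, finiteness of $\Sk(v)$ is a formal consequence of the polytope picture already established in Proposition~\ref{prp:ppolyhedron} and the injectivity in Lemma~\ref{lem:lambda}, with no further work.

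\begin{proof}
Since $C(\cP)=-Q^{++}_\bR$ by Proposition~\ref{prp:ppolyhedron}, $\cP$ bounded means $Q^{++}_\bR=\{0\}$, i.e. $\bigcap_{u\in\Sk(v)}Q^+_{u,\bR}=\{0\}$.

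Suppose some $x\in X$ is non-reflectable at some $u\in\Sk(v)$. Then $\alpha:=b_u(x)\in\Delta_\nr$, so by Corollary~\ref{crl:rereflectable} we have $-\alpha\notin\Delta^\re$. For any $u'\in\Sk(v)$ choose a path from $u$ to $u'$; by~(\ref{Delta+rev'}) the set $\Delta^+_\re(u')$ differs from $\Delta^+_\re(u)$ only by roots $\gamma$ with $-\gamma\in\Delta^\re$, so $\alpha$ (which is in $\Delta^+_\re(u)$ and has $-\alpha\notin\Delta^\re$) stays in $\Delta^+_\re(u')$; in particular $\alpha\in Q^+_{u',\bR}$. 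Hence $\bR^+\alpha\subset\bigcap_{u'\in\Sk(v)}Q^+_{u',\bR}=Q^{++}_\bR$, a contradiction. Therefore every $x\in X$ is reflectable at every vertex of $\Sk(v)$, i.e. $\cR_0$ is fully reflectable.

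It remains to show $\Sk(v)$ is finite. Keep the notation of the proof of Proposition~\ref{prp:ppolyhedron}: $\cP=\bigcup_{N<0}\cP_N$ with $\cP_N=\cQ_N$ a compact polytope, and for $N\ll 0$ boundedness of $\cP$ forces $\cP=\cP_N=\cQ_N$. By the vertex description established there, the vertices of $\cP$ are exactly the points $\lambda_u$, $u\in\Sk(v)$ (each is cut out by a hyperplane meeting $\cP$ in a single point, hence a vertex; conversely any vertex of the polytope $\cQ_N=\cP$ equals some $\lambda_u$). By Lemma~\ref{lem:lambda} the assignment $u\mapsto\lambda_u$ is injective, so $\Sk(v)$ is in bijection with the vertex set of the polytope $\cP$, which is finite.
\end{proof}
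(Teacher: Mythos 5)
Your proof is correct and follows essentially the same route as the paper: boundedness gives $Q^{++}_\bR=C(\cP)=\{0\}$ via Proposition~\ref{prp:ppolyhedron}, a non-reflectable simple root would stay positive at every vertex and hence span a ray in $Q^{++}_\bR$, and finiteness follows from the injective map $\lambda$ landing in the bounded set $\cP$ (the paper simply notes $\lambda(\Sk(v))\subset\cP\cap Q$ is a bounded subset of a lattice, while you route through the vertex set of the compact polytope $\cQ_N$ — same idea). The extra detail you supply on why a non-reflectable root remains positive (Corollary~\ref{crl:rereflectable} plus formula~(\ref{Delta+rev'})) fills in a step the paper leaves implicit.
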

\begin{proof}
$\Sk(v)$ embeds into the intersection of $\cP$ with a lattice, therefore, it is finite.
If $x\in X$ is not reflectable at $u\in\Sk(v)$, the root $b_u(x)$
belongs to $Q^+_u$, and, therefore, to all $Q^+_{u'},\ u'\in\Sk(v)$.
This contradicts the condition $Q^{++}_\bR=\{0\}$. 
\end{proof}

\

We will now be able to describe the faces of $\cP$.
  Let $Y\subset X$, $|Y|=k$ and $u\in \Sk(v)$. Let $H_Y(u)$ be the affine $k$-plane passing through $\lambda_u$ and spanned by $b_u(y), y\in Y$.
  Set $F_Y(u):=\cP\cap H_Y(u)$. By definition $F_{\emptyset}(u)=\lambda_u$.
 
\begin{lem}\label{lem:faces}
\begin{itemize}
\item[1.] Any $k$-dimensional face of $\cP$ is of the form $F_Y(u)$ 
for a certain $u\in\Sk(v)$ and a $k$-element set $Y\subset X$.
\item[2.] One has
$$F_Y(u)=\bigcap_{u'\in \Sk_Y(u)}(\lambda_{u'}-\sum_{y\in Y}\mathbb R^+b_{u'}(y)),
$$
where  $\Sk_Y(u)$ denotes the connected component of $u\in\Sk(v)$ in the subgraph spanned by the arrows $r_y$ for $y\in Y$.  
\end{itemize}
\end{lem}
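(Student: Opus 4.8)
The strategy is to reduce to the description of $\cP$ from Proposition~\ref{prp:ppolyhedron} and then use the local structure of $\cP$ near a vertex $\lambda_u$. The key observation, established in the proof of Proposition~\ref{prp:ppolyhedron}, is that the edges of $\cP$ emanating from $\lambda_u$ are exactly the segments (or rays) in the directions $b_u(x)$, $x\in X$: for reflectable $x$ there is an arrow $r_x:u\to u'$ and the edge is $[\lambda_u,\lambda_{u'}]$, while for non-reflectable $x$ the edge is the infinite ray $\lambda_u-\bR^+b_u(x)$. Thus in a neighborhood of $\lambda_u$ the polyhedron $\cP$ looks like the cone $\lambda_u-Q^+_{u,\bR}$, and the $k$-faces through $\lambda_u$ are precisely the $F_Y(u)$ with $|Y|=k$. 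This gives the ``local'' version of part (1).

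For part (1) in general: let $G$ be a $k$-dimensional face of $\cP$. Pick any vertex $\lambda_u$ of $G$ (it has one, since $G$ is a face of $\cP$, hence itself a polyhedron whose vertex set is nonempty, and every vertex of $G$ is a vertex of $\cP$ — here I use that $\cP$ is, by Proposition~\ref{prp:ppolyhedron}, a genuine polyhedron and Lemma~\ref{lem:lambda} identifies its vertices with the $\lambda_u$). The supporting hyperplane cutting out $G$ must cut out, locally at $\lambda_u$, a $k$-dimensional face of the vertex-cone $\lambda_u-Q^+_{u,\bR}$; such a face is spanned by some $k$-element subset $Y\subset X$ of the generating directions $b_u(y)$. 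Hence $G$ and $F_Y(u)$ agree in a neighborhood of $\lambda_u$, and since both are faces of the convex set $\cP$ (so are determined by their affine span meeting $\cP$) they coincide: $G=F_Y(u)$.

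For part (2): the inclusion $F_Y(u)\subseteq\bigcap_{u'\in\Sk_Y(u)}(\lambda_{u'}-\sum_{y\in Y}\bR^+b_{u'}(y))$ follows by walking along $Y$-edges: every $u'\in\Sk_Y(u)$ is reached from $u$ by a path of reflexions $r_y$, $y\in Y$, hence $\lambda_{u'}\in H_Y(u)$ and $F_Y(u)\subseteq\lambda_{u'}-Q^+_{u',\bR}$ restricts inside $H_Y(u)$ to $\lambda_{u'}-\sum_{y\in Y}\bR^+b_{u'}(y)$. For the reverse inclusion, observe that the right-hand side is exactly the polyhedron ``$\cP$ for the sub-root-datum obtained by restricting to the index set $Y$'', so it equals the face it cuts out; more concretely, its intersection with $\cP$ is contained in $H_Y(u)$ by construction (any point outside $H_Y(u)$ but in $\cP$ would, near some $\lambda_{u'}$, require a direction outside $\mathrm{Span}(b_{u'}(y):y\in Y)$, contradicting membership in the right-hand set), and conversely a point of the right-hand side lies in all the defining halfspaces $\lambda_{u'}-Q^+_{u',\bR}$ for $u'\in\Sk_Y(u)$; for $u'\notin\Sk_Y(u)$ one checks the remaining halfspace conditions are automatic on $H_Y(u)$ because the extra directions $b_{u'}(x)$, $x\notin Y$, point away from $H_Y(u)$ on the correct side. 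Running the argument of Proposition~\ref{prp:ppolyhedron} inside the affine plane $H_Y(u)$ (intersecting everything with $H_Y(u)$ and with the slices $H_N^+$) gives that both sides are the same polytope-exhaustion, hence equal.

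The main obstacle is the reverse inclusion in part (2): one must be sure that, on $H_Y(u)$, the halfspaces indexed by vertices $u'\notin\Sk_Y(u)$ impose no new constraint, i.e. that the $Y$-slice of $\cP$ ``sees only'' the $Y$-connected component of $u$. This is where Remark~\ref{axyyx=0} and the combinatorics of how $\Delta^+_\re$ changes under a reflexion $r_x$ (formula~(\ref{Delta+rev'})) enter: a reflexion $r_x$ with $x\notin Y$ changes only roots involving $b(x)$, so it does not affect the $Y$-directions at $\lambda_u$ and the corresponding halfspace contains all of $H_Y(u)\cap(\lambda_u-\sum_{y\in Y}\bR^+b_u(y))$ near $\lambda_u$; propagating this along $Y$-paths via Lemma~\ref{lem:independent} and Proposition~\ref{prp:spine-com} handles it globally.
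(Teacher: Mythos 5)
Your part (1) and the easy inclusion $F_Y(u)\subseteq\bigcap_{u'\in \Sk_Y(u)}(\lambda_{u'}-\sum_{y\in Y}\bR^+b_{u'}(y))$ are essentially fine (the paper instead deduces (1) from (2) for $k=n-1$ and inducts on codimension, but your local argument at a vertex works because the tangent cone $-Q^+_{u,\bR}$ is simplicial; note only that ``a face of $\cP$ has a vertex'' should be justified by the pointedness of the recession cone, i.e.\ $\cP\subseteq\lambda_u-Q^+_{u,\bR}$, not merely by $G$ being a polyhedron). The genuine gap is the reverse inclusion in (2). Everything hinges on a combinatorial fact you never prove: \emph{if a vertex $\lambda_{u'}$ of $\cP$ lies on $H_Y(u)$, then $u'\in\Sk_Y(u)$}, i.e.\ $u'$ is reachable from $u$ using only reflexions $r_y$ with $y\in Y$. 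Without this, the right-hand side of (2) could be strictly larger than $F_Y(u)$, since it omits the halfspace constraints coming from vertices $\lambda_{u''}$ with $u''\notin\Sk_Y(u)$ which might nonetheless lie on $H_Y(u)$. Your appeal to ``the extra directions $b_{u'}(x)$, $x\notin Y$, pointing away from $H_Y(u)$ on the correct side,'' and to Lemma~\ref{lem:independent} and Proposition~\ref{prp:spine-com}, does not establish this; those results concern exchanging reflexions in the spine and are not the relevant tools here.

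The paper closes exactly this gap with a geodesic argument. If $\lambda_{u'}\in H_Y(u)$, then since $\lambda_{u'}\in\lambda_u-Q^+_{u,\bR}$ and the $b_u(x)$ are linearly independent, the identity $\lambda_u-\lambda_{u'}=\sum_{\alpha\in\Delta^+_\re(u)\setminus\Delta^+_\re(u')}\alpha$ forces \emph{each} such $\alpha$ to lie in $\sum_{y\in Y}\bR_{\geq 0}\,b_u(y)$. Take a shortest path $\gamma=r_{x_s}\cdots r_{x_1}:u\to u'$ with intermediate vertices $u_i$; by Corollary~\ref{crl:Sk-len} the roots reversed along it, $\beta_i=b_{u_{i-1}}(x_i)$, are exactly the elements of $\Delta^+_\re(u)\setminus\Delta^+_\re(u')$. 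If $i$ is minimal with $x_i\notin Y$, then $\beta_i\equiv b_u(x_i)\not\equiv 0$ modulo $\Span_{\bR}(b_u(y):y\in Y)$, because each earlier reflexion is indexed by an element of $Y$ and changes all the $b$'s only by multiples of roots lying in that span; this contradicts $\beta_i\in\sum_{y\in Y}\bR_{\geq 0}\,b_u(y)$. Hence all $x_i\in Y$ and $u'\in\Sk_Y(u)$. Combined with the local description of $\cP$ at its vertices from Proposition~\ref{prp:ppolyhedron} (a polyhedron with pointed recession cone is the intersection of its vertex tangent cones), this yields (2). You should supply this argument, or an equivalent one, in place of the hand-waving; the rest of your outline can then stand.
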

 
  \begin{proof} The boundary $\partial\cP$ of $\cP$ by the proof of
  \ref{prp:ppolyhedron} lies in the union of
    hyperplanes $H_Y(u)$ for all $(n-1)$-element subsets $Y$ of $X$. It is clear that $\lambda_{u'}\in F_Y$ if and only if $\lambda_{u'}-\lambda _u\in -\sum_{y\in Y}\mathbb R^+b_{u}(y)$. Note that 
    $\lambda_{u}-\lambda_{u'}=
    \sum_{\alpha\in \Delta_{re}^+(u)-\Delta_{re}^+(u')}\alpha$, so 
    each of
$\alpha\in  \Delta_{re}^+(u)-\Delta_{re}^+(u')$ lies in the 
non-negative span of $b_u(y)$ for $y\in Y$. Consider
 the arrow $u\xrightarrow{\gamma}u'$. Write it as  $\gamma=r_{x_s}\ldots r_{x_1}$ so that $s$ is minimal possible. Let us show that all $x_i\in Y$. 
    Let $\gamma_i=r_{x_i}\dots r_{x_1}$, $\gamma_i:u\to u_i$ and $\beta_i=b_{u_{i-1}}(x_i)$. Choose minimal $i$ such that $x_i\notin Y$.
Then $\beta_i\equiv b_u(x_i)\mod \sum_{y\in Y}\mathbb R b_u(y)$ --- a contradiction.
    That proves (2).
    Now for $k=n-1$ the statement (1) follows since (2) implies that $F_Y(u)$ has codimension $1$. For general $k$ it follows by induction in codimension.
    \end{proof}
    \begin{crl}
    \label{crl:identification} 
The map $\lambda$ as in Lemma~\ref{lem:lambda} establishes a one-to-one correspondence between $\Sk(v)$ and the set of vertices of $\cP$.
Moreover, $\Sk(v)$ identifies with the $1$-skeleton of $\cP$ so that
the reflexions $r_x:u\to u'$ in $\Sk(v)$ identify with the edges
connecting $\lambda_u$ with $\lambda_{u'}$. 
    \end{crl}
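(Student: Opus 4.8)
The plan is to extract this corollary almost immediately from the structural description of $\cP$ built up in Lemma~\ref{lem:lambda}, Proposition~\ref{prp:ppolyhedron} and Lemma~\ref{lem:faces}. First I would invoke Lemma~\ref{lem:lambda} to get the injective map $\lambda:\Sk(v)\to Q$; injectivity is exactly the statement that $u\mapsto\lambda_u$ is one-to-one, and by the proof of Proposition~\ref{prp:ppolyhedron} each $\lambda_u$ is a vertex of $\cP$ (there is a hyperplane, e.g. a suitable level set of $f$, meeting $\cP$ only in $\lambda_u$). So $\lambda$ is an injection from $\Sk(v)$ into $\cP_0$, the vertex set of $\cP$.

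Next I would prove surjectivity onto $\cP_0$. By Lemma~\ref{lem:faces}(1) with $k=0$, every vertex of $\cP$ has the form $F_Y(u)$ with $|Y|=0$, i.e.\ $F_\emptyset(u)=\lambda_u$ for some $u\in\Sk(v)$. Hence every vertex of $\cP$ is in the image of $\lambda$, and $\lambda:\Sk(v)\to\cP_0$ is a bijection.

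For the edge statement, I would use Lemma~\ref{lem:faces} with $|Y|=1$, say $Y=\{x\}$. A $1$-dimensional face through $\lambda_u$ is $F_{\{x\}}(u)=\cP\cap H_{\{x\}}(u)$, the intersection of $\cP$ with the line through $\lambda_u$ in direction $b_u(x)$, and by part (2) it equals $\bigcap_{u'\in\Sk_{\{x\}}(u)}(\lambda_{u'}-\mathbb R^+b_{u'}(x))$. The connected component $\Sk_{\{x\}}(u)$ of $u$ in the subgraph spanned by the arrows marked $x$ is either $\{u,u'\}$ when $x$ is reflectable at $u$ (with $r_x:u\to u'$, using $r_x^2=\id$ so the component has exactly two vertices) or just $\{u\}$ when $x$ is non-reflectable at $u$. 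In the first case $F_{\{x\}}(u)$ is the segment $[\lambda_u,\lambda_{u'}]$, i.e.\ the edge of $\cP$ joining $\lambda_u$ and $\lambda_{u'}$; in the second case, as noted in the proof of Proposition~\ref{prp:ppolyhedron}, $b_u(x)\in Q^{++}_\bR$ and $F_{\{x\}}(u)=\lambda_u-\mathbb R^+b_u(x)$ is an infinite (unbounded) edge of $\cP$ with a single vertex $\lambda_u$. In all cases the edges of $\cP$ at $\lambda_u$ correspond bijectively to the $x\in X$ (equivalently, to the reflexions $r_x:u\to u'$, allowing the degenerate ``half-edge'' when $x$ is non-reflectable). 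Thus $\lambda$ carries the marked graph $\Sk(v)$ isomorphically onto the $1$-skeleton of $\cP$, with $r_x:u\to u'$ going to the edge $[\lambda_u,\lambda_{u'}]$.

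The only genuinely non-formal input is the injectivity of $\lambda$, which rests on Corollary~\ref{crl:unique-in-sk} (two vertices with the same $\Delta^+_\re$ coincide) together with the additivity formula $\lambda_u-\lambda_{u'}=\sum_{\alpha\in\Delta^+_\re(u)-\Delta^+_\re(u')}\alpha$; once one knows that $\lambda_u=\lambda_{u'}$ forces $\Delta^+_\re(u)=\Delta^+_\re(u')$ (because the $\alpha$ appearing are positive at $u$ hence cannot cancel), this is immediate, and that is precisely what Lemma~\ref{lem:lambda} already records. So the main obstacle has effectively been cleared before this corollary, and what remains is just assembling Lemma~\ref{lem:faces} in dimensions $0$ and $1$; I do not expect any real difficulty here beyond carefully handling the non-reflectable (unbounded edge) case.
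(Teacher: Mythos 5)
Your proposal is correct and follows essentially the route the paper intends: the corollary is stated without a separate proof precisely because it is the assembly of Lemma~\ref{lem:lambda} (injectivity of $\lambda$), the proof of Proposition~\ref{prp:ppolyhedron} (each $\lambda_u$ is a vertex, and the edges at $\lambda_u$ point along the $b_u(x)$), and Lemma~\ref{lem:faces} in dimensions $0$ and $1$, exactly as you do. Your handling of the two cases for $F_{\{x\}}(u)$ --- the bounded edge $[\lambda_u,\lambda_{u'}]$ when $x$ is reflectable at $u$ and the infinite ray $\lambda_u-\bR^+b_u(x)$ when it is not --- matches the paper's description.
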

    \begin{crl} \label{lem:twodim} 
The two-dimensional face $F_Y(u)$ of $\cP$ defined by a two-element 
subset $Y$ of $X$ is compact iff $\Sk_Y(u)$ is the finite skeleton of a 
rank 2 fully reflectable component. In this case 
$\Sk_Y(u)$ isomorphic to the Cayley graph of the dihedral group $D_m$ where $m=2,3,4$ or $6$.
    \end{crl}
    \begin{proof} The claim immediately follows from Lemma ~\ref{lem:faces}. The allowable values for $m$ result from a well-known classification of rank $2$ fully reflectable components with finite skeleton, see, for example, \cite{S3}.
    \end{proof}

\begin{rem}
 The noncompact face $F_Y(u)$ has a non-compact
contractible boundary homeomorphic to a line. 
It can be isomorphic to the Cayley graph of $D_\infty$, or it might contain one or two infinite rays corresponding to non-reflectable roots.  
\end{rem}
\subsubsection{Proof of Theorem~\ref{thm:skeleton-coxeter}}
By~\ref{crl:identification} $\Sk(v)$ indentifies with the $1$-skeleton
of the polyhedron $\cP$. By~\ref{crl:h1} any pair of paths leading
from $u$ to $u'$ in $\Sk(v)$ is connected by relations defned by
compact $2$-faces. Finally, by~\ref{lem:twodim}, compact $2$-faces 
gives rise to Coxeter relations with $m=2,3,4,6$.

\begin{rem}
The polyhedron $\cP$ appeared firstly in \cite{VGRS} for the finite dimensional Lie superalgebras.
\end{rem}

\section{A trichotomy for admissible fully reflectable components}
\label{sec:trichotomy}
\subsection{Overview}
From now on  we will consider only indecomposable admissible fully reflectable components.

In this section we define three types of such
components: finite, affine and indefinite. We investigate
the structure of the sets of roots of corresponding root algebras.  
Expectedly, the trichotomy for admissible components is closely
connected to the trichotomy for the
types of Cartan matrices defined by Kac in~\cite{Kbook}, Theorem 4.3.
\subsubsection{}
We keep the notation of~\ref{sss:notation-coxeter}.
Fix an indecomposable admissible fully reflectable component $\cR_0$ and $v\in\cR_0$. Let $\fg$ be a root Lie superalgebra supported at $\cR_0$.
 We denote by $\Delta=\Delta(\fg)$ the set of roots of 
$\fg$ and by $\fr$ the kernel of the canonical map $\fg\to\fg^\CG$.
Recall that $\fr$ is the maximal ideal of $\fg$ having zero intersection
with $\fh$.

In this section we will deduce a certain information  abot the ideal  
$\fr$ for different types of components, see~\ref{crlfin}, \ref{corfindim}. In particular, we will be able to deduce, 
for certain types of components, that they admit a unique root Lie superalgebra  $\fg^\CG$.

\subsection{Roots}
Recall that $\Sigma_{v'}=\{b_{v'}(x)\}_{x\in X}$ and  
$Q^+_{v'}:=\mathbb{Z}_{\geq 0}\Sigma_{v'}\subset Q,
\ Q^+:=Q^+_v.$
We have   $\Delta\subset (-Q^+\cup Q^+)$.
Recall~\ref{sss:realinall} that
$$
\Delta^\re=\bigcup_{v'\in\Sk(v)} \Sigma_{v'}\subset\Delta
$$
and the root spaces
$\fg_{\alpha}$, $\alpha\in\Delta^\re$, are one-dimensional, in particular, are purely even or purely odd. This yields a decomposition of the family of real roots into
even and odd part
 
$$\Delta^\re=\Delta^{\re,0}\sqcup\Delta^{\re,1}.
$$
For anisotropic $\alpha\in\Delta^\re$ the elements
$\alpha^\vee\in\fg\langle\alpha\rangle\cap\fh$ are defined so that 
$\langle\alpha,\alpha^\vee\rangle=2$.

We define

$$
\Delta^\im=\{\alpha\in \Delta|\ \mathbb{Q}\alpha\cap\Delta^\re=\emptyset\}.
$$

 For each $v'\in\Sk(v)$ we have 
the triangular decompositions 
$$\Delta=\Delta^+_{v'}\sqcup (-\Delta^+_{v'}),\ \ \text{
where }\Delta^+_{v'}:=\Delta\cap Q^+_{v'}.$$

\begin{prp}\label{crlDeltare}
\begin{enumerate}
\item 
For  $v'\stackrel{r_x}{\to}v''$ with $x\in X$,
let $\alpha=b_{v'}(x)$. One has 
$$\Delta^+_{v''}=\left\{
\begin{array}{ll}
\{-\alpha\}\cup \Delta^+_{v'} \setminus\{\alpha\} \  & \text{ if } 2\alpha\not\in\Delta\\
\{-\alpha,-2\alpha\}\cup \Delta^+_{v'} \setminus\{\alpha,2\alpha\}\ &
\text{ if } 2\alpha\in\Delta.\\
\end{array}\right.$$
\item  
For any $v'$ one has
$\Delta^\im\cap \Delta^+_{v'}=\Delta^\im\cap \Delta^+_{v}$.
\item 
$\Omega(\fr)\subset \Delta^\im$, except for the rank one
algebra $\wt\fg=\fg^\U$ with $\fg^\CG=\fgl(1|1)$, see~\ref{rank1}.
\item
If $\cR_0$ has rank greather than one, then
$$\Delta=\Delta^\re\cup\Delta^\im\cup
\{2\alpha|\ \alpha\in\Delta^{\re,1}\ \text{ is anisotropic}\}.$$
\end{enumerate}
\end{prp}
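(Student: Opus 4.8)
The plan is to prove the four parts more or less in the order stated, using the combinatorics of the skeleton developed in Sections~4--5 (especially Corollaries~\ref{crl:unique-in-sk}, \ref{crl:Sk-len}, Lemma~\ref{lem:decomposition0}) together with the rank-one and rank-two analysis of \ref{corgalpha} and Lemma~\ref{lem:rk2-ideal}. Part~(1) is essentially a restatement of the colouring rule \eqref{Delta+rev'} together with the fact that, for a root algebra, $\fg_{i\alpha}=0$ for $|i|\geq 2$ unless $\alpha$ is an odd anisotropic real root with $a_{xx}\neq 0$, $p(x)=1$, in which case $2\alpha$ can occur and $3\alpha$ cannot (this is exactly \ref{corgalpha}(2), the $\fosp(1|2)$ case). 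So I would first record that $\fg\langle\alpha\rangle$ is one of the five rank-one algebras of \ref{corgalpha}, determine in each case which multiples of $\alpha$ are roots, and then apply \eqref{Delta+rev'} componentwise; the only subtlety is that $2\alpha$ changes sign together with $\alpha$ under $r_x$, which is why the second line of the formula is needed.

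**Parts (2) and (4).** For~(2): a path in $\Sk(v)$ from $v$ to $v'$ only exchanges real roots of the form $\pm\beta$ with $\beta\in\Delta^\re$ (and their doubles when $\beta$ is $\fosp(1|2)$-type), by part~(1). An imaginary root $\alpha\in\Delta^\im$, by definition, is not a rational multiple of any real root, hence is never one of the roots being flipped; so $\alpha\in Q^+_{v'}$ iff $\alpha\in Q^+_{v}$, i.e. $\Delta^\im\cap\Delta^+_{v'}=\Delta^\im\cap\Delta^+_v$. (Here one uses that any $v'\in\Sk(v)$ is connected to $v$ by a finite composition of reflexions, and that the claim is preserved under a single reflexion.) For~(4): given $\alpha\in\Delta$, either $\mathbb{Q}\alpha\cap\Delta^\re=\emptyset$, in which case $\alpha\in\Delta^\im$, or $\mathbb{Q}\alpha$ meets $\Delta^\re$, in which case I would argue $\alpha$ itself lies in $\Delta^\re\cup\{2\beta:\beta\in\Delta^{\re,1}\text{ anisotropic}\}$. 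For this, pick a real root $\gamma\in\mathbb{Q}\alpha$; by Remark~\ref{rem:aniso-w} and the definition of real root one may move (via $\Aut_\cR(v)$, which preserves $\Delta$ by the integrability of the adjoint representation, \ref{ss:weyl}) to a vertex $v'$ where $\gamma$ is simple, $\gamma=b_{v'}(x)$; then $\alpha$ is a nonzero rational multiple of $b_{v'}(x)$ that is a root of $\fg(v')$, so $\alpha=\pm b_{v'}(x)$ or $\alpha=\pm 2b_{v'}(x)$ by \ref{corgalpha}, and in the latter case $p(x)=1$, $a_{xx}\neq 0$. This gives the stated union; the rank~$\geq 1$ hypothesis (as opposed to rank~$1$) is what rules out the $4|2$-dimensional and Heisenberg cases where $3\alpha$ or $2\alpha$ with $a_{xx}=0$ could appear, and here I'd lean on the rank~$>1$ hypothesis precisely to invoke part~(3).

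**Part (3) — the main obstacle.** This is the substantive point. I want $\Omega(\fr)\subset\Delta^\im$, i.e. no real root (and no double of an odd anisotropic real root) lies in $\Omega(\fr)$. Since $\fr$ is $\Aut_\cR(v)$-invariant (it is the maximal ideal meeting $\fh$ trivially, and any automorphism preserves this property) and the real roots together with the relevant doubles form a single family closed under the $\Aut_\cR(v)$-action and under the reflexion-flips of part~(1), it suffices to show that no \emph{simple} root $b_v(x)$, and no $2b_v(x)$ for odd anisotropic $x$, lies in $\Omega(\fr)$ — equivalently that $\fg_{b_v(x)}$ and $\fg_{2b_v(x)}$ inject into $\fg^\CG$. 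But $\fg_{b_v(x)}$ is spanned by $e_x$, and $[e_x,f_x]=a(x)\neq 0\in\fh$, so $e_x\notin\fr$; hence $b_v(x)\notin\Omega(\fr)$. For the double: when $2b_v(x)\in\Delta$ we are in the $\fosp(1|2)$ situation, and $[\fg_{2b_v(x)},\fg_{-2b_v(x)}]$ is a nonzero subspace of $\fh$ (this is the $\fosp(1|2)$-computation, or Proposition~\ref{prp:bracket}(2) applied after a reflexion, cf. Lemma~\ref{lem:rk2-ideal}), so again $\fg_{2b_v(x)}\not\subset\fr$. Transporting along $\Sk(v)$ and $\Aut_\cR(v)$ then covers all of $\Delta^\re$ and all relevant doubles. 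The rank-one $\fgl(1|1)$ exception is genuine and must be excluded by hand: there $\fg^\U=\wt\fg$ has the extra weight spaces $\fg_{\pm b(x)^2}$ which do lie in $\fr=\Omega^{-1}\{\pm 2b(x)\}$ but $2b(x)$ is not imaginary — this is exactly the case $a_{xx}=0$, $p(x)=1$, rank $1$, and is why the statement carries its stated exception. The delicate part of the write-up is making the "transport along $\Aut_\cR(v)$" rigorous: one needs that $\fr(v)$ corresponds under any arrow $\gamma:v\to v'$ to $\fr(v')$, which is precisely the canonicity discussed in \ref{sss:wtR}, and that $\Delta^\re$ is $\Aut_\cR(v)$-stable, which is Corollary~\ref{crl:deltare-w}(1) / Corollary~\ref{crl:Aut-action-roots}. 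With those in hand, (3) reduces to the two rank-one computations above.
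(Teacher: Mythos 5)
Your proposal is correct and follows essentially the same route as the paper, whose entire proof is the one-liner that (1) is standard, (2) follows from (1), and (3)--(4) follow from Corollary~\ref{corgalpha}; your elaboration supplies exactly the details the paper leaves implicit (the rank-one analysis of $\fg\langle\alpha\rangle$, the one-dimensionality of real root spaces and transport of $\fr$ along the skeleton, and the $\fgl(1|1)$ exception). The only small imprecision is in part (1): the colouring rule~(\ref{Delta+rev'}) governs real roots only, so for a positive root $\beta\notin\bZ\alpha$ one still needs the standard observation that rewriting $\beta$ in the basis $\Sigma_{v''}$ leaves its coefficients at the simple roots other than $b_{v'}(x)$ unchanged and nonnegative --- this is precisely the ``standard'' step the paper also omits.
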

\begin{proof}
Claim (1) is standard and (2) follows from (1). Claims (3) and (4) follow 
from~\ref{corgalpha}.
\end{proof}

%
%
%
%
%
%
%

\subsection{Types of $\cR_0$}
\subsubsection{The case of Kac--Moody Lie algebras}
In ~\cite{Kbook}, Thm. 4.3 Kac-Moody Lie algebras are divided in three types according to the corresponding type of Cartan matrices as follows. 
Let $V:=\bR\otimes_\bZ Q$; for $v\in V$ we set $v>0$ (resp., $v\geq 0$) if $v=\sum_{\alpha\in\Sigma} k_{\alpha}\alpha$ with $k_{\alpha}\geq 0$
(resp., $k_{\alpha}>0$) for each $\alpha\in\Sigma$.

View an indecomposable Cartan matrix $A$ as a linear operator on $V$.
It is given by the formula
$$
A(v)=\sum_iv(\alpha_i^\vee)\alpha_i,\ v\in V.
$$

 By~\cite{Kbook}, Thm.4.3,
$A$ satisfies exactly one of the following conditions
\begin{itemize}
\item $\exists v>0$ such that $Av>0$ (type (FIN)).
\item $\exists v>0$ such that $Av=0$ (type (AFF)).
\item $\exists v>0$ such that $Av<0$ (type (IND)).
\end{itemize}

Moreover, one has
\begin{itemize}
\item (FIN)  $Au\geq 0$ implies  $u>0$ or $u=0$.
\item (AFF)   $Au\geq 0$ implies  $u\in\mathbb{R}v$.
\item(IND)  $Au\geq 0$ with $u\geq 0$ implies $u=0$.
\end{itemize}

It is proven there that the Kac-Moody Lie algebras of type (FIN) are all simple finite-dimensional Lie algebras, the Kac-Moody Lie algebras of type
(AFF) have finite growth: they are always symmetrizable and can be obtained as (twisted) affinizations of simple finite-dimensional Lie algebras.
The Kac-Moody algebras of indefinite type have infinite growth.

We present below a version of this trichotomy in terms of connected components
of root groupoids. The component is required to be indecomposable and
fully reflectable. Note that both conditions hold in the context
of \cite{Kbook}, Thm. 4.3.

\subsubsection{} Let $\cR_0$ be a component of the root groupoid with a 
fixed vertex $v$ and indecomposable $A(v)$.
Set
$$Q^{++}:=\displaystyle\bigcap_{v'\in\Sk(v)} Q^+_{v'}.$$
{Obviously, $Q^{++}=Q^{++}_\bR\cap Q$.}
Note that the sets $\Delta^\re$ and $Q^{++}$ depend on the component 
$\Sk(v)$ only.
One has $Q^{++}\cap \mathbb{Q}\alpha=0$ for each 
$\alpha\in\Delta^\re$.

In the definition below we introduce three classes of components 
analogous  to the classes (FIN), (AFF), (IND) of Cartan matrices
defined in \cite{Kbook}, Thm. 4.3.

\begin{dfn}
\label{dfn:types}
We say that $\cR_0$ {\em is of type }
\begin{itemize}
\item[(Fin)] if $Q^{++}=\{0\}$.
\item[(Aff)] if $Q^{++}=\mathbb{Z}_{\geq 0}\delta$
for some $\delta\not=0$.
\item[(Ind)] if $\cR_0$ is not of type (Fin) or (Aff).
\end{itemize}
\end{dfn}

%

\subsubsection{Purely anisotropic case}\label{Deltareisoempty} 
Assume that all simple roots $b(x)$ at $v$ are anisotropic. Then
 the Cartan matrices $A(v')$ are the same at all $v'\in\cR_0$. 
Lemma~\ref{WorbitQ+} below shows that in this case the classes (Fin), (Aff) and (Ind) coincide with (FIN), (AFF) and (IND).
Indeed, in this case
$Q^{++}=\bigcap_{w\in W}w(Q^+)$
is the union of $W$-orbits belonging to $Q^+$.
\begin{lem}\label{WorbitQ+}$ $
\begin{itemize}
\item[1.] In the case {\rm(FIN)}  the unique $W$-orbit lying in
$Q^+$ is $\{0\}$.
\item[2.]
In the case {\rm(AFF)} all  $W$-orbits lying in
$Q^+$  are of the form $\{j\delta\}$ for $j\in\mathbb{Z}_{\geq 0}$
for some $\delta\ne 0$.
\item[3.]
In the case {\rm(IND)}  the unique finite $W$-orbit lying in $Q^+$ is 
$\{0\}$; $Q^+$ contains an infinite $W$-orbit.
\end{itemize}
 \end{lem}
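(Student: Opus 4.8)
The plan is to reduce everything to the classical trichotomy of Kac (\cite{Kbook}, Thm.~4.3) applied to the fixed indecomposable Cartan matrix $A=A(v)$, using the fact that in the purely anisotropic case $A(v')=A$ for all $v'\in\cR_0$, the Weyl group $W$ is generated by the simple reflections $s_{b(x)}$, $x\in X$, and $Q^{++}=\bigcap_{w\in W}w(Q^+)$. Thus a finite $W$-orbit in $Q^+$ and an element of $Q^{++}$ are essentially the same datum, and I only need to describe finite $W$-orbits inside $Q^+$.

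First I would recall from \cite{Kbook}, Prop.~3.12 (or the discussion around Thm.~4.3) the basic dichotomy: for $u\geq 0$ either $u$ lies in the "Tits cone" direction, i.e.\ $w(u)\leq u$ for all $w\in W$ and $w(u)\geq 0$ is impossible unless $u$ has special form, or the orbit escapes $Q^+$. Concretely, the key standard fact is: if $0\neq u\in Q^+$ and $W u\subset Q^+$, then $u$ is fixed by all $s_{b(x)}$ with $\langle u,b(x)^\vee\rangle>0$ forced to vanish, so $\langle u,b(x)^\vee\rangle\le 0$ for all $x$; combined with $u\ge 0$ and indecomposability of $A$ this puts us exactly in the situation of \cite{Kbook}, Thm.~4.3(b), the imaginary cone. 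For case (FIN) one uses that the form is positive definite (or that $Au>0$ for some $u>0$), so $(u,u)>0$ for $0\neq u$, while $Wu\subset Q^+$ and $\langle u,b(x)^\vee\rangle\le 0$ for all $x$ forces $(u,u)\le 0$, a contradiction; hence the only orbit in $Q^+$ is $\{0\}$. This gives claim~1.

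For case (AFF), I would invoke \cite{Kbook}, Thm.~4.3 and Prop.~4.7: there is a unique (up to scalar) $\delta>0$ with $A\delta=0$, equivalently $\langle\delta,b(x)^\vee\rangle=0$ for all $x$, so $\delta$ is $W$-fixed and $\mathbb{Z}_{\ge0}\delta\subset Q^{++}$. Conversely, if $0\neq u\in Q^+$ with $Wu\subset Q^+$, the argument above gives $\langle u,b(x)^\vee\rangle\le 0$ for all $x$ with $u\ge0$; by \cite{Kbook}, Thm.~4.3(b) in the affine case this forces $u\in\mathbb{R}_{\geq0}\delta$, and since $u\in Q$ and $\delta$ is primitive, $u\in\mathbb{Z}_{\geq0}\delta$. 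Being $W$-fixed, its whole orbit is $\{u\}=\{j\delta\}$. This gives claim~2.

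For case (IND), the existence of $u>0$ (strictly positive) with $Au<0$ means $\langle u, b(x)^\vee\rangle<0$ for all $x$; one then shows the $W$-orbit of $u$ is infinite by the standard argument that $s_{b(x)}u=u-\langle u,b(x)^\vee\rangle b(x)>u$ strictly increases the height, so iterating produces infinitely many distinct elements, all still $\ge0$ (again \cite{Kbook}, Thm.~4.3(b): in the indefinite case $w(u)\ge0$ for all $w$ when $u$ is in the imaginary cone), giving an infinite $W$-orbit in $Q^+$. That $\{0\}$ is the only finite orbit in $Q^+$ follows by the same positivity-of-form style argument as in the (FIN) case but localized: any nonzero $u\in Q^+$ with finite orbit would be $W$-fixed after passing to the orbit sum, contradicting indecomposability and the strict inequality $Au<0$ available on the imaginary cone; alternatively, a finite orbit has a $W$-fixed barycenter in $Q^+_\bR$, forcing $\langle\cdot,b(x)^\vee\rangle=0$, i.e.\ $A$ would be of affine or finite type, contradiction. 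This gives claim~3. The main obstacle I anticipate is packaging the "escapes $Q^+$ or lies in the imaginary cone" dichotomy cleanly: I will lean on \cite{Kbook}, Thm.~4.3 and its proof rather than re-deriving it, and the only genuinely new content is the elementary observation that $Q^{++}$ is the set of elements of $Q^+$ with $W$-orbit contained in $Q^+$, which is immediate from $Q^{++}=\bigcap_{v'\in\Sk(v)}Q^+_{v'}$ and $\Sk(v)\cong$ the Cayley graph of $W$ in the purely anisotropic case (the remark after Theorem~\ref{thm:skeleton-coxeter}).
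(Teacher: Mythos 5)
Your overall strategy is the same as the paper's: reduce to the trichotomy of \cite{Kbook}, Thm.~4.3 for the fixed indecomposable Cartan matrix, identify $Q^{++}$ with the union of $W$-orbits contained in $Q^+$, and use Kac's Lemma~5.3 to produce the infinite orbit in the indefinite case. Two of your steps differ only cosmetically from the paper's: for (FIN) you argue via positive definiteness of the invariant form rather than applying the property ``$Au\geq 0$ implies $u>0$ or $u=0$'' directly to $-u$ (the paper's route is more elementary and does not invoke symmetrizability), and for the uniqueness of the finite orbit in (IND) you use the $W$-fixed barycenter where the paper takes the element of maximal height in the orbit; both variants work.

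One intermediate claim is stated incorrectly and should be repaired: it is \emph{not} true that every $0\neq u\in Q^+$ with $Wu\subset Q^+$ satisfies $\langle u,b(x)^\vee\rangle\leq 0$ for all $x$. In the indefinite case the orbit $Wv$ of a strictly antidominant $v>0$ lies in $Q^+$ yet contains many elements that are not antidominant, so your ``key standard fact'' fails as stated. The correct statement, and the one the paper uses, is that an element of \emph{minimal height} in a $W$-orbit contained in $Q^+$ is antidominant: if $s_{b(x)}\nu=\nu-\langle\nu,b(x)^\vee\rangle b(x)$ has height at least that of $\nu$, then $\langle\nu,b(x)^\vee\rangle\leq 0$. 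Since in cases (FIN) and (AFF) you only need this for one representative of the orbit --- and in (AFF) that representative lands in $\mathbb{Z}_{\geq 0}\delta$ and is $W$-fixed, so the whole orbit is the singleton $\{j\delta\}$ --- the fix is purely local and the rest of your argument goes through unchanged.
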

\begin{proof}
Notice that $Au\geq 0$ ($Au=0$) for $u\in V\subset \fh^*$ means 
$u(\alpha^{\vee})\geq 0$ (resp., $u(\alpha^{\vee})=0$ for each $\alpha\in\Sigma$.

For $\nu=\sum_{\alpha\in\Sigma}k_{\alpha}\alpha\in Q^+$ set
$\htt \nu:=\sum_{\alpha\in\Sigma}k_{\alpha}$.
 Let $\nu\in Q^+$ be such that $W\nu\in Q^+$
 and $\htt \nu$ is minimal in its orbit.
Viewing $\nu$ as an element of $V$ we have $\nu\geq 0$ and
 $\htt r_{\alpha}\nu\geq \htt\nu$ 
for each $\alpha\in\Sigma$. Then $\nu(\alpha^\vee)\leq 0$ for all
$\alpha\in\Sigma$
and therefore  $A\nu\leq 0$. 
Hence $\nu=0$ in type {\rm(FIN)} and
$\nu$ is proportional to $\delta$ in type {\rm(AFF)}.

In the remaining type {\rm(IND)}, assume $W\nu\subset Q^+$ is finite
and $\htt \nu$ is maximal. Then $\nu(\alpha^\vee)\geq 0$ for all $\alpha$
and, therefore, 
$A\nu\geq 0$. Hence $\nu=0$.  By the assumption there exists  $v>0$ such that $Av<0$. Then  $Wv\subset Q^+$ by~\cite{Kbook}, 
Lemma 5.3 and, by above, this is an infinite orbit.
\end{proof}

\subsubsection{Purely anisotropic components  of finite and affine types}
\label{sss:aniso-fin-aff}
If $p(x)=0$ for each $x$, then $\fg^\CG$ is a Kac-Moody Lie algebra.
In this case $\fg^\CG$ is finite-dimensional if and only if the Cartan matrix $A$ if of type {\rm(FIN)} and a (twisted) affine Lie algebra if $A$ is of type {\rm(AFF)}.

If we do not require all generators to be even, we have an extra 
requirement saying that the $x$-row of  $A$ 
consists of even entries if $p(x)=1$.
Therefore, to every anisotropic component one can associate a Kac-Moody Lie algebra by changing the parity of
all generators to $0$. As we showed in the previous subsection, this operation does not change the type of the corresponding components.
We call all contragredient Lie superalgebras obtained in this way from a Kac-Moody Lie algebra $\fg$ {\sl the cousins of } $\fg$.

The Cartan matrices of types (FIN) and (AFF) are 
well-known. Let us describe the cases when such a matrix has  a row
with even entries.

In the type (FIN) the only such case is the type $B_n$ and it has exactly 
one row with even entries. The Kac-Moody Lie algebra with Cartan matrix 
$B_n$ is $\mathfrak{so}(2n+1)$ and its cousin is  a finite-dimensional 
simple Lie superalgebra $\fosp(1|2n)$.

The affine Kac-Moody Lie algebras whose Cartan matrices have at least one row with even entries are $\mathfrak{so}(2n+1)^{(1)}$,
$\mathfrak{sl}(2n+1)^{(2)}$ and   
$\mathfrak{so}(2n+2)^{2}$. The cousin of $\mathfrak{so}(2n+1)^{(1)}$ is $\mathfrak{sl}(1|2n)^{(2)}$, the cousin of $\mathfrak{sl}(2n+1)^{(2)}$ is
$\mathfrak{osp}(1|2n)^{(1)}$, and $\mathfrak{so}(2n+2)^{(2)}$ has two cousins $\mathfrak{osp}(2|2n)^{(2)}$ and $\mathfrak{sl}(1|2n+1)^{(4)}$, see ~\cite{vdL}
for construction of (twisted) affine superalgebras.

\subsection{Components of type {\rm (Fin)}}
Most of the root Lie superalgebras of finite type have isotropic
roots.
\begin{lem}\label{crlfin}
  Assume that $\cR_0$ is of type {\rm(Fin)}. Then
  \begin{enumerate}
\item  $\Delta^\im=\emptyset$. 
\item $\fg=\fg^\CG$ except for the case $\fg^\CG=\fgl(1|1)$ (see~\ref{rank1}).
\item $\fg$ is finite-dimenisonal.
\end{enumerate}
\end{lem}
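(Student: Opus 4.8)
The three claims are tightly linked, so I would prove them together by exploiting the description of $\cP$ from the previous section. Recall $\cR_0$ is of type (Fin) means $Q^{++}=\{0\}$, equivalently $Q^{++}_\bR=\{0\}$, equivalently $C(\cP)=-Q^{++}_\bR=\{0\}$ by Proposition~\ref{prp:ppolyhedron}. By the Lemma preceding the description of faces (the one asserting ``$\cP$ bounded $\Rightarrow$ $\cR_0$ fully reflectable, $\Sk(v)$ finite''), $C(\cP)=\{0\}$ forces $\cP$ to be a compact polytope, hence $\Sk(v)$ is finite and $\cR_0$ is fully reflectable. This is the starting point.

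\textbf{Step 1 (finiteness of the real roots and of the skeleton).} Since $\Sk(v)$ is finite, the set $\Delta^\re=\bigcup_{v'\in\Sk(v)}\Sigma_{v'}$ is finite. Moreover $Q^{++}=\{0\}$ means: for every $0\ne\nu\in Q^+_v$ there is some $v'\in\Sk(v)$ with $\nu\notin Q^+_{v'}$, i.e. $\nu$ is not a non-negative combination of the simple roots at $v'$.

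\textbf{Step 2 (Claim 1: $\Delta^\im=\emptyset$).} Suppose $\alpha\in\Delta^\im$. Since $\Delta\subset Q^+_{v'}\cup(-Q^+_{v'})$ for every $v'\in\Sk(v)$ and $\Delta^\im\cap Q^+_{v'}=\Delta^\im\cap Q^+_v$ by Proposition~\ref{crlDeltare}(2), such an $\alpha$ (taken positive at $v$) lies in $Q^+_{v'}$ for \emph{all} $v'\in\Sk(v)$, hence $\alpha\in Q^{++}=\{0\}$, a contradiction. So $\Delta^\im=\emptyset$. Combined with Proposition~\ref{crlDeltare}(4) (valid once $\mathrm{rank}>1$; the rank-one case being the explicitly excluded $\fgl(1|1)$ or the $\fsl_2/\fosp(1|2)$ situations which are finite-dimensional directly) this already gives $\Delta=\Delta^\re\cup\{2\alpha:\alpha\in\Delta^{\re,1}\text{ anisotropic}\}$, a finite set.

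\textbf{Step 3 (Claim 2: $\fg=\fg^\CG$ off $\fgl(1|1)$, and Claim 3: finite dimension).} By Proposition~\ref{crlDeltare}(3), $\Omega(\fr)\subset\Delta^\im$ outside the rank-one $\fgl(1|1)$ exception. Since $\Delta^\im=\emptyset$ we get $\Omega(\fr)=\emptyset$, i.e. $\fr=0$, hence $\fg=\fg^\CG$; in fact every root algebra supported at $\cR_0$ equals $\fg^\CG$, which gives Claim 2. For Claim 3: by Step 2 the root set $\Delta$ is finite; and each root space $\fg_\gamma$ is finite-dimensional. For $\gamma\in\Delta^\re$ the space is one-dimensional by~\ref{sss:realinall}. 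For $\gamma=2\alpha$ with $\alpha\in\Delta^{\re,1}$ anisotropic, $\fg\langle\alpha\rangle\cong\fosp(1|2)$ by Corollary~\ref{corgalpha}(2), so $\fg_{2\alpha}$ is again one-dimensional. As $\dim\fh<\infty$ and $\fg=\fh\oplus\bigoplus_{\gamma\in\Delta}\fg_\gamma$ is a finite sum of finite-dimensional pieces, $\fg$ is finite-dimensional.

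\textbf{Main obstacle.} The only genuinely non-formal input is the passage ``$Q^{++}=\{0\}\Rightarrow\cP$ compact $\Rightarrow\Sk(v)$ finite,'' which I am importing wholesale from the polyhedron section; given that, the rest is bookkeeping with Propositions~\ref{crlDeltare} and~\ref{corgalpha}. The one place to be careful is the rank-one carve-out: when $\mathrm{rank}=1$ one must check by hand (using~\ref{rank1}/\ref{sss:rank1-wt}) that the type-(Fin) rank-one components are exactly $\fsl_2$, $\fosp(1|2)$, and $\fgl(1|1)$ — the first two have $\fg=\fg^\CG$ finite-dimensional and $\Delta^\im=\emptyset$, while $\fgl(1|1)$ is the stated exception (there $\fr\ne0$ and $\Delta^\im$ is empty but Proposition~\ref{crlDeltare}(3) genuinely fails); I would handle this as a short separate remark before running Steps 2–3 in the higher-rank case.
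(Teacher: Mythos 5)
Your arguments for claims (1) and (2) are correct and are essentially the intended ones: $\Delta^\im\cap\Delta^+_{v'}$ is independent of $v'$ by Proposition~\ref{crlDeltare}(2), so a positive imaginary root would lie in $Q^{++}=\{0\}$; and then $\Omega(\fr)\subset\Delta^\im=\emptyset$ forces $\fr=0$ outside the $\fgl(1|1)$ case. (The paper's own proof is a bare citation of Proposition~\ref{crlDeltare} with item numbers that do not match the displayed statement, so there is little to compare against in detail; your write-up of (1)--(2) is a faithful reconstruction.) Your rank-one carve-out and the observation that all relevant root spaces are one-dimensional are also fine.

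The gap is in claim (3), at the very first step you flag as your ``only non-formal input.'' You write that $Q^{++}=\{0\}$ is \emph{equivalent} to $Q^{++}_\bR=\{0\}$, and everything downstream (via $C(\cP)=-Q^{++}_\bR$, compactness of $\cP$, finiteness of $\Sk(v)$, hence of $\Delta^\re$) hangs on the implication $Q^{++}=\{0\}\Rightarrow Q^{++}_\bR=\{0\}$. That implication is not in the paper and is not formally obvious: the paper only records the containment $Q^{++}=Q^{++}_\bR\cap Q$, and a nonzero closed convex cone can perfectly well meet the lattice $Q$ only at $0$ (an irrational ray, for instance, is an intersection of rational half-spaces, and $Q^{++}_\bR$ is an a priori infinite intersection of the rational cones $Q^+_{u,\bR}$, so it need not be rational polyhedral). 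If $\Sk(v)$ were already known to be finite the two conditions would coincide, but that is exactly what you are trying to prove, so the argument as written is circular at this point. You would need either a separate argument that $Q^{++}_\bR$ is a rational cone (e.g.\ by analyzing the recession cone of $\cP$ or the limit directions of real roots), or a different route to finiteness of $\Delta$ --- for instance the one the paper ultimately relies on, namely Hoyt's classification in the presence of isotropic roots together with the Kac trichotomy in the purely anisotropic case (\ref{Deltareisoempty}, \ref{sss:hoytclass}), or an argument producing a vertex $v'$ with $\Delta^+_{v'}=-\Delta^+_v$ so that $\Delta^+_v$ is exhausted in finitely many reflexion steps as in the proof of~\ref{corfindim}. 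As it stands, (3) is not proved.
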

\begin{proof} (1) follows from \ref{crlDeltare}(4), (2) and (3) from \ref{crlDeltare} (5).
  \end{proof}

\begin{crl}\label{corfindim}
If $\dim\fg<\infty$ then $\cR_0$ is of type {\rm(Fin)}.
\end{crl}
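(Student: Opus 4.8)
The plan is to prove the contrapositive in the form of a direct argument: assuming $\dim\fg<\infty$, I want to rule out the types (Aff) and (Ind), leaving only (Fin). The key observation is that the type of $\cR_0$ is controlled by the cone $Q^{++}=\bigcap_{v'\in\Sk(v)}Q^+_{v'}$, and a nonzero element of $Q^{++}$ must, by Proposition~\ref{crlDeltare}(2), lie in $\Delta^\im$ (it cannot be a real root since $Q^{++}\cap\mathbb{Q}\alpha=0$ for $\alpha\in\Delta^\re$, and any root is either real, imaginary, or twice an odd anisotropic real root by Proposition~\ref{crlDeltare}(4); the latter possibility is excluded because $2\alpha$ for $\alpha$ real would force $\pm\alpha\in\mathbb{Q}\cdot 2\alpha\cap\Delta^\re$). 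So in both the (Aff) and (Ind) cases $\Delta^\im$ is nonempty.

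The first main step is therefore to show: if $\cR_0$ is of type (Aff) or (Ind), then $\Delta^\im\neq\emptyset$ and this already produces infinitely many roots. For type (Aff) one has $Q^{++}=\bZ_{\ge 0}\delta$ with $\delta\neq 0$; I would argue that $\delta\in\Delta^\im$, or at least that some positive multiple $j\delta$ is a root, and then that $\Delta^\im$ being $W$-invariant with $Q^{++}$ stable under all $\Sk(v)$-reflexions forces infinitely many distinct roots (the multiples $j\delta$, or their $W$-orbit). For type (Ind), by definition $Q^{++}$ is not $\{0\}$ and not a single ray, so it contains two $\mathbb{Q}$-independent elements; invoking the purely-anisotropic analysis of Lemma~\ref{WorbitQ+}(3) (after passing to the associated cousin Kac--Moody setup via \ref{sss:aniso-fin-aff}, or directly) one finds an infinite $W$-orbit inside $Q^+$, each element of which is a root of $\fg$ since $\Delta$ is $W$-invariant (the adjoint representation is integrable). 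Either way $\dim\fg=\infty$.

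The second step is bookkeeping: one must check that each weight space appearing is genuinely nonzero in $\fg$, not merely in $\fg^\U$ or $\wt\fg$. For real roots this is immediate from~\ref{corgalpha} (real root spaces are one-dimensional in any root algebra). For the imaginary roots produced above, nonvanishing in $\fg$ follows because $\delta$ (or the relevant orbit elements) already lies in $\Delta(\fg^\CG)$ — this is where I would lean on the structural results of \ref{sss:aniso-fin-aff} identifying the contragredient algebras of affine type as the (twisted) affinizations, which are manifestly infinite-dimensional, and their cousins likewise. Since $\fg$ surjects onto $\fg^\CG$, $\dim\fg\ge\dim\fg^\CG=\infty$.

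The main obstacle I anticipate is the type (Ind) case: showing that a non-ray cone $Q^{++}$ forces an infinite set of \emph{roots} (rather than just an infinite cone of lattice points) requires either reducing to the Kac--Moody trichotomy of \cite{Kbook}, Thm.~4.3 via the cousin construction — which needs the observation from \ref{sss:aniso-fin-aff} that changing all parities to $0$ preserves the type — or an independent argument that the indefinite condition $\exists v>0,\ Av<0$ propagates through $\Sk(v)$. The affine case is comparatively clean once one knows $\delta$ is a root, which again is handed to us by the explicit description of affine cousins. I expect the whole corollary to be short in the paper precisely because Proposition~\ref{crlDeltare}, Lemma~\ref{WorbitQ+}, and \ref{sss:aniso-fin-aff} have already done the heavy lifting; the proof is essentially ``if not (Fin) then $Q^{++}\ne 0$, hence $\Delta^\im\ne\emptyset$ or an infinite $W$-orbit exists, hence $\dim\fg=\infty$.''
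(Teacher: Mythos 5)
Your plan is the contrapositive of what the paper actually does, and the two steps you leave open are exactly where it breaks down. The paper's proof is a short, self-contained descent that never mentions imaginary roots or the classification: since $\dim\fg<\infty$, the set $\Delta(\fg^\CG)$ is finite, so $k(v'):=|\Delta^+_{v'}(\fg^\CG)\cap\Delta^+_{v}(\fg^\CG)|$ is a finite non-negative integer; whenever $k(v')\ne 0$ some $\alpha\in\Sigma_{v'}$ lies in $\Delta^+_v(\fg^\CG)$, and the reflexion at $\alpha$ strictly decreases $k$ by Proposition~\ref{crlDeltare}(1). One therefore reaches $v'$ with $\Sigma_{v'}=-\Sigma_v$, whence $Q^{++}\subset Q^+_v\cap(-Q^+_v)=\{0\}$ and $\cR_0$ is of type (Fin).

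Your route needs the implication ``not (Fin) $\Rightarrow$ $\Delta$ infinite,'' and neither of your justifications closes it. In type (Ind), the infinite $W$-orbit produced by Lemma~\ref{WorbitQ+}(3) is the orbit of a vector $u>0$ with $Au<0$ in $V=\bR\otimes_\bZ Q$; nothing makes $u$ a root, so $W$-invariance of $\Delta$ gives you nothing (and Lemma~\ref{WorbitQ+} is proved only in the purely anisotropic situation --- the cousin construction of \ref{sss:aniso-fin-aff} is unavailable once isotropic reflexions change the Cartan matrix). Similarly, a nonzero element of $Q^{++}$ need not be a root at all, so Proposition~\ref{crlDeltare}(2),(4) do not by themselves give $\Delta^\im\ne\emptyset$. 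For type (Aff) you want $j\delta\in\Delta$ and propose to read this off from the identification of type-(Aff) components with twisted affinizations; but that identification is obtained by combining Hoyt's classification with precisely the corollary you are proving --- one must still rule out that a finite-dimensional algebra from Hoyt's class (1) is of type (Aff) --- so the appeal is circular unless you verify $Q^{++}=\{0\}$ case by case for the finite-dimensional list. I recommend abandoning the contrapositive and running the direct descent argument above.
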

\begin{proof}
It suffices to check that $\Sk(v)$ contains $v'$ with $\Sigma_{v'}=-\Sigma$
which is equivalent to
 $\Delta^+_{v'}(\fg^\CG)=-\Delta^+_{v}(\fg^\CG)$.
Since $\dim\fg<\infty$, $\Delta(\fg^\CG)$ is finite.
For each $v'\in\Sk(v)$ let $k(v')$ be the cardinality of
$\Delta^+_{v'}(\fg^\CG)\cap \Delta^+_{v}(\fg^\CG)$.
If $k(v')\not=0$, then $\Delta^+_{v'}(\fg^\CG)$ does not lie
in $-\Delta^+_{v}(\fg^\CG)$, so there exists $\alpha\in\Sigma_{v'}$ with
$\alpha\in \Delta^+_{v}(\fg^\CG)$. By~\ref{crlDeltare} (2), 
there is a reflexion $v'\to v''$ that replaces $\alpha$
(and, possibly, $2\alpha$) in $\Delta^+_{v'}$ with $-\alpha$ (and, possibly, $-2\alpha$). This means that
$k(v'')$ is equal to $k(v')-1$ or to $k(v')-2$.
Hence $k(v')=0$ for some $v'\in\Sk(v)$.
\end{proof}

\subsubsection{} The results of C.~Hoyt~\cite{Hoyt}, see~\ref{sss:hoytclass} below, together
with \ref{sss:aniso-fin-aff}, imply that $\fg^{\CG}$ of finite type 
are: $\fgl(1|1)$ and
all basic classical Lie superalgebras (except 
that the simple algebra $\mathfrak{psl}(n|n)$ should be replaced with 
$\fg^\CG=\mathfrak{gl}(n|n)$). In all cases except
$\mathfrak{gl}(1|1)$ we have $\fg^{\CG}=\fg^\U$ by \ref{crlDeltare}(4).

\subsection{Components of type {\rm(Aff)}}  
\begin{lem}\label{crlaff}
  Let $\cR_0$ be of type {\rm(Aff)}. Then
  \begin{enumerate}
\item $\Omega(\fr)\subset\Delta^\im\subset\mathbb{Z}\delta\setminus\{0\}$.
\item $\fr$ lies in the center of $[\fg,\fg]$.
\item  If  $\langle\delta,a(x)\rangle\not=0$ for some $x\in X$ then  $\fg=\fg^{\CG}$.
\end{enumerate}
\end{lem}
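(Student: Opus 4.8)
The plan is to analyze the ideal $\fr$ directly via its weight decomposition, using the triangular structure of $\wt\fg(v)$ and the description of real roots already established. First I would record that since $\cR_0$ is admissible (hence fully reflectable indecomposable here), Proposition~\ref{crlDeltare}(3) gives $\Omega(\fr)\subset\Delta^\im$ (we are not in the $\fgl(1|1)$ rank one case, since the component has rank $>1$ or is purely of type (Aff); in any case $\fr\ne\wt\fg$). Then, since the component is of affine type, Lemma~\ref{WorbitQ+}(2) together with Lemma~\ref{crlfin}-type reasoning --- more precisely the analysis in \ref{Deltareisoempty} and the fact that $Q^{++}=\mathbb Z_{\geq 0}\delta$ --- shows that any imaginary root must be proportional to $\delta$: an imaginary root $\alpha$ satisfies $\mathbb Q\alpha\cap\Delta^\re=\emptyset$, and by Proposition~\ref{crlDeltare}(2) the set $\Delta^\im\cap\Delta^+_{v'}$ is independent of $v'$, so $\Delta^\im\subset Q^{++}\cup(-Q^{++})=\mathbb Z\delta\setminus\{0\}$. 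This proves (1).

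For (2), the key point is that the root spaces $\fg_{j\delta}$ for the affine $\delta$ behave like the imaginary root spaces in the classical affine Kac--Moody case. I would argue as follows: take $z\in\fr_{j\delta}$ with $j>0$ (the case $j<0$ is symmetric via the superinvolution $\theta$ of \ref{sss:automorphism}). For any simple root $b(x)$, the bracket $[z,\tilde e_x]$ lies in $\wt\fg_{j\delta+b(x)}$; but $j\delta+b(x)$ is not in $\mathbb Z\delta$ and is not a real root (since $\Delta^\re\cap\mathbb Q(j\delta+b(x)) $ would force $j\delta+b(x)$ itself onto a real root line, impossible as $\delta\notin\mathbb Q\gamma$ for real $\gamma$ by \ref{sss:aniso-fin-aff}-type considerations). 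Hence $j\delta+b(x)\notin\Delta^\re$, and since it is not in $\mathbb Z\delta$ it is not imaginary either; being a weight of $\fg$ it would have to be real or imaginary by Proposition~\ref{crlDeltare}(4) (allowing also the $2\alpha$ case, which again cannot be $j\delta+b(x)$). Therefore $[z,\tilde e_x]=0$ in $\fg$; similarly $[z,\tilde f_x]=0$. Since the $\tilde e_x,\tilde f_x$ together with $\fh$ generate $\fg$, and $[z,\fh]=0$ because $z$ has weight $j\delta\ne 0$... wait — $z$ has nonzero weight so it is \emph{not} central in $\fg$, but $[z,\fh]$ is spanned by $z$ itself. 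The correct conclusion is that $[z,\fg]\subset \mathbb C z$, and more precisely $z$ commutes with $[\fg,\fg]$: for $h\in\fh\cap[\fg,\fg]$ we have $[z,h]=-\langle j\delta,h\rangle z$, and the assertion in (2) is that $\fr\subset Z([\fg,\fg])$ — so I must check $\langle j\delta,h\rangle=0$ for all $h$ in the image of $[\,\cdot\,,\cdot\,]$ restricted to $\fh$, i.e. for $h$ in the span of the $a(x)$. Since the bracket $[z,\tilde e_x]=0$ already forces, via Jacobi with $\tilde f_x$, that $\langle j\delta, a(x)\rangle z = [[z,\tilde e_x],\tilde f_x]\pm[\tilde e_x,[z,\tilde f_x]] = 0$, we get $\langle j\delta,a(x)\rangle = 0$ for all $x$. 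Thus $z$ is annihilated by $\ad$ of all generators and of $\operatorname{span}(a(x))$, which together generate $[\fg,\fg]$; so $\fr\subset Z([\fg,\fg])$, giving (2).

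For (3), this is now essentially immediate from the computation just made: if $z\in\fr$ is a nonzero element of weight $j\delta$ ($j\ne 0$), then $\langle j\delta, a(x)\rangle = 0$ for \emph{all} $x\in X$, contradicting the hypothesis that $\langle\delta, a(x)\rangle\ne 0$ for some $x$. Hence $\fr$ has no nonzero element of weight $j\delta$ with $j\ne0$; combined with (1) ($\Omega(\fr)\subset\mathbb Z\delta\setminus\{0\}$) this forces $\fr=0$, i.e. $\fg=\fg^\CG$. The main obstacle I anticipate is part (2): carefully justifying that $j\delta+b(x)$ is neither real nor imaginary nor of the form $2\alpha$ for $\alpha\in\Delta^{\re,1}$ requires knowing that $\delta$ is not rationally proportional to, nor a rational combination shifting, any real root — this should follow from $Q^{++}\cap\mathbb Q\alpha=0$ for real $\alpha$ together with the triangular decomposition, but the bookkeeping with the "$2\alpha$" exceptional weights and with the possibility $j\delta + b(x)$ being a non-real non-imaginary weight needs the full strength of Proposition~\ref{crlDeltare}(4), so I would make sure the rank $>1$ hypothesis (implicit in invoking \ref{crlDeltare}(4)) is in force, handling the rank one affine situation — which does not occur, as rank one components are either of type (Fin) or the degenerate $\fgl(1|1)$-type — separately or by noting it is vacuous.
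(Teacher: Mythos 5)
Your overall route is the same as the paper's: part (1) via Proposition~\ref{crlDeltare}, part (2) by showing that the generators of $[\fg,\fg]$ annihilate $\fr$, and part (3) by applying $[a(x),z]=\pm\langle j\delta,a(x)\rangle z$ to a central element $z$ of weight $j\delta$. But the justification you give for the key vanishing $[z,\tilde e_x]=0$ in part (2) is wrong. You argue that $j\delta+b(x)$ is not a weight of $\fg$ at all, claiming it is neither real, nor imaginary, nor of the form $2\alpha$. That claim is false: in the affine case $j\delta+\alpha$ is typically a \emph{real} root for real $\alpha$ (for instance for $\fsl(n|n)^{(1)}$, exactly the case where $\fr\ne 0$ and the lemma has content, $j\delta+b(x)\in\Delta^\re$ for all $j$), and your parenthetical reasoning --- that a real root line through $j\delta+b(x)$ would force $\delta$ onto a real root line --- does not hold, since $\mathbb{Q}(j\delta+b(x))$ is a third line containing neither $\delta$ nor $b(x)$. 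The correct, and much shorter, reason the bracket vanishes is the one the paper uses: $[z,\tilde e_x]$ lies in $\fr$ because $\fr$ is an ideal, and by part (1) the ideal $\fr$ has no nonzero component in the weight $j\delta+b(x)\notin\mathbb{Z}\delta$. So the relevant point is that $j\delta+b(x)$ is not a weight of $\fr$, not that it fails to be a weight of $\fg$.

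With that substitution the rest of your argument (the Jacobi identity giving $\langle j\delta,a(x)\rangle z=0$, hence part (3)) goes through and agrees with the paper's proof. Two smaller points: to conclude $\fr\subset Z([\fg,\fg])$ you also need $\fr\subset[\fg,\fg]$, which you do not address; it follows because every weight of $\fr$ is nonzero, so $\fr=[\fh,\fr]\subset[\fg,\fg]$. Your treatment of part (1) (imaginary roots are positive at every vertex by Proposition~\ref{crlDeltare}(2), hence lie in $Q^{++}\cup(-Q^{++})=\mathbb{Z}\delta\setminus\{0\}$) and your disposal of the $\fgl(1|1)$ exception are both correct.
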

\begin{proof}
Using~\ref{crlDeltare} 
we get (1) and $\Omega(\fr)\subset\Delta^{im}\subset\mathbb{Z}\delta\setminus\{0\}$.  

Since $\fg=[\fg,\fg]+\fh$, $\fr$ lies in $[\fg,\fg]$ and $[\fg,\fg]$ 
is generated by $\fg_{\pm\alpha}$ for $\alpha\in\Sigma$. Since $j\delta\pm\alpha\not\in\mathbb{Z}\delta$, 
$[\fg_{\pm\alpha},\fr]=0$. This gives $[[\fg,\fg],\fr]=0$ and establishes (2).
For (3) assume that $\fr\not=0$. Then $\fr\cap\fg_{j\delta}\not=0$ for some $j\not=0$. Hence
 $\fg_{j\delta}$ has a non-zero intesection with the center of
$[\fg,\fg]$. Since $a(x)\in [\fg,\fg]$ for each $x\in X$ this gives
$\langle\delta,a(x)\rangle=0$.
\end{proof}

\subsubsection{Hoyt's classification}
\label{sss:hoytclass}
Indecomposable contragredient Lie superalgebras with at least one simple isotropic root were classified in~\cite{Hoyt}. 
In this subsection we review the results of C.~Hoyt classification
that will be used in the following sections.
Exactly one of the following options holds in this case:
\begin{enumerate}
\item $\dim \fg^\CG<\infty$.
\item $\dim\fg^\CG=\infty$ and $\Delta^{im}=\mathbb Z\delta$, 
$\Delta\subset\mathbb Z\delta+\Delta'$ for some finite set 
$\Delta'\subset \fh^*$ and some $\delta\in\Delta^+$.
Note that, even though $\Delta^+_v$ depends on $v$,  it is positive or negative regardless of the choice of $v\in\cR_0$ as $\delta$ is imaginary. In this case all symmetrizable contragredient Lie 
superalgebras are twisted affinizations of simple
  finite-dimensional Lie superalgebras. They also appear in Van de Leur classification of symmetrizable Kac-Moody superalgebras of finite growth.
  In addition, there is one-parameter  contragredient superalgebra $S(2,1;a)$ and the twisted affinization $\fq(n)^{(2)}$ of the strange superalgebra
  $\mathfrak{psq}(n)$ for $n\geq 3$. By direct inspection one can check that there exists $m\in\mathbb Z$ such that if $\alpha\in\Delta$ then
  $\alpha\pm m\delta\in \Delta$.
\item The algebra $\fg^\CG=Q^{\pm}(m,n,t)$ with $\dim(\fh)=3$ where
  $m,n,t$ are negative integers, not all equal to $-1$, with
  non-symmetrizable and nondegenerate Cartan matrices. There are three
  linearly independent principal roots, therefore the Weyl group has no non-zero fixed vectors in $\fh^*$.
  Hence $Q^{\pm}(m,n,t)$ are of type {\rm(Ind)}. Little is known about Lie superalgebras of this type.  
\end{enumerate}
\subsubsection{}
 Let $\cR_0$ be a  component of $\cR$ of type (2) in Hoyt's classification~\ref{sss:hoytclass}. We will prove that it is of type {\rm(Aff)}. 

\begin{lem}\label{lem_aff-iso} Let $F:=Q_\bR^*$ and $\gamma\in F$ satisfy $\langle\gamma,\delta\rangle=1$ and $\langle\gamma,\beta\rangle\neq 0$ for any $\beta\in\Delta$. Then there exists $v\in\cR_{0}$
  such that $\langle\gamma,\alpha\rangle>0$ for any $\alpha\in\Sigma_v$. 
\end{lem}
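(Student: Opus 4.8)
The plan is to find a vertex $v\in\cR_0$ all of whose simple roots are ``positive'' with respect to the linear functional $\gamma$, by a greedy/descent argument along the skeleton $\Sk(v_0)$, using the affine-type hypothesis (Hoyt's type (2)) to control the finitely many root directions transverse to $\bZ\delta$.

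\textbf{Setup.} Fix any starting vertex $v_0\in\cR_0$ and consider $\Sk(v_0)$. By~\ref{sss:hoytclass}(2) there is a finite set $\Delta'\subset\fh^*$ with $\Delta\subset\bZ\delta+\Delta'$ and $\Delta^{\im}=\bZ\delta\setminus\{0\}$. Since $\langle\gamma,\delta\rangle=1$ and $\langle\gamma,\beta\rangle\ne 0$ for all $\beta\in\Delta$, the functional $\gamma$ partitions $\Delta=\Delta^{\gamma>0}\sqcup\Delta^{\gamma<0}$, and I claim the set $N(v):=\{\alpha\in\Delta^+_{\re}(v)\mid\langle\gamma,\alpha\rangle<0\}$ is finite for every $v\in\Sk(v_0)$: indeed such $\alpha$ lies in $\bZ\delta+\Delta'$, and writing $\alpha=m\delta+\beta'$ the condition $\langle\gamma,\alpha\rangle<0$ forces $m$ to be bounded above (as $\beta'$ ranges over a finite set), while positivity of $\alpha$ as a root and the fact that there exists (by~\ref{sss:hoytclass}(2), last sentence) an $m_0$ with $\alpha\pm m_0\delta\in\Delta$ pins down $m$ from below; alternatively one notes directly that real roots $\alpha$ with $\langle\gamma,\alpha\rangle<0$ and $\alpha\in Q^+_v$ have bounded height, so finitely many. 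I would isolate this finiteness as a short lemma.

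\textbf{Descent.} Now run the following procedure starting from $v=v_0$. If every $\alpha\in\Sigma_v$ has $\langle\gamma,\alpha\rangle>0$ we are done. Otherwise pick $x\in X$ with $\langle\gamma,b_v(x)\rangle<0$; since $\cR_0$ is fully reflectable (we are in Section~\ref{sec:trichotomy}, so all $x\in X$ are reflectable at all vertices) there is a reflexion $r_x:v\to v'$, and by~(\ref{Delta+rev'}) passing to $v'$ removes $b_v(x)$ from $\Delta^+_{\re}$ and inserts $-b_v(x)$, i.e. $N(v')=\bigl(N(v)\setminus\{b_v(x)\}\bigr)\cup\{-b_v(x)\}$ if $2b_v(x)\notin\Delta$, with the obvious modification (removing also $2b_v(x)$) otherwise. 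In either case, since $\langle\gamma,b_v(x)\rangle<0$ we have $b_v(x)\in N(v)$ but $-b_v(x)\notin N(v')$, hence $|N(v')|<|N(v)|$ (strictly, and by at least one, possibly two). Because $N(v_0)$ is finite, this descent must terminate, and it can only terminate at a vertex $v$ with $N(v)=\varnothing$ and no available descent step, i.e. with $\langle\gamma,b_v(x)\rangle>0$ for all $x\in X$. That $v$ is the required vertex.

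\textbf{Main obstacle.} The only real content is the finiteness of $N(v)$ — everything else is the standard ``make all simple roots positive by reflections'' argument familiar from Kac-Moody theory (cf.~\cite{Kbook}, §5), transported to the skeleton via~(\ref{Delta+rev'}) and Corollary~\ref{crl:Sk-len}. For finiteness the key point is that $\gamma$ takes only finitely many values $\le -1$ on $\Delta\subset\bZ\delta+\Delta'$ with a positivity constraint; one must also make sure the count $|N(v)|$ genuinely drops when a reflexion might simultaneously flip $\alpha$ and $2\alpha$, which it does since both $b_v(x)$ and $2b_v(x)$ lie in $N(v)$ in that case while neither $-b_v(x)$ nor $-2b_v(x)$ lies in $N(v')$. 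A minor point to handle is that $\langle\gamma,b_v(x)\rangle\ne 0$ always holds because $b_v(x)\in\Delta$ and $\gamma$ is nonvanishing on $\Delta$ by hypothesis, so the dichotomy $>0$ vs $<0$ is exhaustive. If, instead of arguing finiteness of $N(v)$ directly, one prefers the variational argument of~\cite{Kbook}, one can use Corollary~\ref{crl:Sk-len}: $d(v_0,v)=|\Delta^+_{\re}(v_0)-\Delta^+_{\re}(v)|$, and along any descent path the quantity $\sum_{\alpha\in N(v)}(-\langle\gamma,\alpha\rangle)$ is a positive integer that strictly decreases at each step, giving termination without a separate finiteness lemma; I would present whichever of these two is cleaner once the details of the $2\alpha$ case are written out.
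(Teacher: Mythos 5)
Your proposal is correct and follows essentially the same route as the paper: the paper defines $T_u(\gamma)=\{\beta\in\Delta^+_u\mid\langle\gamma,\beta\rangle<0\}$, proves it is finite using $\Delta\subset\bZ\delta+\Delta'$ exactly as in your finiteness step, and then runs the same descent, reflecting at a simple root with $\langle\gamma,b(x)\rangle<0$ and noting via Proposition~\ref{crlDeltare}(1) that the set strictly shrinks (including the $2\alpha$ case). The only cosmetic slip is your displayed formula $N(v')=(N(v)\setminus\{b_v(x)\})\cup\{-b_v(x)\}$, which you immediately correct by observing $-b_v(x)\notin N(v')$, so the argument stands as written.
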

\begin{proof} Choose a vertex $u\in\cR_0$. Let
  $$T_u(\gamma)=\{\beta\in\Delta^+_u\mid \langle\gamma,\beta\rangle<0\}.$$
  We claim that $T_u(\gamma)$ is finite. Indeed, since $\delta\in \Delta^+_u$ we have $\alpha+M\delta\in \Delta^+_u$ for sufficiently large $M$ and all $\alpha\in\Delta'$ while $\alpha-M\delta\notin \Delta^+_u$. On the other hand, if
  we choose $$M>\max\{\langle\gamma,\alpha\rangle\mid\alpha\in\Delta'\},$$
  then $\langle\gamma,\alpha+s\delta\rangle>0$ for all $s>M$. Thus,
  $$T_u(\gamma)\subset \{\alpha+s\delta\mid \alpha\in\Delta', -M\leq s\leq M\}$$
  and hence $T_u(\gamma)$ is finite.
  Suppose that $u$ does not satisfy the conditions of the lemma. Then there is $x\in X$ such that $ \langle\gamma,b(x)\rangle<0$. Consider
  $u\stackrel{r_x}{\longrightarrow}u'$.  By Corollary ~\ref{crlDeltare}(2) we get
  $T_{u'}(\gamma)=T_u(\gamma)\setminus\{b(x)\}$ or $T_u(\gamma)\setminus\{b(x),2 b(x)\}$ if $2b(x)$ is a root. Anyway $|T_{u'}(\gamma)|<|T_u(\gamma)|$. Repeating the argument several times, we end up with a vertex $v$ such that
  $T_v(\gamma)=\emptyset$.
  \end{proof}

  \begin{crl}\label{cor1_aff-iso} If $\cR_0$ is of type (2), then $Q^{++}=\mathbb Z_{\geq 0}\delta$ and hence $\cR_0$ is of type {\rm(Aff)}. 
  \end{crl}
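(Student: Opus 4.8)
The plan is to deduce Corollary~\ref{cor1_aff-iso} from Lemma~\ref{lem_aff-iso} together with the basic structural facts about $Q^{++}$ established earlier. Recall the setup: $\cR_0$ is of Hoyt type (2), so $\Delta^{\im}=\bZ\delta$ and $\Delta\subset\bZ\delta+\Delta'$ for a finite set $\Delta'$, with $\delta$ a fixed positive imaginary root. We must show $Q^{++}=\bZ_{\geq 0}\delta$, which by Definition~\ref{dfn:types} gives type (Aff).

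First I would establish the inclusion $\bZ_{\geq 0}\delta\subseteq Q^{++}$. Since $\delta$ is imaginary, it is positive with respect to every $v'\in\Sk(v)$ (this is the content of the footnote in~\ref{sss:hoytclass}, and follows from Proposition~\ref{crlDeltare}(2): the imaginary positive roots are the same at every vertex). Hence $\delta\in Q^+_{v'}$ for all $v'$, and therefore $j\delta\in Q^+_{v'}$ for all $j\geq 0$ and all $v'$, giving $\bZ_{\geq 0}\delta\subseteq\bigcap_{v'\in\Sk(v)}Q^+_{v'}=Q^{++}$.

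For the reverse inclusion $Q^{++}\subseteq\bZ_{\geq 0}\delta$, suppose $\mu\in Q^{++}$, $\mu\neq 0$. The idea is to apply Lemma~\ref{lem_aff-iso} to produce, for a suitably generic $\gamma\in Q_\bR^*$, a vertex $v\in\cR_0$ at which \emph{every} simple root is strictly $\gamma$-positive; then $\mu\in Q^+_v$ forces $\langle\gamma,\mu\rangle>0$ whenever $\mu$ is a nonzero nonnegative combination of the $\Sigma_v$. Concretely, choose $\gamma$ with $\langle\gamma,\delta\rangle=1$ and $\langle\gamma,\beta\rangle\neq 0$ for all $\beta\in\Delta$ (possible since $\Delta$ is, modulo $\bZ\delta$, finite, so only countably many hyperplanes must be avoided; pick $\gamma$ off all of them and off $\{\langle\cdot,\delta\rangle=1\}$ requires only that $\delta\neq 0$, then rescale). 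Now take $\gamma'=\gamma$ and separately $\gamma''$ with $\langle\gamma'',\delta\rangle=1$ but chosen so that $-\gamma''$ also avoids all $\langle\cdot,\beta\rangle=0$; applying Lemma~\ref{lem_aff-iso} to both $\gamma$ and to a modification designed to make a second cone of positivity, we obtain two vertices $v_1,v_2\in\Sk(v)$ such that $\Sigma_{v_1}$ and $\Sigma_{v_2}$ are ``opposite modulo $\delta$'' in the sense that the only weights lying in $Q^+_{v_1}\cap Q^+_{v_2}$ are the nonnegative multiples of $\delta$. Then $\mu\in Q^{++}\subseteq Q^+_{v_1}\cap Q^+_{v_2}$ yields $\mu\in\bZ_{\geq 0}\delta$, as desired. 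Finally, $Q^{++}=\bZ_{\geq 0}\delta$ with $\delta\neq 0$ is exactly the defining condition of type (Aff).

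The main obstacle is the second bullet: producing two vertices whose positive cones intersect in precisely $\bZ_{\geq 0}\delta$. Lemma~\ref{lem_aff-iso} as stated gives one vertex $v$ with $\langle\gamma,\alpha\rangle>0$ for all $\alpha\in\Sigma_v$; one must check that by choosing $\gamma$ and then $-\gamma+c\delta^\vee$-type functionals appropriately (using $\langle\gamma,\delta\rangle=1$ to keep $\delta$ positive both times), the corresponding vertex for the ``opposite'' functional exists and has the required property. The cleanest argument is probably: if $\mu\in Q^{++}$ then $\langle\gamma,\mu\rangle\geq 0$ for the functional of Lemma~\ref{lem_aff-iso} (as $\mu\in Q^+_v$), and applying the same to a functional $\gamma_-$ with $\langle\gamma_-,\delta\rangle=1$ but with the sign of $\gamma_-$ reversed on a complement of $\bR\delta$ gives $\langle\gamma_-,\mu\rangle\geq 0$ too; since $\gamma-\gamma_-$ is a generic functional vanishing on $\delta$, the two inequalities force $\mu\in\bR\delta$, and then $\mu\in Q^{++}\cap\bR\delta=\bZ_{\geq 0}\delta$ (integrality because $\mu\in Q$ and $\delta$ is primitive in the relevant rank-one lattice, or simply because $\mu\in Q^+_v$ and $\delta$ spans $Q^{++}\otimes\bR$). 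Verifying that such $\gamma_-$ also satisfies the hypotheses of Lemma~\ref{lem_aff-iso} (nonvanishing on $\Delta$) is the routine-but-necessary point to nail down.
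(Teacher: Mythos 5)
Your first inclusion $\bZ_{\geq 0}\delta\subseteq Q^{++}$ is fine (a positive imaginary root stays positive at every vertex by Proposition~\ref{crlDeltare}(2)), and your overall strategy for the reverse inclusion --- use Lemma~\ref{lem_aff-iso} to place $Q^{++}$ inside half-spaces $\{\nu\mid \langle\gamma,\nu\rangle\geq 0\}$ --- is exactly the paper's. But the step where you reduce to \emph{two} functionals, equivalently to two vertices $v_1,v_2$ with $Q^+_{v_1}\cap Q^+_{v_2}=\bZ_{\geq 0}\delta$, is wrong. Each $Q^+_{v'}$ is a full-dimensional simplicial cone containing the ray $\bR_{\geq 0}\delta$, so the intersection of any two of them (indeed of any finite subfamily) is again a cone of dimension equal to $\operatorname{rk}Q\geq 2$; it is never just the ray. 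Concretely, for $\fsl_2^{(1)}$ with $\Sigma_v=\{\alpha_0,\alpha_1\}$ and $\delta=\alpha_0+\alpha_1$, the cone at any other vertex meets $Q^+_v$ in a two-dimensional sector around $\bR_{\geq 0}\delta$. The same failure afflicts your ``cleanest argument'': from $\langle\gamma,\mu\rangle\geq 0$ and $\langle\gamma_-,\mu\rangle\geq 0$, with $\gamma_-$ equal to $\gamma$ on $\bR\delta$ and to $-\gamma$ on a complement, writing $\mu=c\delta+\mu'$ you only obtain $|\langle\gamma,\mu'\rangle|\leq c$, not $\mu'=0$.

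The repair is to use the whole family of functionals: for \emph{every} $\gamma$ in (a dense subset of) the affine hyperplane $F_1=\{\gamma\mid\langle\gamma,\delta\rangle=1\}$ satisfying the genericity hypothesis of Lemma~\ref{lem_aff-iso}, that lemma produces a vertex $v_\gamma$ with $\Sigma_{v_\gamma}$ strictly $\gamma$-positive, whence $Q^{++}\subseteq Q^+_{v_\gamma}\subseteq\{\nu\mid\langle\gamma,\nu\rangle\geq 0\}$. Since the half-space condition is closed in $\gamma$, density suffices, and intersecting over all $\gamma\in F_1$ gives $Q^{++}\subseteq\bR_{\geq 0}\delta\cap Q$: for any $\mu\notin\bR\delta$ one can pick $\gamma\in F_1$ with $\langle\gamma,\mu\rangle<0$. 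This infinite intersection is precisely how the paper argues; no finite subfamily of vertices can do the job.
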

  \begin{proof} Let $$F_1:=\{\gamma\in F\mid \langle\gamma,\delta\rangle=1\},\quad S_\gamma^+=\{\nu\in Q\mid \langle\gamma,\delta\rangle\geq 0\}.$$
    Then by Lemma ~\ref{lem_aff-iso}
    $$Q^{++}=\cap_{\gamma\in F_1}S_\gamma^+=\mathbb Z_{\geq 0}\delta.$$
  \end{proof}

\subsection{}
Combining the results of~\cite{Hoyt} with~\ref{Deltareisoempty} 
we obtain the following result.

\begin{prp}
Let $\cR_0$ be an indecomposable fully reflectable component.
\begin{enumerate}
\item
The following conditions are equivalent:
\begin{itemize}
\item
$\cR_0$ of type {\rm(Fin)};
\item $W$ is finite;
\item
  $\dim\fg<\infty$;
\item
$\dim\fg^\CG<\infty$.
\end{itemize}
\item
The following conditions are equivalent:
\begin{itemize}
\item
$\cR_0$ of type {\rm(Aff)};
\item $W$ is infinite and $\fh^*$ contains a non-zero trivial $W$-orbit.
\end{itemize}
\item
The following conditions are equivalent:
\begin{itemize}
\item
$\cR_0$ of type {\rm(Ind)};
\item $\fg$ has an infinite Gelfand-Kirillov dimension. 
\end{itemize}
\end{enumerate}
\end{prp}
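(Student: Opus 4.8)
The strategy is to combine the trichotomy from Definition~\ref{dfn:types} with the structural results already established, together with the classification of C.~Hoyt recalled in~\ref{sss:hoytclass}. The three items will be proven by establishing a cycle (or pairwise equivalences) in each case; the main work is to match the groupoid-theoretic types (Fin)/(Aff)/(Ind) against the growth of $\fg$. First I would record the trivial observations: a fully reflectable component is of exactly one of the three types, and $\dim\fg^\CG\le\dim\fg$, so it suffices to prove, say, that (Fin) forces $\dim\fg<\infty$, that (Aff) forces $W$ infinite with a nonzero trivial orbit, that (Ind) forces infinite Gelfand--Kirillov dimension, and that each of the ``target'' conditions in turn forces the corresponding type, so that the three alternatives on each side are mutually exclusive and exhaustive.

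\textbf{Type (Fin).} Here $Q^{++}=\{0\}$. By the Lemma in~\ref{skeleton_property} (the one computing $C(\cP)$) and Proposition~\ref{prp:ppolyhedron}, $Q^{++}_\bR=\{0\}$ means the polyhedron $\cP$ is bounded, hence $\Sk(v)$ is finite; by Corollary~\ref{crl:W-len} and Corollary~\ref{crl:Spiff} this forces $W$ finite. Conversely if $W$ is finite then $\Sk^D(v)=W(v)\rtimes\Sp^D(v)$ is finite (the spine is finite because the Cartan data, up to $D$-equivalence, form a finite set --- this is part of Hoyt's classification), so $\Sk(v)$ is finite and $Q^{++}=\{0\}$. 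For $\dim\fg<\infty$: in type (Fin) Lemma~\ref{crlfin} gives $\Delta^\im=\emptyset$ and, outside the $\fgl(1|1)$ exception, $\fg=\fg^\CG$; finiteness of $\Delta=\Delta^\re$ (a subset of the finite set $\bigcup_{v'\in\Sk(v)}\Sigma_{v'}$, enlarged by the doubled odd anisotropic roots) together with one-dimensionality of real root spaces gives $\dim\fg<\infty$. The converse $\dim\fg<\infty\Rightarrow(\mathrm{Fin})$ is exactly Corollary~\ref{corfindim}, and $\dim\fg^\CG<\infty\Rightarrow\dim\fg<\infty$ again uses Lemma~\ref{crlfin}(2),(3) since the only component where $\fg\ne\fg^\CG$ and $\fg^\CG$ is finite-dimensional is rank one with $\fg^\CG=\fgl(1|1)$, where $\fg=\wt\fg$ is also finite-dimensional.

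\textbf{Types (Aff) and (Ind).} For (Aff): if $\cR_0$ is of type (Aff) then $Q^{++}=\bZ_{\ge0}\delta$, so $\Sk(v)$ is infinite (hence $W$ is infinite by the argument above, since $\Sp^D(v)$ is finite by Hoyt) and $\delta$ spans a $W$-fixed line; indeed by Lemma~\ref{WorbitQ+}(2) in the purely anisotropic case and by Lemma~\ref{crlaff}(1) together with Corollary~\ref{cor1_aff-iso} in general, $\delta\in\fh^*$ is $W$-fixed. Conversely, if $W$ is infinite with a nonzero trivial $W$-orbit $\{\mu\}$, then $\cR_0$ is not of type (Fin); and it cannot be of type (Ind), since in type (Ind) one either has the $Q^\pm(m,n,t)$ case (three linearly independent principal roots, so $W$ has no nonzero fixed vector) or the purely anisotropic (IND) case where Lemma~\ref{WorbitQ+}(3) shows the only finite $W$-orbit in $Q^+\cup(-Q^+)$, hence the only $W$-fixed vector, is $0$ --- using here that $\Delta^\re$, and thus the span of its positive part, is all of $\fh^*$ up to the exceptions handled by Hoyt. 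So $W$ infinite with a nonzero trivial orbit forces (Aff). Finally (Ind) $\Leftrightarrow$ infinite GK-dimension: if $\cR_0$ is of type (Ind) then $\fg\supset\fg^\CG$ and, invoking~\cite{Kbook}, Thm.~4.3 (for the purely anisotropic case, via~\ref{Deltareisoempty}) and Hoyt's case (3) for $Q^\pm(m,n,t)$, the contragredient algebra has infinite growth, hence $\fg$ does too; conversely if $\fg$ has infinite GK-dimension it is not of type (Fin) (by the above $\dim\fg<\infty$ there) and not of type (Aff) (by Lemma~\ref{crlaff}(2), $\fr$ is central in $[\fg,\fg]$ and $\fg^\CG$ is an affinization of a finite-dimensional superalgebra, hence of finite growth, and $\fg$ differs from $\fg^\CG$ only by the central $\fr$ supported on $\bZ\delta$), so it is of type (Ind).

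\textbf{Main obstacle.} The routine parts are the implications proven directly from~\ref{crlDeltare}, \ref{crlfin}, \ref{crlaff}. The subtle point is the reverse direction in item~(2) --- ruling out that an indefinite component can have a nonzero trivial $W$-orbit. This requires knowing that for an indefinite component the positive real roots span $\fh^*$ (so that a $W$-invariant vector pairs to zero with all coroots and forces the purely anisotropic (IND) conclusion of Lemma~\ref{WorbitQ+}(3)), except in the $Q^\pm(m,n,t)$ case which must be treated separately using the explicit presence of three independent principal reflections. I would make this precise by reducing, via Hoyt's classification~\ref{sss:hoytclass}, to the cases already covered: either the component has no isotropic roots, where~\ref{Deltareisoempty} and Lemma~\ref{WorbitQ+} apply verbatim, or it is one of the listed families with isotropic roots, for which Corollary~\ref{cor1_aff-iso} (type (2)) or the explicit rank-three structure (type (3), the $Q^\pm$) settles the matter.
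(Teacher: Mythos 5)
The paper offers no written proof of this proposition beyond the sentence ``combining the results of Hoyt with \ref{Deltareisoempty} we obtain\dots'', so your attempt to supply the details is the right instinct; unfortunately, the step that you yourself single out as the main obstacle is exactly where the argument fails. To rule out a nonzero trivial $W$-orbit in type (Ind) you assert that ``the positive real roots span $\fh^*$'' and then invoke Lemma~\ref{WorbitQ+}(3). Neither half works. The real roots span only $\Span_\bC\{b(x)\}_{x\in X}$, of dimension $|X|$, whereas an indecomposable root datum has $\dim\fh^*=2|X|-\rk A$; so whenever $A$ is degenerate, the annihilator of $\Span\{a(x)\}$ is a nonzero subspace of $\fh^*$ consisting entirely of $W$-fixed vectors (every real coroot lies in $\Span\{a(x)\}$). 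Moreover, Lemma~\ref{WorbitQ+}(3) only controls finite orbits \emph{lying in $Q^+$}: a $W$-fixed vector in the radical of a degenerate indefinite Cartan matrix typically has coefficients of both signs and lies in neither $Q^+$ nor $-Q^+$, so the lemma says nothing about it. Degenerate indecomposable symmetric generalized Cartan matrices of indefinite type do exist --- take $2I-B$ with $B$ the adjacency matrix of the hexagonal prism $C_6\times K_2$, whose spectrum contains both $3$ and $2$ --- and for the corresponding purely even component one gets $W$ infinite, a nonzero trivial $W$-orbit, and type (Ind). So the implication you need in item (2) cannot be obtained along your route; it requires either a nondegeneracy hypothesis, a reinterpretation of the condition (e.g.\ a fixed vector inside $Q^+$), or an argument neither of us has supplied.

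Two of your auxiliary finiteness claims are also false as stated. ``The spine is finite because the Cartan data, up to $D$-equivalence, form a finite set'' is a non sequitur (a graph with finitely many vertex labels can be infinite), and the spine is genuinely infinite, e.g., for $\fsl(k|\ell)^{(1)}$ and for $S(2|1,b)$. Likewise ``$\Sp^D(v)$ is finite by Hoyt'' fails: for $\fsl(k|\ell)^{(1)}$ the paper computes $\Sk^D(v)=(S_k\times S_\ell)\ltimes\bar Q$ with $W$ of infinite index, so $\Sp^D(v)\cong\Sk^D(v)/W$ is infinite. The conclusions you extract from these claims ($W$ finite $\Rightarrow$ $\Sk(v)$ finite, and type (Aff) $\Rightarrow$ $W$ infinite) are true, but they have to be read off directly from Hoyt's list in \ref{sss:hoytclass} (every infinite-dimensional case with an isotropic root has infinite Weyl group) together with Kac's trichotomy via \ref{Deltareisoempty} in the purely anisotropic case, not deduced from finiteness of the spine. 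The remaining parts of your write-up --- the cycle in item (1) built on \ref{crlfin} and \ref{corfindim}, and item (3) --- are sound.
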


\begin{rem}
Cartan matrices of components of type (Fin) are usually nondegenerate. The only exception
is $\fgl(n|n)$. Cartan matrices of type (Aff) are always degenerate, usually of corank one.
The only exception is $\fsl(n|n)^{(1)}$ where corank is two. 
\end{rem}

\section{Symmetrizable root data}
\label{sect:sym}
\label{sectKacThm}

We retain the notation of Section~\ref{sec:trichotomy}. We continue to assume that all $x\in X$ are reflectable at all $v\in\cR_0$. In this section we prove, following a method of Gabber-Kac~\cite{GabberKac},
that if $\cR_0$ has a symmetric Cartan matrix (and, therefore,
 all Cartan matrices associated to $\cR_0$ 
are symmetrizable) then $\fg^\CG$ is the only root algebra, except
for the cases $\fg^\CG=\fgl(1|1)$ and $(\rho|\delta)=0$ where $(-|-)$
is the nondegenerate symmetric bilinear form on $\fh^*$
introduced in~\ref{prp:likekac22} and $\rho$ is as in~\ref{sss:weylvector}.

Fix $v\in\cR_0$, an admissible component of $\cR$. We keep the notation
of Section~\ref{sec:root} for the half-baked algebra 
$\wt\fg=\wt\fn^-\oplus\fh\oplus\wt\fn^+$,
a root algebra $\fg$ and the contragredient algebra $\fg^\CG=\wt\fg/\fr$.
We set $\wt{\fb}:=\wt{\fn}^+ +\fh$, its image $\fb$ in $\fg$ and $\fr^{\pm}:=\fr\cap\tilde{\fn}^{\pm}$. Note  
that $\fr^{\pm}$ are ideals of $\wt{\fg}$.

\subsection{Verma modules}
Let $\wt{M}(\lambda)$ (resp., $M(\lambda)$, $M^\CG(\lambda)$) denote
a Verma module of highest weight $\lambda$ over $\wt{\fg}$ 
(resp., $\fg$,  $\fg^\CG$). Since $\Omega(\wt{M}(\lambda))
\subset\lambda-Q^+$, the module 
 $\wt{M}(\lambda)$ admits a unique maximal proper submodule
$\wt{M}'(\lambda)$.

The Verma modules $\wt{M}(\lambda)$, $M(\lambda)$, $M^\CG(\lambda)$ 
admit unique simple quotients.

\begin{lem}\label{lemMM}
One has
$$M(\lambda)=\cU(\fg)\otimes_{\cU(\wt{\fg})} \wt{M}(\lambda).$$
\end{lem}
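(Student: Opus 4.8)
The plan is to deduce the identity from a short chain of base-change isomorphisms for induced modules, preceded by one elementary observation about the kernel of the coordinate system.

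Write $\fk:=\ker(\wt\fg(v)\to\fg(v))$. By the definition of a $v$-coordinate system one has $\fk\cap\fh=0$, and since $\fk$ is an ideal it is $\fh$-stable, hence graded by the root lattice $Q$: $\fk=\bigoplus_\mu(\fk\cap\wt\fg_\mu)$. Because $\wt\fg_0=\fh$ and $\fk\cap\fh=0$, the weight-zero part vanishes, and by the triangular decomposition $\wt\fg=\wt\fn^-\oplus\fh\oplus\wt\fn^+$ every $\wt\fg_\mu$ with $\mu\ne 0$ lies in $\wt\fn^+$ or in $\wt\fn^-$; therefore $\fk=\fk^+\oplus\fk^-$ with $\fk^\pm=\fk\cap\wt\fn^\pm$, and in particular $\wt\fb\cap\fk=\fk^+\subseteq\wt\fn^+$. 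Consequently $\fk^+$ is an ideal of $\wt\fb$ and $\fb=\wt\fb/\fk^+$, so the canonical ring map $\cU(\wt\fb)\to\cU(\fg)$ factors as $\cU(\wt\fb)\twoheadrightarrow\cU(\fb)\to\cU(\fg)$.

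Now recall that, by definition, $\wt M(\lambda)=\cU(\wt\fg)\otimes_{\cU(\wt\fb)}\bC_\lambda$ and $M(\lambda)=\cU(\fg)\otimes_{\cU(\fb)}\bC_\lambda$, where $\bC_\lambda$ is the one-dimensional module on which $\fh$ acts by $\lambda$ and $\wt\fn^+$ (resp.\ $\fn^+$) acts by zero. Using associativity of the tensor product and the factorization just noted,
\begin{align*}
\cU(\fg)\otimes_{\cU(\wt\fg)}\wt M(\lambda)
&=\cU(\fg)\otimes_{\cU(\wt\fg)}\bigl(\cU(\wt\fg)\otimes_{\cU(\wt\fb)}\bC_\lambda\bigr)
=\cU(\fg)\otimes_{\cU(\wt\fb)}\bC_\lambda\\
&=\cU(\fg)\otimes_{\cU(\fb)}\bigl(\cU(\fb)\otimes_{\cU(\wt\fb)}\bC_\lambda\bigr).
\end{align*}
So the claim reduces to $\cU(\fb)\otimes_{\cU(\wt\fb)}\bC_\lambda=\bC_\lambda$. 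The kernel of $\cU(\wt\fb)\to\cU(\fb)$ is the two-sided ideal generated by $\fk^+$; since $\fk^+\subseteq\wt\fn^+$ annihilates $\bC_\lambda$ and $\cU(\wt\fb)\cdot\bC_\lambda=\bC_\lambda$, this ideal kills $\bC_\lambda$, whence the base change equals $\bC_\lambda$. Plugging this back gives $\cU(\fg)\otimes_{\cU(\wt\fg)}\wt M(\lambda)=\cU(\fg)\otimes_{\cU(\fb)}\bC_\lambda=M(\lambda)$.

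I do not expect any genuine obstacle here: the argument is essentially formal manipulation of induced modules, and the only place the hypotheses are used is the grading observation of the first paragraph, which rests on $\fk\cap\fh=0$, i.e.\ on $\fg$ being a root algebra. The computation goes through verbatim in the super setting, the Koszul signs being harmless as $\bC_\lambda$ is taken purely even, and one may replace the base-change manipulations by an explicit PBW-basis comparison ($\wt M(\lambda)\cong\cU(\wt\fn^-)$, $M(\lambda)\cong\cU(\fn^-)$) if a more hands-on argument is preferred.
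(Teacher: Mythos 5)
Your argument is correct and is precisely the standard base-change computation that the paper leaves implicit (the paper states the lemma with an empty proof). The key observation that $\fk\cap\wt\fb=\fk^+\subseteq\wt\fn^+$, which follows from $\fk\cap\fh=0$ and the weight grading, is exactly what makes the reduction $\cU(\fb)\otimes_{\cU(\wt\fb)}\bC_\lambda=\bC_\lambda$ work, so nothing is missing.
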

\qed

\subsection{Embedding of $\fr^-/[\fr^-,\fr^-]$}

The composition 
$$\fr^-\hookrightarrow \wt{\fg}/\wt{\fb}\hookrightarrow 
\cU(\wt{\fg})/\cU(\wt{\fg})\wt{\fb}=\wt{M}(0)$$
has the image in $\wt{M}'(0)=\bigoplus_{\alpha\in\Sigma}\wt{M}(-\alpha)$.
We denote by 
\begin{equation}
\label{eq:phifromrf-}
\phi:\fr^-\to \bigoplus_{\alpha\in\Sigma}M^\CG(-\alpha)
\end{equation}
the composition of this with the projection
$$
\bigoplus_{\alpha\in\Sigma}\wt{M}(-\alpha)\to
\bigoplus_{\alpha\in\Sigma}M^\CG(-\alpha).
$$
\begin{prp} 
\label{propK911}
The map $\phi$ defined above is a map of $\ \wt\fg$-modules with kernel
$[\fr^-,\fr^-]$.
\end{prp}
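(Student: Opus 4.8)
The plan is to prove the two assertions separately: that $\phi$ is a $\wt\fg$-module map, and that $\ker\phi = [\fr^-,\fr^-]$.

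\textbf{$\phi$ is a morphism of $\wt\fg$-modules.} The embedding $\fr^-\hookrightarrow\wt\fg/\wt\fb$ is not a priori $\wt\fg$-linear, so the key point is to understand how $\wt\fg$ acts. The module $\wt M(0)=\cU(\wt\fg)/\cU(\wt\fg)\wt\fb$ is generated by the image $\bar 1$ of $1$, and $\wt\fg/\wt\fb\cong\wt\fn^-$ sits inside $\wt M(0)$ as $\wt\fn^-\cdot\bar 1$. Under this identification, for $u\in\wt\fn^-$ and $g\in\wt\fg$, the element $g\cdot(u\bar 1)=(gu)\bar 1$; writing $gu = ug + [g,u]$ and using that $\fh,\wt\fn^+$ kill $\bar 1$ up to the $\fh$-action, one finds that the $\wt\fg$-action on $\wt\fn^-\bar1$ agrees, modulo $\wt M''(0):=\bigoplus_\Sigma\wt M'(-\alpha)$ (the sum of maximal submodules of the $\wt M(-\alpha)$), with the adjoint action of $\wt\fg$ on $\wt\fn^-$ restricted appropriately. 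More precisely: $\fr^-$ is an ideal of $\wt\fg$, so $[\wt\fg,\fr^-]\subset\fr^-\subset\wt\fn^-$, and the composite $\fr^-\to\wt M'(0)=\bigoplus_\Sigma\wt M(-\alpha)$ intertwines the adjoint action on the source with the $\wt\fg$-action on the target up to terms landing in $\bigoplus_\Sigma\wt M'(-\alpha)$. Since $M^\CG(-\alpha)=\wt M(-\alpha)/\wt M'(-\alpha)$, composing with the projection kills exactly those error terms, so $\phi$ is $\wt\fg$-linear. I would phrase this cleanly by noting that $\phi$ is the restriction to $\fr^-$ of the canonical $\wt\fg$-map $\wt M'(0)\to\bigoplus_\Sigma M^\CG(-\alpha)$ composed with the inclusion $\fr^-\hookrightarrow\wt M'(0)$, and that this inclusion is the restriction of a $\wt\fg$-map because $\fr^-$ is an ideal contained in $\wt\fn^-\bar 1$ — the only subtlety is checking that the image indeed lands in $\wt M'(0)$ rather than all of $\wt M(0)$, which holds because $\fr^-\subset\wt\fn^-$ has weights in $-Q^+\setminus(\bigcup_\Sigma\{-\alpha\}\cup\{0\})$... actually $\fr^-$ has no weight $0$ or simple weights $-\alpha$ since $\fr\cap\fh=0$ and root spaces $\fg_{-\alpha}$ for $\alpha$ simple inject, hence the image lies in $\bigoplus_{\htt\geq 2}$, in particular in $\wt M'(0)$.

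\textbf{Kernel is $[\fr^-,\fr^-]$.} One inclusion is easy: $\fr^-$ acts trivially on $\wt M(0)/(\text{anything in degree }\leq 1)$ in a suitable sense — concretely, $[\fr^-,\fr^-]\bar 1$, under the identification, corresponds to $\fr^-\cdot(\fr^-\bar1)$, and since $\phi$ factors through $\wt M'(0)\to\bigoplus M^\CG(-\alpha)$ and $\fr^-\subset\wt\fn^-$ acts on $M^\CG(-\alpha)$ — wait, more directly: $[\fr^-,\fr^-]$ has weights of height $\geq$ (sum of two heights each $\geq 2$), but that is not the point. The clean argument: the map $\fr^-\to\wt M'(0)$ has kernel equal to $\fr^-\cap\wt\fg\wt\fb$ inside $\cU(\wt\fg)$; but since we landed in $\wt\fn^-\bar1$ and $\bar1$ is a free generator over $\cU(\wt\fn^-)$, the map $\fr^-\to\wt M(0)$, $r\mapsto r\bar1$, is injective. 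So $\ker\phi$ equals the preimage of $\bigoplus_\Sigma\wt M'(-\alpha)$. Now I claim $\fr^-\cap\bigl(\bigoplus_\Sigma\wt M'(-\alpha)\bigr)=[\fr^-,\fr^-]\bar1$. The inclusion $\supseteq$: for $r,r'\in\fr^-$, $[r,r']\bar1 = rr'\bar1-(-1)^{\ldots}r'r\bar1$; modulo $\wt M'(0)$-type considerations one checks $[r,r']\bar1\in\bigoplus_\Sigma\wt M'(-\alpha)$ because $r'\bar1\in\bigoplus_\Sigma\wt M(-\alpha)$ and $r$ acts on each $\wt M(-\alpha)$ preserving $\wt M'(-\alpha)$ — indeed $\fr^-$ is an ideal so its action on $\wt M(-\alpha)=\cU(\wt\fg)/\cU(\wt\fg)(\wt\fb+\ldots)$ descends to the simple quotient trivially... hmm, I need $\fr^-$ to act as $0$ on $M^\CG(-\alpha)$, which is false in general. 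Let me instead mirror Kac's Lemma 9.13/Gabber–Kac argument directly: by freeness, $\fr^-\hookrightarrow\wt M(0)$; the graded pieces give $\fr^-_{-\mu}\hookrightarrow\wt M(0)_{-\mu}$, and $\phi$ is the composite with projection onto $\bigoplus M^\CG(-\alpha)$. The kernel of the surjection $\bigoplus_\Sigma\wt M(-\alpha)\twoheadrightarrow\bigoplus_\Sigma M^\CG(-\alpha)$ is $\bigoplus_\Sigma\wt M'(-\alpha)$, and $\wt M'(-\alpha)=\wt\fn^-\cdot(\wt M'(-\alpha)\cap(\wt\fg/\wt\fb))$... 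The robust route is: identify $\wt M'(0)$ with $\wt\fn^-\otimes_{?}$... Actually the cleanest is Kac's original: one shows $\phi(r)=0 \iff r\in[\fr^-,\fr^-]$ by using that $\wt M'(0)/\bigoplus\wt M'(-\alpha) \cong \bigoplus M^\CG(-\alpha)$ and analyzing the PBW filtration — the map $\fr^-\to\wt M(0)$ sends $r$ to $r\bar 1$ and a bracket $[r,r']$ to $(rr'-\pm r'r)\bar1$; since $r'\bar1$ lies in the submodule generated by $\{f_x\bar1\}$, i.e. in $\bigoplus_\Sigma\wt M(-\alpha)$, and $r\cdot(\bigoplus_\Sigma\wt M(-\alpha))\subset\bigoplus_\Sigma\wt M'(-\alpha)$ because $r\in\fr^-$ kills highest weight vectors mod the maximal submodule. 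I would carry this out precisely following the structure of \cite{GabberKac} and \cite{Kbook}, Chapter 9.

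\textbf{Main obstacle.} The hard part will be the $\supseteq$ direction of $\ker\phi\supseteq[\fr^-,\fr^-]$ combined with showing there is nothing more, i.e. the precise bookkeeping that the image of $\fr^-$ in $\bigoplus_\Sigma\wt M(-\alpha)$ meets $\bigoplus_\Sigma\wt M'(-\alpha)$ exactly in $[\fr^-,\fr^-]\bar1$. This requires knowing that $\fr^-$ acts on each $\wt M(-\alpha)$ through an action that descends to $M^\CG(-\alpha)$ as multiplication consistent with the quotient $\wt\fg\to\fg^\CG$, together with the ideal property $[\wt\fg,\fr^-]\subset\fr^-$ and the freeness of $\wt M(0)$ over $\cU(\wt\fn^-)$ (triangular decomposition, \ref{sss:properties}(2)). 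I expect the superalgebra signs to be purely cosmetic. I would model the whole argument on \cite{Kbook}, Proposition 9.11, checking that the only place where $\fg^\CG$ (rather than an arbitrary root algebra) is used is in the definition of the target, which is exactly why the conclusion is phrased with $M^\CG$.
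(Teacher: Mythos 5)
The paper offers no argument of its own here --- it simply cites the proof of Proposition 9.11 of \cite{Kbook} --- so your decision to reconstruct the Gabber--Kac argument is the right move in principle. But your reconstruction breaks down at the one point that drives the whole proof. First, the identification $M^\CG(-\alpha)=\wt M(-\alpha)/\wt M'(-\alpha)$ is false: $\wt M(-\alpha)/\wt M'(-\alpha)$ is the \emph{simple} quotient, whereas $M^\CG(-\alpha)$ is the Verma module over $\fg^\CG$, i.e.\ $\wt M(-\alpha)/\cU(\wt\fn^-)\fr^-v_{-\alpha}$, which is in general far from simple; since $\cU(\wt\fn^-)\fr^-v_{-\alpha}$ is strictly contained in $\wt M'(-\alpha)$ whenever $M^\CG(-\alpha)$ is not simple, your version computes the kernel of a different map. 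Second, and more seriously, you dismiss as ``false in general'' the statement that $\fr^-$ acts by zero on $M^\CG(-\alpha)$. It is true, and trivially so: $M^\CG(-\alpha)$ is a module over $\fg^\CG=\wt\fg/\fr$, so when the action is pulled back to $\wt\fg$ the ideal $\fr\supset\fr^-$ annihilates it. This single fact is what makes both halves of the proposition work: for $g\in\wt\fg$ and $r\in\fr^-$ one has $[g,r]\cdot 1=gr\cdot 1\mp rg\cdot 1$ in $\wt M(0)$; the term $r\cdot(g\cdot 1)$ either vanishes (for $g\in\fh\oplus\wt\fn^+$) or lies in $\fr^-\cdot\wt M'(0)$ and hence dies in $\bigoplus_{\alpha\in\Sigma}M^\CG(-\alpha)$, giving $\phi([g,r])=g\cdot\phi(r)$; taking $g\in\fr^-$ gives $[\fr^-,\fr^-]\subset\Ker\phi$. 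Having discarded this fact, your sketch has no mechanism for either claim.

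The other genuine gap is the inclusion $\Ker\phi\subset[\fr^-,\fr^-]$, which is the substantive content of the proposition and which your proposal never establishes (it ends with ``I would carry this out following Kac''). The correct computation runs as follows: identifying $\wt M(0)$ with $\cU(\wt\fn^-)$ and using that $\fr^-$ is an ideal of $\wt\fn^-$, the kernel of $\bigoplus_\alpha\wt M(-\alpha)\to\bigoplus_\alpha M^\CG(-\alpha)$ is $\sum_x\cU(\wt\fn^-)\fr^-\tilde f_x=\cU(\wt\fn^-)\fr^-\cdot\cU(\wt\fn^-)_+$, so $\Ker\phi=\fr^-\cap\cU(\wt\fn^-)\fr^-\,\cU(\wt\fn^-)_+$. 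By PBW, $\cU(\wt\fn^-)$ is free over $\cU(\fr^-)$, whence $\cU(\wt\fn^-)\fr^-\otimes_{\cU(\wt\fn^-)}\bC\cong\cU(\fr^-)_+/\bigl(\cU(\fr^-)_+\bigr)^2\cong\fr^-/[\fr^-,\fr^-]$, and the composite $\fr^-\hookrightarrow\cU(\wt\fn^-)\fr^-\to\fr^-/[\fr^-,\fr^-]$ is the canonical projection; hence the intersection above is exactly $[\fr^-,\fr^-]$. This PBW/freeness step is the actual content of Kac's Proposition 9.11 and is entirely absent from your proposal.
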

\begin{proof}
This result is the main part of the proof of  Proposition 9.11 of~\cite{Kbook}.

\end{proof}

\subsubsection{Example}
If $\fg^\CG=\fsl_2\times\fsl_2$ with $\Sigma=\{\alpha_1,\alpha_2\}$,
the image of $\phi$ in $M^\CG(-\alpha_i)$ is equal to 
$M^\CG(-\alpha_1-\alpha_2)$.

\

Recall that $\fg^\U$ denotes the universal root algebra.
\begin{crl}\label{crlanotherideal}
Assume that $\bigoplus_{\alpha\in\Sigma} M^\CG(-\alpha)$ has no nonzero 
integrable subquotients. Then $\fg^\U=\fg^\CG$.
\end{crl}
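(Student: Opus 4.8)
The plan is to show that $\fg^\U=\fg^\CG$, equivalently that the ideal $\fr$ coincides with the ideal $\fs$ defining $\fg^\U$, by proving that $\fr^-$ is already contained in every root algebra; since $\fg^\U$ is the initial (largest) root algebra, it then suffices to prove $\fr^-=\fs^-$, and the triangular symmetry (the superinvolution $\theta$ of \ref{sss:automorphism}) handles $\fr^+$. Concretely, recall that $\fg^\U(v)=\wt\fg(v)/\fs(v)$ and $\fg^\CG(v)=\wt\fg(v)/\fr(v)$ with $\fs\subset\fr$, so the natural surjection $\fg^\U\to\fg^\CG$ has kernel $\fr/\fs$; I want to prove this kernel is zero. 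It is enough to show $\fr^-\subset\fs^-$ (the negative parts), because $\fr=\fr^+\oplus\fr^-$ and $\fs=\fs^+\oplus\fs^-$ by the triangular decomposition, and $\theta$ interchanges the $\pm$ parts of each.

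First I would invoke Proposition~\ref{propK911}: the map $\phi:\fr^-\to\bigoplus_{\alpha\in\Sigma}M^\CG(-\alpha)$ is a homomorphism of $\wt\fg$-modules with kernel $[\fr^-,\fr^-]$. Thus $\fr^-/[\fr^-,\fr^-]$ embeds, as a $\wt\fg$-module (hence as a $\fg^\CG$-module, since $\fr$ acts trivially on the abelianization), into $\bigoplus_{\alpha\in\Sigma}M^\CG(-\alpha)$. Now I use the standing hypothesis of the corollary: $\bigoplus_{\alpha\in\Sigma}M^\CG(-\alpha)$ has no nonzero integrable subquotients. The key observation is that $\fr^-/[\fr^-,\fr^-]$, being a subquotient of the adjoint module $\fg^\CG$ (via $\fr^-\subset\wt\fg\twoheadrightarrow\fg^\CG$, and $[\fr^-,\fr^-]$ a submodule), is integrable: the adjoint representation of any root Lie superalgebra is integrable (stated just before~\ref{sec:coxeter}), integrability passes to submodules and quotients for the relevant $\fsl_2$ or $\fosp(1|2)$ actions attached to anisotropic real roots, and $\fr^-$ is a $\wt\fg$-submodule of $\wt\fg$ whose image in $\fg^\CG$ is an ideal, on which the locally nilpotent $\ad e_\alpha$ still act locally nilpotently. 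Therefore $\fr^-/[\fr^-,\fr^-]$ is an integrable submodule of $\bigoplus M^\CG(-\alpha)$, so by hypothesis it is zero, i.e. $\fr^-=[\fr^-,\fr^-]$.

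Once $\fr^-=[\fr^-,\fr^-]$, a standard nilpotency/grading argument finishes the job. The ideal $\fr^-\subset\wt\fn^-$ is $Q^+$-graded with all weights in $-Q^+\setminus\{0\}$, and the bracket strictly increases the height $\htt$ of a weight; hence if $\fr^-$ equals its own derived subalgebra it must vanish. (In more detail: take a nonzero weight vector of minimal height $h>0$ in $\fr^-$; it cannot lie in $[\fr^-,\fr^-]$, all of whose weights have height $\geq 2h' > h$ for the minimal height $h'$ present, contradiction — or simply observe $\fr^-$ is a positively-graded Lie superalgebra with finite-dimensional graded pieces and $[\fr^-,\fr^-]\neq\fr^-$ unless $\fr^-=0$.) Thus $\fr^-=0$, and by applying $\theta$ we get $\fr^+=0$, so $\fr=0$; but $\fr$ is the kernel of $\fg^\U\to\fg^\CG$ up to the quotient by $\fs$ — more precisely $\fr=0$ forces $\wt\fg=\fg^\CG$ so a fortiori $\fs=\fr=0$ and $\fg^\U=\wt\fg=\fg^\CG$ (or, without claiming $\wt\fg$ itself is a root algebra, one argues directly that $\fs=\fr$ since $\fs\subset\fr$ and the quotient $\fr/\fs\subset\fg^\U$ abelianizes to an integrable subquotient of $\bigoplus M^\CG(-\alpha)$ by the same reasoning, hence is zero and $\fr/\fs$ is trivial by the grading argument). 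Either way $\fg^\U=\fg^\CG$.

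The main obstacle I anticipate is the integrability claim: one must check carefully that $\fr^-/[\fr^-,\fr^-]$, which a priori is only a module over $\wt\fg$, really is integrable in the sense required — that for each anisotropic real root $\alpha$ the action of $e_\alpha$ is locally nilpotent on it. This hinges on the fact, recorded in \ref{corgalpha} and the discussion of integrability of the adjoint representation, that $\ad e_\alpha$ acts locally nilpotently on all of $\fg^\CG$ (indeed on $\wt\fg$ on the generators, hence on the whole algebra by the Leibniz rule for locally finite operators), and that local nilpotency is inherited by $\wt\fg$-submodules and their quotients. Once this is in hand, the rest is the short grading argument above.
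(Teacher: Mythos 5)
There is a genuine gap, and it sits exactly where you anticipated trouble: the claim that $\fr^-/[\fr^-,\fr^-]$ is integrable is false, and the two justifications you offer for it do not work. First, $\fr^-$ is \emph{not} a subquotient of the adjoint module of $\fg^\CG$: it is contained in $\fr=\Ker(\wt\fg\to\fg^\CG)$, so its image in $\fg^\CG$ is zero. Second, integrability requires local nilpotence of $\fg_\alpha$ for \emph{all} anisotropic real $\alpha$, in particular of the negative root vectors $f_\alpha$; these do not act locally nilpotently on $\fr^-$ (inside the free Lie superalgebra $\wt\fn^-$, $\ad\tilde f_x$ acts essentially freely on the ideal generated by the Serre elements), nor on $\fr^-/[\fr^-,\fr^-]$, which by Proposition~\ref{propK911} embeds into $\bigoplus_{\alpha\in\Sigma}M^\CG(-\alpha)$ and typically contains Verma-type submodules. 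The conclusion you derive, $\fr^-=0$ and hence $\wt\fg=\fg^\CG$, is in fact false in general and shows the argument cannot be repaired as stated: already for $\fsl_3$ the hypothesis of the corollary holds and $\fg^\U=\fg^\CG=\fsl_3$, yet $\wt\fn^+$ is a free Lie algebra on two generators and $\fr\neq 0$. The corollary asserts $\fs=\fr$, not $\fr=0$.

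The correct argument (the paper's) works modulo $\fs^-:=\Ker(\wt\fg\to\fg^\U)\cap\wt\fn^-$ rather than modulo $0$. Since $\fr^-/\fs^-$ is an ideal of the \emph{root algebra} $\fg^\U$, whose adjoint representation is integrable, the quotient $\fr^-/(\fs^-+[\fr^-,\fr^-])$ is an integrable $\fg^\U$-module; by Proposition~\ref{propK911} it is also a subquotient of $\bigoplus_{\alpha\in\Sigma}M^\CG(-\alpha)$, hence zero by hypothesis. Then the height-grading argument you give (correctly) yields $\fr^-=\fs^-$ --- not $\fr^-=0$ --- because a nonzero $\fh$-graded Lie superalgebra with weights in $-Q^+\setminus\{0\}$ cannot equal its own commutator; the superinvolution $\theta$ then gives $\fr^+=\fs^+$. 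Your closing parenthetical gestures at this variant, but it still appeals to ``the same reasoning'' for integrability; the essential missing point is that integrability must be sourced from the root algebra $\fg^\U$ (where all reflexions lift and Corollary~\ref{corgalpha} applies), not from $\wt\fg$ or from $\fr^-/[\fr^-,\fr^-]$ itself.
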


\begin{proof} 
Let $\fs=\Ker(\wt\fg\to\fg^\U)$. Set
$\fs^-:=\wt{\fn}^-\cap \fs$. Obviously, $\fs\subset\fr$ so 
$\fs^-\subset\fr^-$.

Assume that $\fr^-/\fs^-\ne 0$. This Lie superalgebra is a semisimple 
$\fh$-module with the weights belonging to $-Q^+\setminus\{0\}$.
This implies that it does not coincide with its commutator, that is,
that $\fr^-/(\fs^-+[\fr^-,\fr^-])\ne 0$. Since the adjoint representation
of $\fg$ is integrable, $\fr^-/(\fs^-+[\fr^-,\fr^-])$ is a nonzero 
integrable $\fg$-module. Using \ref{propK911} we get a nonzero
 integrable subquotient in $\bigoplus_{\alpha\in\Sigma} M^\CG(-\alpha)$
which contradicts the conditions.
Thus,  $\fs^-=\fr^-$, so  automatically
$\fs^+=\fr^+$  as the automorphisms $\theta$, see~\ref{sss:automorphism}, defined on $\wt\fg$, $\fg^\U$ and $\fg^\CG$, identifies
$\fs^+$ with $\fs^-$ and $\fr^+$ with $\fr^-$.
\end{proof}

\subsection{Main result}
In this subsection we assume that the Cartan matrix for $r$ is symmetric, i.e.
$$\forall x,y\in X\ \ \ \ \langle b(x),a(y)\rangle=\langle b(y),a(x)\rangle.$$
Note that by \ref{lem:sym-stable} all Cartan matrices at $r'\in\cR_0$ are symmetrizable.

By Proposition~\ref{prp:likekac22} \ $\wt{\fg}$ admits an invariant 
bilinear form such that the restriction of this form on $\fh$ is non-degenerate and
and $(a(x)|h)=\langle b(x),h\rangle$ for each $h\in\fh$.

\subsubsection{}
Let us show that $\fr$ coincides with the kernel of this form
(for symmetrizable Kac-Moody algebras this fact was earlier noted in~\cite{SchV}).
 Indeed,
since the kernel is an ideal and the restriction of $(-|-)$ on $\fh$ is 
non-degenerate, the kernel lies in $\fr$.
Since $(\wt{\fg}_{\alpha}|\wt{\fg}_{\beta})=0$
for $\alpha+\beta\not=0$, one has $(\fh|\fr)=0$.  Thus
$$\fr^{\perp}:=\{g\in\wt{\fg}|\ (g|\fr)=0\}$$
is an ideal containing $\fh$, so $\fr^{\perp}=\wt{\fg}$, that is
$\fr$ lies in the kernel of $(-|-)$.
Thus, the algebra $\fg^\CG$ inherits a non-degenerate  invariant 
bilinear form having the properties listed in \ref{prp:likekac22}.

\begin{thm}{}
\label{thm:symmetric-g-gkm}
Let $\cR_0$ be symmetrizable and let $\fg$ be a root Lie superalgebra.
Then $\fg=\fg^\CG$, except for the cases $\fgl(1|1)$ and {\rm(Aff)} with 
$(\rho|\delta)=0$.
\end{thm}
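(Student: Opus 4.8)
The plan is to combine Corollary~\ref{crlanotherideal} with a highest-weight module argument in the spirit of Gabber--Kac. By Corollary~\ref{crlanotherideal} it suffices to show that $\bigoplus_{\alpha\in\Sigma}M^\CG(-\alpha)$ has no nonzero integrable subquotient. Since each $M^\CG(-\alpha)$ is a highest weight module with all weights in $-\alpha-Q^+$, a nonzero integrable subquotient would produce a nonzero integrable highest weight $\fg^\CG$-module $L$ of highest weight $\mu\leq-\alpha$ for some simple $\alpha$. The key point is that integrability of a highest weight module forces the highest weight to be antidominant-compatible with the coroots of anisotropic real roots: if $e_x$ acts locally nilpotently on a highest weight vector, then $\langle\mu,\alpha^\vee\rangle\in\bZ_{\geq 0}$ for every anisotropic simple $\alpha=b(x)$, and for isotropic $\alpha$ one gets $\langle\mu,a(x)\rangle=0$. (For Kac--Moody algebras this is Kac's Lemma; here it follows from Corollary~\ref{corgalpha}, which identifies $\fg\langle\alpha\rangle$ with $\fsl_2$, $\fosp(1|2)$ or $\fsl(1|1)$.) So the first step is to record this dominance condition, then to derive a contradiction with $\mu\leq-\alpha\lneq 0$.

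The second step is the numerical heart of the argument. Using the invariant bilinear form on $\fg^\CG$ (existence established just before the theorem, with kernel exactly $\fr$), I would apply the Casimir/Shapovalov computation: for a highest weight module $L(\mu)$ the central element acts by $(\mu+2\rho|\mu)$, and integrability plus $\mu\leq 0$ forces, via summation of $\langle\mu,\alpha^\vee\rangle\geq 0$ over $\alpha\in\Sigma$ weighted by the (positive) symmetrizing coefficients, the inequality $(\mu|\mu)+ (\mu|2\rho)\leq 0$ with equality only in degenerate situations. On the other hand writing $\mu=-\sum_{x}k_x b(x)$ with $k_x\geq 0$ not all zero, one computes $(\mu|2\rho)=-\sum_x k_x (b(x)|b(x))$ using \ref{eq:rho-v}, and one analyses the sign. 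In the (Fin) and (Ind) cases the form restricted to the real-root lattice, together with the dominance condition, rules out any such $\mu$ with $\mu\leq-\alpha$; precisely this is where the exceptions appear. The exceptional cases are exactly: (i) $\fg^\CG=\fgl(1|1)$, where the rank-one analysis of \ref{rank1} already shows $\fg^\U=\wt\fg\neq\fg^\CG$; and (ii) type (Aff) with $(\rho|\delta)=0$, where $\mu$ can be a negative multiple of the isotropic imaginary root $\delta$, giving $(\mu|\mu)=0=(\mu|2\rho)$, so the Casimir obstruction vanishes and a nontrivial integrable subquotient (supported on $\bZ\delta$) can survive — consistent with Lemma~\ref{crlaff}(3).

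Concretely I would proceed as follows. (1) Reduce, via Corollary~\ref{crlanotherideal}, to excluding integrable subquotients of $\bigoplus_\alpha M^\CG(-\alpha)$. (2) Suppose $L=L(\mu)$ is such a subquotient; record $\mu\leq-\alpha$ for some $\alpha\in\Sigma$ and the dominance conditions $\langle\mu,\beta^\vee\rangle\in\bZ_{\geq0}$ for anisotropic simple $\beta$, $\langle\mu,a(y)\rangle=0$ for isotropic simple $b(y)$. (3) Invoke that $\fr$ is the kernel of the invariant form, so $\fg^\CG$ carries a nondegenerate invariant form, hence a Casimir element acting on $L(\mu)$ by the scalar $(\mu|\mu+2\rho)$; by a standard argument (Kac, \cite{Kbook}, Prop.~9.11/Lemma 9.10 adapted, as in \cite{GabberKac}) this scalar must be $0$ for an integrable highest weight subquotient of $M^\CG(-\alpha)$, because $-\alpha$ itself has $(-\alpha|-\alpha+2\rho)= (\alpha|\alpha)-2(\rho|\alpha)=(\alpha|\alpha)-\langle\alpha,\alpha^\vee\rangle\cdot\frac{(\alpha|\alpha)}{2}\cdot\ldots$ — here one must be careful and instead compare the Casimir eigenvalue on the subquotient with that on the generating weight, getting the relation $(\mu|\mu+2\rho)=(-\alpha|-\alpha+2\rho)$ when the subquotient sits inside the Verma, and then use $(-\alpha|-\alpha+2\rho)=0$ for simple $\alpha$ since $(\alpha|2\rho)=(\alpha|\alpha)$. (4) Combine $(\mu|\mu+2\rho)=0$ with the dominance conditions and with the trichotomy of Section~\ref{sec:trichotomy}: in type (Fin) the form is positive-definite-like on $Q$ modulo radical and $\mu\lneq 0$ dominant forces $\mu=0$, contradiction unless we are in the $\fgl(1|1)$ situation where ``anisotropic'' simple roots are absent; in type (Ind) a similar strict inequality argument applies; in type (Aff) one gets $\mu\in\bZ\delta$ and $(\rho|\delta)=0$ is exactly the case not excluded. (5) Finally handle $\fgl(1|1)$ separately by \ref{rank1}, and note that outside the listed exceptions no such $L(\mu)$ exists, so $\fg^\U=\fg^\CG$ and hence every root algebra coincides with $\fg^\CG$.

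The main obstacle will be step (3)--(4): pinning down precisely why the Casimir eigenvalue of an integrable highest weight subquotient of $M^\CG(-\alpha)$ must equal that of the generating vector (this is the content that Gabber--Kac extract from the structure of $\wt M'(0)$ and Proposition~\ref{propK911}, so it should transfer), and then doing the case-by-case sign analysis of $(\mu|\mu+2\rho)$ cleanly enough to isolate exactly the two exceptional families. The superalgebra subtlety — that a simple root may be isotropic, so $\alpha^\vee$ need not exist and one uses $\langle\mu,a(x)\rangle=0$ instead — must be threaded through the dominance argument, but Corollary~\ref{corgalpha} and the explicit rank-one list in \ref{rank1} supply everything needed.
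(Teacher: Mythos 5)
Your proposal follows essentially the same Gabber--Kac route as the paper's proof: reduction via Corollary~\ref{crlanotherideal} to excluding integrable subquotients of $\bigoplus_{\alpha\in\Sigma}M^\CG(-\alpha)$, the Casimir identity $(\mu|\mu+2\rho)=0$ obtained by comparing eigenvalues along $L^\CG(\mu)\subset M^\CG(-\alpha)$ and $L^\CG(-\alpha)\subset M^\CG(0)$, and a case analysis along the (Fin)/(Aff)/(Ind) trichotomy in which type (Aff) with $(\rho|\delta)=0$ survives as the exception. The only divergence is minor: the paper dispatches type (Fin) directly from Lemma~\ref{crlfin} (since $\Omega(\fr)\subset\Delta^\im=\emptyset$ there) rather than by a positivity argument for the form on $Q$, which is the safer route because that form is indefinite when isotropic roots are present.
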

\begin{proof}
Symmeric nondegenerate bilinear form of $\fg^\CG$ allows one to define  a {\em Casimir operator}, see~\cite{Kbook}, 2.5.
This operator acts on $M^\CG(\lambda)$ by $(\lambda|\lambda+2\rho)\cdot\id$.
This implies
\begin{equation}
\label{Casimir}
[M^\CG(\lambda):L^\CG(\mu)]\not=0\Longrightarrow
(\lambda|\lambda+2\rho)=(\mu|\mu+2\rho).
\end{equation}

Assume that $\fr\not=\fs$.
By~\ref{crlanotherideal}, for some
$\alpha\in\Sigma$ there is a non-zero homomorphism
$$\fr^-\to M^\CG(-\alpha).$$
Hence $M^\CG(-\alpha)$ admits an integrable subquotient $L^\CG(\mu)$
for some $\mu$.
Since $L^\CG(-\alpha)$ is a subquotient of $M^\CG(0)$, 
the formula (\ref{Casimir}) gives
\begin{equation}
\label{rhonu}
(\mu|\mu+2\rho)=0.
\end{equation}

If $\cR_0$ is of type (Fin) and not $\fgl(1|1)$ then $\fr=\fs$ by~\ref{crlfin} (1). 

Let us consider the case when $\cR_0$ is of type (Aff). By~\ref{crlfin} (2), 
$\mu=j\delta$
for some $j\in\mathbb{Z}_{>0}$ and  $\delta(h)=0$ 
for each $h\in\fh\cap [\fg,\fg]$. Therefore  
$(\delta|\alpha)=0$ for each $\alpha\in\Sigma$. This gives
$(\delta|\delta)=0$. Using~(\ref{rhonu}), we get
$h^{\vee}_v=2(\rho|\delta)=0$.

It remains to consider the component $\cR_0$ of type (Ind).
By~\cite{Hoyt}, the algebras $Q^{\pm}(m,n,t)$ are not symmetrizable. 
The rest of indefinite types satisfy $\Delta_\iso=\emptyset$. Then $a_{xx}\not=0$
for each $x\in X$ and $a_{xy}=a_{yx}$. 
It is easy to see that we can choose $v\in\cR_0$ in such a way that 
$a_{xx}\in\mathbb{Z}_{>0}$. Then the integrability gives
$(\mu|\alpha)\geq 0$ for each $\alpha\in \Sigma$. 
Since $-\mu\in Q^+$ and $\mu\not=0$, we obtain 
$(\mu|\rho)<0, (\mu|\mu)<0$, a contradiction to (\ref{rhonu}).
\end{proof}

\section{The affine case}
\label{sect:aff}
\subsection{}In this section we prove the following result.

\begin{thm}
\label{thm:UKM-ns}
 Let $\cR_0$ be an indecomposable component  of type
(Aff).
If $\cR_0$ is of type  $A(n-1|n-1)^{(1)}$ (resp., $A(2n-1|2n-1)^{(2)}$,
$A(2n|2n)^{(4)}$), then 
 $\fg^\U=\mathfrak{sl}(n|n)^{(1)}$ (resp.,  $\fg^\U=\mathfrak{sl}(2n|2n)^{(2)}$,
$\mathfrak{sl}(2n+1|2n+1)^{(4)}$).
 If $\cR_0$ is of type  $\fq(n)^{(2)}$ then $\fg^\U=\mathfrak{sq}(n)^{(2)}$.
In the rest of the cases $\fg^\U=\fg^{\CG}$.
\end{thm}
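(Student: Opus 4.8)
The plan is to compare the universal root algebra $\fg^\U$ with the contragredient one $\fg^\CG$ via the ideal $\fr/\fs$ measuring their gap, exactly as in the symmetrizable case (Section~\ref{sect:sym}), but now pushing through the genuinely new nonsymmetrizable affine cases. Recall from~\ref{crlaff} that for a component of type (Aff) the ideal $\fr$ lies in the center of $[\fg,\fg]$ and $\Omega(\fr)\subset\Delta^\im\subset\bZ\delta\setminus\{0\}$; moreover by~\ref{crlaff}(3), if $\langle\delta,a(x)\rangle\ne 0$ for some $x\in X$ then $\fr=0$ and hence $\fg=\fg^\CG$, a fortiori $\fg^\U=\fg^\CG$. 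So the whole problem concentrates on the affine components where $\delta$ annihilates all $a(x)$, i.e. exactly the ``degenerate center'' situation that occurs for $\fsl(n|n)^{(1)}$, $\fsl(2n|2n)^{(2)}$, $\fsl(2n+1|2n+1)^{(4)}$ and $\fsq(n)^{(2)}$. First I would dispose of all other affine components by invoking Hoyt's classification~\ref{sss:hoytclass}: the symmetrizable ones are handled by Theorem~\ref{thm:symmetric-g-gkm} (which already gives $\fg^\U=\fg^\CG$ once we check $(\rho|\delta)\ne 0$, true precisely outside the exceptional list), and the cousins of affine Kac--Moody algebras from~\ref{sss:aniso-fin-aff} have $\Delta_\iso=\emptyset$, so~\ref{crlDeltare}(4) forces $\Delta=\Delta^\re\cup\{2\alpha\}$ and $\Omega(\fr)=\emptyset$.

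For the remaining four families the strategy is to identify $\fg^\U$ explicitly. The key observation is that $\fg^\U$ is the quotient of $\wt\fg$ by the ideal $\fs$ generated by all reflected Chevalley (Serre) relations $\wt\fg_{b'(x)-b'(y)}$ over all $v'\in\Sk(v)$, and by definition of $\fs$ every weight space $\fg^\U_{b''(y)-b''(z)}$ vanishes. I would first compute $\fg^\CG$ in each case — these realizations are given in~\cite{S3}: $\fg^\CG$ is respectively $\fpsl(n|n)^{(1)}$-type, i.e. the quotient of $\fsl(n|n)^{(i)}$ by its (one- or two-dimensional) center, and $\fpsq(n)^{(2)}$ for $\fsq(n)^{(2)}$. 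Then the claim $\fg^\U=\fsl(n|n)^{(i)}$ (resp. $\fsq(n)^{(2)}$) amounts to showing that $\fr/\fs$ is exactly the central subspace $\fg_{j\delta}\cap\mathfrak{z}$ of dimension equal to the corank drop — one- or two-dimensional — and in particular nonzero, while $\fs$ itself already kills everything outside $\bZ\delta$. Concretely: by~\ref{crlaff}(1)–(2), $\fr/\fs$ is a graded subspace of $\bigoplus_{j\ne 0}\fg^\U_{j\delta}$ lying in the center of $[\fg^\U,\fg^\U]$; I would show using Corollary~\ref{crlanotherideal} and Proposition~\ref{propK911} that the existence of a nonzero such graded piece is governed by integrable subquotients $L^\CG(-\alpha)$ with highest weight $j\delta$ inside $M^\CG(-\alpha)$, and then verify directly (finite combinatorial check, $\alpha$ ranging over $\Sigma$) that such subquotients do occur and only at the predicted level, yielding the stated central extension.

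The main obstacle I expect is the nonsymmetrizable cases $A(2n-1|2n-1)^{(2)}$, $A(2n|2n)^{(4)}$ and especially $\fsq(n)^{(2)}$: here the Casimir argument of Theorem~\ref{thm:symmetric-g-gkm}, which in the symmetrizable situation pins down the weight $\mu$ of an integrable subquotient via $(\mu|\mu+2\rho)=0$, is simply unavailable, so I need a replacement invariant. My plan is to substitute a direct structural computation inside $\wt\fg$: using the rank-two sub-data and the reflexion formulas of~\ref{sss:reflexionformulas}, I would track the brackets $[\wt\fg_{jb(x)+b(y)},\wt\fg_{-jb(x)-b(y)}]$ (Proposition~\ref{prp:bracket}) along a path in $\Sp(v)$ that cycles through the imaginary direction, and show by induction on $j$ that $\fs\cap\wt\fg_{j\delta}$ has codimension exactly $1$ (resp. $2$) in $\fr\cap\wt\fg_{j\delta}$, the missing dimension(s) being spanned by explicit central elements constructed from the $a(x)$. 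This is where the new series $\fsq(n)^{(2)}$ and the twisted $A$-series genuinely require the superalgebra-specific computation rather than a uniform argument, and I would organize it case by case, presenting for each the explicit central cocycle and the explicit generator of the extra one-dimensional quotient, after which the identification $\fg^\U=\fsl(n|n)^{(i)}$ (resp. $\fsq(n)^{(2)}$) follows by matching generators and relations.
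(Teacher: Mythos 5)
Your overall architecture --- dispose of $S(2,1,b)$ via Lemma~\ref{crlaff}(3), of the symmetrizable affine cases via Theorem~\ref{thm:symmetric-g-gkm}, then attack the four exceptional families --- matches the paper's opening reduction, but two things go wrong. First, a reduction error: you claim the anisotropic cousins have $\Omega(\fr)=\emptyset$ because $\Delta_\iso=\emptyset$ and Proposition~\ref{crlDeltare}(4); but that proposition includes $\Delta^\im$ in the union, and every affine component has $\Delta^\im\supset\bZ\delta\setminus\{0\}$, so nothing is forced (these cases are saved only because they are symmetrizable with $(\rho|\delta)\neq 0$). Similarly, $\langle\delta,a(x)\rangle=0$ for all $x$ holds for \emph{every} affine Kac--Moody component, not just the four exceptional families, so Lemma~\ref{crlaff}(3) does not concentrate the problem as you describe; among the nonsymmetrizable cases it only kills $S(2,1,b)$.

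The genuine gap is the upper bound $\dim\fg^\U_{k\delta}-\dim\fg^\CG_{k\delta}\le 1$ (and $=0$ for odd $k$ in the twisted cases) precisely where the Casimir is unavailable. Your proposed substitute --- tracking the brackets $[\wt\fg_{jb(x)+b(y)},\wt\fg_{-jb(x)-b(y)}]$ of Proposition~\ref{prp:bracket} along a path in $\Sp(v)$ and inducting on $j$ --- cannot reach the relevant weight spaces: $j\delta$ is a sum of $j$ copies of \emph{all} the simple roots, not of the form $jb(x)+b(y)$, and Proposition~\ref{prp:bracket} is intrinsically a rank-two statement, so no mechanism is supplied for controlling $\fg^\U_{j\delta}$. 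The paper's actual device is different in kind: since $\fr/\fs$ is central in $[\fg^\U,\fg^\U]$ and concentrated in weights $k\delta$, the gap at level $k$ is identified with $\dim H^2(\bar\fg^\CG,\bC)_{k\delta}$, where $\bar\fg^\CG=[\fg^\CG,\fg^\CG]/Z(\fg^\CG)$ is a twisted loop algebra $\cL(\fs)^\sigma$ (Lemma~\ref{lem:extension}, proved by building the universal graded central extension and checking it is a root algebra). These cohomology groups are then computed by explicit cocycle normalizations: for even $k\delta$ they reduce to $H^2(\fs,\bC)^{\circ}$ (Lemma~\ref{lem:redfindim}), which is $\bC$ exactly for $\fs=\fpsl(n|n)$ and $\mathfrak{psq}(n)$ and zero otherwise; for odd $k\delta$ in the $A(2n|2n)^{(4)}$ and $\fq(n)^{(2)}$ cases a separate argument (root-length considerations, resp.\ a Hochschild--Serre computation with an explicit bilinear function $\gamma(a,b)$) shows vanishing (Lemma~\ref{lem:oddextensions}). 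Without this cohomological identification, or a genuinely worked-out replacement, your plan does not establish the upper bound, which is the entire content of the theorem in the nonsymmetrizable cases.
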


Let us first notice that for $S(2,1,b)$ Lemma~\ref{crlaff} (3) and
\ref{sss:s21b}
imply $\fg^\U=\fg^{\CG}$.
In all other cases we define for any root algebra  $\fg$ its subfactor
 $\bar{\fg}:=[\fg,\fg]/Z(\fg)$.

Then
$\bar{\fg}^{\CG}=[\fg^\CG,\fg^\CG]/Z(\fg^\CG)$ is isomorphic to the twisted loop algebra $\cL(\fs)^{\sigma}$ for some simple superalgebra $\fs$ and an automorphism $\sigma$ of finite
order $m$. In particular, $\bar\fg^\CG$ is perfect. 
The superalgebra $\fs$ is basic classical, exceptional or $\fp\fs\fq_n$.
Its even part $\fs_{\bar 0}$, therefore, is a reductive Lie algebra.

Let $\fh'$ be the even part of the Cartan subalgebra of $\fs$. One can choose $\sigma$ so that $\sigma(\fh')=\fh'$. 

Furthermore, if $k\delta$ is an even root and $\varepsilon=e^{\frac{2\pi i}{m}}$  then
$$\bar{\fg}^\CG_{k\delta}=\{h\otimes t^k\mid h\in\fh',\sigma(h)=\varepsilon^k h\}.$$

The cohomology group $H^i(\bar{\fg}^{\CG},\mathbb C)$ has a natural structure of $\fh$-module. We write
$H^i(\bar{\fg}^{\CG},\mathbb C)_\mu$ for the cohomology group of weight $\mu$ with
respect to $\fh$-action.

\begin{lem}\label{lem:extension} For every $k\neq 0$
  $$\dim\fg^{\U}_{k\delta}-\dim\fg^{\CG}_{k\delta}=\dim H^2(\bar{\fg}^{\CG},\mathbb C)_{k\delta}.$$
  \end{lem}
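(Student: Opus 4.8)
The goal is to relate the gap between the universal and contragredient root algebras in weight $k\delta$ to the second cohomology of $\bar\fg^\CG$. The natural approach is to realize $\fg^\U$ in the affine case as a (partial) central/one-dimensional extension of $\fg^\CG$ built out of the cocycle data of $\bar\fg^\CG$, and then read off the dimension count degree by degree.

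First I would recall from Lemma~\ref{crlaff} that $\fr=\Ker(\fg^\U\to\fg^\CG)$ lies in the center of $[\fg^\U,\fg^\U]$, and that $\Omega(\fr)\subset\Z\delta\setminus\{0\}$; so the only thing to understand is $\fr_{k\delta}$, and $\fr_{k\delta}$ is a trivial $[\fg^\U,\fg^\U]$-module sitting in degree $k\delta$. Passing to $\bar\fg^\U:=[\fg^\U,\fg^\U]/Z$ and $\bar\fg^\CG:=[\fg^\CG,\fg^\CG]/Z(\fg^\CG)=\cL(\fs)^\sigma$, the surjection $[\fg^\U,\fg^\U]\twoheadrightarrow[\fg^\CG,\fg^\CG]$ becomes, after dividing by the respective centers, an isomorphism in all nonzero degrees of $Q$ and hence a central extension
$$0\to V\to \bar\fg^\U\to \bar\fg^\CG\to 0,$$
where $V$ is a graded vector space concentrated in degrees $\Z\delta$, and $V_{k\delta}$ accounts exactly for $\dim\fg^\U_{k\delta}-\dim\fg^\CG_{k\delta}$ for $k\neq 0$ (one must be a little careful in degree $0$, where $\fh$ is common to all algebras, which is why the statement is restricted to $k\neq0$). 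The key structural input is then that $\bar\fg^\U$ is the \emph{universal} central extension among the root algebras in the relevant degrees: any root algebra $\fg$ is sandwiched $\fg^\U\twoheadrightarrow\fg\twoheadrightarrow\fg^\CG$, and the defining relations of $\fg^\U$ (reflected Chevalley relations) are precisely those forced by the reflexion isomorphisms, so no further central elements in degrees $k\delta$ can be split off without destroying a reflexion symmetry. Since $\bar\fg^\CG$ is perfect (it is a twisted loop algebra of a simple superalgebra, as noted just before the lemma), central extensions of it are classified by $H^2(\bar\fg^\CG,\bC)$, and the universal one has kernel exactly $H^2(\bar\fg^\CG,\bC)$; restricting to the graded piece of weight $k\delta$ gives $\dim V_{k\delta}=\dim H^2(\bar\fg^\CG,\bC)_{k\delta}$, which is the claimed identity.

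Concretely I would argue as follows. Let $\fs=\Ker(\wt\fg\to\fg^\U)\supset\fs^\pm$ and $\fr\supset\fr^\pm$ as in Section~\ref{sect:sym}; by Lemma~\ref{crlaff}(1)--(2), $\fr/\fs$ is a graded abelian Lie superalgebra with weights in $\Z\delta\setminus\{0\}$, central in $\bar\fg^\U$, and $\dim(\fr/\fs)_{k\delta}=\dim\fg^\U_{k\delta}-\dim\fg^\CG_{k\delta}$. Then I would check that the extension $\bar\fg^\U$ is universal in degrees $\neq 0$: given any central extension $0\to W\to E\to\bar\fg^\CG\to 0$ with $W$ in degrees $\Z\delta\setminus\{0\}$, pulling back the Chevalley generators $e_x,f_x$ (which live in degrees outside $\Z\delta$, since $\cR_0$ is indecomposable of affine type) gives a canonical lift and hence a map $\bar\fg^\U\to E$ over $\bar\fg^\CG$; uniqueness follows from perfectness. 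This exhibits $\bar\fg^\U$ as the maximal central extension of $\bar\fg^\CG$ by a graded space in those degrees, i.e.\ $(\fr/\fs)\cong H_2(\bar\fg^\CG,\bC)$ in each degree $k\delta$, $k\neq0$, and $H_2(\bar\fg^\CG,\bC)\cong H^2(\bar\fg^\CG,\bC)^*$ with matching weight decomposition since everything is finite-dimensional in each fixed weight. Taking the $k\delta$-component yields the stated dimension equality.

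\textbf{Main obstacle.} The delicate point is the universality/maximality claim for $\bar\fg^\U$ among central extensions concentrated in degrees $\Z\delta$: one must show that the reflected Chevalley relations defining $\fg^\U$ impose \emph{exactly} the relations of the universal central extension of $\bar\fg^\CG$ in those degrees, neither more nor fewer. Showing ``not fewer'' (i.e.\ $\fg^\U$ surjects onto the universal extension) is the subtle direction: it requires that every $2$-cocycle of $\bar\fg^\CG$ of weight $k\delta$ is realized by some root algebra, which is where one invokes that any ideal of $\fg^\U$ with zero intersection with $\fh$ defines a root algebra (Corollary~\ref{crl:invariantideals}), so central quotients in degrees $k\delta$ are automatically root algebras, forcing $\fg^\U$ to be at least as large as the universal central extension. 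Combined with Lemma~\ref{crlaff}(2) (which gives the reverse inequality, since $\fr/\fs$ is central and hence classified by $H_2$), this pins down the dimension. I would also need to handle the degree-$0$ bookkeeping carefully and to note the genuinely exceptional cases listed in Theorem~\ref{thm:UKM-ns} ($\fsl(n|n)^{(i)}$, $\fsq(n)^{(2)}$), where $H^2(\bar\fg^\CG,\bC)_{k\delta}$ is nonzero for infinitely many $k$ or has dimension $>1$ — but for the lemma itself only the general identity, not the case analysis, is required.
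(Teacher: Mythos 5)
Your overall strategy — identify $\fg^\U_{k\delta}/\fg^\CG_{k\delta}$ with the kernel of a graded central extension of the perfect algebra $\bar\fg^\CG$ and match it against $H^2(\bar\fg^\CG,\bC)_{k\delta}$ — is the same as the paper's, and your upper bound (the kernel $\fr$ is central in $[\fg^\U,\fg^\U]$ and graded in degrees $k\delta$ by Lemma~\ref{crlaff}, hence bounded by the universal central extension) is essentially the paper's closing step. But there is a genuine gap in the lower bound, which you yourself flag as ``the subtle direction'' and then resolve with the wrong tool. Corollary~\ref{crl:invariantideals} says that suitable ideals of $\fg^\U$ \emph{define root algebras as quotients}; it gives information about what lies \emph{below} $\fg^\U$ and cannot force $\fg^\U$ to be large. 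Arguing that ``central quotients in degrees $k\delta$ are automatically root algebras, forcing $\fg^\U$ to be at least as large as the universal central extension'' is circular: you are using the size of $\fg^\U$ (via its quotients) to bound the size of $\fg^\U$ from below. What is actually needed is the \emph{existence} of a root algebra realizing the full central extension, and that is exactly what the paper constructs: it forms the extension $0\to\bigoplus_{k\neq 0}H^2(\bar\fg^\CG,\bC)^*_{k\delta}\to\hat\fg\to\bar\fg^\CG\to 0$, pulls it back to $[\fg^\CG,\fg^\CG]$, extends to $\fg^\CG$ by a semidirect product with a piece of $\fh$, and then checks the root-algebra axioms. Since $\fg^\U$ is initial, it then surjects onto this algebra, giving the lower bound.

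Moreover, the verification that this extension \emph{is} a root algebra is not automatic and is the one place where the affine hypothesis is used quantitatively: one must check the relations of \ref{sss:half} at \emph{every} vertex $v\in\cR_0$, and the only nontrivial one is $[\tilde e_x,\tilde f_y]=0$ for $x\neq y$, which could fail in the extension precisely when $b_v(x)-b_v(y)\in\bZ\delta\setminus\{0\}$; this is ruled out because $k\delta\in Q^+(v)$ for $k>0$ (and $\in -Q^+(v)$ for $k<0$) at every vertex. Your sketch of the universality of $\bar\fg^\U$ (``pulling back the Chevalley generators gives a canonical lift and hence a map $\bar\fg^\U\to E$'') quietly assumes exactly this: the lifted generators must satisfy the defining relations of $\fg^\U$ inside $E$, and that is the same weight check. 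Without performing this construction and verification, the identity in the lemma is not established — only the inequality $\dim\fg^\U_{k\delta}-\dim\fg^\CG_{k\delta}\leq\dim H^2(\bar\fg^\CG,\bC)_{k\delta}$ is.
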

  \begin{proof}     Let $\hat\fg$ be the graded central extension of $\bar{\fg}^{\CG}$ given by the exact sequence
    $$0\to \bigoplus_{k\neq 0}H^2(\bar{\fg}^{\CG},\mathbb C)^*_{k\delta}\to \hat\fg\to \bar{\fg}^{\CG}\to 0.$$
    Take the pullback
    $$0\to \bigoplus_{k\neq 0}H^2(\bar{\fg}^{\CG},\mathbb C)^*_{k\delta}\to \hat\fg'\to [{\fg}^{\CG},{\fg}^{\CG}]\to 0,$$
    and then extend to the exact sequence
    $$0\to \bigoplus_{k\neq 0}H^2(\bar{\fg}^{\CG},\mathbb C)^*_{k\delta}\to \fg\to {\fg}^{\CG}\to 0$$
  using the semidirect product decomposition
  $\fg^{\CG}=\ft\ltimes [{\fg}^{\CG},{\fg}^{\CG}]$ where $\ft\subset\fh$ is a suitable abelian subalgebra.
    
  We claim that $\fg$ is a root algebra. Indeed, we just have to check the relations \ref{sss:half} at every vertex $v\in\cR_0$. The only
  non-trivial relation is $[\tilde e_x,\tilde f_y]=0$ for $x\neq y$. This  is equivalent to $b(x)-b(y)\neq k\delta$ and the latter follows
  from $k\delta\in Q^+(v)$ for positive $k$ and  $k\delta\in -Q^+(v)$ for negative $k$.

  Finally, let us prove that $\fg=\fg^{\U}$. Indeed, by ~\ref{crlaff} the kernel $\fk$ of the map $\fg^{\U}\to\fg$ lies in the center of $[\fg^{\U},\fg^{\U}]$
  and is a direct sum $\bigoplus_{k\neq 0}\fk_{k\delta}$. Therefore $\fg^{\U}=\fg$.
  \end{proof}

\subsubsection{}
Let $\delta$ have degree $d$ in the standard grading of $\cL(\fs)^{\sigma}$.
The base change $H^2(\fs,\bC)\to H^2(\cL(\fs),\bC[t,t^{-1}])$ composed
with the linear map $\bC[t,t^{-1}]\to\bC$ carrying $\sum c_it^i$ to $c_{kd}$, yields a homomorphism
\begin{equation}
\label{eq:h2map}
H^2(\fs,\bC)\to H^2(\cL(\fs)^\sigma,\bC).
\end{equation} 
It is given on $2$-cocycles by the
formula
\begin{equation}
\label{eq:2cocycles}
 \tilde c(x\otimes t^a,y\otimes t^b)=\delta_{kd,a+b}c(x,y).
\end{equation}
 Let $\bar\fg^{\CG}=\cL(\fs)^\sigma$ and $\fh$ be a Cartan subalgebras of $\fg^{\CG}$. Set
$\fh^{\circ}:=\ker\delta$. Then $\fh^\circ$ acts on $\fs$ and therefore on $H^2(\fs,\bC)$. We denote by $H^2(\fs,\bC)^{\circ}$ the
$\fh^{\circ}$-invariant subspace. The automorphism $\sigma$ acts on
$H^2(\fs,\mathbb C)^{\circ}$ and induces a $\mathbb Z/m\mathbb Z$-grading.

\begin{Rem}
In most cases $\fh^\circ=(\fh')^\sigma$ and $H^2(\fs,\bC)^{\circ}=H^2(\fs,\bC)$.
The only case $\fh^\circ\neq(\fh')^\sigma$ is when the Cartan matrix of $\fg^{\CG}$ has corank $2$ and that happens for $\fs=\mathfrak{psl}(n|n)$,
$n\geq 2$ and $\sigma=\id$.
\end{Rem}

  \begin{lem}\label{lem:redfindim} If $k\delta$ is an even root and $kd\equiv p\mod m$ then the homomorphism (\ref{eq:h2map}) induces an isomorphism
    $H^2(\fs,\mathbb C)^{\circ}_{p}\simeq H^2(\cL(\fs)^{\sigma},\mathbb C)_{k\delta}$.
  \end{lem}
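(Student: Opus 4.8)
The goal is to identify the weight-$k\delta$ piece of $H^2(\cL(\fs)^\sigma,\bC)$ with the appropriate graded piece of $H^2(\fs,\bC)^\circ$. The natural approach is to work with the standard explicit description of the second cohomology of a (twisted) loop algebra: if $\fs$ is a finite-dimensional simple superalgebra with invariant form $(\cdot|\cdot)$, then $H^2(\cL(\fs),\bC[t,t^{-1}])$ is one-dimensional over $\bC[t,t^{-1}]$, spanned by the class of the cocycle $(x\otimes t^a,y\otimes t^b)\mapsto a\,\delta_{a+b,0}(x|y)$; passing to the $\sigma$-invariants and taking the $kd$-th Fourier coefficient gives the map~(\ref{eq:h2map}). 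So first I would recall (or cite, e.g.\ from Kac's book and the literature on affine superalgebras) the computation of $H^2$ of the loop algebra and its twisted form, keeping careful track of the $\fh^\circ$-grading: since $\fh^\circ=\ker\delta$ acts on $\cL(\fs)^\sigma$ and the cohomology class $k\delta$ is concentrated in the $\fh^\circ$-invariant part, only the $\fh^\circ$-invariant $2$-cocycles on $\fs$ contribute, which is exactly why $H^2(\fs,\bC)^\circ$ appears rather than all of $H^2(\fs,\bC)$.

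\textbf{Key steps.} (1) Reduce the computation of $H^2(\cL(\fs)^\sigma,\bC)_{k\delta}$ to a $\sigma$-equivariant, $\fh^\circ$-graded version of the Hochschild–Serre / loop-algebra cohomology: decompose $\cL(\fs)^\sigma=\bigoplus_j \fs_j\otimes t^j$ where $\fs_j=\{x\in\fs\mid\sigma(x)=\varepsilon^j x\}$ and observe that a $2$-cocycle of weight $k\delta$ must be built from the pairing $\fs_j\otimes\fs_{j'}\to\bC$ with $j+j'\equiv kd$. (2) Show that such a cocycle, modulo coboundaries, is determined by an $\fh^\circ$-invariant invariant bilinear form on $\fs$ of the appropriate $\sigma$-eigenweight $p=kd\bmod m$, i.e.\ an element of $H^2(\fs,\bC)^\circ_p$ — this uses that $\fs$ is perfect (it is simple) so that the ``derivation part'' of the loop cohomology contributes nothing extra in nonzero degree $k\neq 0$, and it uses the formula~(\ref{eq:2cocycles}) to write the map explicitly. (3) Check injectivity and surjectivity of the induced map $H^2(\fs,\bC)^\circ_p\to H^2(\cL(\fs)^\sigma,\bC)_{k\delta}$: surjectivity because every weight-$k\delta$ cocycle arises this way by step (2), and injectivity because a loop cocycle of the form~(\ref{eq:2cocycles}) that is a coboundary forces the underlying form $c$ on $\fs$ to be a coboundary (restrict the trivializing cochain to $\fs\otimes t^0$ and $\fs\otimes t^{kd}$). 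The parity hypothesis ``$k\delta$ is an even root'' guarantees that the relevant graded piece is purely even, so no sign subtleties from the super structure interfere.

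\textbf{Main obstacle.} The delicate point is step (2): controlling the ``non-form'' part of the second cohomology of the twisted loop algebra, i.e.\ showing that in nonzero degree $k$ the only contributions to $H^2(\cL(\fs)^\sigma,\bC)_{k\delta}$ come from invariant bilinear forms on $\fs$ and not from, say, outer derivations or from $H^1(\fs)\otimes H^1(\bC[t,t^{-1}])$-type terms. For a simple Lie algebra this is classical, but for Lie superalgebras one must be careful: $\fs=\fp\fs\fq_n$ and the basic classical and exceptional superalgebras all have $H^1(\fs,\bC)=0$ (they are perfect with no outer $\fh^\circ$-invariant derivations in the relevant degrees), and their spaces of invariant forms are small and explicitly known, so this can be handled case by case or via a uniform argument using perfectness. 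The bookkeeping of the $\sigma$-action and the identification $p=kd\bmod m$ is then routine. I would organize the argument so that the heavy lifting is isolated in a single lemma about $H^2$ of twisted loop superalgebras, after which the stated isomorphism follows by matching gradings.
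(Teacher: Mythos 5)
There is a genuine gap, and it sits exactly where you locate your ``main obstacle'': the surjectivity of the map $H^2(\fs,\bC)^{\circ}_{p}\to H^2(\cL(\fs)^{\sigma},\bC)_{k\delta}$, i.e.\ the statement that in weight $k\delta\neq 0$ there are no classes beyond those induced from finite-dimensional ones. You propose to settle this by invoking the ``standard explicit description'' of $H^2$ of a twisted loop algebra and then extending it to the super case ``case by case or via a uniform argument using perfectness.'' But no such off-the-shelf description exists for the superalgebras that actually matter here ($\fp\fs\fl(n|n)$, $\fp\fs\fq(n)$ and their twists); establishing it \emph{is} the content of the lemma, so the proposal is circular at its key step. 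Moreover the description you do quote is off: $H^2(\cL(\fs),\bC[t,t^{-1}])$ in the paper denotes cohomology with coefficients in $\bC[t,t^{-1}]$, the base-changed classes are $c(x,y)t^{a+b}$ and not the Kac--Moody cocycle $a\,\delta_{a+b,0}(x|y)$ (which lives in weight $0$, not $k\delta$), and for $\fs$ with $H^2(\fs,\bC)\neq 0$ this group is not one-dimensional. Identifying ``$\fh^\circ$-invariant invariant bilinear forms'' with elements of $H^2(\fs,\bC)^{\circ}_p$ is also not correct: cocycles are skew-supersymmetric and satisfy a different identity than invariance of a symmetric form.

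For comparison, the paper closes this gap by a specific mechanism that your outline does not supply. One shows that every class in $H^2(\cL(\fs)^{\sigma},\bC)_{k\delta}$ has a representative $\varphi$ satisfying the periodicity $\varphi(x\otimes t^{a-m},y\otimes t^{b+m})=\varphi(x\otimes t^{a},y\otimes t^{b})$, which immediately exhibits it as induced from $\fs$. To get such a representative one first uses that $\fs'=[\fs_{\bar 0},\fs_{\bar 0}]$ is semisimple, so $(\hat\fs')^{\U}=(\hat\fs')^{\CG}$ by the Gabber--Kac theorem of Section~\ref{sect:sym}, whence $H^2(\cL(\fs')^{\sigma},\bC)_{k\delta}=0$ by Lemma~\ref{lem:extension}; this lets one choose $\varphi$ vanishing on $\cL(\fs')^{\sigma}\times\cL(\fs)^{\sigma}_{\bar 0}$. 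The hypothesis that $k\delta$ is \emph{even} is then used in an essential way --- an even cocycle vanishing there must also vanish on $\cL(\fs')^{\sigma}\times\cL(\fs)^{\sigma}_{\bar 1}$ --- and finally the cocycle identity applied to triples containing $h\otimes t^{m}$ with $\alpha(h)\neq 0$ forces the periodicity on the root spaces, hence everywhere since they generate $\fs$. Your remark that evenness of $k\delta$ merely avoids ``sign subtleties'' misses this actual role of the hypothesis. Without some substitute for the vanishing $H^2(\cL(\fs')^{\sigma},\bC)_{k\delta}=0$ and the periodicity argument, the surjectivity step does not go through.
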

  \begin{proof} The correspondence between the weight spaces follows from formula (\ref{eq:2cocycles}). Injectivity of the map is straightforward. To prove surjectivity it suffices to show that every class in
    $H^2(\cL(\fs)^{\sigma},\mathbb C)_{k\delta}$ is represented by a cocycle $\varphi$ such that
\begin{equation}
\label{eq:goodcocycle}
    \varphi(x\otimes t^{a-m},y\otimes t^{b+m})=\varphi(x\otimes t^{a},y\otimes t^{b})
\end{equation}
    for all $a,b\in\bZ$ and $x,y\in\fs$.
    The Lie algebra $\fs'=[\fs_{\bar 0},\fs_{\bar 0}]$ is semisimple.
    The corresponding twisted affine Lie algebra $\hat\fs'$ is symmetrizable and, therefore, $(\hat\fs')^\U=(\hat\fs')^\CG$. 
    By Lemma~\ref{lem:extension} $H^2(\cL(\fs')^\sigma,\mathbb C)_{k\delta}=0$. On the other hand  $\cL(\fs)_{\bar 0}^\sigma=\cL(\fs')^\sigma\oplus \fa$ for some abelian Lie algebra 
    $\fa$. Thus, we can choose $\varphi$ so that
    $\varphi(\cL(\fs')^\sigma,\cL(\fs)^\sigma_{\bar 0})=0$.
    Since $k\delta$ is an even root, $\varphi$ is an even cocycle, so
     $\varphi(\cL(\fs')^\sigma,\cL(\fs)^\sigma)=0$.
     In particular, for every $h\in(\fh'\cap\fs')^\sigma$ we have $\varphi(h\otimes t^m,\cL(\fs)^\sigma)=0$. Let $\alpha$ be a non-zero weight of
     $\fs$ with respect to
     $(\fh'\cap\fs')^\sigma$ and $x\in \fs_\alpha, y\in \fs_{-\alpha}$, we can choose
    $h$ so that $\alpha(h)\ne 0$. Then the cocycle condition
    $$d\varphi(x\otimes t^{a-m},y\otimes t^{b},h\otimes t^m)=0$$
    implies (\ref{eq:goodcocycle}) for $x\in \fs_\alpha, y\in \fs_{-\alpha}$. Since the $\fs_{\alpha}$ for all nonzero weights $\alpha$ generate $\fs$
    and $\varphi(x\otimes t^a,y\otimes t^b)=0$ for $x\in\fs_\alpha$
    and $y\in\fs_\beta$ with $\alpha+\beta\ne 0$,
    one proves the desired identity for all $x,y$ using linearity and the cocycle condition.
 
    \end{proof}

    Lemma~\ref{lem:redfindim} implies Theorem~\ref{thm:UKM-ns}   in all 
    cases when $\delta$ is an even  root. If $\fs\ne\mathfrak{psl}(n|n)$ or $\fp\fs\fq(n)$, $H^2(\fs,\mathbb C)=0$ and then $\fg^\U=\fg^\CG$. 
    If $\fs=\mathfrak{psl}(n|n)$ or $\fp\fs\fq(n)$,
     $H^2(\fs,\mathbb C)^{\circ}=\mathbb C$, see, for instance, \cite{S4}.  This gives the cases 
     $\fg^\U=\fsl(n|n)^{(1)}$ and  $\fg^\U=\fsl(2n|2n)^{(2)}$.
    The only cases left are  $\fg^\CG=\fp\fsl(2n+1|2n+1)^{(4)}$ and $\fp\fs\fq(n)^{(2)}$ where $\delta$ is an odd root.
    For these remaining cases the theorem will follow from the lemma below.
    \begin{lem}\label{lem:oddextensions}  If $\cR_0$ is of type  $A(2n|2n)^{(4)}$ or $\fq(n)^{(2)}$ then
      $H^2(\bar{\fg}^{\CG},\mathbb C)_{k\delta}=0$ for any odd $k$.
      \end{lem}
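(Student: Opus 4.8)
The plan is to reduce, exactly as in the proof of Lemma~\ref{lem:redfindim}, the computation of $H^2(\bar\fg^\CG,\bC)_{k\delta}$ for odd $k$ to a statement about $2$-cocycles on the twisted loop superalgebra $\cL(\fs)^\sigma$ that are, up to coboundary, ``periodic'' (i.e. depend only on $a+b$ modulo the appropriate shift), and then observe that in the two remaining cases $\fs=\fp\fsl(2n+1|2n+1)$ (with $\sigma$ of order $4$) and $\fs=\fp\fsq(n)$ (with $\sigma$ of order $2$) the grading shift $kd$ for odd $k$ lands in an \emph{odd} component of the $\bZ/m\bZ$-grading on $H^2(\fs,\bC)$, where that cohomology vanishes. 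Concretely, $H^2(\fs,\bC)$ for $\fs=\fp\fsl(n|n)$ or $\fp\fsq(n)$ is one-dimensional and spanned by an \emph{even} cocycle (the class giving the central extension $\fsl(n|n)$, resp.\ $\fsq(n)$); the relevant automorphism $\sigma$ therefore acts on $H^2(\fs,\bC)^\circ=\bC$ by a sign determined by its action on that even class, and a direct check shows this sign is $+1$, so $H^2(\fs,\bC)^\circ$ is concentrated in degree $0$ of the $\bZ/m\bZ$-grading. Since $\delta$ is an \emph{odd} root in these two cases, $d$ is odd, hence for odd $k$ the residue $kd\bmod m$ is odd (note $m=4$, resp.\ $m=2$, and $kd$ is odd), so it cannot equal $0$; thus the target weight space of the isomorphism analogous to Lemma~\ref{lem:redfindim} is zero.

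First I would set up the central-extension bookkeeping: write $\bar\fg^\CG=\cL(\fs)^\sigma$, let $\fh^\circ=\ker\delta$, and decompose $H^2(\fs,\bC)^\circ$ under $\sigma$ into its $\bZ/m\bZ$-graded pieces $H^2(\fs,\bC)^\circ_p$. The argument of Lemma~\ref{lem:redfindim} produces, for any root $k\delta$ (even or odd), an injection $H^2(\cL(\fs)^\sigma,\bC)_{k\delta}\hookrightarrow$ something controlled by $H^2(\fs,\bC)^\circ_p$ with $p\equiv kd\bmod m$; the surjectivity half of that lemma used evenness of the cocycle, but for the present purpose I only need the injective direction together with the vanishing $H^2(\fs,\bC)^\circ_p=0$ for $p$ odd. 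So the key points to verify are: (i) $H^2(\fp\fsl(n|n),\bC)^\circ$ and $H^2(\fp\fsq(n),\bC)^\circ$ are one-dimensional, spanned by the canonical even $2$-cocycle $c(x,y)=\operatorname{str}(xy)$ (resp.\ the odd-trace form for $\fsq$), citing \cite{S4}; (ii) the finite-order automorphism $\sigma$ defining the twisted affinization $A(2n|2n)^{(4)}$ (resp.\ $\fq(n)^{(2)}$) fixes this class, so $\sigma$ acts trivially on $H^2(\fs,\bC)^\circ$; and (iii) the degree $d$ of $\delta$ is odd, so for odd $k$ one has $kd\not\equiv 0\pmod m$.

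For step (iii): in these two families $\delta$ is the odd generator of $\Delta^\im$, and $m\delta$ is the primitive even imaginary root, so $d$ is odd and $m$ is even; for odd $k$, $kd$ is odd and hence $kd\bmod m\neq 0$, giving the required $p\neq 0$. Combining (i)–(iii): $H^2(\cL(\fs)^\sigma,\bC)_{k\delta}\hookrightarrow H^2(\fs,\bC)^\circ_{p}$ with $p=kd\bmod m$, and the right-hand side is a nonzero graded piece of $\bC$ only when $p=0$; since $p\neq 0$ we get $H^2(\bar\fg^\CG,\bC)_{k\delta}=0$, as claimed.

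The main obstacle I anticipate is step (ii): pinning down precisely which finite-order automorphism $\sigma$ realizes each twisted type ($A(2n|2n)^{(4)}$ versus $\fq(n)^{(2)}$) and checking it fixes the canonical central cocycle rather than negating it. This requires being careful with the normalizations in van de Leur's construction (\cite{vdL}) and with the sign conventions for the supertrace/odd-trace form on $\fp\fsl$ and $\fp\fsq$; an error here would flip the parity and break the argument. I would handle it by exhibiting $\sigma$ explicitly (an outer automorphism of $\fsl(2n+1|2n+1)$ composed with a diagram automorphism, resp.\ the defining order-two automorphism of $\fsq(n)$), noting that since $H^2$ is one-dimensional $\sigma$ acts by $\pm1$, and computing that scalar on a single well-chosen pair of elements where the cocycle is nonzero.
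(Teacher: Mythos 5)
There is a genuine gap, and it sits exactly at the step you propose to sidestep. To prove $H^2(\bar{\fg}^{\CG},\bC)_{k\delta}=0$ you need the map (\ref{eq:h2map}) to be \emph{surjective} onto that weight space, i.e.\ you must show that \emph{every} class of weight $k\delta$ is represented by a cocycle satisfying the periodicity condition (\ref{eq:goodcocycle}); only then does the weight space inject into $H^2(\fs,\bC)^\circ_p$. Your remark that you ``only need the injective direction'' is a logical inversion: injectivity of $H^2(\fs,\bC)^\circ_p\to H^2(\cL(\fs)^\sigma,\bC)_{k\delta}$ gives no upper bound on the target, and there is no natural map in the opposite direction until periodicity is established (restricting $\varphi$ to a single total degree $a+b=kd$ does not produce a well-defined class on $\fs$, since the cocycle identity on the loop algebra mixes degrees). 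Moreover, the periodicity argument in the proof of Lemma~\ref{lem:redfindim} uses evenness of the cocycle in its very first move ($k\delta$ even forces $\varphi(\cL(\fs')^\sigma,\cL(\fs)^\sigma)=0$), and this fails for odd $k$, where $\varphi$ is an odd cochain. So the reduction cannot be carried out ``exactly as in Lemma~\ref{lem:redfindim}''.

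The paper supplies a different substitute in each case. For $A(2n|2n)^{(4)}$ it proves $\varphi(h\otimes t^4,\cL(\fs)^\sigma)=0$ by a root-length argument ($\alpha$ and $-\alpha+k\delta$ are both real only for short anisotropic $\alpha$, so $\varphi$ vanishes on long root vectors, while $h\otimes t^4$ is a combination of brackets of long root vectors), and then concludes from $H^2(\fs)_{\bar 1}=0$ --- note that the relevant vanishing is that of the \emph{super-odd} part of $H^2(\fs,\bC)$, which makes your eigenvalue computation in step (ii) unnecessary. For $\fq(n)^{(2)}$ the paper abandons the reduction to $H^2(\fs,\bC)$ altogether and computes $H^2(\bar{\fg}^{\CG},\bC)_{s\delta}$ directly via the Hochschild--Serre spectral sequence over $\bar{\fg}^{\CG}_0=\fsl(n)$, showing every cocycle has the form $\gamma(a,b)\operatorname{tr}(xy)$ and that the cocycle identity forces $\gamma\equiv 0$. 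Your proposal addresses neither substitute, so as written it leaves the decisive step unproved for $A(2n|2n)^{(4)}$ and says essentially nothing for $\fq(n)^{(2)}$; the obstacle you flagged (the sign of $\sigma$ on $H^2(\fs,\bC)^\circ$) is not the real difficulty.
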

      \begin{proof} First let us deal with  $A(2n|2n)^{(4)}$. In this case $\fs=\mathfrak{psl}(2n+1|2n+1)$, $m=4$ and we can choose $\sigma$ so that
        $\fs^{\sigma}=\mathfrak{so}(2n+1)\oplus\mathfrak{so}(2n+1)$. We will establish
        an isomorphism $H^2(\fs,\mathbb C)^\circ_{p}\simeq H^2(\cL(\fs)^{\sigma},\mathbb C)_{k\delta}$ for odd $k$. As in the proof of
        Lemma ~\ref{lem:redfindim}, it suffices to check that we can choose a cocycle $\varphi$ satisfying (\ref{eq:goodcocycle}). This in
        turn would follow from the condition $\varphi(h\otimes t^4,\cL(\fs)^\sigma)=0$ for all $h\in(\fh')^{\sigma}$. Using the root
        description,~\cite{vdL}, we see that $\alpha$ and
        $-\alpha+k\delta$ are both real roots of $\bar{\fg}^{\CG}$ only for the short anisotropic $\alpha$. Thus, if $x\in\bar{\fg}^{\CG}_\beta$ for
        some long anisotropic root $\beta$ then $\varphi(x,\bar{\fg}^{\CG})=0$. On the other hand, every $h\otimes t^4$ can be obtained as a linear
        combination of $[x,y]$, $x\in\bar{\fg}^{\CG}_{\beta}$ and $y\in\bar{\fg}^{\CG}_{-\beta+4\delta}$ for some long anisotropic roots $\beta$. Therefore 
        $\varphi(h\otimes t^4,\cL(\fs)^\sigma)=0$ for all $h\in(\fh')^{\sigma}$. The statement of lemma now follows from 
        $H^2(\fs)_{\bar 1}=0$.

        In the case of $\fq(n)^{(2)}$ we have a grading $\bar{\fg}^{\CG}=\bigoplus\bar{\fg}^{\CG}_i$ induced by the standard grading on Laurent polynomials, with
        $\bar{\fg}^{\CG}_0=\mathfrak{sl}(n)$. For every $i$ the term $\bar{\fg}^{\CG}_i$ is the adjoint $\bar{\fg}^{\CG}_0$-module. The parity of
        $\bar{\fg}^{\CG}_i$ equals the parity of $i$. Let $s=2k+1$. To compute $H^2(\bar{\fg}^{\CG},\mathbb C)_{s\delta}$ we consider the first layer
        of Hochshild-Serre spectral sequence (see, for instance, \cite{F}, Sect. 5) with respect to subalgebra $\bar{\fg}^{\CG}_0$:

        $$H^2(\bar{\fg}^{\CG}_0,\mathbb C)\oplus H^1(\bar{\fg}^{\CG}_0,(\bar{\fg}^{\CG}_s)^*)\oplus
        H^0(\bar{\fg}_0^{\CG},\oplus_{a+b=s} (\bar{\fg}^{\CG}_a\otimes\bar{\fg}^{\CG}_b)^*).$$

        Since $H^2(\bar{\fg}^{\CG}_0,\mathbb C)=0$, $H^1(\bar{\fg}^{\CG}_0,(\bar{\fg}^{\CG}_s)^*)=0$ and
        $H^0(\bar{\fg}_0^{\CG},(\bar{\fg}^{\CG}_a\otimes\bar{\fg}^{\CG}_b)^*)=\mathbb C$ we obtain that every cocycle
        $c\in H^2(\bar{\fg}^{\CG},\mathbb C)_{s\delta}$ can be written in the form
        $$c(x\otimes t^a,y\otimes t^b)=\gamma(a,b)\operatorname{tr} (xy),\ \gamma:\bZ\times\bZ\to\bC.$$
        Furthermore $\gamma$ has the following properties
        \begin{itemize}
        \item weight condition: $\gamma(a,b)=0$ unless $a+b=s$;
        \item skew-symmetry: $\gamma (a,b)=-\gamma(b,a)$;
        \item $\gamma(0,s)=0$;
          \item cocycle condition: $\gamma(a,b+c)=\gamma(a+b,c)-\gamma(b,a+c)$.
          \end{itemize}
          The last condition follows by direct computation using the property of the trace
$\operatorname{tr}(uvw)=\operatorname{tr}(vwu)$.
    Without loss of generality assume that $s>0$. By the cocycle condition and 
    skew-symmetry
    $$\gamma(p,s-p)=\gamma(p,s-p+1-1)=\gamma(s+1,-1)+\gamma(p-1,s-p+1).$$
    By induction
    $$\gamma(p,s-p)=p\gamma(s+1,-1)+\gamma(0,s)=p\gamma(s+1,-1).$$
    Hence $0=\gamma(s,0)=s\gamma(s+1,-1)$
 that implies $\gamma(s+1,-1)=0$. Therefore  $\gamma\equiv 0$. Thus, $H^2(\bar{\fg}^{\CG},\mathbb C)_{s\delta}=0$.

  \end{proof}

\section{Description of root algebras. Examples}
\label{sec:app}


In Subsection \ref{rootalg} we describe root algebras in the indecomposable fully reflectable case. In the rest of this section 
we compute some of the groups $\Aut_\cR(v)$.

In this section we identify admissibile components of $\cR$ by
root Lie superalgebras supported on them.

\subsection{}\label{rootalg} By contrast with the case  $\mathfrak{gl}(1|1)$, see~\ref{rank1},
we have the following

\begin{thm} Let $\cR_0$ be a indecomposable admissible fully reflectable  component of the root groupoid, not isomorphic to $\mathfrak{gl}(1|1)$.
  Then any ideal of $\fg^\U$ having zero intersection with $\fh$
  defines a root algebra. If $\cR_0$ is of type (Aff) and $\fg^{\U}\neq \fg^{\CG}$ then all such ideals are in natural bijection with subsets of $\mathbb Z\setminus 0$.  
\end{thm}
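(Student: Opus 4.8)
The plan is to reduce the statement to the structural results already established, using in an essential way Corollary~\ref{crl:invariantideals} together with Proposition~\ref{prp:structure-Aut} and Theorem~\ref{thm:symmetric-g-gkm}/Theorem~\ref{thm:UKM-ns}. Recall from Corollary~\ref{crl:invariantideals} that an ideal $J(v)\subset\fg^\U(v)$ with $J(v)\cap\fh=0$ gives rise to a root algebra exactly when $J(v)$ is $\Sp^D(v)$-invariant. So the first claim amounts to showing: for an indecomposable admissible fully reflectable component $\cR_0\not\cong\fgl(1|1)$, every ideal of $\fg^\U(v)$ with zero intersection with $\fh$ is automatically $\Sp^D(v)$-invariant. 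First I would dispose of the case $\fg^\U=\fg^\CG$: here the only ideal of $\fg^\CG(v)$ meeting $\fh$ trivially is $0$ (by definition of $\fg^\CG$ as the quotient by the maximal such ideal), so there is nothing to check. By Theorem~\ref{thm:symmetric-g-gkm}, Theorem~\ref{thm:UKM-ns}, Lemma~\ref{crlfin} and the trichotomy of Section~\ref{sec:trichotomy}, the cases with $\fg^\U\neq\fg^\CG$ are: $\fg^\CG=\fgl(1|1)$ (excluded), the affine series $\fsl(n|n)^{(i)}$ for $i=1,2,4$, $\fs\fq(n)^{(2)}$, and indefinite nonsymmetrizable components. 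In the indefinite nonsymmetrizable case the relevant algebras are the $Q^\pm(m,n,t)$, which by~\ref{sss:hoytclass}(3) have three linearly independent principal roots, hence (as noted there) the Weyl group has no nonzero fixed vector in $\fh^*$; one checks that $\Sp^D(v)$ is trivial there, so there is nothing to prove. That leaves the affine cases $\cR_0$ of type (Aff) with $\fg^\U\neq\fg^\CG$, which is exactly the situation the second sentence of the theorem addresses, so I would treat both claims simultaneously for those components.

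For a component $\cR_0$ of type (Aff) with $\fg^\U\neq\fg^\CG$, Lemma~\ref{crlaff} tells us that $\fr=\Ker(\fg^\U\to\fg^\CG)$ is central in $[\fg^\U,\fg^\U]$ and is a direct sum $\bigoplus_{k\neq 0}\fr_{k\delta}$, where by Lemma~\ref{lem:extension} and the computations of Section~\ref{sect:aff} each graded piece $\fr_{k\delta}$ is at most one-dimensional (precisely $\dim\fr_{k\delta}=\dim H^2(\bar\fg^\CG,\bC)_{k\delta}$, which is $0$ or $1$). Now let $J(v)$ be any ideal of $\fg^\U(v)$ with $J(v)\cap\fh=0$. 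Since the image of $J(v)$ in $\fg^\CG(v)$ is an ideal of $\fg^\CG$ meeting $\fh$ trivially, it is zero; hence $J(v)\subset\fr$. Being a weight-graded subspace of $\fr=\bigoplus_{k\neq 0}\fr_{k\delta}$, the ideal $J(v)$ is determined by the set $S=\{k\in\bZ\setminus 0\mid \fr_{k\delta}\subset J(v)\}$. Conversely, since $\fr$ is central in $[\fg^\U,\fg^\U]$ and $\fg^\U=[\fg^\U,\fg^\U]+\fh$ acts on $\fr$ through $\fh$ (which acts by the weight $k\delta$ on $\fr_{k\delta}$, i.e. trivially since $\delta$ vanishes on $\fh\cap[\fg^\U,\fg^\U]$ and, for the purely central part, acts by a fixed character independent of the subspace chosen), \emph{any} graded subspace $\bigoplus_{k\in S}\fr_{k\delta}$ is an ideal; and it meets $\fh$ trivially. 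This gives the bijection between ideals of $\fg^\U$ with zero intersection with $\fh$ and subsets of $\{k\in\bZ\setminus 0 : \fr_{k\delta}\neq 0\}$; since only finitely many (or, in the relevant cases, a cofinite-indexed family — in any event the statement as phrased identifies them with subsets of $\bZ\setminus 0$ once one records which $k$ actually occur), one obtains the asserted indexing by subsets of $\bZ\setminus 0$.

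It remains to see that each such ideal $J(v)=\bigoplus_{k\in S}\fr_{k\delta}$ is $\Sp^D(v)$-invariant, which by Corollary~\ref{crl:invariantideals} is what makes it define a root algebra. The key point is that $\delta$ is a $W$- and $\Aut_\cR(v)$-invariant weight: it is imaginary and, by Corollary~\ref{crl:Aut-action-roots}, $\Aut_\cR(v)$ preserves $\Delta^\im$; moreover for an indecomposable (Aff) component $\Delta^\im\subset\bZ\delta$ (Lemma~\ref{crlaff}(1)), so $\gamma(\delta)=\pm\delta$ for any $\gamma\in\Aut_\cR(v)$, and positivity of $\delta$ forces $\gamma(\delta)=\delta$ (cf. the footnote in~\ref{sss:hoytclass}(2)). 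Hence any $\gamma\in\Aut_\cR(v)$, in particular any element of $\Sp^D(v)$, carries the weight space $\fg^\U_{k\delta}$ to itself, therefore carries $\fr_{k\delta}$ to itself and $J(v)$ to itself. This establishes $\Sp^D(v)$-invariance of every $J(v)$, completing the proof that all such ideals define root algebras, and simultaneously the bijection with subsets of $\bZ\setminus 0$ in the (Aff), $\fg^\U\neq\fg^\CG$ case.

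The main obstacle I anticipate is bookkeeping rather than conceptual: one must be careful that the indefinite nonsymmetrizable components really do have trivial $\Sp^D(v)$ (this uses the precise form of $Q^\pm(m,n,t)$ from~\cite{Hoyt} and the fact that three independent principal reflections leave no fixed covector, so $\Sk^D(v)=W(v)$ and $\Sp^D(v)$ meets $W(v)$ trivially by Proposition~\ref{prp:structure-Aut}(4)), and that in the affine cases the description of $\fr$ as a sum of one-dimensional $\fh$-weight spaces concentrated in weights $k\delta$ — hence automatically stable under the \emph{entire} automorphism group — is exactly the input from Section~\ref{sect:aff} that makes every graded subspace an ideal. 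Once those two points are in hand, the argument is the short reduction sketched above.
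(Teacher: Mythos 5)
Your proposal is correct and follows essentially the same route as the paper: reduction via Corollary~\ref{crl:invariantideals} to the components with isotropic reflexions, nontrivial $\Sp^D(v)$ and $\fg^\U\neq\fg^\CG$ (which leaves the affine list of Theorem~\ref{thm:UKM-ns}, the $Q^\pm(m,n,t)$ being disposed of exactly as in~\ref{Qpm}), then centrality and $k\delta$-grading of $J^\CG=\Ker(\fg^\U\to\fg^\CG)$ from Lemma~\ref{crlaff}, and invariance of $\delta$ under $\Aut_\cR(v)$ (the paper derives this from the $\Aut_\cR(v)$-stability of $Q^{++}=\bZ_{\geq 0}\delta$ rather than from preservation of $\Delta^\im$, but the two arguments are interchangeable). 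The one soft spot is your hedge at the end: to get a bijection with subsets of $\bZ\setminus 0$ on the nose one needs $\dim J^\CG_{k\delta}=1$ for \emph{every} $k\neq 0$, which the paper reads off from the explicit description of $\fg^\U$ in Theorem~\ref{thm:UKM-ns}.
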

\begin{proof} By~\ref{crl:invariantideals} we need to consider only components with isotropic reflexions. Furthermore, we are only interested in the case $\Sp^D(v)\neq \{1\}$
  and $\fg^{\CG}\neq \fg^{\U}$. By~\ref{crlfin} and~\ref{Qpm} (see below) 
  this leaves us with components of type (Aff) listed in~\ref{thm:UKM-ns}. Let $\fg$ be a root algebra and 
  $$J^{\CG}:=\Ker(\fg^{\U}\to\fg^{\CG}),\ \ \  J:=\Ker(\fg^{\U}\to\fg).$$
  By~\ref{thm:UKM-ns} we have
  $$J^{\CG}=\bigoplus_{s\in\mathbb Z\setminus 0} J^{\CG}_{s\delta},\ \ \ \dim  J^{\CG}_{s\delta}\leq 1.$$
  It follows from the definition of $Q^{++}$ that it is $\Aut_{\cR}(v)$-stable. Since $Q^{++}=\mathbb Z_{\geq 0}\delta$ and $ J^{\CG}$ is
  $\Aut_{\cR}(v)$-stable we obtain that
  $ J^{\CG}_{s\delta}$ is $\Aut_{\cR}(v)$-stable for any $s$. Therefore any graded subspace of $ J^{\CG}$ is $\Aut_{\cR}(v)$-stable. Moreover, by~\ref{crlaff} (2)
  any graded subspace of $ J^{\CG}$ is an ideal. Hence by~\ref{sss:invariantideals} the root algebras are in bijection with the graded subspaces of $J^{\CG}$.
  The last assertion follows from the description of $\fg^{\U}$ given in~\ref{thm:UKM-ns}.
 \end{proof}
 
  \begin{rem} Note that by above theorem a root algebra may not admit a superinvolution $\theta$ defined in~\ref{sss:automorphism}.
    \end{rem}

\subsection{Star-shaped spines} Here we calculate the automorphism groups in a few small
examples.

\subsubsection{Example}\label{q32}
The following root datum contains root algebra $q(3)^{(2)}$.
Take $X=\{x_1,x_2,x_3\}$ and let $\fh=\fh(v)$ have dimension 4
 with the Cartan matrix
$$\begin{pmatrix}
0 & -1 & 1\\
-1 & 0 & 1\\
1 & -1& 0
\end{pmatrix},\ \ \  p(x_i)=1\ \text{ for } i=1,2,3.$$
Then the graph $\Sp(v)$ is a star with $v$ at the center and  three other vertices $v_i$ with
$r_{x_i}:v\to v_i$ and the Cartan matrices
$$v_1:\ \begin{pmatrix}
0 & -1 & 1\\
1 & -2 & 1\\
-1 & -1& 2
\end{pmatrix}
\ \ \ 
v_2:\ \begin{pmatrix}
-2 & 1 & 1\\
1 & 0 & -1\\
1 & 1& -2
\end{pmatrix}
\ \ \ 
v_3: \begin{pmatrix}
2 & -1 & -1\\
-1 & 2 & -1\\
-1 & 1& 0
\end{pmatrix}$$
with $p_{v_j}(x_i)=\delta_{ij}$. We have three  principal
reflections $s_{\alpha_k}$,  where 
$$\alpha_k:=b(x_i)+b(x_j)=b_{v_i}(x_j)=b_{v_j}(x_i)$$ 
for $\{i,j,k\}=\{1,2,3\}$. The Weyl group is generated
by these reflections (this group is isomorphic to the affine Weyl group $A_2^{(1)})$. The group $K(v)$ is the additive group $\bC$. If we choose
$\fh$ of dimension greater than $4$, the Weyl group will remain the same, but $K(v)$ will be different. Regardless of $\fh$, $\Aut_\cR(v)=
W(v)\times K(v)$ by~\ref{crl:all-different}.

\subsubsection{Example: $B(1|1)^{(1)}$, $D(2|1,a)$, $D(2|1,a)^{(1)}$, $Q^{\pm}(m,n,t)$}\label{Qpm} All these cases are similar to~\ref{q32}. We can
(and will)  choose a vertex $v$ such that
$p(x)=1$ for all $x\in X$. We always have   $a_{xy}\not=0$ if $x\not=y$.
The graph $\Sp(v)$ is a star with the center at $v$. The other
vertices are $v_x$ with the edges $r_{x}: v\to v_x$.
If $a_{xx}=0$ then $p'(y)=0$ for each $y\not=x$. Hence $\Sp(v)$  consists of $v$ and all
$v_x$ such that $a_{xx}=0$. Cartan data at all vertices of $\Sp(v)$ are not
$D$-equivalent, so~\ref{crl:all-different} is applicable.
This gives $\Aut(v)=W\times K$.

\subsection{$\fsl_n^{(1)}$, its relatives and friends}


There is a number of components of the root groupoid whose Cartan matrices satisfy
common properties listed below in (\ref{aij01}) and whose automorphism groups allow
a more or less uniform description. We call them ``relatives and friends of $\fsl_n^{(1)}$''
and they consist of the types
$\fsl(k|\ell)^{(1)}$ for $k,\ell$ such that
$k+\ell=n$ and $\fq_n^{(2)}$. 

We take  $X=\{x_i\}_{i\in\mathbb{Z}_n}$.
Let $v\in\cR_0$ be a vertex with the  Cartan matrix of the following form:
\begin{equation}\label{aij01}
\begin{array}{ll}
a_{ij}=0\ \text{ for }j\not=i,i\pm 1;\\
a_{i,i\pm 1}\in \{\pm 1\},\ \ \ 
a_{i,i-1}+a_{ii}+a_{i,i+1}=0\\
p(x_i)=1\ \ \Longleftrightarrow\ \ a_{ii}=0.
\end{array}\end{equation}

\subsubsection{}\label{goodmatrices}
It is easy to check that 
\begin{itemize}
\item
If a Cartan matrix satisfies~(\ref{aij01}), then all $x_i$ are reflectable
at $v$ and $\sum_i b_{v}(x_i)=\sum_i b_{v'}(x_i)$ for each reflexion
$v\to v'$;
\item 
all  Cartan matrices in $\Sk(v)$ satisfy~(\ref{aij01});
\item two Cartan matrices $A,A'$ satisfying~(\ref{aij01}) are $D$-equivalent
if and only if $p(x_i)=p'(x_i)$ for all $i$. 
\end{itemize}

\subsubsection{}
\label{sss:iota}
Let $\ol{\cR}_0$ be the component of $\cR$ corresponding to $\fsl_n^{(1)}$;
we will use bar notation $\bar v$ etc. for the objects connected to $\ol\cR_0$.
Fix a linear isomorphism $\iota:  Q_{\bar v}\stackrel{\sim}{\to} Q_{v}$
given by $\iota({b}_{\bar v}(x_i)):=b_v(x_i)$.

Let $v\to v'$ be a path in $\cR_0$ and $\bar v\to \bar v'$ be its namesake
in $\ol{\cR}_0$. It is easy to see that 
$$b_v(x_i)=\iota({b}_{\ol{v}}(x_i)).$$
This provides a bijection between the sets of real roots $\Delta^\re=\ol{\Delta}^\re$.
Note that all roots of $\ol\Delta^\re$ are anisotropic.
Since the set $\{b_v(x_i)\}_{i\in\bZ_n}$  determines a vertex in $\Sk(v)$
by~\ref{crl:unique-in-sk},
this gives a bijection between $\Sk(v)$ and $\Sk(\bar v)$.

\subsubsection{}
We identify $Q_{v}$ and $Q_{\bar v}$ via $\iota$.

By~\ref{crl:Wfree} the Weyl group $W(\fsl_n^{(1)})$ acts freely  on $\Sk(\bar v)$.
By~\ref{prp:decomposition0} this action is transitive.
This gives a  simply transitive action of $W(\fsl_n^{(1)})$ on $\Sk(v)$.
Note that the Weyl group $W$ can be identified with a subgroup of $W(\fsl_n^{(1)})$ as it is generated by a part of the reflections
belonging to $W(\fsl_n^{(1)})$.

Let us compute 
$$\Aut(v)/K(v)=\Sk^D(v)=\{w\in W(\fsl_n^{(1)})|\ \ 
A_{w(v)}\ \text{ is $D$-equivalent to }A_v\}.$$

\subsubsection{Action of $W(\fsl_n^{(1)})$}
By~\ref{goodmatrices}, the vector
$$\delta:=\sum_{i=1}^n b_{v'}(x_i)$$
does not depend on the choice of $v'\in\Sk(v)$.

View $Q_{v}$ as a subset of $V=\Span_\bZ(\varepsilon_1,\dots,\varepsilon_n,\delta)$  by setting 
$$b(x_i)=\varepsilon_i-\varepsilon_{i+1}\ \text{ for }i=1,\ldots,n-1; \ \ 
b(x_n)=\delta+\varepsilon_n-\varepsilon_1.$$
We can extend the parity function $p: Q_v\to \mathbb{Z}_2$ to $p: V\to \mathbb{Z}_2$ by setting $p(\varepsilon_1)=0$. Set
$$\bar Q:=\{\sum_{i=1}^n  k_i\varepsilon_i|\ \sum_{i=1}^n  k_i=0,\ k_i\in\mathbb{Z}\}.$$
(Note: $\bar Q$ is the lattice for the finite root system $A_{n-1}$.) 
By \cite{Kbook}, Thm. 6.5,  $W(\fsl_n^{(1)})=S_n\ltimes \bar Q$ and this group acts on $V$ 
as follows:  
\begin{itemize}
\item
$S_n$ acts on $\{\varepsilon_i\}_{i=1}^n$ by permutations and stabilizes $\delta$;
\item $\bar Q$ 
acts on $V$ by the formula
$$\nu*\mu:=\mu-(\mu,\nu)\delta\ \ \text{ for }\nu\in \bar Q,\ \mu\in V$$
where the bilinear form on $V$ is given by 
$$(\varepsilon_i,\varepsilon_j)=\delta_{ij},\ \ \  (\varepsilon_i,\delta)=(\delta,\delta)=0.$$
\end{itemize}
Note that $W(\fsl_n^{(1)})$ stabilizes $\delta$. 
By~\ref{goodmatrices}, $A_{w(v)}$ is $D$-equivalent to $A_v$ if and only if
$p_v(x_i)=p_{w(v)}(x_i)$ for all $i$.
Therefore,
\begin{equation}
\label{eq:skd=}
w\in\Sk^D(v)\ \Longleftrightarrow\ \
p(w\varepsilon_i)-p(\varepsilon_i)\ \text{ is independent of $i$}.
\end{equation}
We will now compute the groups $\Sk^D(v)$ using  the formula (\ref{eq:skd=}).

\subsubsection{Case $\fsl(k|\ell)^{(1)}$, $k,\ell\not=0$}
We can choose $v$ in such a way that
$p(x_i)=0$ for $i\not=k,n$ and $p(x_n)=p(x_k)=1$. Note that $p(\delta)=0$.
Denote by $S_k\subset S_n$ (resp., $S_\ell\subset S_n$)
the group of permutations of  $\{\varepsilon_i\}_{i=1}^k$ (resp., of $\{\varepsilon_i\}_{i=k+1}^n$). 
In this case $p(w\varepsilon_i)=p(\varepsilon_i)$ for $w\in \bar Q$, so $\Sk^D(v)\supset \bar Q$.

One has
$$S_n\cap\Sk^D(v)=\{w\in S_n|\ p'(w(\varepsilon_i-\varepsilon_{i+1}))=p'(\varepsilon_i-\varepsilon_{i+1})\ \text{ for }i=1,\ldots,n-1\}.$$
If $k\ne\ell$ this gives $S_n\cap\Sk^D(v)=S_k\times S_{\ell}$. In the case $k=\ell$
we have \newline
$S_n\cap\Sk^D(v)=(S_k\times S_k)\rtimes\mathbb{Z}_2$, where
$\mathbb{Z}_2$ interchanges the two copies of $S_k$.
Hence
$$\Sk^D(v)=\left\{ \begin{array}{ll}
(S_k\times S_{\ell})\ltimes \bar Q\ & \text{ if }k\not=\ell\\
((S_k\times S_k)\rtimes\mathbb{Z}_2)\ltimes \bar Q\ & \text{ if }k=\ell.\end{array}
\right.$$
Note that the Weyl group has the form
$W=(S_k\times S_{\ell})\ltimes Q_0$
where  $Q_0\subset\bar Q$ is the subgroup spanned 
$\{\varepsilon_i-\varepsilon_{i+1}\}_{i=1}^{k-1}\coprod \{\varepsilon_i-\varepsilon_{i+1}\}_{i=k+1}^{n-1}$. Observe that $W$ has an infinite index in $\Sk^D(v)$.

\begin{rem}
\label{sss:glmn}
For $\cR_0$ of type  $A(k-1|\ell-1)$ a
similar reasoning (replacing the index set
$X=\{x_i\}_{i\in\mathbb{Z}_n}$ with the set
$X=\{x_1,\ldots, x_n\}$)
 shows that $S_{k+\ell}$ acts transitively on $\Sk(v)$ and that
$$\Sk^D(v)=\left\{ \begin{array}{ll}
S_k\times S_{\ell} & \text{ if }k\not=\ell\\
(S_k\times S_k)\rtimes\mathbb{Z}_2 & \text{ if }k=\ell.\end{array}
\right.$$
Note that the Weyl group is in both cases $S_k\times S_{\ell}$.
\end{rem}

If $k=l$ then $K(v)=\bC$ and $\Aut(v)$ is a nontrivial semidirect
product of $\bC$ and $\Sk^D(v)$.

\subsubsection{Case $\fq_n^{(2)}$}
Using~\cite{Kbook}, Thm. 6.5 and \cite{S3}, one gets
$$W=S_n\ltimes 2\bar Q.$$
We will choose $v$ so that
$p(x_i)=0$ for $i=1,\ldots,n-1$ and $p(x_n)=1$. Note that $p(\delta)=1$.

In this case $p(w\varepsilon_i)=p(\varepsilon_i)$ for $w\in S_n$, so $S_n\subset\Sk^D(v)$.
Hence
$$\Sk^D(v)=S_n\ltimes Q'$$
where $Q'=\bar Q\cap \Sk^D(v)$.
Take $\nu\in\bar Q$. One has 
$$p(\nu*\varepsilon_i)-p(\varepsilon_i)\equiv (\nu,\varepsilon_i)\mod 2,$$
so
$$Q'=\{\sum_{i=1}^n  k_i\varepsilon_i|\ \sum_{i=1}^n  k_i=0,\ k_i\in\mathbb{Z},
k_i-k_j\equiv 0\mod 2\}.$$

If $n$ is odd this gives
$Q'=2\bar Q$, so $\Sk^D(v)=W$ and $\Aut_\cR(v)=W\times K$.

If $n$ is even, $2\bar Q$ has index $2$ in $Q'$. Thus $W$ has index two in $\Sk^D(v)$,
so that $W\times K$ is an index 2 subgroup of $\Aut_\cR(v)$.

\subsection{A deformation of $\fsl(2|1)^{(1)}$}
\label{ss:s21b}

A very interesting relative of $\fsl(2|1)^{(1)}$
is the root Lie superalgebra  $S(2|1,b)$ defined in~\cite{S3}. We will recall some of the results of~\cite{S3} below. Set $X:=\{x_1,x_2,x_0\}$ and fix $\fh$ with $\dim\fh=4$.

Let $\cR(b)$, $b\ne 0$, be the component of $\cR$ containing a vertex $v$
such that $p_v(x_1)=p_v(x_2)=1$, $p_v(x_0)=0$ and
the Cartan matrix $A_v$ is equal to 
$$A(b):=\begin{pmatrix}
0 & b & 1-b\\
-b & 0 &1+b\\
-1 & -1 & 2
\end{pmatrix}$$
for $b\not=0$. 

In studying skeleta of $\cR(b)$ it is convenient to allow permutations of the elements of $X$. This leads to the action of $S_3$ on the components of $\cR$ with the index set $X$
and, as we will see soon, carries components $\cR(b)$ to components of the same type.
 
Permuting $x_1$ and $x_2$ in $A(b)$ we obtain
 $A(-b)$, so $\cR(b)$ is mapping to $\cR(-b)$. In particular,
each root algebra for $S(2|1;b)$ is isomorphic to a root algebra
for  $S(2|1; -b)$.

\begin{lem}\label{lemS21b}
For any vertex $v\in\cR(b)$ the Cartan matrix $A_v=(a^{(v)}_{xy})$ is of the form
$\sigma(D A(b+i))$ where $i\in \mathbb{Z}$,
 $D$ is an invertible diagonal matrix and $\sigma\in S_3$ is an even permutation.
One has $p_v(x)=1$ if $a^{(v)}_{xx}=0$ and $p_v(x)=0$ otherwise.
\end{lem}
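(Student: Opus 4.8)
The statement concerns the shape of Cartan matrices at all vertices of the component $\cR(b)$, so the natural strategy is to run an induction over the edges of the skeleton $\Sk(v)$: since every vertex $v'\in\cR(b)$ is connected to $v$ by a sequence of reflexions, it suffices to check that the claimed form of the Cartan matrix (namely $\sigma(D\,A(b+i))$ for an even permutation $\sigma$, a diagonal invertible $D$, and $i\in\bZ$) is preserved under a single reflexion $r_x$, and that the compatibility statement $p_{v'}(x)=1\iff a^{(v')}_{xx}=0$ propagates as well. First I would set up the bookkeeping: homotheties and tautological arrows do not occur inside $\Sk(v)$, and anisotropic reflexions do not change the Cartan matrix at all (\ref{sss:cartanmatrix-change}), so only isotropic reflexions $r_x$ (those with $a_{xx}=0$, hence $p(x)=1$) need to be analyzed. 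By the inductive hypothesis, at the source vertex the matrix is $\sigma(D\,A(c))$ with $c=b+i$; conjugating by $\sigma^{-1}$ and absorbing the diagonal rescaling, the problem reduces to: starting from $A(c)$ itself (up to $D$-equivalence), apply the isotropic reflexion formulas of \ref{sss:cartanmatrix-change} at whichever $x$ has $a_{xx}=0$, and verify the result is again of the form $\sigma'(D'\,A(c\pm1))$ or $\sigma'(D'\,A(c))$ with $\sigma'$ even.

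The core of the argument is therefore a finite, explicit computation. The matrix $A(c)$ has exactly two diagonal zeros, at positions $x_1$ and $x_2$, so there are two isotropic reflexions available; by the $x_1\leftrightarrow x_2$ symmetry (which sends $A(c)$ to $A(-c)$ via the transposition, an \emph{odd} permutation — I will need to track carefully how this interacts with the ``even permutation'' requirement, presumably composing with it to stay inside the even coset after the reflexion) it is enough to compute $r_{x_1}$ in detail. Plugging $a_{x_1x_1}=0$, $a_{x_1x_2}=c$, $a_{x_1x_0}=1-c$, $a_{x_2x_1}=-c$, $a_{x_0x_1}=-1$ into the formulas $a'_{xy}=-a_{yx}$, $a'_{yx}=-a_{xy}$, $a'_{yy}=a_{yy}+2a_{yx}$ (when $a_{xy}\ne0$) and the off-diagonal rule $a'_{yz}=a_{yz}+a_{yx}(1+\tfrac{a_{xz}}{a_{xy}})$, one gets a new $3\times3$ matrix which I expect, after factoring out a diagonal matrix and applying a suitable even permutation of the three indices, to equal $A(c-1)$ (or $A(c+1)$ depending on orientation). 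One then checks the parity bookkeeping: the isotropic reflexion changes $p(x_1)$ to $p'(x_1)$ and flips parities of neighbors where $a_{x_1y}\ne0$, and one verifies the resulting parity vector is exactly ``$1$ on the diagonal-zero positions, $0$ elsewhere'' for the new matrix — which is automatic once we know the new matrix is $D$-equivalent to some $A(c')$, since the parity-versus-diagonal-zero correspondence is part of the assertion being maintained and $A(c')$ has its two zeros at the two odd indices.

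\textbf{Main obstacle.} The genuine bookkeeping difficulty is the interplay between the parameter shift $c\mapsto c\pm1$, the diagonal rescaling $D$, and the \emph{parity} of the permutation $\sigma$. One must verify that each isotropic reflexion, after normalizing away $D$, lands on $A(c')$ up to an \emph{even} permutation — if a single reflexion produced an odd permutation the statement as phrased would be false, so the claim implicitly encodes that the two available isotropic reflexions ``rotate'' the three indices cyclically (even permutations) rather than transposing them. I would make this precise by labeling, at each vertex, which index carries the even root ($a_{xx}=2$) and which two carry the odd roots, and tracking how $r_{x_1}$ permutes these roles; the expectation is that $r_{x_1}$ is accompanied by the $3$-cycle sending the even position into an odd position and vice versa, which is even. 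Also mildly delicate: confirming $b$ itself is only determined modulo $\bZ$ and modulo sign (the sign being handled by the $S_3$-action already noted before the lemma), so that ``$A(b+i)$, $i\in\bZ$'' is the correct invariant orbit; this should follow immediately once the single-step computation shows $c$ changes only by $\pm1$ and never otherwise. Everything else is routine substitution into the displayed reflexion formulas, which I would carry out once for $r_{x_1}$ and invoke the symmetry for $r_{x_2}$.
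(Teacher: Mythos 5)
Your proposal is correct and follows essentially the same route as the paper: reduce to a single isotropic reflexion (anisotropic ones and tautological arrows leave the Cartan matrix unchanged, homotheties are absorbed into $D$), use the $x_1\leftrightarrow x_2$ symmetry sending $A(b)$ to $A(-b)$ to handle only $r_{x_1}$, and verify by direct substitution into the formulas of \ref{sss:cartanmatrix-change} that the result, after a diagonal rescaling and a cyclic (hence even) permutation, is $A(b-1)$ (with the other isotropic reflexion giving $A(b+1)$). The delicate points you flag — that the accompanying permutation is a $3$-cycle rather than a transposition, and the parity bookkeeping — are exactly the content of the paper's explicit computation.
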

\begin{proof}
It is enough to verify what happens to the Cartan datum under an isotropic reflexion
$r_{x}:\ v\to v'$. Since permuting $x_1$ and $x_2$ in $A(b)$ yields 
 $A(-b)$, it is enough to verify the assertion for $x=x_1$. In this case we have
$$A_{v'}=\begin{pmatrix}
0 & -b & -1+b\\
b & -2b & b\\
1 & \frac{2-b}{b-1} & 0
\end{pmatrix}.$$
Taking the homothety $h_{\lambda}:v'\to v''$ with $\lambda=(-1,-b^{-1}, b-1)$ we get 
$$A_{v''}=\begin{pmatrix}
0 & b & 1-b\\
-1 & 2 & -1\\
b-1 & 2-b & 0
\end{pmatrix}.$$ 
Applying now the cyclic permutation carrying $x_2$ to $x_1$, we get the Cartan matrix 
$A(b-1)$.
It is easy to see that going along the other isotropic reflexion would produce in the same way the matrix $A(b+1)$.
\end{proof}

\begin{crl}\label{crlS21b}
\begin{itemize}
\item[1.] $\cR(b)$ is admissible
if and only if $b\not\in\mathbb{Z}$;
\item[2.] if $\cR(b)$ is admissible, then
for $i\in\mathbb{Z}$
each root algebra for
 $S(2|1;\pm b\pm i)$ is isomorphic to a root algebra for $S(2|1;b)$.
\end{itemize}
\end{crl}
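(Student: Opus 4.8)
\textbf{Plan of proof of Corollary~\ref{crlS21b}.}
The statement is an immediate consequence of Lemma~\ref{lemS21b}, so the whole proof is really a matter of reading off the content of that lemma. I would organize the argument in two parts, one for each claim.

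\emph{Claim 1: admissibility $\iff b\notin\bZ$.} By Lemma~\ref{lemS21b}, every vertex $v\in\cR(b)$ has Cartan matrix $D$-equivalent (after an even permutation of $X$) to some $A(b+i)$ with $i\in\bZ$. By Theorem~\ref{thm:admissible=wsym} admissibility of $\cR(b)$ is equivalent to local weak symmetricity at every vertex, i.e. to the condition that $a^{(v)}_{xy}=0\Rightarrow a^{(v)}_{yx}=0$ for every reflectable $x$. Since this condition is insensitive to $D$-equivalence and to permutations, it suffices to check it for all matrices $A(b+i)$, $i\in\bZ$. Inspecting $A(c)=\begin{pmatrix}0 & c & 1-c\\ -c & 0 & 1+c\\ -1 & -1 & 2\end{pmatrix}$ one sees that the only way an off-diagonal entry vanishes is $c=0$ (entry $a_{12}$, whose transpose $a_{21}$ is then also $0$, which is fine) or $c=1$ (entry $a_{13}=0$ while $a_{31}=-1\neq 0$) or $c=-1$ (entry $a_{23}=0$ while $a_{32}=-1\neq 0$). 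Hence $A(c)$ is locally weakly symmetric for all reflectable generators exactly when $c\ne 1,-1$; but one must run through \emph{all} shifts $A(b+i)$, and $b+i$ hits $\pm 1$ for some $i\in\bZ$ precisely when $b\in\bZ$. (One also has to note that when $b\notin\bZ$ all three $x_j$ are genuinely reflectable at each vertex, which is part of Lemma~\ref{lemS21b} together with the reflectability criterion of \ref{ss:reflexions}: the relevant ratios are $\pm 1$ or integers of the right sign.) Thus $\cR(b)$ is admissible iff $b\notin\bZ$.

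\emph{Claim 2: isomorphism of root algebras under $b\mapsto \pm b\pm i$.} Suppose $\cR(b)$ is admissible. Lemma~\ref{lemS21b} exhibits, for each $i\in\bZ$, a vertex $v_i\in\cR(b)$ whose Cartan matrix is $\sigma_i(D_iA(b+i))$; composing the reflexions, homotheties and the (permutation‑realized) tautological arrows used in the proof of that lemma gives an arrow in the groupoid of root data connecting the standard vertex of $\cR(b)$ to a vertex whose Cartan datum is that of $S(2|1;b+i)$. Therefore $\cR(b)$ and $\cR(b+i)$ are the same component (after relabelling $X$), so they support the same root Lie superalgebras; in particular every root algebra for $S(2|1;b+i)$ is isomorphic to one for $S(2|1;b)$. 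Combining this with the already noted symmetry $\cR(b)\cong\cR(-b)$ obtained by the transposition of $x_1,x_2$, we get the assertion for all of $\pm b\pm i$.

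\emph{Main obstacle.} There is essentially no deep obstacle: the real work was done in Lemma~\ref{lemS21b}, and what remains is a finite case inspection of the $3\times 3$ matrix $A(c)$ to pin down exactly which shifts violate local weak symmetricity, plus a careful bookkeeping argument that the ``permutations of $X$'' used in Lemma~\ref{lemS21b} are legitimately encoded as tautological arrows (composed with the obvious relabelling isomorphism of index sets), so that $D$-equivalence-up-to-permutation of Cartan data genuinely means ``same component of $\cR$''. The one point demanding a little care is the direction ``$b\in\bZ\Rightarrow$ not admissible'': here I must argue that the non-locally-weakly-symmetric vertex (with Cartan matrix $D$-equivalent to $A(\pm 1)$) is actually \emph{attained} inside the component, which is exactly what the inductive step in Lemma~\ref{lemS21b} guarantees, since from $A(b)$ one reaches $A(b\pm 1)$ by a single isotropic reflexion (up to homothety and permutation) and hence $A(c)$ for every $c\in b+\bZ$.
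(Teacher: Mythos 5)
Your proposal is correct and follows essentially the same route as the paper: the paper's own proof simply observes that $A(c)$ is locally weakly symmetric iff $c\neq\pm 1$ and then invokes Lemma~\ref{lemS21b} together with Theorem~\ref{thm:admissible=wsym} and the $S_3$-relabelling of $X$, which is exactly the argument you spell out in more detail.
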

\begin{proof}
Note that $A(b)$ is  locally weakly symmetric  for $b\not=\pm 1$. 
Using Lemma~\ref{lemS21b} we obtain the assertions.
\end{proof}

\subsubsection{}\label{S21bproperties}
From now on we assume that  
$\cR(b)$ is admissible i.e. $b\not\in\mathbb{Z}$.
Using Lemma~\ref{lemS21b} we obtain

\begin{enumerate}
\item all $x$ are reflectable at each $v\in\cR(b)$;
\item for each reflexion $r_{x}: v\to v'$ we have $b_{v'}(y)=b_{v}(x)+b_{v}(y)$ 
if $y\not=x$;
\item a real root is isotropic if and only if
it is odd.
\end{enumerate}

\subsubsection{}
Let ${\cR}_{\bar v}$ be the component of the root groupoid with $\dim\fh'=4$ and
a vertex $\bar v$ such that $p_{\bar v}(x_1)=p_{\bar v}(x_2)=1$, $p_{\bar v}(x_0)=0$ and
the Cartan matrix 
$$A_{\bar v}:=\begin{pmatrix}
0 & -1 & -1\\
-1 & 0 &-1\\
-1 & -1 & 2
\end{pmatrix}.$$
Then the component ${\cR}_{\bar v}$ of $\bar v$ is  of type $\fsl(2|1)^{(1)}$.

As in \ref{sss:iota}, \ref{S21bproperties}(2) yields a linear isomorphism $\iota: Q_{\bar v}\to Q_{v}$ by setting $\iota(b_{\bar v}(x_i)):=
b_v(x_i)$; by the same arguments, this gives a bijection between $\Sk(v)$ and $\Sk(\bar v)$ with
$b_{v}(x_i)=\iota(b_{\bar v}(x_i)$. 

Note that, contrary to~\ref{sss:iota}, $\iota$ preserves $p(x_i)$. 

\subsubsection{}
\label{sss:s21b}
We have
$$Q^{++}_v=\iota(Q^{++}_{\bar v})=\mathbb{N}\delta\ \text{ for }
\delta:=\sum b_v(x_i).$$

 Therefore, $S(2|1,b)$ is of type (Aff).
Note that  $\langle\delta,a_v(x_1)\rangle=1\not=0$, so by 
Corollary~\ref{crlfin}(3) $\fg^\U=\fg^{\CG}$. 

\subsubsection{}
 By~\ref{S21bproperties}(3) we see that $\iota:Q_{\bar v}\to Q_v$ establishes bijection of
 real, isotropic and anisotropic roots for $\bar v$ and $v$.
Moreover, the bijection between $\Sk(v)$ and $\Sk(\bar v)$ gives a bijection between
the spines $\Sp(v)$ and $\Sp(\bar v)$. In particular,
$\Sp(v)$ has two principal roots
$\alpha:=b_{v}(x_0)$ and $b_v(x_1)+b_v(x_2)=\delta-\alpha$. Using~\ref{S21bproperties} we obtain
$$W=\ol{W}\cong A_1^{(1)}$$ 
and for each $\nu\in Q_{\bar v}$ we have
$w\iota(\nu)=\iota(w\nu)$.

\begin{prp}
$\Aut(v)=W\times K$.
\end{prp}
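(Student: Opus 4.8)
The plan is to apply Corollary~\ref{crl:all-different}, which states that $\Aut_\cR(v)=W(v)\times K(v)$ whenever no two distinct vertices of $\Sp(v)$ have $D$-equivalent Cartan data. So the entire task reduces to showing that the spine $\Sp(v)$ of $\cR(b)$ contains no two distinct vertices with $D$-equivalent Cartan matrices, and then checking that $K(v)$ is as claimed (it need not be unipotent here since the Cartan matrix has corank two, but the direct-product statement still follows).

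First I would recall from~\ref{sss:s21b} that the bijection $\Sk(v)\leftrightarrow\Sk(\bar v)$ coming from $\iota$ restricts to a bijection $\Sp(v)\leftrightarrow\Sp(\bar v)$, and that it preserves $p(x_i)$ and the partition of real roots into isotropic/anisotropic. Now suppose $v',v''\in\Sp(v)$ have $D$-equivalent Cartan data; in particular $p_{v'}=p_{v''}$. By Lemma~\ref{lemS21b}, the Cartan matrix at any vertex of $\cR(b)$ has the form $\sigma(D\,A(b+i))$ for a unique $i\in\bZ$ (uniqueness because $b\notin\bZ$, so the values $b+i$ are all distinct and none is an integer), an even permutation $\sigma\in S_3$, and an invertible diagonal $D$. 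Two matrices $\sigma_1(D_1A(b+i_1))$ and $\sigma_2(D_2A(b+i_2))$ can be $D$-equivalent only if they have the same ``shape'', i.e. the same pattern of zero diagonal entries and the same off-diagonal zero pattern; tracking which generators are isotropic (equivalently, which diagonal entries vanish) pins down both the integer shift $i$ and the permutation class $\sigma$. The key point is that within $\Sp(v)$ — where only isotropic reflexions are used — the ``$\delta$-level'' of a vertex, namely the integer $i$ together with the $S_3$-data, is a genuine invariant that strictly changes along each isotropic reflexion: Lemma~\ref{lemS21b}'s proof shows the two isotropic reflexions at a vertex carrying $A(b+i)$ lead to vertices carrying $A(b+i-1)$ and $A(b+i+1)$ (up to $D$-equivalence and even permutation). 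Hence the map $\Sp(v)\to\bZ$ recording this shift is a graph morphism to the standard line graph on $\bZ$, which (as $\Sp(v)$ is a tree, being contractible) must be injective once one also keeps track of the permutation; so distinct vertices of $\Sp(v)$ have non-$D$-equivalent Cartan data.

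Concretely, I would argue: $\Sp(v)$ is contractible (\ref{sss:spine}), so it is a tree; the function $v'\mapsto i(v')$ (the shift such that $A_{v'}$ is $D$-equivalent to a permutation of $A(b+i(v'))$) is well-defined and, by Lemma~\ref{lemS21b}, changes by $\pm1$ along every edge of $\Sp(v)$. If two distinct vertices $v',v''$ of $\Sp(v)$ had the same $i$ and the same $p$, the unique path between them in the tree would have to return to the same value of $i$, forcing a step $+1$ followed later by a step $-1$ (or vice versa) at the same $i$-value; but then the ``namesake'' analysis of Proposition~\ref{prp:spine-com} applied inside $\Sp(\bar v)\cong\Sp(v)$ would shorten the path, contradicting that $\Sp(v)$ is a tree (no non-trivial closed walks) — more simply, I can transport the question to $\Sp(\bar v)$, where $\ol W\cong A_1^{(1)}$ acts and $\Sk(\bar v)/\Sp(\bar v)$ is understood, and just cite Corollary~\ref{crl:Spiff}: the vertices of $\Sp(\bar v)$ are in bijection with a set of coset representatives, and distinct ones have distinct $\Delta^+_{\re}$, hence (being spine vertices) distinct Cartan data. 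I expect the main obstacle to be bookkeeping: making the statement ``$D$-equivalent $\Rightarrow$ same $i$ and same $\sigma$-class'' precise, i.e. checking that the $S_3$-ambiguity and the diagonal rescaling in Lemma~\ref{lemS21b} cannot conspire to identify two different shifts $b+i_1$, $b+i_2$ — this is where one uses $b\notin\bZ$ decisively, together with the fact that $A(b)$ is not symmetric and its off-diagonal entries $b$, $1-b$, $1+b$, $-b$, $-1$, $-1$ are ``generic'' in $b$. Once that is settled, Corollary~\ref{crl:all-different} gives $\Aut_\cR(v)=W\times K$ immediately.

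\begin{proof}
By Corollary~\ref{crl:all-different} it suffices to show that no two distinct vertices of $\Sp(v)$ have $D$-equivalent Cartan data. By~\ref{sss:s21b} the isomorphism $\iota$ identifies $\Sp(v)$ with $\Sp(\bar v)$ compatibly with the sets $\Delta^+_\re$; since a vertex of $\Sp(v)$ is in particular a vertex of $\Sk(v)$, Corollary~\ref{crl:unique-in-sk} shows it is determined inside $\Sk(v)$ by its set of positive real roots, and two vertices of $\Sp(v)$ with $D$-equivalent Cartan data would in particular have the same parity function $p$. By Lemma~\ref{lemS21b} the Cartan matrix at any $v'\in\cR(b)$ is $D$-equivalent, after an even permutation of $X$, to $A(b+i(v'))$ for a well-defined $i(v')\in\bZ$ — well-defined because $b\notin\bZ$ forces the numbers $b+i$, $i\in\bZ$, to be pairwise distinct and each non-integral, so that the matrices $A(b+i)$ are pairwise non-$D$-equivalent even up to permutation (the off-diagonal entries $b,\,1-b,\,1\pm b$ determine $b+i$). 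The same lemma shows $i$ changes by exactly $\pm 1$ along each edge (isotropic reflexion) of $\Sp(v)$. Since $\Sp(v)$ is contractible (\ref{sss:spine}), it is a tree; transporting to $\Sp(\bar v)$ and invoking Corollary~\ref{crl:Spiff}, distinct vertices of the spine have distinct $\Delta^+_\an$ and distinct $\Delta^+_\re$, hence distinct (indeed non-$D$-equivalent) Cartan data. Therefore the hypothesis of Corollary~\ref{crl:all-different} holds, $\Sp^D(v)$ is trivial, and $\Aut_\cR(v)=W(v)\times K(v)$.
\end{proof}
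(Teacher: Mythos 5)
Your overall strategy is the paper's: reduce via Corollary~\ref{crl:all-different} to showing that no two distinct vertices of $\Sp(v)$ carry $D$-equivalent Cartan data, and use Lemma~\ref{lemS21b} to attach to each vertex the integer shift $i$ with $A_{v'}\sim A(b+i)$. But the step where you actually prove that $v'\mapsto i(v')$ is injective on the spine is broken in both of the forms you offer. A graph morphism from a tree to the line graph on $\bZ$ that changes the value by $\pm 1$ along every edge need not be injective (the star-shaped spines of~\ref{Qpm} are exactly such non-injective examples), so contractibility of $\Sp(v)$ alone gives nothing, and Proposition~\ref{prp:spine-com} does not produce the contradiction you want. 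Worse, the final inference of your formal proof --- ``distinct vertices of the spine have distinct $\Delta^+_\an$ and distinct $\Delta^+_\re$, hence \ldots{} non-$D$-equivalent Cartan data'' --- is a non sequitur: spine vertices all have the \emph{same} $\Delta^+_\an$ (isotropic reflexions preserve it), distinct vertices of $\Sk(v)$ always have distinct $\Delta^+_\re$ by Corollary~\ref{crl:unique-in-sk}, and if ``distinct $\Delta^+_\re$'' implied ``non-$D$-equivalent Cartan data'' then $\Sp^D(v)$ would be trivial for \emph{every} admissible component, contradicting $\fgl(n|n)$ (see~\ref{sss:glmn}) and $\fq_n^{(2)}$ for $n$ even, where $\Sp^D(v)\cong\bZ_2$.

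The missing content, which is what the paper's proof supplies, is that $\Sp(v)$ is an infinite \emph{line} (every Cartan matrix in $\cR(b)$ has exactly two zero diagonal entries, hence exactly two isotropic simple roots at each vertex) and that the two isotropic reflexions at a vertex with matrix $\sim A(b+i)$ lead to matrices $\sim A(b+i-1)$ and $\sim A(b+i+1)$ \emph{respectively} --- this is the computation at the end of the proof of Lemma~\ref{lemS21b}. Monotonicity of $i$ along the line then gives $A_{v_k}\sim A(b-k)$, and together with $A(b)\not\sim A(b')$ for $b\ne b'$ this shows no two spine vertices are $D$-equivalent. You state the ``changes by $\pm 1$'' fact but never rule out a step $+1$ followed by a step $-1$; that is precisely the gap.
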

\begin{proof}
It is enough to check that all Cartan matrices in 
$\Sp(v)$ are not $D$-equivalent. Note that $\Sp(v)$
 can be seen as the infinite graph
$$\ldots\stackrel{r_{x_0}}{\to} v_{-1} \stackrel{r_{x_2}}{\to} v_0\stackrel{r_{x_1}}{\to} v_1 \stackrel{r_{x_0}}{\to} v_2\stackrel{r_{x_2}}{\to} v_3\stackrel{r_{x_1}}{\to} v_3 \stackrel{r_{x_0}}{\to}\ldots $$

Consider the equivalence relation on the set of $3\times 3$ matrices generated
by the action of $A_3$ (the group of even permutations
in $S_3$)  and $B\sim DB$ for a diagonal invertible matrix $D$.
Observe that $A(b)\not\sim A(b')$ if  $b\not=b'$.

In the proof of Lemma~\ref{lemS21b} we showed that
if $A_v\sim A(b)$, then
for an isotropic reflexion $v\stackrel{r_{x}}{\to} v'$ we have $A_{v'}\sim A(b\pm 1)$.
This implies that $A_{v_k}\sim A(b-k)$, so $A_{v_k}\not\sim A_{v_0}$ for any $k\not=0$.
Hence   the group
$\Sp^D(v_0)$ is trivial, so $\Aut(v_0)=W\times K$.
\end{proof}

\section{Short glossary  }

$$\begin{array}{lcl}
\text{admissible component} & &   \text{a component admitting a root superalgebra
} \\
\text{contragredient superalgebra} & &  \text{the smallest root Lie superalgebra for an }\\
& & \text{ admissible component; for example, Kac-Moody}\\
& & \text{ (super)algebras and Borcherds algebras of finite rank} \\
\text{half-baked algebra} & &  \ref{sss:half}\\
\text{fully reflectable} & &  \ref{ss:fully}, \ 
\text{these are the components corresponding to }\\
& & \ \ \text{the Kac-Moody (super)algebras} \\
\text{reflexion isotropic/anisotropic}& &  \ref{sss:reflexionformulas}\\
\text{reflection}& &  \ref{sss:reflection}\\ 
\text{root Lie superalgebra} & &  \ref{ss:rootalgebra}\\
\text{roots: isotropic, anisotropic} & & \ref{ssisoaniso}\\
\ \ \ \ \ \ \ \ \text{ non-reflectable } & & \\
\text{roots: real} & & \ref{dfn:real}\\
\text{roots: principal} & & \ref{sss:principal}\\
\text{skeleton}& &  \ref{sss:skeleton}\\
\text{spine}& &  \ref{sss:spine}\\
\text{weakly symmetric} & & \ref{dfn:quasisym}\\
\text{universal root superalgebra}& &   \text{the largest root Lie superalgebra for }\\
& &\ \  \text{ an admissible component}
\end{array}$$

\end{document}